\theoremstyle{plain}
\newtheorem*{maintheorem}{Main Theorem}
\newtheorem{theoremL}{Theorem}
\newtheorem{corollaryL}{Corollary}[theoremL]
\newtheorem{theorem}{Theorem}
\newtheorem{lemma}[theorem]{Lemma}
\newtheorem{corollary}[theorem]{Corollary}
\newtheorem{proposition}[theorem]{Proposition}
\theoremstyle{definition}
\newtheorem{example}[theorem]{Example}
\theoremstyle{remark}
\newtheorem{remark}[theorem]{Remark}
\newtheorem{claim}[theorem]{Claim}
\newtheorem*{acknowledgments}{Acknowledgments}
\numberwithin{equation}{section}
\newcommand{\C}{\text{c}}
\newcommand{\mc}{\text{mc}}
\newcommand{\R}{\text{r}}
\DeclareMathOperator{\Aff}{Aff}
\DeclareMathOperator{\Mod}{mod}
\DeclareMathOperator{\Mult}{Mult}
\DeclareMathOperator{\Poly}{Poly}
\DeclareMathOperator{\res}{res}
\begin{document}

\title{Moduli spaces of polynomial maps and multipliers at small cycles}

\author{Valentin Huguin}
\address{Department of Mathematics, University of Toronto, Toronto, ON M5S 2E4, Canada}
\email{valentin.huguin@utoronto.ca}
\thanks{The research of the author was partly supported by the German Research Foundation (DFG, project number 455038303).}

\subjclass[2020]{Primary 37F46, 37P45; Secondary 37F10, 37P05, 37P30}

\begin{abstract}
Fix an integer $d \geq 2$. The space $\mathcal{P}_{d}$ of polynomial maps of degree $d$ modulo conjugation by affine transformations is naturally an affine variety over $\mathbb{Q}$ of dimension $d -1$. For each integer $P \geq 1$, the elementary symmetric functions of the multipliers at all the cycles with period $p \in \lbrace 1, \dotsc, P \rbrace$ induce a natural morphism $\Mult_{d}^{(P)}$ defined on $\mathcal{P}_{d}$. In this article, we show that the morphism $\Mult_{d}^{(2)}$ induced by the multipliers at the cycles with periods $1$ and $2$ is both finite and birational onto its image. In the case of polynomial maps, this strengthens results by McMullen and by Ji and Xie stating that $\Mult_{d}^{(P)}$ is quasifinite and birational onto its image for all sufficiently large integers $P$. Our result arises as the combination of the following two statements:
\begin{itemize}
\item A sequence of polynomials over $\mathbb{C}$ of degree $d$ with bounded multipliers at its cycles with periods $1$ and $2$ is necessarily bounded in $\mathcal{P}_{d}(\mathbb{C})$.
\item A generic conjugacy class of polynomials over $\mathbb{C}$ of degree $d$ is uniquely determined by its multipliers at its cycles with periods $1$ and $2$.
\end{itemize}
\end{abstract}

\maketitle

\section{Introduction}

Fix any integer $d \geq 2$. We wish here to describe the space $\Poly_{d}$ of polynomials of degree $d$ from a dynamical perspective. The group $\Aff$ of affine transformations acts on $\Poly_{d}$ by conjugation, via $\phi \centerdot f = \phi \circ f \circ \phi^{-1}$. Since conjugate polynomials induce the same dynamical system, up to some change of coordinates, it is natural to consider the space $\mathcal{P}_{d} = \Poly_{d}/\Aff$ of conjugacy classes of polynomial maps of degree $d$. This quotient space $\mathcal{P}_{d}$ is naturally an affine variety over $\mathbb{Q}$ of dimension $d -1$ called the \emph{moduli space of polynomial maps} of degree $d$. As an affine variety, $\mathcal{P}_{d}$ is completely determined by its coordinate ring $\mathbb{Q}\left[ \mathcal{P}_{d} \right]$, consisting of all regular functions defined on $\mathcal{P}_{d}$. From a dynamical point of view, natural regular functions on $\mathcal{P}_{d}$ are given by the elementary symmetric functions of the multipliers at all the cycles with any given period. Thus, it is natural to ask how well the multipliers at the cycles describe the moduli space $\mathcal{P}_{d}$.

Suppose that $K$ is an arbitrary algebraically closed field of characteristic $0$ and $f \in \Poly_{d}(K)$. Recall that a point $z_{0} \in K$ is \emph{periodic} for $f$ if there is some integer $p \geq 1$ such that $f^{\circ p}\left( z_{0} \right) = z_{0}$. In this case, the smallest such integer $p$ is called the \emph{period} of $z_{0}$ and $\left\lbrace f^{\circ n}\left( z_{0} \right) : n \geq 0 \right\rbrace$ is said to be a \emph{cycle} for $f$. The \emph{multiplier} of $f$ at $z_{0}$ is the number $\lambda = \left( f^{\circ p} \right)^{\prime}\left( z_{0} \right) \in K$. The polynomial $f$ has the same multiplier at each point of the cycle. Moreover, the multiplier is invariant under conjugation: if $\phi \in \Aff(K)$, then $\phi\left( z_{0} \right)$ is periodic for $\phi \centerdot f$ with period $p$ and multiplier $\lambda$.

Given $f \in \Poly_{d}(K)$, with $K$ an algebraically closed field of characteristic $0$, for $p \geq 1$, we denote by $\Lambda_{f}^{(p)} \in K^{N_{d}^{(p)}}/\mathfrak{S}_{N_{d}^{(p)}}$ the multiset of multipliers of $f$ at all its cycles with period $p$, which depends only on the conjugacy class $[f] \in \mathcal{P}_{d}(K)$. We ask here if $f \in \Poly_{d}(K)$, with $K$ an algebraically closed field of characteristic $0$, is characterized by its multiplier spectrum $\left( \Lambda_{f}^{(p)} \right)_{p \geq 1}$. Since $\mathcal{P}_{d}$ is finite-dimensional, we can even ask if $\mathcal{P}_{d}$ is well described by the multipliers at the cycles with period at most $P$ for some integer $P \geq 1$.

For $n \geq 1$, we denote by $\mathbb{A}^{n}$ the affine space of dimension $n$ over $\mathbb{Q}$. For $N \geq 1$ and every algebraically closed field $K$, we have a natural bijection $K^{N}/\mathfrak{S}_{N} \cong K^{N}$ given by the elementary symmetric functions. Via these identifications, for $P \geq 1$, we consider the \emph{multiplier spectrum morphism} $\Mult_{d}^{(P)} \colon \mathcal{P}_{d} \rightarrow \prod\limits_{p = 1}^{P} \mathbb{A}^{N_{d}^{(p)}}$ given by \[ \Mult_{d}^{(P)}\left( [f] \right) = \left( \Lambda_{f}^{(1)}, \dotsc, \Lambda_{f}^{(P)} \right) \, \text{.} \]

Here, we show that the multipliers at the cycles with periods $1$ and $2$ provide a good description of the space $\mathcal{P}_{d}$. More precisely, our main result is the following:

\begin{maintheorem}
Assume that $d \geq 2$ is an integer. Then $\Mult_{d}^{(2)}$ induces a finite birational morphism from $\mathcal{P}_{d}$ onto its image $\Sigma_{d}^{(2)}$. Furthermore, if $d \in \lbrace 2, 3 \rbrace$, then $\Mult_{d}^{(1)}$ induces an isomorphism from $\mathcal{P}_{d}$ onto its image $\Sigma_{d}^{(1)}$.
\end{maintheorem}

\begin{remark}
It follows from Main Theorem that $\Mult_{d}^{(P)}$ induces a finite birational morphism from $\mathcal{P}_{d}$ onto its image $\Sigma_{d}^{(P)}$ for all $P \geq 2$. Moreover, if $d \in \lbrace 2, 3 \rbrace$, then $\Mult_{d}^{(P)}$ induces an isomorphism from $\mathcal{P}_{d}$ onto its image $\Sigma_{d}^{(P)}$ for all $P \geq 1$.
\end{remark}

\begin{remark}
In general, $\Mult_{d}^{(P)}$ may not be an isomorphism onto its image $\Sigma_{d}^{(P)}$ for any $P \geq 1$ (see Appendix~\ref{appendix:isospec} for details).
\end{remark}

In fact, Main~Theorem is a direct combination of Theorems~\ref{theorem:degenA} and~\ref{theorem:unique}, which we now present.

\subsection{Degeneration of complex polynomial maps and multipliers at small cycles}

It is natural to investigate the behavior of multipliers under degeneration in the space $\mathcal{P}_{d}(\mathbb{C})$ of affine conjugacy classes of complex polynomials of degree $d$. This study was notably conducted by DeMarco and McMullen in~\cite{DMMM2008}. We ask here if degeneration in the space $\mathcal{P}_{d}(\mathbb{C})$ is always detected by the multipliers at the cycles with small periods.

The set $\mathcal{P}_{d}(\mathbb{C})$ of conjugacy classes of complex polynomial maps of degree $d$ is naturally a complex orbifold of dimension $d -1$. We say that a sequence $\left( f_{n} \right)_{n \geq 0}$ of elements of $\Poly_{d}(\mathbb{C})$ \emph{degenerates} in $\mathcal{P}_{d}(\mathbb{C})$ if the sequence $\left( \left[ f_{n} \right] \right)_{n \geq 0}$ eventually leaves every compact subset of $\mathcal{P}_{d}(\mathbb{C})$. We can also express degeneration in $\mathcal{P}_{d}(\mathbb{C})$ in terms of maximal escape rates.

Given an algebraically closed field $K$ of characteristic $0$ equipped with an absolute value $\lvert . \rvert$ and $f \in \Poly_{d}(K)$, the \emph{Green function} $g_{f} \colon K \rightarrow \mathbb{R}_{\geq 0}$ is given by \[ g_{f}(z) = \lim_{n \rightarrow +\infty} \frac{1}{d^{n}} \log^{+}\left\lvert f^{\circ n}(z) \right\rvert \] and the \emph{maximal escape rate} $M_{f}$ of $f$ is defined by \[ M_{f} = \max\left\lbrace g_{f}(c) : c \in K, \, f^{\prime}(c) = 0 \right\rbrace \, \text{.} \] The maximal escape rate is invariant under conjugation. Moreover, in the complex case, the maximal escape rate characterizes degeneration in $\mathcal{P}_{d}(\mathbb{C})$. More precisely, any sequence $\left( f_{n} \right)_{n \geq 0}$ of elements of $\Poly_{d}(\mathbb{C})$ degenerates in $\mathcal{P}_{d}(\mathbb{C})$ if and only if $\lim\limits_{n \rightarrow +\infty} M_{f_{n}} = +\infty$.

Now, given an algebraically closed field $K$ of characteristic $0$ equipped with an absolute value $\lvert . \rvert$ and $f \in \Poly_{d}(K)$, for $p \geq 1$, we define \[ M_{f}^{(p)} = \max_{\lambda \in \Lambda_{f}^{(p)}} \left( \frac{1}{p} \log\lvert \lambda \rvert \right) \] to be the maximal characteristic exponent of $f$ at a cycle with period $p$.

By~\cite{O2012}, for every algebraically closed valued field $K$ of characteristic $0$ and every $f \in \Poly_{d}(K)$, we have \[ \lim_{p \rightarrow +\infty} \frac{1}{N_{d}^{(p)}} \sum_{\lambda \in \Lambda_{f}^{(p)}} \left( \frac{1}{p} \log\lvert \lambda \rvert \right) = \log\lvert d \rvert +\sum_{c \in \Gamma_{f}} \rho_{c} \cdot g_{f}(c) \, \text{,} \] where $\Gamma_{f} \subseteq K$ is the set of critical points for $f$ and $\rho_{c} \geq 1$ denotes the multiplicity of $c$ as a critical point for $f$ for each $c \in \Gamma_{f}$, which yields $\sup\limits_{p \geq 1} M_{f}^{(p)} \geq M_{f} +\log\lvert d \rvert$. In particular, if $\left( f_{n} \right)_{n \geq 0}$ is any sequence of elements of $\Poly_{d}(\mathbb{C})$ that degenerates in $\mathcal{P}_{d}(\mathbb{C})$, then $\lim\limits_{n \rightarrow +\infty} \sup\limits_{p \geq 1} M_{f_{n}}^{(p)} = +\infty$. Thus, degeneration in $\mathcal{P}_{d}(\mathbb{C})$ is detected by the full multiplier spectrum.

We show here that degeneration in $\mathcal{P}_{d}(\mathbb{C})$ is already detected by the multipliers at the cycles with periods $1$ and $2$. Explicitly, we obtain the following:

\begin{theoremL}
\label{theorem:degenA}
Assume that $d \geq 2$ is an integer and $\left( f_{n} \right)_{n \geq 0}$ is any sequence of elements of $\Poly_{d}(\mathbb{C})$ that degenerates in $\mathcal{P}_{d}(\mathbb{C})$. Then \[ \lim_{n \rightarrow +\infty} \max\left\lbrace M_{f_{n}}^{(1)}, M_{f_{n}}^{(2)} \right\rbrace = +\infty \, \text{.} \] Furthermore, if $d \in \lbrace 2, 3 \rbrace$, then $\lim\limits_{n \rightarrow +\infty} M_{f_{n}}^{(1)} = +\infty$.
\end{theoremL}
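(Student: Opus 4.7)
The plan is to prove the contrapositive: if both $\left( M_{f_n}^{(1)} \right)_{n \geq 0}$ and $\left( M_{f_n}^{(2)} \right)_{n \geq 0}$ are bounded, then $\left( \left[ f_n \right] \right)_{n \geq 0}$ stays in a compact subset of $\mathcal{P}_d(\mathbb{C})$. I first normalize each $f_n$ to its monic centered form $f_n(z) = z^d + c_{n, d-2} z^{d-2} + \dots + c_{n, 0}$, which is unique up to conjugation by $(d-1)$-th roots of unity, so that it suffices to prove the coefficients $c_{n, k}$ remain bounded.

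For $d \in \lbrace 2, 3 \rbrace$ I work directly with $M_f^{(1)}$, which also yields the second clause of the theorem. For $d = 2$, the fixed points of $f(z) = z^2 + c$ have multipliers $1 \pm \sqrt{1 -4c}$, so bounded $\Lambda_f^{(1)}$ immediately forces $c$ bounded. For $d = 3$ with $f(z) = z^3 + bz + c$, the fixed-point multipliers take the form $\lambda_i = 3 z_i^2 + b$; I expand their elementary symmetric functions using Newton's identities together with the Vieta relations $\sum z_i = 0$, $\sum_{i<j} z_i z_j = b -1$, $z_1 z_2 z_3 = -c$. A direct calculation yields $\sigma_1\!\left( \Lambda_f^{(1)} \right) = 6 -3b$ and expresses $\sigma_3\!\left( \Lambda_f^{(1)} \right)$ as a polynomial in $b$ plus the term $27 c^2$; hence bounded $\Lambda_f^{(1)}$ bounds both $b$ and $c^2$, and the sign ambiguity on $c$ is absorbed by the residual $z \mapsto -z$ symmetry of monic centered cubics.

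For the full statement in degree $d \geq 2$, I argue by contradiction: a degenerating subsequence along which $\Lambda_{f_n}^{(1)}$ and $\Lambda_{f_n}^{(2)}$ stay bounded produces, after passing to a non-archimedean limit (via a non-principal ultrapower of $(\mathbb{C}, \lvert . \rvert)$, or alternatively by reparameterizing the degeneration and descending to the completed Puiseux field $\widehat{\overline{\mathbb{C}((t))}}$), a polynomial $f \in \Poly_d(K)$ over an algebraically closed complete non-archimedean valued field $K$ of residue characteristic $0$ satisfying $M_f > 0$ but $M_f^{(1)} \leq 0$ and $M_f^{(2)} \leq 0$. The problem thereby reduces to ruling out this configuration on the non-archimedean side. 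I would do this using Berkovich dynamics on $\mathbb{A}^{1, \mathrm{an}}_K$: the hypotheses confine all period-$1$ and period-$2$ points to the closed Berkovich unit ball and force their multipliers to have norm at most $1$; combining a Rivera-Letelier-type analysis of Berkovich Fatou components applied both to $f$ and to $f^{\circ 2}$ (whose additional critical points carry exactly the period-$2$ cycles of $f$), I deduce that the Berkovich filled Julia set of $f$ lies in the closed unit ball, contradicting $M_f > 0$.

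The main obstacle is precisely this non-archimedean step. For $d \geq 4$, fixed-point data alone is genuinely insufficient: the family $f_a(z) = z^4 + a z^2 + z + a^2/4$ satisfies $f_a(z) - z = (z^2 + a/2)^2$, so its four fixed points pair off into double roots with multiplier $1$ independent of $a$, yielding the constant spectrum $\Lambda_{f_a}^{(1)} = (1, 1, 1, 1)$ along a curve degenerating in $\mathcal{P}_4(\mathbb{C})$. The role of period-$2$ cycles is to detect the additional branching of $f^{\circ 2}$ created under iteration along such parabolic degenerations, and making this detection quantitative and uniform in $d$ is the technical heart of the argument.
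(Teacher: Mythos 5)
Your treatment of $d \in \lbrace 2, 3 \rbrace$ by direct elimination is correct — the computations for $\sigma_1$ and $\sigma_3$ in the cubic case agree with the explicit formulas in the paper's Section~2.3 — and it is a legitimately more elementary route than the paper's, which derives those cases from the same combinatorial bound used for general $d$. The parabolic family $f_a(z) = z^4 + a z^2 + z + a^2/4$ with $f_a(z) - z = (z^2 + a/2)^2$ is also a valid (and nice) witness that period-$1$ data alone fails for $d \geq 4$.

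The general case, however, has a genuine gap. You reduce, via an ultrapower or Puiseux-field limit, to the following non-Archimedean claim: if $f \in \Poly_{d}(K)$ over an algebraically closed non-Archimedean field of residue characteristic $0$ satisfies $M_{f} > 0$, then $M_{f}^{(1)} > 0$ or $M_{f}^{(2)} > 0$. That reduction is sound (it is essentially the strategy the paper attributes to Favre in Remark~4), but it is not a simplification — it is the entire theorem transplanted to the Berkovich line. Your sketch of the non-Archimedean step does not prove it. In particular, the statement that bounded multipliers at periods $1$ and $2$ ``confine all period-$1$ and period-$2$ points to the closed Berkovich unit ball'' is not the hypothesis (periodic points always lie in the filled Julia set regardless of multiplier size), and the target conclusion ``the Berkovich filled Julia set lies in the closed unit ball'' does not contradict $M_{f} > 0$: one can have $\mathcal{K}_{f}$ bounded while a critical point escapes. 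What actually closes the argument is a concrete combinatorial dichotomy about the disk components of $\lbrace g_{f} < M_{f} \rbrace$ — the paper's ``two-islands'' lemma (Lemma~3.6 in the Archimedean case, Lemma~4.8 in the non-Archimedean one), proved by counting critical points via Riemann--Hurwitz. It guarantees that some component of $\lbrace g_{f} < M_{f}/d \rbrace$ is mapped bijectively onto a component of $\lbrace g_{f} < M_{f} \rbrace$, either by $f$ (period $1$) or by $f^{\circ 2}$ after swapping two components (period $2$); the modulus/radius-ratio estimate then produces a cycle whose multiplier has characteristic exponent at least a positive multiple of $M_{f}$. Invoking ``Rivera-Letelier-type analysis of Berkovich Fatou components'' without this counting argument leaves the crucial step unproved: the theorem is precisely that small multipliers at periods $1$ and $2$ force $M_{f} = 0$, and your proposal assumes rather than establishes the implication.
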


As a consequence of Theorem~\ref{theorem:degenA}, the morphism $\Mult_{d}^{(2)}$ given by the multipliers at the cycles with periods $1$ and $2$ is proper, and hence finite since $\mathcal{P}_{d}$ is an affine variety. In the polynomial case, this strengthens a result established by McMullen in~\cite{MM1987} stating that $\Mult_{d}^{(P)}$ is a quasifinite morphism for some integer $P \geq 1$. By contrast, Fujimura proved in~\cite{F2007} that the morphism $\Mult_{d}^{(1)}$ induced by the multipliers at the fixed points is neither quasifinite nor surjective onto its scheme-theoretic image $\Sigma_{d}^{(1)}$ if $d \geq 4$.

Then, using the fact that $\Mult_{d}^{(2)}$ is a finite morphism, we generalize Theorem~\ref{theorem:degenA} to polynomials over any algebraically closed valued field of characteristic $0$. More precisely, we obtain the following stronger result:

\begin{corollaryL}
\label{corollary:degenLocal}
Suppose that $d \geq 2$ is an integer and $K$ is an algebraically closed valued field of characteristic $0$. Then there exist $A \in \mathbb{R}_{> 0}$ and $B \in \mathbb{R}$ such that \[ \max\left\lbrace M_{f}^{(1)}, M_{f}^{(2)} \right\rbrace \geq A \cdot M_{f} +B \] for all $f \in \Poly_{d}(K)$. Moreover, if $d \in \lbrace 2, 3 \rbrace$, then there exist $A \in \mathbb{R}_{> 0}$ and $B \in \mathbb{R}$ such that $M_{f}^{(1)} \geq A \cdot M_{f} +B$ for all $f \in \Poly_{d}(K)$.
\end{corollaryL}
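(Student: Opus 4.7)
The approach is to leverage the finiteness of $\Mult_{d}^{(2)}$, which is a consequence of Theorem~\ref{theorem:degenA} and is an \emph{algebraic} statement over $\mathbb{Q}$ that descends by base change to any algebraically closed field $K$. My plan has two ingredients: (i) produce regular functions on $\mathcal{P}_{d}$ whose $\log^{+}$-values control $M_{f}$ from above; (ii) use integral dependence of these functions on the multiplier coordinates to bound them in terms of $M_{f}^{(1)}$ and $M_{f}^{(2)}$.

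For (i), I would use the standard presentation $\mathcal{P}_{d} = \mathbb{A}^{d-1}/\mu_{d-1}$ obtained by conjugating each class to a centered monic representative $f(z) = z^{d} + a_{d-2} z^{d-2} + \dotsb + a_{0}$ (with $\mu_{d-1}$ acting through $a_{i} \mapsto \zeta^{1-i} a_{i}$). A Branner--Hubbard style estimate, valid over any algebraically closed valued $K$ since $\lvert f(z) \rvert$ is comparable to $\lvert z \rvert^{d}$ as soon as $\lvert z \rvert$ dominates each $\lvert a_{i} \rvert^{1/(d-i)}$, produces constants $C_{1}, C_{2}$ depending only on $K$ and $d$ such that
\[ M_{f} \leq C_{1} \max_{0 \leq i \leq d-2} \frac{1}{d-i} \log^{+}\lvert a_{i} \rvert + C_{2} \, \text{.} \]
Since $\psi_{i} := a_{i}^{d-1}$ is $\mu_{d-1}$-invariant and hence descends to a regular function on $\mathcal{P}_{d}$, this rewrites as $M_{f} \leq C_{1}' \max_{i} \log^{+}\lvert \psi_{i}([f]) \rvert + C_{2}'$.

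For (ii), the finiteness of $\Mult_{d}^{(2)}$ as a morphism of affine $\mathbb{Q}$-varieties exhibits $\mathbb{Q}\left[ \mathcal{P}_{d} \right]$ as a finitely generated module over the subring generated by the pullbacks of the coordinates on $\Sigma_{d}^{(2)}$, so each $\psi_{i}$ satisfies a monic relation
\[ \psi_{i}^{m_{i}} + Q_{i, m_{i}-1} \psi_{i}^{m_{i}-1} + \dotsb + Q_{i, 0} = 0 \]
with each $Q_{i, k}$ a fixed $\mathbb{Q}$-polynomial in the elementary symmetric functions of $\Lambda_{f}^{(1)}$ and $\Lambda_{f}^{(2)}$. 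The standard root bound for monic polynomials (with constant $\log 2$ in the archimedean case and $0$ under the ultrametric inequality) gives $\log^{+}\lvert \psi_{i}([f]) \rvert \leq \max_{k} \log^{+}\lvert Q_{i, k}([f]) \rvert + O(1)$, and the inequality $\lvert \sigma_{\ell}( \Lambda_{f}^{(p)} ) \rvert \leq \binom{N_{d}^{(p)}}{\ell} \exp(\ell p M_{f}^{(p)})$ on elementary symmetric functions yields $\log^{+}\lvert Q_{i, k}([f]) \rvert \leq C_{3} \max\left\lbrace M_{f}^{(1)}, M_{f}^{(2)} \right\rbrace + C_{4}$. Concatenating the three estimates and rearranging gives the desired inequality with $A = 1/(C_{1}' C_{3}) > 0$. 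For $d \in \lbrace 2, 3 \rbrace$, Main~Theorem shows that $\Mult_{d}^{(1)}$ is an isomorphism, so each $\psi_{i}$ is already a $\mathbb{Q}$-polynomial in the symmetric multiplier functions at the fixed points; the integral-dependence step is superfluous and the same chain of estimates produces $M_{f}^{(1)} \geq A \cdot M_{f} + B$.

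The main technical obstacle is the uniform Branner--Hubbard upper bound in Step~(i), which must be extracted with explicit constants valid simultaneously in the archimedean and non-archimedean settings; this is classical but requires some care with the triangle-inequality factors $\binom{d}{i}$ in the archimedean case. Everything downstream reduces to bookkeeping with universal integral relations over $\mathbb{Q}$ and the elementary root bound for monic polynomials.
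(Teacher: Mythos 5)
Your approach is close in spirit to the paper's (the paper also descends integral relations over $\mathbb{Q}$ to every valued field, and also bounds $M_{f}$ from above via coefficient norms in a normal form), but it has a genuine gap: the chain of estimates in step~(ii) only produces
\[ M_{f} \leq C_{1}'C_{3} \cdot \max\left\lbrace 0, M_{f}^{(1)}, M_{f}^{(2)} \right\rbrace + \text{const} \, , \]
not $M_{f} \leq C_{1}'C_{3}\max\left\lbrace M_{f}^{(1)}, M_{f}^{(2)} \right\rbrace + \text{const}$. The reason is that the root bound and the elementary-symmetric-function estimate only give $\log^{+}\lvert Q_{i,k}\rvert \leq C_{3}\max\left\lbrace 0, M_{f}^{(1)}, M_{f}^{(2)} \right\rbrace + C_{4}$; when all multipliers are small (so $M_{f}^{(1)}, M_{f}^{(2)} < 0$), the left side is a nonnegative quantity but your claimed upper bound $C_{3}\max\{M_{f}^{(1)}, M_{f}^{(2)}\} + C_{4}$ could be arbitrarily negative. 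Rearranging the weaker inequality, you get the desired lower bound only when $\max\{M_{f}^{(1)}, M_{f}^{(2)}\} \geq 0$; in the opposite regime you still need a uniform lower bound on $\max\{M_{f}^{(1)}, M_{f}^{(2)}\}$ over all of $\Poly_{d}(K)$, and your proposal provides no argument for this.

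The paper fills this gap with an effective Nullstellensatz argument (Lemma~\ref{lemma:minimum}): it observes that the holomorphic fixed-point identity $d + \sum_{j}(-1)^{j}(d-j)\sigma_{d,j}^{(1)} = 0$ forces the map $\boldsymbol{c} \mapsto \Mult_{d}^{(2)}\left(\left[f_{\boldsymbol{c}}\right]\right)$ to be nowhere vanishing, and then writes $1 = \sum_{j}F_{j}G_{j}$ over $\mathbb{Q}$ to extract a universal lower bound $\lVert \boldsymbol{G}(\boldsymbol{c}) \rVert \gtrsim \max\{1, \lVert\boldsymbol{c}\rVert\}^{-\delta}$. This is not redundant: over a non-Archimedean field with residue characteristic dividing $d$, the naive fixed-point-formula argument that over $\mathbb{C}$ yields $M_{f}^{(1)} \geq 0$ breaks down (one loses the conclusion $\lvert d \rvert = 1$), so the lower-bound half genuinely requires the algebraic Nullstellensatz mechanism rather than a direct analytic estimate. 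Your proposal handles the growth at infinity (that part is correct, and is exactly the paper's Lemma~\ref{lemma:lojasiewcz}), but the combination with a lower bound near the vanishing locus, which the paper packages into Lemma~\ref{lemma:everywhere}, is missing. The same issue applies verbatim to your argument for $d \in \{2, 3\}$.
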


We also deduce a relation between the critical height of any polynomial defined over a number field and the standard heights of its multipliers at the small cycles. Here, denote by $h \colon \overline{\mathbb{Q}} \rightarrow \mathbb{R}_{\geq 0}$ the standard height on the algebraic closure $\overline{\mathbb{Q}}$ of $\mathbb{Q}$. Given $f \in \Poly_{d}\left( \overline{\mathbb{Q}} \right)$, define $H_{f}$ to be the critical height of $f$ and, for $p \geq 1$, define $H_{f}^{(p)} = \max\limits_{\lambda \in \Lambda_{f}^{(p)}} h(\lambda)$ (see Subsection~\ref{subsection:corollaries} for details). We obtain the following:

\begin{corollaryL}
\label{corollary:degenGlobal}
Suppose that $d \geq 2$ is an integer. Then there exist $A \in \mathbb{R}_{> 0}$ and $B \in \mathbb{R}$ such that \[ \max\left\lbrace H_{f}^{(1)}, H_{f}^{(2)} \right\rbrace \geq A \cdot H_{f} +B \] for all $f \in \Poly_{d}\left( \overline{\mathbb{Q}} \right)$. Moreover, if $d \in \lbrace 2, 3 \rbrace$, then there exist $A \in \mathbb{R}_{> 0}$ and $B \in \mathbb{R}$ such that $H_{f}^{(1)} \geq A \cdot H_{f} +B$ for all $f \in \Poly_{d}\left( \overline{\mathbb{Q}} \right)$.
\end{corollaryL}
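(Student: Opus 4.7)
The plan is to derive Corollary~\ref{corollary:degenGlobal} from its local companion Corollary~\ref{corollary:degenLocal} by summing local contributions over the places of a number field. Fix $f \in \Poly_{d}\left( \overline{\mathbb{Q}} \right)$ and choose a number field $K$ large enough to contain the coefficients of $f$, its critical points, and all its multipliers at cycles of periods $1$ and $2$. For every place $v$ of $K$, set $n_{v} = [K_{v} : \mathbb{Q}_{v}]/[K : \mathbb{Q}]$ and let $\lvert \cdot \rvert_{v}$ denote the associated absolute value on $\overline{K_{v}}$. Standard height theory then gives the decompositions
\[ H_{f} = \sum_{v} n_{v} \sum_{c \in \Gamma_{f}} \rho_{c} \, g_{f, v}(c) \quad \text{and} \quad h(\lambda) = \sum_{v} n_{v} \log^{+} \lvert \lambda \rvert_{v} \]
for each multiplier $\lambda$, where $g_{f, v}$ denotes the local Green function with respect to $\lvert \cdot \rvert_{v}$ and $M_{f, v}, M_{f, v}^{(p)}$ are the corresponding local exponents.

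Two elementary bounds then drive the argument. Since $M_{f, v} = \max_{c} g_{f, v}(c)$ and $\sum_{c} \rho_{c} = d - 1$, the local critical sum at $v$ is at most $(d-1) \max\{M_{f, v}, 0\}$, so $H_{f} \leq (d-1) \sum_{v} n_{v} \max\{M_{f, v}, 0\}$. Conversely, since $\lvert \Lambda_{f}^{(p)} \rvert = N_{d}^{(p)}$, the local maximum $\max\{M_{f, v}^{(p)}, 0\}$ is at most $\frac{1}{p} \sum_{\lambda \in \Lambda_{f}^{(p)}} \log^{+} \lvert \lambda \rvert_{v}$; summing over $v$ and exchanging the order of summation yields $\sum_{v} n_{v} \max\{M_{f, v}^{(p)}, 0\} \leq \frac{N_{d}^{(p)}}{p} H_{f}^{(p)}$. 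Now apply Corollary~\ref{corollary:degenLocal} at each completion $K_{v}$, with constants $A, B$ chosen uniformly in $v$, and sum with weights $n_{v}$ to conclude that $H_{f} \leq C_{1} \left( H_{f}^{(1)} + H_{f}^{(2)} \right) + C_{2} \leq 2 C_{1} \max\{H_{f}^{(1)}, H_{f}^{(2)}\} + C_{2}$, which rearranges to the desired inequality. The case $d \in \{2, 3\}$ is identical, with only $H_{f}^{(1)}$ appearing.

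The main obstacle is showing that the local constants in Corollary~\ref{corollary:degenLocal} can be chosen independently of the place $v$. For archimedean $v$ this is immediate, as $K_{v}$ is isomorphic to $\mathbb{C}$. For non-archimedean $v$, one needs constants that do not depend on the residue characteristic; this stems from the scheme-theoretic finiteness of the morphism $\Mult_{d}^{(2)}$ established via the Main Theorem, which provides over the coordinate ring $\mathbb{Q}\left[ \Sigma_{d}^{(2)} \right]$ integral-dependence relations satisfied by a set of generators of $\mathbb{Q}\left[ \mathcal{P}_{d} \right]$. Taking $\lvert \cdot \rvert_{v}$-values in such a relation gives a place-independent bound on $M_{f, v}$, viewed as a polynomial function on $\mathcal{P}_{d}$, in terms of the multipliers. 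Alternatively, one can appeal directly to the general principle that a finite morphism of affine varieties over $\overline{\mathbb{Q}}$ induces a height comparison up to $O(1)$; applied to $\Mult_{d}^{(2)} \colon \mathcal{P}_{d} \to \Sigma_{d}^{(2)}$ with $H_{f}$ on the source and the standard height of $\Mult_{d}^{(2)}([f])$ on the target, and combined with a bound of the form $h\left( \Mult_{d}^{(2)}([f]) \right) \leq C \max\{H_{f}^{(1)}, H_{f}^{(2)}\} + C^{\prime}$, this yields Corollary~\ref{corollary:degenGlobal} in one stroke.
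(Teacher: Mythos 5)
The overall plan — decompose $H_f$ and the multiplier heights into local contributions and sum a local inequality over all places of a number field — is exactly the paper's strategy. But there is a genuine gap in the step where you "apply Corollary~\ref{corollary:degenLocal} at each completion $K_v$, with constants $A, B$ chosen uniformly in $v$, and sum with weights $n_v$". The weights satisfy $\sum_{v \mid p} n_v = 1$ for every rational place $p$, so $\sum_v n_v = +\infty$; if the additive constant $B$ in the local bound $\max\{M_{f,v}^{(1)}, M_{f,v}^{(2)}\} \geq A\,M_{f,v} + B$ is a fixed negative number, then the summed error $\sum_v n_v B$ is $-\infty$ and the rearranged inequality for $H_f$ becomes vacuous. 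Uniformity is therefore not enough: the local additive constant $B_v$ must actually vanish at all but finitely many places. This is precisely the content of Proposition~\ref{proposition:corollary}, which is what the paper sums — a strengthening of Corollary~\ref{corollary:degenLocal} in which $A$ is independent of $K$ and $B_K = 0$ for every non-Archimedean $K$ whose residue characteristic lies outside a fixed finite set of primes (a posteriori the primes $\leq d$). Corollary~\ref{corollary:degenLocal} itself only gives a constant per valued field and says nothing about vanishing away from a finite exceptional set, so summing it as you propose does not go through.

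Your appeal to the integral-dependence relations coming from the finiteness of $\Mult_d^{(2)}$ is pointed at the right mechanism: those relations have coefficients in $\mathbb{Q}$, so only finitely many primes occur in denominators, and for residue characteristics away from that finite set the induced local bound degenerates to the trivial one with $B_v = 0$. But describing the resulting bound as merely "place-independent" hides the essential vanishing, and the summation as written fails without it. The "one-stroke" alternative in your last sentence — the $O(1)$ height comparison under a finite morphism of affine varieties — is valid in principle, but to turn it into a proof you must additionally identify the critical height $H_f$ with an affine Weil height on $\mathcal{P}_d$ up to $O(1)$; the paper builds this comparison through the Ingram normal form $f_{\boldsymbol{c}}$, Claim~\ref{claim:moduli}, Claim~\ref{claim:estimates1}, and Lemma~\ref{lemma:everywhere}, which together are exactly what Proposition~\ref{proposition:corollary} packages.
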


In fact, we obtain Theorem~\ref{theorem:degenA} as a direct consequence of the quantitative result below. In most cases, this is a stronger version of Corollary~\ref{corollary:degenLocal}. Furthermore, the bounds in our statement are optimal.

\begin{theoremL}
\label{theorem:degenB}
Assume that $d \geq 2$ is an integer, $K$ is an algebraically closed valued field of characteristic $0$ that is either Archimedean or non-Archimedean with residue characteristic $0$ or greater than $d$ and $f \in \Poly_{d}(K)$. If $d \geq 4$, then \[ M_{f}^{(1)} \geq \frac{d -1}{d -2} M_{f} \quad \text{or} \quad M_{f}^{(2)} \geq C_{d} \cdot M_{f} \, \text{,} \quad \text{with} \quad C_{d} = \begin{cases} \frac{2 (d -1)}{d} & \text{if } d \text{ is even}\\ \frac{2 d}{d +1} & \text{if } d \text{ is odd} \end{cases} \, \text{.} \] In addition, $M_{f}^{(1)} \geq M_{f}$ if $d = 2$, and $M_{f}^{(1)} \geq 2 M_{f}$ if $d = 3$.
\end{theoremL}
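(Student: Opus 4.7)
The plan is to normalize $f$ by affine conjugation to be monic and centered, $f(z) = z^d + a_{d-2} z^{d-2} + \cdots + a_0$, since $M_f$ and the $M_f^{(p)}$ are conjugacy invariants. I would then prove both inequalities through Newton polygon / Vieta analyses of the fixed-point polynomial $f(z) - z$ and the period-$2$ polynomial $\Phi_2(z) = (f(f(z)) - z)/(f(z) - z)$, calibrated against Green function estimates relating $M_f$ to the coefficients. The residue characteristic hypothesis enters to ensure that $d$ and the binomial coefficients appearing in $f \circ f$ are invertible in $K$, so that the leading-order behavior is not spoiled by unexpected cancellations.

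For the fixed-point bound, I would bound the largest fixed point $z_1$ from below using the Newton polygon of $f(z) - z$, giving $|z_1|^{d-k} \gtrsim |a_k|$ for each index $k$. Combined with a Green function estimate expressing $M_f$ as a suitable combination of $\log|a_k|^{1/(d-k)}$ (depending on which critical orbit dominates $M_f$), this translates to $|z_1| \gtrsim e^{\gamma M_f}$ for some exponent $\gamma$. Since $d \neq 0$ in $K$, the multiplier $\lambda_1 = f'(z_1)$ is dominated by its leading term $d z_1^{d-1}$, so $|\lambda_1| \gtrsim |z_1|^{d-1}$, yielding a lower bound on $M_f^{(1)}$. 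Identifying the extremal polynomials (those for which the ratio $M_f^{(1)}/M_f$ is as small as possible) pins the sharp constant down to $(d-1)/(d-2)$ when $d \geq 4$. For $d \in \{2, 3\}$, the middle coefficient $a_{d-2}$ is forced to coincide with $a_0$ or $a_1$, ruling out the extremal configuration, and a direct computation produces the sharper ratios $1$ and $2$.

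If the fixed-point bound does not improve beyond $(d-1)/(d-2)\cdot M_f$, then $f$ is structurally close to an extremal family, and I would analyze $\Phi_2$ in this regime. The roots of $\Phi_2$ form period-$2$ cycles $(z, w)$ with multipliers $f'(z)f'(w)$, and a symmetric Newton polygon argument (using the invariance under $z \leftrightarrow w$) produces a lower bound on the product $|zw|$ for the largest cycle, hence on the characteristic exponent $\frac{1}{2}\log|f'(z)f'(w)|$ and thus on $M_f^{(2)}$. The parity-dependent form of $C_d$ reflects the fact that for even $d$, the extremal polynomial can admit a $z \mapsto -z$ symmetry that forces the largest period-$2$ cycle to have a specific structure different from the odd $d$ case, which shifts the sharp exponent from $2(d-1)/d$ to $2d/(d+1)$.

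The main obstacle is establishing the sharp constants $(d-1)/(d-2)$ and $C_d$ rather than merely the correct orders of magnitude. The Newton polygon and Vieta estimates give orders of magnitude up to bounded error; pinning down the optimal exponents requires identifying the extremal families explicitly and performing a delicate coefficient analysis of $\Phi_2$, a polynomial of degree $d(d-1)$ whose coefficients are polynomial expressions of high degree in the $a_k$. The residue characteristic hypothesis ensures that these leading-term analyses remain valid uniformly in the Archimedean and non-Archimedean settings.
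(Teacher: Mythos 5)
The central step of your proposed argument fails. You claim that the largest fixed point $z_1$ satisfies $|z_1| \gtrsim e^{\gamma M_f}$ and then conclude $|\lambda_1| = |f'(z_1)| \gtrsim |z_1|^{d-1}$ because ``the multiplier is dominated by its leading term $d z_1^{d-1}$.'' This last step is false: a fixed point of large modulus need not carry a large multiplier, since the lower-degree terms of $f'$ can cancel the leading term at $z_1$. The paper's own sharpness example (Proposition~\ref{proposition:sharp1}) has a fixed point $\gamma$ with $|\gamma| = \exp(M_f)$, yet every fixed-point multiplier satisfies $\log|\lambda| \leq 0$, so $M_f^{(1)} = 0$ even though $M_f$ is arbitrarily large. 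There $f(z)=f_{\boldsymbol c}(z)$ is constructed so that $0$ and $\gamma$ are both fixed \emph{and} critical of high order, and $f'$ is not close to $dz^{d-1}$ near $\gamma$. Thus the intended chain $M_f \rightarrow$ large fixed point $\rightarrow$ large fixed-point multiplier is broken, and this is precisely the situation the theorem is designed to handle: the whole point of including the period-$2$ spectrum is that the fixed-point spectrum alone can be uniformly bounded while $M_f \to \infty$.

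A related structural gap: your proposal has no mechanism to establish the \emph{dichotomy} ``$M_f^{(1)} \geq \frac{d-1}{d-2}M_f$ \textbf{or} $M_f^{(2)} \geq C_d M_f$.'' You write that ``if the fixed-point bound does not improve beyond $(d-1)/(d-2)M_f$, then $f$ is structurally close to an extremal family,'' but this presupposes the classification you need to prove. The paper obtains the dichotomy from a combinatorial two-islands lemma (Lemmas~\ref{lemma:islandsArch}/\ref{lemma:islandsNonArch}, feeding into Lemmas~\ref{lemma:combinatArch}/\ref{lemma:combinatNonArch}): by a Riemann--Hurwitz count on the components of $\{g_f < M_f\}$, either some component of $\{g_f < M_f/d\}$ maps biholomorphically (bijectively) onto its parent, producing a fixed point, or there is a cross-mapping between two distinct components, producing a genuine period-$2$ cycle. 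The sharp constants then come from Gr\"otzsch's inequality (Archimedean) or ratios of disk radii (non-Archimedean), applied to the nested annuli/disks along the orbit -- not from a coefficient analysis of $f(z)-z$ or $\Phi_f^{(2)}$. Your Newton-polygon calculus would at best control the \emph{moduli} of periodic points, which is not the same thing as controlling multipliers. Finally, your explanation of the parity of $C_d$ via a $z\mapsto -z$ symmetry is not what is happening: $C_d$ is simply $\min_{j} \frac{d-1}{2}\bigl(\frac{1}{j}+\frac{1}{d-j}\bigr)$, the optimal two-part degree split $d = d_1 + d_2$, which lands on $d_1 = d_2 = d/2$ for even $d$ and $\{(d-1)/2,(d+1)/2\}$ for odd $d$; no symmetry is involved.
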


\begin{remark}
Alternatively, one can establish Theorem~\ref{theorem:degenA} by relating bounded multipliers to rescalings for sequences of polynomial maps and by counting some critical points. Suppose that $\left( f_{n} \right)_{n \geq 0}$ is a sequence of elements of $\Poly_{d}(\mathbb{C})$. We say that a sequence $\left( \phi_{n} \right)_{n \geq 0}$ of elements of $\Aff(\mathbb{C})$ is a \emph{rescaling} for $\left( f_{n} \right)_{n \geq 0}$ with period $p \geq 1$ and degree $e \geq 2$ if $\left( \phi_{n} \circ f_{n}^{\circ p} \circ \phi_{n}^{-1} \right)_{n \geq 0}$ converges locally uniformly on $\mathbb{C}$ to some $g \in \Poly_{e}(\mathbb{C})$. We say that two rescalings $\left( \phi_{n} \right)_{n \geq 0}$ and $\left( \psi_{n} \right)_{n \geq 0}$ are \emph{independent} if, for each bounded subset $D$ of $\mathbb{C}$, we have $\phi_{n}^{-1}(D) \cap \psi_{n}^{-1}(D) = \varnothing$ for all sufficiently large $n$. More precisely, to prove Theorem~\ref{theorem:degenA}, one can proceed as follows: Assume that both $\sup\limits_{n \geq 0} M_{f_{n}}^{(1)} < +\infty$ and $\sup\limits_{n \geq 0} M_{f_{n}}^{(2)} < +\infty$. Then, according to the discussion in~\cite[Section~2]{FT2008}, possibly passing to some subsequence, $\left( f_{n} \right)_{n \geq 0}$ has pairwise independent rescalings $\left( \phi_{j, n} \right)_{n \geq 0}$ with period $1$ and respective degrees $d_{j} \geq 2$, with $r \geq 1$ and $j \in \lbrace 1, \dotsc, r \rbrace$, such that $\sum\limits_{j = 1}^{r} d_{j} = d$. If $d \in \lbrace 2, 3 \rbrace$, then $r = 1$, and hence $\left( f_{n} \right)_{n \geq 0}$ does not degenerate in $\mathcal{P}_{d}(\mathbb{C})$. Thus, assume that $d \geq 4$. Using again the arguments of~\cite[Section~2]{FT2008} and passing to a subsequence if necessary, $\left( f_{n} \right)_{n \geq 0}$ has pairwise independent rescalings $\left( \psi_{k, n} \right)_{n \geq 0}$ with period $2$ and respective degrees $e_{k} \geq 2$, with $s \geq 0$ and $k \in \lbrace 1, \dotsc, s \rbrace$, that are independent from all the $\left( \phi_{j, n} \right)_{n \geq 0}$, with $j \in \lbrace 1, \dotsc, r \rbrace$, and such that $\sum\limits_{j = 1}^{r} d_{j}^{2} +\sum\limits_{k = 1}^{s} e_{k} = d^{2}$. Finally, given a sufficiently large bounded subset $D$ of $\mathbb{C}$, define $N \geq 0$ to be the number of critical points for $f_{n}^{\circ 2}$ in $\bigcup\limits_{k = 1}^{s} \psi_{k, n}^{-1}(D)$ for all sufficiently large $n$, counting multiplicities. Then one can show that $N \geq 2 (d -1) (r -1)$ and $N \leq (d +1) (r -1)$, which yields $r = 1$. Thus, the sequence $\left( f_{n} \right)_{n \geq 0}$ does not degenerate in $\mathcal{P}_{d}(\mathbb{C})$. We refer the reader to~\cite{F2024} for a concise and complete proof of Theorem~\ref{theorem:degenA} using analogous arguments, which considers the dynamical system induced by a meromorphic family of rational maps on a certain Berkovich projective line (see~\cite{K2015} for the correspondence between rescalings and periodic points of type~II in this Berkovich space).
\end{remark}

\begin{remark}
While we provide lower bounds on $\max\left\lbrace M_{f}^{(1)}, M_{f}^{(2)} \right\rbrace$ in terms of $M_{f}$, with $K$ an algebraically closed valued field of characteristic $0$ and $f \in \Poly_{d}(K)$, it is not difficult to prove reverse inequalities. In fact, for every $p \geq 1$, one can easily establish an upper bound on $M_{f}^{(p)}$ in terms of $M_{f}$, with $K$ an algebraically closed valued field of characteristic $0$ and $f \in \Poly_{d}(K)$ (see Appendix~\ref{appendix:bounds} for details).
\end{remark}

\begin{remark}
Actually, one can prove that, if $\left( f_{n} \right)_{n \geq 0}$ is any sequence of elements of $\Poly_{d}(\mathbb{C})$ that degenerates in $\mathcal{P}_{d}(\mathbb{C})$, then $\lim\limits_{n \rightarrow +\infty} M_{f_{n}}^{(2)} = +\infty$. Furthermore, under the hypotheses of Theorem~\ref{theorem:degenB}, one can show that $M_{f}^{(2)} \geq M_{f}$ if $M_{f} > 0$.
\end{remark}

\subsection{Determination of generic conjugacy classes of polynomial maps by their multipliers at their small cycles}

It is natural to ask how many polynomial maps have the same multipliers, up to conjugation.

Fujimura showed in~\cite{F2007} that the induced morphism $\Mult_{d}^{(1)} \colon \mathcal{P}_{d} \rightarrow \Sigma_{d}^{(1)}$ has degree $(d -2)!$ (see also~\cite{S2017} and~\cite{S2023}). Thus, generically, there are exactly $(d -2)!$ elements of $\mathcal{P}_{d}(\mathbb{C})$ that have the same multiset of multipliers at their fixed points. Here, we prove that a generic element of $\mathcal{P}_{d}(\mathbb{C})$ is uniquely determined by its multipliers at its cycles with periods $1$ and $2$.

\begin{theoremL}
\label{theorem:unique}
Assume that $d \geq 2$ is an integer. Then there is a nonempty Zariski-open subset $U$ of $\mathcal{P}_{d}$ such that each $[f] \in U(\mathbb{C})$ is the unique $[g] \in \mathcal{P}_{d}(\mathbb{C})$ such that $\Lambda_{g}^{(1)} = \Lambda_{f}^{(1)}$ and $\Lambda_{g}^{(2)} = \Lambda_{f}^{(2)}$. Moreover, if $d \in \lbrace 2, 3 \rbrace$, then each $[f] \in \mathcal{P}_{d}(\mathbb{C})$ is the unique $[g] \in \mathcal{P}_{d}(\mathbb{C})$ such that $\Lambda_{g}^{(1)} = \Lambda_{f}^{(1)}$.
\end{theoremL}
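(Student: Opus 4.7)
The plan is to combine Theorem~\ref{theorem:degenA} with generic injectivity of $\Mult_{d}^{(2)}$. By Theorem~\ref{theorem:degenA}, the morphism $\Mult_{d}^{(2)} \colon \mathcal{P}_{d} \rightarrow \Sigma_{d}^{(2)}$ is finite and dominant onto its image. For such a morphism of irreducible varieties, the size of the geometric fiber is upper semi-continuous on the source, so the locus $U \subseteq \mathcal{P}_{d}$ where this fiber consists of a single point is Zariski-open. It therefore suffices to exhibit a single $[f_{0}] \in \mathcal{P}_{d}(\mathbb{C})$ whose preimage under $\Mult_{d}^{(2)}$ is exactly $\lbrace [f_{0}] \rbrace$; the conclusion then follows by unpacking definitions.

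For $d \in \lbrace 2, 3 \rbrace$, I would prove the stronger statement that $\Mult_{d}^{(1)}$ is injective on all of $\mathcal{P}_{d}(\mathbb{C})$ by direct computation in standard normal forms. In degree $2$, each class is represented by $f(z) = z^{2} + c$, and the product $4 c$ of the two fixed-point multipliers $1 \pm \sqrt{1 -4 c}$ recovers $c$. In degree $3$, each class is represented by some $f(z) = z^{3} + a z + b$ with residual ambiguity $(a, b) \sim (a, -b)$; one then checks that the first elementary symmetric function of the three multipliers $3 z_{i}^{2} + a$ is a polynomial in $a$ alone, and, once $a$ is known, the product of the multipliers determines $b^{2}$.

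For $d \geq 4$, Fujimura's theorem~\cite{F2007} gives $\deg \Mult_{d}^{(1)} = (d -2)!$, so every geometric fiber of $\Mult_{d}^{(2)}$ has at most $(d -2)!$ points. To construct $[f_{0}]$ with singleton fiber, I would parametrize a generic fiber of $\Mult_{d}^{(1)}$ using the combinatorial description from~\cite{S2017, S2023} and show that the elementary symmetric functions of the period-$2$ multipliers separate its $(d -2)!$ elements. A convenient variant is to specialize $[f_{0}]$ to the end of a one-parameter degeneration along which a fixed-point multiplier approaches a resonant value: this collapses several of the $(d -2)!$ sheets of $\Mult_{d}^{(1)}$ and allows one to reduce the problem, by recursion on $d$, to lower-degree data where the fiber structure is already under control.

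The main obstacle is ruling out accidental coincidences of the period-$2$ multiplier multisets across distinct sheets of $\Mult_{d}^{(1)}$. These sheets are not related by any apparent group action, and the period-$2$ cycles are the zeros of the polynomial $\bigl( f^{\circ 2}(z) -z \bigr) / \bigl( f(z) -z \bigr)$ of degree $d^{2} -d$, so the symmetric functions of their multipliers do not admit any transparent expression in terms of the coefficients of $f$. Resolving this likely requires either an explicit algebraic model for the fibers of $\Mult_{d}^{(1)}$ in which the period-$2$ symmetric multiplier functions take manifestly distinct values, or a careful local computation at a single, cleverly chosen $[f_{0}]$ that makes the upper semi-continuity argument effective.
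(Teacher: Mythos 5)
Your overall architecture is aligned with the paper's --- use Fujimura's degree count for $\Mult_{d}^{(1)}$ and then show that the period-$2$ multipliers separate the $(d-2)!$ sheets --- but the proposal has one false step and one genuine gap at the crux.

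The false step is the claim that for a finite dominant morphism of irreducible varieties the locus in the \emph{source} where the fiber is a singleton is Zariski-open. This is not true: for $\pi(x) = x^{2}$ on $\mathbb{A}^{1}$ the singleton-fiber locus is $\{0\}$, which is closed. What is true, and what the paper actually uses, is that the locus in the \emph{target} where the fiber has exactly $D$ points (with $D$ the generic degree) is nonempty and Zariski-open. So exhibiting a single $[f_{0}]$ with singleton fiber does \emph{not} force $D=1$; you must exhibit such a point lying in the generic-degree locus. The paper handles this by producing a nonempty \emph{Euclidean}-open subset $V'$ of $\mathcal{P}_{d}(\mathbb{C})$ on which the fiber is a singleton and then intersecting it with the (Zariski-dense, hence Euclidean-dense) locus of generic-degree points; only then can one conclude $D = 1$. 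A single point is not enough unless you separately verify it is unramified.

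The genuine gap is that you never actually construct the required point or open set. You correctly identify the obstacle --- "ruling out accidental coincidences of the period-$2$ multiplier multisets across distinct sheets of $\Mult_{d}^{(1)}$" --- and correctly guess that a local computation at a cleverly chosen basepoint is the way to resolve it, but you leave that as a placeholder. The paper's resolution is to take the basepoint $f_{0}(z) = z^{d}$ and perturb. Near $f_{0}$ the implicit function theorem gives well-defined holomorphic branches of the $d-1$ nonzero fixed-point multipliers; Gorbovickis's formula yields the explicit Jacobian $A_{1} = \bigl((k-d)\alpha^{j(k-1)}\bigr)_{j,k}$ (with $\alpha = e^{2\pi i/(d-1)}$), which is invertible, so the fixed-point multipliers are local coordinates and the $\mathfrak{S}_{d-1}$-action on them induces a local action on the parameter space; Fujimura's theorem then guarantees this local orbit accounts for \emph{all} monic centered polynomials sharing the same fixed-point spectrum. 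The decisive step --- which has no counterpart in your proposal --- is to compute the Jacobian $A_{2}$ of the period-$2$ multiplier map at $f_{0}$ (again via Gorbovickis), set $A = A_{2}A_{1}^{-1}$, and show by an explicit trigonometric computation (Claim~\ref{claim:distinct}) that the entries of the first column of $A$ are pairwise distinct. This linear-algebra fact, together with the equivariance $\sigma_{0}^{-k}\times_{\C} A = \tau_{0}^{k}\times_{\R} A$ coming from the $\mu_{d-1}$-conjugation symmetry, pins down exactly which column permutations of $A$ can be matched by a row permutation, namely the cyclic group $\langle \sigma_{0}\rangle$ corresponding to conjugation. Your proposal contains neither this choice of basepoint nor the Jacobian computation, and the alternative routes you sketch (recursion on $d$ via resonance collapse, or an explicit algebraic model of the $\Mult_{d}^{(1)}$-fibers following Sugiyama) are not developed to the point where one can tell whether they work. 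As written, the proposal identifies the right question but does not answer it.

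For $d \in \{2,3\}$ your direct normal-form computation is essentially what the paper does in its Section~\ref{section:prelim} examples and is fine.
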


In other words, Theorem~\ref{theorem:unique} states that the morphism $\Mult_{d}^{(2)}$ is birational onto its image $\Sigma_{d}^{(2)}$. This proves a conjecture made by Hutz and Tepper in~\cite{HT2013}, who had checked it when $d \in \lbrace 2, 3, 4, 5 \rbrace$. This also strengthens a recent result obtained by Ji and Xie in~\cite{JX2024} asserting that $\Mult_{d}^{(P)}$ is birational onto its image $\Sigma_{d}^{(P)}$ for some integer $P \geq 1$.

\begin{remark}
In general, there may exist distinct elements $[f], [g] \in \mathcal{P}_{d}(\mathbb{C})$ such that $\Lambda_{f}^{(1)} = \Lambda_{g}^{(1)}$ and $\Lambda_{f}^{(2)} = \Lambda_{g}^{(2)}$. For $d = 4$, we can describe precisely when this occurs (see Appendix~\ref{appendix:isospec}).
\end{remark}

\subsection{Known results in the case of rational maps}

The algebraic group $\operatorname{PSL}_{2}$ of M\"{o}bius transformations acts on the space $\operatorname{Rat}_{d}$ of rational maps of degree $d$ by conjugation. The quotient $\mathcal{M}_{d}$ forms an affine variety over $\mathbb{Q}$ of dimension $2 d -2$ called the \emph{moduli space of rational maps} of degree $d$. As in the polynomial setting, for each $P \geq 1$, the elementary symmetric functions of the multipliers at the cycles with period $p \in \lbrace 1, \dotsc, P \rbrace$ define a morphism $\widehat{\Mult}_{d}^{(P)} \colon \mathcal{M}_{d} \rightarrow \prod\limits_{p = 1}^{P} \mathbb{A}^{\widehat{N}_{d}^{(p)}}$.

Milnor showed in~\cite{M1993} that the morphism $\widehat{\Mult}_{2}^{(1)}$ given by the multipliers of quadratic rational maps at the fixed points is an isomorphism onto its image $\widehat{\Sigma}_{2}^{(1)}$.

Unlike the case of polynomial maps, if $d \geq 4$, then the morphism $\widehat{\Mult}_{d}^{(P)}$ is not proper for any $P \geq 1$. For example, if $d$ is a perfect square, taking flexible Latt\`{e}s maps, one can find degenerating sequences in $\mathcal{M}_{d}(\mathbb{C})$ whose elements all have the same multisets of multipliers for each period. As another example, first examined by McMullen in~\cite{MM1988}, the sequence $\left( f_{n} \right)_{n \geq 1}$ of elements of $\operatorname{Rat}_{5}(\mathbb{C})$ defined by $f_{n}(z) = z^{2} +\frac{1}{n z^{3}}$ degenerates in $\mathcal{M}_{5}(\mathbb{C})$ and has uniformly bounded multipliers for each period. We refer to~\cite{L2022} for a description of the hyperbolic components of $\mathcal{M}_{d}(\mathbb{C})$ containing degenerating sequences with uniformly bounded multipliers for each period. We also refer to~\cite{F2024} and~\cite{G2024} for very recent results about the behavior of multipliers under degeneration in $\mathcal{M}_{d}(\mathbb{C})$.

Now, denote by $\mathcal{L}_{d} \subseteq \mathcal{M}_{d}$ the locus of conjugacy classes of flexible Latt\`{e}s maps of degree $d$. Note that $\mathcal{L}_{d}$ is empty if $d$ is not a perfect square, $\mathcal{L}_{d}$ is an irreducible curve if $d$ is an even square and $\mathcal{L}_{d}$ is the union of two irreducible curves if $d$ is an odd square (see~\cite{M2006}). McMullen showed that, aside from flexible Latt\`{e}s maps, any conjugacy class of rational maps of degree $d$ is determined up to finitely many choices by its multiplier spectrum. Thus, McMullen established the following:

\begin{theorem}[{\cite[Corollary~2.3]{MM1987}}]
\label{theorem:quasifinite}
There exists an integer $P \geq 1$ such that the restriction of $\widehat{\Mult}_{d}^{(P)}$ to $\mathcal{M}_{d} \setminus \mathcal{L}_{d}$ is a quasifinite morphism.
\end{theorem}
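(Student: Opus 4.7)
The plan combines a Noetherian stabilization argument with McMullen's rigidity theorem for multiplier spectra. For each $P \geq 1$, consider the closed subscheme $\Delta_{P} \subseteq \mathcal{M}_{d} \times_{\mathbb{Q}} \mathcal{M}_{d}$ cut out by the equations $\widehat{\Mult}_{d}^{(P)} \circ \pi_{1} = \widehat{\Mult}_{d}^{(P)} \circ \pi_{2}$, which is the preimage of the diagonal under $\widehat{\Mult}_{d}^{(P)} \times \widehat{\Mult}_{d}^{(P)}$. The sequence $\left( \Delta_{P} \right)_{P \geq 1}$ is decreasing, so by Noetherianity of $\mathcal{M}_{d} \times \mathcal{M}_{d}$ there exists $P_{0} \geq 1$ such that $\Delta_{P} = \Delta_{P_{0}}$ for all $P \geq P_{0}$. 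Call this common subscheme $\Delta_{\infty}$; its fiber above any $[f]$ under the first projection $\pi_{1}$ is exactly the set of conjugacy classes $[g]$ with the same full multiplier spectrum as $[f]$. A failure of quasifiniteness for $\widehat{\Mult}_{d}^{(P_{0})}$ on $\mathcal{M}_{d} \setminus \mathcal{L}_{d}$ would therefore produce an irreducible positive-dimensional subvariety $V \subseteq \mathcal{M}_{d} \setminus \mathcal{L}_{d}$ along which the entire multiplier spectrum is constant. Thus it suffices to prove the following rigidity statement: any such $V$ is contained in $\mathcal{L}_{d}$.

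To prove rigidity I would invoke the core complex-analytic input. Working over $\mathbb{C}$, $V$ carries a holomorphic arc $t \mapsto f_{t}$ of pairwise non-conjugate rational maps all sharing the same multiplier spectrum. The equality of multipliers at every cycle, combined with the equidistribution of repelling periodic orbits toward the measure of maximal entropy, yields a non-M\"{o}bius holomorphic motion of the Julia sets $J\left( f_{t} \right)$ conjugating the dynamics. Sullivan's no-wandering-domains theorem and the classification of Fatou components then let one extend this motion to a holomorphic motion of the Riemann sphere, and the measurable Riemann mapping theorem converts it into a one-parameter family of quasiconformal conjugacies. Its infinitesimal generator at $t = 0$ is an $f_{0}$-invariant Beltrami coefficient, equivalently an invariant measurable line field on a subset of $J\left( f_{0} \right)$ of positive measure. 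By the classical rigidity result, in the form established by McMullen in~\cite{MM1987} and building on work of Zdunik, the only rational maps admitting such invariant line fields are flexible Latt\`{e}s maps, forcing $V \subseteq \mathcal{L}_{d}$ and contradicting the choice of $V$.

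The main obstacle is constructing a non-trivial $f_{0}$-invariant line field out of the mere coincidence of multiplier spectra: the algebraic and Noetherian reductions are essentially formal, whereas the chain ``equal multipliers at every period $\Rightarrow$ holomorphic motion of the Julia set $\Rightarrow$ quasiconformal conjugacy $\Rightarrow$ invariant line field'' constitutes the substantive dynamical content. Carefully setting up the quasiconformal deformation theory, and in particular ruling out the case in which the holomorphic motion is globally M\"{o}bius (which would collapse the family into a single conjugacy class rather than producing a positive-dimensional $V$), is the step where the proof requires its deepest complex dynamical input.
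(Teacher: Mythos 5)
The paper does not prove this result; it is quoted verbatim from McMullen's 1987 paper and used only as background for comparison, so there is no ``paper's own proof'' to compare against. Your proposal does capture the global shape of McMullen's argument: a Noetherian stabilization to reduce the infinitely many multiplier conditions to finitely many, and then a complex-analytic rigidity statement for positive-dimensional isospectral families. Both pieces are correctly identified, and the reduction in your first paragraph is clean and essentially what one finds in McMullen and in later treatments.

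The second paragraph, however, has several inaccuracies and one serious gap. First, the holomorphic motion of the Julia set does not come from equidistribution of repelling orbits toward the maximal-entropy measure; it comes from the implicit function theorem (repelling periodic points persist holomorphically because their multipliers are bounded away from $1$) together with the Ma\~{n}\'{e}--Sad--Sullivan $\lambda$-lemma, and the extension to the sphere is via S{\l}odkowski (or Bers--Royden for a one-parameter family), not via Sullivan's no-wandering-domains theorem. Second, you silently assert that the resulting invariant Beltrami coefficient is supported on $J\left( f_{0} \right)$; a priori it lives on the whole sphere, and showing that the Fatou part of the deformation is trivial (using the constraints that constant multipliers impose on Siegel disks, attracting basins, parabolic basins and Herman rings) is a substantive step that must be carried out. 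Third, and most importantly, the conclusion ``an $f_{0}$-invariant line field on a positive-measure subset of $J\left( f_{0} \right)$ forces $f_{0}$ to be a flexible Latt\`{e}s map'' is precisely the No Invariant Line Fields conjecture, which is a famous open problem and is certainly not established in~\cite{MM1987} or in Zdunik's work (which in any case postdates McMullen's paper). McMullen's actual Theorem~2.2 and Corollary~2.3 are worded and proved more carefully to avoid assuming this, so if you want to fill in this step you need to consult his argument rather than invoke NILF as a known theorem. As written, your proof of the rigidity half contains a genuine gap.
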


In~\cite{GOV2020}, Gauthier, Okuyama and Vigny exhibited some integer $P \geq 1$ as in Theorem~\ref{theorem:quasifinite} that can be explicitly computed from quantities related to bifurcations. In addition, Ji and Xie established in~\cite{JX2023} that, aside from flexible Latt\`{e}s maps, any conjugacy class of rational maps of degree $d$ is already determined up to only finitely many choices by its multisets of moduli of multipliers for each period.

Very recently, Ji and Xie also showed that a generic conjugacy class of rational maps of degree $d$ is uniquely determined by its multiplier spectrum.

\begin{theorem}[{\cite[Theorem~1.3]{JX2024}}]
There exists an integer $P \geq 1$ such that the morphism $\widehat{\Mult}_{d}^{(P)}$ is birational onto its scheme-theoretic image $\widehat{\Sigma}_{d}^{(P)}$.
\end{theorem}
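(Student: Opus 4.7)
The plan is to combine McMullen's quasi-finiteness (Theorem~\ref{theorem:quasifinite}) with an analytic rigidity argument via Noetherian descent. Since $\widehat{\Mult}_{d}^{(P)}$ is a morphism of irreducible affine varieties, birationality onto the scheme-theoretic image is equivalent to the existence of a Zariski-dense open subset of $\mathcal{M}_{d}$ over which the morphism is injective. Theorem~\ref{theorem:quasifinite} already yields some $P_{0}$ for which the generic fiber on $\mathcal{M}_{d} \setminus \mathcal{L}_{d}$ is finite, so what remains is to cut that finite fiber down to a single point by enlarging $P$.

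I would organize this through the following chain. Consider the decreasing sequence of closed incidence subvarieties
\[ Z^{(P)} = \left\lbrace ([f], [g]) \in \left( \mathcal{M}_{d} \setminus \mathcal{L}_{d} \right)^{2} : \widehat{\Lambda}_{f}^{(p)} = \widehat{\Lambda}_{g}^{(p)} \text{ for all } p \leq P \right\rbrace \]
indexed by $P \geq P_{0}$, each containing the diagonal $\Delta$. By Theorem~\ref{theorem:quasifinite}, the first projection $Z^{(P)} \to \mathcal{M}_{d} \setminus \mathcal{L}_{d}$ has generically finite fibers, so each $Z^{(P)}$ has only finitely many irreducible components passing through a generic point of $\Delta$. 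Noetherianity forces the chain $\left\lbrace Z^{(P)} \right\rbrace_{P}$ to stabilize at some $Z^{(\infty)}$, and the theorem becomes equivalent to the assertion that the germ of $Z^{(\infty)}$ at a generic point of $\Delta$ coincides with $\Delta$ itself.

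The heart of the matter is this last equality: a generic $[f] \in \mathcal{M}_{d}(\mathbb{C}) \setminus \mathcal{L}_{d}(\mathbb{C})$ cannot admit a distinct $[g]$ with identical multiplier spectra at every period. I would establish this analytically. Selecting an irreducible component $W$ of $Z^{(\infty)}$ other than $\Delta$ and a holomorphic family $t \mapsto ([f_{t}], [g_{t}])$ tracing $W$, matching repelling periodic points by their multipliers produces a holomorphic multivalued correspondence between the Julia sets of $f_{t}$ and $g_{t}$. Equidistribution of repelling cycles toward the measure of maximal entropy (Brolin-Lyubich) should promote this correspondence to a measurable semi-conjugacy between the maximal-entropy measures; outside the flexible Latt\`{e}s locus, rigidity of this measure, in the spirit of Zdunik and Levin-Przytycki, would then force the semi-conjugacy to arise from a M\"{o}bius transformation, contradicting $[f_{t}] \neq [g_{t}]$. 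The principal obstacle is precisely this analytic rigidity step, which is the deep technical content of~\cite{JX2024}; once it is in place, the Noetherian descent to a single finite $P$ is essentially formal.
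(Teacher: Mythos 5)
The paper does not prove this theorem; it is stated purely as a citation of Ji and Xie~\cite{JX2024}, so there is no internal argument in the paper to compare against. Your formal reduction is nonetheless correct and standard: the incidence loci $Z^{(P)}$ form a decreasing chain of closed subvarieties, Noetherianity forces stabilization at some $Z^{(\infty)}$, and birationality of $\widehat{\Mult}_{d}^{(P)}$ at a finite truncation is equivalent to $\Delta$ being the unique irreducible component of $Z^{(\infty)}$ that dominates $\mathcal{M}_{d} \setminus \mathcal{L}_{d}$ under the first projection. That part of your plan is sound.

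The analytic rigidity step that you correctly flag as the heart of the matter is, however, sketched in a way that would not go through as written. Matching repelling cycles of $f_{t}$ and $g_{t}$ ``by multiplier'' is not a well-defined operation once distinct cycles share a multiplier; at best it yields a combinatorial pairing of finite point sets, not a holomorphic correspondence between Julia sets; and equidistribution of repelling cycles toward the maximal-entropy measure of a single map does not, on its own, upgrade such a pairing into a measurable semi-conjugacy between the two dynamical systems. The rigidity theorems you invoke (Zdunik on Hausdorff dimension of the maximal-entropy measure; Levin--Przytycki on maps sharing that measure or a Julia set) address different hypotheses and would not close the argument even if the semi-conjugacy were somehow available. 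So while the overall shape of your plan --- a formal Noetherian descent followed by a deep dynamical rigidity statement for the full multiplier spectrum --- plausibly parallels the structure of Ji and Xie's approach, the key lemma as you have sketched it contains genuine gaps rather than merely an unfinished technicality, and one must consult~\cite{JX2024} for the actual argument.
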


Moreover, Ji and Xie conjectured that Theorem~\ref{theorem:unique} also holds for rational maps or, equivalently, that the morphism $\widehat{\Mult}_{d}^{(2)}$ is birational onto its image $\widehat{\Sigma}_{d}^{(2)}$. This conjecture was proved for cubic rational maps by Gotou in~\cite{G2023}.

\subsection{Outline of the paper}

For the reader's convenience, the sections are mostly independent from each other.

In Section~\ref{section:prelim}, we show that the moduli space $\mathcal{P}_{d}$ exists as a geometric quotient, we provide a precise definition of the multiplier spectrum morphisms $\Mult_{d}^{(P)}$, with $P \geq 1$, and we examine the cases of quadratic and cubic polynomials. In Section~\ref{section:degenArch}, we prove Theorem~\ref{theorem:degenB} in the complex setting and we derive Corollaries~\ref{corollary:degenLocal} and~\ref{corollary:degenGlobal} from Theorem~\ref{theorem:degenA}. In Section~\ref{section:unique}, we prove Theorem~\ref{theorem:unique}. In Appendix~\ref{appendix:bounds}, we obtain a few additional estimates on absolute values of multipliers of polynomial maps. In Appendix~\ref{appendix:isospec}, we discuss isospectral polynomial maps and we describe the pairs of quartic polynomials that have the same multipliers for periods $1$ and $2$.

\begin{acknowledgments}
The author would like to thank Xavier Buff for helpful discussions, and in particular for stating a two-islands lemma. The author would like to thank Charles Favre for inquiring about Theorem~\ref{theorem:degenA}, which was the initial motivation for this paper, and for suggesting to investigate the non-Archimedean setting. The author would also like to thank Igors Gorbovickis for helpful discussions.
\end{acknowledgments}

\section{Moduli spaces of polynomial maps and multiplier spectrum morphisms}
\label{section:prelim}

Throughout this section, we fix an integer $d \geq 2$.

\subsection{Moduli spaces of polynomial maps}

First, let us present the space $\mathcal{P}_{d}$ of polynomial maps of degree $d$ modulo conjugation by an affine transformation, and specifically let us describe its algebraic and complex analytic structures. Here, we shall elaborate on the similar discussion in~\cite[Section~2.1]{FG2022}.

We shall start by showing that this space $\mathcal{P}_{d}$ exists as a geometric quotient and affine variety over $\mathbb{Q}$. The analogous statement about the space $\mathcal{M}_{d}$ of all rational maps of degree $d$ modulo conjugation by a M\"{o}bius transformation was obtained by Silverman in~\cite{S1998} by using the geometric invariant theory developed by Mumford in~\cite{MFK1994}.

As we work over a non-algebraically closed field, we shall first briefly recall some material from algebraic geometry. For more details, we refer the reader to~\cite{M2017}, \cite{MFK1994} and~\cite{P2017}.

Here, we call \emph{variety} over $\mathbb{Q}$ any scheme $X$ that is geometrically reduced, separated and of finite type over $\mathbb{Q}$. For each variety $X$ over $\mathbb{Q}$ and each commutative $\mathbb{Q}$\nobreakdash-algebra $R$, we denote by $X(R)$ the set of $R$\nobreakdash-valued points of $X$. For each $n \geq 1$, we denote by $\mathbb{A}^{n}$ the affine space of dimension $n$ over $\mathbb{Q}$.

Here, we call \emph{algebraic group} over $\mathbb{Q}$ any variety $G$ over $\mathbb{Q}$ together with a point $e \in G(\mathbb{Q})$ and morphisms $m \colon G \times G \rightarrow G$ and $\operatorname{inv} \colon G \rightarrow G$ of varieties over $\mathbb{Q}$ that satisfy the usual group axioms, where $e$, $m$ and $\operatorname{inv}$ represent the identity element, the group law and the inversion, respectively. Given any algebraic group $G$ over $\mathbb{Q}$ and any commutative $\mathbb{Q}$\nobreakdash-algebra $R$, the set $G(R)$ has a natural group structure.

Given an algebraic group $G$ over $\mathbb{Q}$ and a variety $X$ over $\mathbb{Q}$, we call \emph{action} of $G$ on $X$ any morphism $\theta \colon G \times X \rightarrow X$ of varieties over $\mathbb{Q}$ such that the induced map $\theta \colon G(R) \times X(R) \rightarrow X(R)$ is a group action for each commutative $\mathbb{Q}$\nobreakdash-algebra $R$.

Suppose that $G$ is an algebraic group over $\mathbb{Q}$, $X$ is a variety over $\mathbb{Q}$ and $G$ acts on $X$ via a morphism $\theta \colon G \times X \rightarrow X$. For each commutative $\mathbb{Q}$\nobreakdash-algebra $R$, denote here by $(g, x) \mapsto g \centerdot x$ the induced group action $\theta \colon G(R) \times X(R) \rightarrow X(R)$. Given a variety $Z$ over $\mathbb{Q}$, we say that a morphism $\Psi \colon X \rightarrow Z$ is \emph{invariant} under the action $\theta$ if, for every commutative $\mathbb{Q}$\nobreakdash-algebra $R$, we have $\Psi(g \centerdot x) = \Psi(x)$ for all $g \in G(R)$ and all $x \in X(R)$. In fact, given an algebraically closed field $K$ of characteristic $0$, any morphism $\Psi \colon X \rightarrow Z$, with $Z$ a variety over $\mathbb{Q}$, is invariant under $\theta$ if and only if $\Psi(g \centerdot x) = \Psi(x)$ for all $g \in G(K)$ and all $x \in X(K)$. If $X$ is affine, we denote by $\mathbb{Q}[X]^{G}$ the commutative $\mathbb{Q}$\nobreakdash-algebra of all regular functions on $X$ that are invariant under $\theta$ when viewed as morphisms from $X$ to $\mathbb{A}^{1}$.

Suppose again that some algebraic group $G$ over $\mathbb{Q}$ acts on a variety $X$ over $\mathbb{Q}$. Here, we call \emph{geometric quotient} of $X$ by $G$ any variety $X/G$ over $\mathbb{Q}$ together with a morphism $\pi \colon X \rightarrow X/G$ that satisfies the following conditions:
\begin{enumerate}
\item for any algebraically closed field $K$ of characteristic $0$, we have \[ (X/G)(K) = X(K)/G(K) \, \text{,} \] in the sense that the induced map $\pi \colon X(K) \rightarrow (X/G)(K)$ is surjective and its fibers are precisely the orbits $\left\lbrace g \centerdot x : g \in G(K) \right\rbrace$, with $x \in X(K)$;
\item the space $X/G$ has the quotient topology: any subset $U$ of $X/G$ is open if and only if $\pi^{-1}(U)$ is an open subset of $X$;
\item for every open subset $U$ of $X/G$, each regular function $\psi$ on $\pi^{-1}(U)$ that is invariant under the induced action of $G$ on $\pi^{-1}(U)$ factors as $\psi = \overline{\psi} \circ \pi$, with $\overline{\psi}$ a regular function on $U$.
\end{enumerate}
If a variety $X/G$ over $\mathbb{Q}$ with a morphism $\pi \colon X \rightarrow X/G$ is a geometric quotient of $X$ by $G$, then it is also a \emph{categorical quotient}: each invariant morphism $\Psi \colon X \rightarrow Z$, with $Z$ a variety over $\mathbb{Q}$, factors as $\Psi = \overline{\Psi} \circ \pi$ in a unique way, with $\overline{\Psi} \colon X/G \rightarrow Z$ a morphism. In particular, a geometric quotient of $X$ by $G$ (if it exists) is unique, up to isomorphism.

We shall now state general results about existence of geometric quotients. First, we have the well-known result below about geometric quotients of affine varieties by finite algebraic groups. We omit here the proof and refer to~\cite[Expos\'{e}~V, Th\'{e}or\`{e}me~4.1]{SGA1970} for a more general statement.

\begin{lemma}
\label{lemma:quotient1}
Suppose that $G$ is some finite algebraic group over $\mathbb{Q}$ that acts on an affine variety $X$ over $\mathbb{Q}$. Then $\mathbb{Q}[X]^{G}$ is a finitely generated commutative $\mathbb{Q}$\nobreakdash-algebra. Moreover, the affine variety $X/G$ over $\mathbb{Q}$ such that $\mathbb{Q}[X/G] = \mathbb{Q}[X]^{G}$ together with the morphism $\pi \colon X \rightarrow X/G$ induced by the inclusion $\mathbb{Q}[X]^{G} \subseteq \mathbb{Q}[X]$ is a geometric quotient of $X$ by $G$. Furthermore, $\pi$ is a finite morphism.
\end{lemma}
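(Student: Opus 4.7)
The plan is to adapt the classical proof of the Hilbert--Noether theorem on invariants of finite groups, with care taken because we work over the non-algebraically closed field $\mathbb{Q}$. Since $\mathbb{Q}$ has characteristic $0$, the finite algebraic group $G$ is \'{e}tale, so its action on $X$ is completely determined by the action of the finite set $G\left( \overline{\mathbb{Q}} \right)$ on the geometric points $X\left( \overline{\mathbb{Q}} \right)$, or equivalently by the induced action on $\mathbb{Q}[X] \otimes_{\mathbb{Q}} \overline{\mathbb{Q}}$, compatibly with Galois descent. I will write $n$ for the order of the finite group $G\left( \overline{\mathbb{Q}} \right)$.

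First I would establish integrality of $\mathbb{Q}[X]$ over $\mathbb{Q}[X]^{G}$: for any $f \in \mathbb{Q}[X]$, the polynomial $\prod_{g \in G(\overline{\mathbb{Q}})} (T -g \centerdot f)$ is monic, has $f$ as a root, and has coefficients lying in $\left( \mathbb{Q}[X] \otimes_{\mathbb{Q}} \overline{\mathbb{Q}} \right)^{G(\overline{\mathbb{Q}})}$; Galois descent (using that $G$ is defined over $\mathbb{Q}$) then places these coefficients in $\mathbb{Q}[X]^{G}$. Finite generation of $\mathbb{Q}[X]^{G}$ would follow from a standard Artin--Tate style argument: choose generators $f_{1}, \dotsc, f_{N}$ of the finitely generated $\mathbb{Q}$\nobreakdash-algebra $\mathbb{Q}[X]$, let $A \subseteq \mathbb{Q}[X]^{G}$ be the $\mathbb{Q}$\nobreakdash-subalgebra generated by the coefficients of the monic polynomials witnessing integrality of the $f_{i}$, and observe that $\mathbb{Q}[X]$ is a finitely generated $A$\nobreakdash-module. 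Since $A$ is Noetherian, its submodule $\mathbb{Q}[X]^{G}$ is a finitely generated $A$\nobreakdash-module and therefore a finitely generated $\mathbb{Q}$\nobreakdash-algebra; the same fact shows $\pi \colon X \rightarrow X/G$ is a finite morphism.

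Next I would verify the three properties of the geometric quotient. Surjectivity of $\pi \colon X(K) \rightarrow (X/G)(K)$ on $K$\nobreakdash-points for any algebraically closed field $K$ of characteristic $0$ follows from the going-up theorem applied to the integral extension $\mathbb{Q}[X]^{G} \otimes_{\mathbb{Q}} K \subseteq \mathbb{Q}[X] \otimes_{\mathbb{Q}} K$, together with the identification of maximal ideals with $K$\nobreakdash-points. To see that fibers are precisely the $G(K)$\nobreakdash-orbits, I argue by contradiction: given $x, y \in X(K)$ lying over the same point of $(X/G)(K)$ but in distinct orbits, I use the Chinese Remainder Theorem on the coordinate ring of $X_{K}$ to produce a regular function that vanishes on the orbit of $x$ but takes the value $1$ at $y$, then symmetrize by averaging over $G(K)$---division by $n$ being legitimate in characteristic $0$---to obtain an invariant function that separates the images of $x$ and $y$, contradicting that they coincide in $(X/G)(K)$. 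The quotient-topology property follows from $\pi$ being a closed map (a consequence of finiteness) combined with the $G$\nobreakdash-invariance of closed fibers. The universal property for invariant regular functions on preimages $\pi^{-1}(U)$ is essentially tautological once one checks that invariants commute with the localization defining $U \subseteq X/G$.

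The main obstacle will be ensuring that the arguments descend cleanly from $\overline{\mathbb{Q}}$ to $\mathbb{Q}$: the action of $G$ over $\mathbb{Q}$ does not realize the full permutation structure of $G\left( \overline{\mathbb{Q}} \right)$ on $X(\mathbb{Q})$\nobreakdash-points, so all key reductions---integrality of $\mathbb{Q}[X]$ over $\mathbb{Q}[X]^{G}$, surjectivity on geometric points, and separation of distinct orbits---must first be carried out over an algebraically closed field and then shown to be compatible with Galois descent. The crucial input making this descent work is that taking $G$\nobreakdash-invariants for an \'{e}tale group scheme in characteristic $0$ commutes with the faithfully flat base change from $\mathbb{Q}$ to $\overline{\mathbb{Q}}$, i.e.\ $\left( \mathbb{Q}[X] \otimes_{\mathbb{Q}} \overline{\mathbb{Q}} \right)^{G(\overline{\mathbb{Q}})} = \mathbb{Q}[X]^{G} \otimes_{\mathbb{Q}} \overline{\mathbb{Q}}$.
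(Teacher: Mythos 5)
Your argument is correct and takes a genuinely different route from the paper. The paper does not prove Lemma~\ref{lemma:quotient1} directly; it cites [SGA70, Expos\'{e}~V, Th\'{e}or\`{e}me~4.1], a general theorem on quotients of affine schemes by finite flat group schemes (stated for groupoids and valid in all characteristics, where $G$ need not be \'{e}tale and the argument goes through the Hopf-algebra structure and faithful flatness). You instead give a self-contained Hilbert--Noether style proof, exploiting the fact that in characteristic $0$ any finite algebraic group is automatically \'{e}tale, so the action is controlled by the finite abstract group $G(\overline{\mathbb{Q}})$ together with a compatible Galois action. This buys you a concrete, elementary argument at the cost of losing the generality of SGA3, and you are right that the delicate point is the commutation of invariants with base change, i.e.\ $\left( \mathbb{Q}[X] \otimes_{\mathbb{Q}} \overline{\mathbb{Q}} \right)^{G(\overline{\mathbb{Q}})} = \mathbb{Q}[X]^{G} \otimes_{\mathbb{Q}} \overline{\mathbb{Q}}$, which follows since taking invariants under a finite group is a kernel and hence commutes with flat base change, and since the subring of $G(\overline{\mathbb{Q}})$-invariants is Galois-stable. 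Your integrality argument (the monic polynomial $\prod_{g \in G(\overline{\mathbb{Q}})}(T - g \centerdot f)$ has Galois-stable coefficients because $\operatorname{Gal}(\overline{\mathbb{Q}}/\mathbb{Q})$ permutes $G(\overline{\mathbb{Q}})$ and fixes $f$), the Artin--Tate step, finiteness of $\pi$, and going-up for surjectivity on $K$-points are all sound, as is the observation that a surjective closed map induces the quotient topology.

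One small imprecision: in the separation-of-orbits step you say the CRT produces $f$ ``that vanishes on the orbit of $x$ but takes the value $1$ at $y$,'' and then you average over $G(K)$. If $f$ is only normalized at the single point $y$ and unconstrained on the rest of $G(K) \centerdot y$, the average $\overline{f} = \frac{1}{n}\sum_{g} g \centerdot f$ satisfies $\overline{f}(x) = 0$ but $\overline{f}(y) = \frac{1}{n}\sum_{g} f\left( g^{-1} y \right)$, which could vanish. The fix is the standard one and costs nothing: since both orbits are finite and disjoint, ask the CRT for $f$ that vanishes on all of $G(K) \centerdot x$ and equals $1$ on all of $G(K) \centerdot y$; then $\overline{f}$ is identically $0$ on $G(K) \centerdot x$ and identically $1$ on $G(K) \centerdot y$. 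Combined with the identification $K\left[ X_{K} \right]^{G(K)} = \mathbb{Q}[X]^{G} \otimes_{\mathbb{Q}} K$, this gives an element of $\mathbb{Q}[X]^{G}$ separating $\pi(x)$ from $\pi(y)$ as needed.
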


Now, suppose that an algebraic group $G$ over $\mathbb{Q}$ acts on a variety $X$ over $\mathbb{Q}$ and $Y$ is a closed subvariety of $X$. We call \emph{stabilizer} of $Y$ the algebraic subgroup $H$ of $G$ such that $H(K) = \left\lbrace g \in G(K) : g \centerdot Y(K) \subseteq Y(K) \right\rbrace$, where $K$ is any algebraically closed field of characteristic $0$. Note that the action of $G$ on $X$ yields an action of $H$ on $Y$. If $X$ is affine and $H$ is finite, then a geometric quotient $Y/H$ of $Y$ by $H$ exists by Lemma~\ref{lemma:quotient1}. Under some additional assumption implying that every orbit $\left\lbrace g \centerdot x : g \in G(K) \right\rbrace$, with $x \in X(K)$, has a nonempty intersection with $Y(K)$, where $K$ is any algebraically closed field of characteristic $0$, a geometric quotient $X/G$ of $X$ by $G$ also exists and $X/G \cong Y/H$. More precisely, we have the following result. We omit the proof and refer to~\cite[Expos\'{e}~V, Lemme~6.1]{SGA1970} for a more general statement expressed in terms of groupoids.

\begin{lemma}
\label{lemma:quotient2}
Suppose that $G$ is some algebraic group over $\mathbb{Q}$ that acts on an affine variety $X$ over $\mathbb{Q}$ via a morphism $\theta \colon G \times X \rightarrow X$. Also assume that there exists a closed subvariety $Y$ of $X$ such that the induced morphism $\theta \colon G \times Y \rightarrow X$ is finite, flat and surjective. Then the stabilizer $H$ of $Y$ is finite and the closed immersion $\imath \colon Y \hookrightarrow X$ induces an isomorphism $\imath^{*} \colon \mathbb{Q}[X]^{G} \rightarrow \mathbb{Q}[Y]^{H}$ of $\mathbb{Q}$\nobreakdash-algebras. Therefore, a geometric quotient $Y/H$ of $Y$ by $H$ exists, $\mathbb{Q}[X]^{G}$ is a finitely generated commutative $\mathbb{Q}$\nobreakdash-algebra and the affine variety $X/G$ over $\mathbb{Q}$ such that $\mathbb{Q}[X/G] = \mathbb{Q}[X]^{G}$ is isomorphic to $Y/H$. Furthermore, $X/G$ together with the morphism $\pi \colon X \rightarrow X/G$ induced by the inclusion $\mathbb{Q}[X]^{G} \subseteq \mathbb{Q}[X]$ is a geometric quotient of $X$ by $G$.
\end{lemma}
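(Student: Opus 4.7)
The plan is to prove the four listed conclusions in order: finiteness of $H$; the isomorphism $\imath^{*} \colon \mathbb{Q}[X]^{G} \to \mathbb{Q}[Y]^{H}$; the existence and finite generation of $X/G$ with $\mathbb{Q}[X/G] = \mathbb{Q}[X]^{G}$ (which will be visibly isomorphic to $Y/H$); and the verification that the induced $\pi \colon X \to X/G$ satisfies the geometric quotient axioms. Finiteness of $H$ comes from a dimension count: by definition every element of $H$ sends $Y$ into itself, so the restriction of $\theta$ to $H \times Y$ factors through $Y$, yielding a morphism $\alpha \colon H \times Y \to Y$. As the restriction of the finite morphism $\theta$ to a closed subscheme, with image contained in the closed subvariety $Y \subseteq X$, the morphism $\alpha$ is itself finite; hence $\dim(H \times Y) \leq \dim Y$, forcing $\dim H = 0$, and so $H$ is finite. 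Lemma~\ref{lemma:quotient1} then produces the geometric quotient $Y/H$ together with $\mathbb{Q}[Y/H] = \mathbb{Q}[Y]^{H}$ finitely generated.

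Injectivity of $\imath^{*}$ is immediate: if $f \in \mathbb{Q}[X]^{G}$ vanishes on $Y$, then $G$-invariance combined with the surjectivity of $\theta \colon G \times Y \to X$ forces $f \equiv 0$. Surjectivity of $\imath^{*}$ is the main step, and my plan is to use faithfully flat descent along $\theta$. Given $\overline{f} \in \mathbb{Q}[Y]^{H}$, I pull it back to $F \in \mathbb{Q}[G \times Y]$ via the second projection. Since $\theta$ is finite, flat and surjective, hence faithfully flat, $F$ descends to a (necessarily unique) $f \in \mathbb{Q}[X]$ with $\theta^{*} f = F$ provided the two pullbacks of $F$ to $(G \times Y) \times_{X} (G \times Y)$ coincide. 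The descended $f$ will automatically be $G$-invariant, because $F$ is constant along the action of $G$ by left translation on the first factor of $G \times Y$, and that action is intertwined by $\theta$ with the action of $G$ on $X$.

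The main obstacle will be to convert this descent condition into a consequence of $H$-invariance of $\overline{f}$. Concretely, one must verify that $\overline{f}(y_{1}) = \overline{f}(y_{2})$ whenever $g_{1} y_{1} = g_{2} y_{2}$, i.e.\ whenever $y_{2} = (g_{2}^{-1} g_{1}) \cdot y_{1}$ with both $y_{1}, y_{2} \in Y$. This reduces to the scheme-theoretic identity
\[
\theta^{-1}(Y) = H \times Y
\]
inside $G \times Y$, ensuring that $g_{2}^{-1} g_{1}$ lies in $H$. The inclusion $H \times Y \subseteq \theta^{-1}(Y)$ is clear from the definition of $H$. For the reverse inclusion I would compare degrees of the two finite flat projections to $Y$: the map $H \times Y \to Y$ is finite \'{e}tale of degree $|H|$, while $\theta^{-1}(Y) \to Y$ is finite flat of some constant degree inherited from $\theta$; exploiting the free action of $H$ on $G \times Y$ given by $h \cdot (g, y) = (g h^{-1}, h y)$, which commutes with $\theta$, an \'{e}tale-local computation at a generic point of $Y$ should force these degrees to agree, and hence the two closed subschemes to coincide. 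Equivalently, one may argue that $\theta$ realizes $X$ as the quotient of $G \times Y$ by this free $H$-action, i.e.\ as an $H$-torsor. Granting this, all remaining assertions follow at once: $\mathbb{Q}[X]^{G} \cong \mathbb{Q}[Y]^{H} = \mathbb{Q}[Y/H]$ is finitely generated, so $X/G$ exists as an affine variety and $\imath$ induces an isomorphism $Y/H \xrightarrow{\sim} X/G$; the geometric quotient axioms for $\pi \colon X \to X/G$ then transfer from those for $Y \to Y/H$ via the surjectivity of $\theta$ on $K$-points.
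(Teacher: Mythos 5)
Your reduction to faithfully flat descent along $\theta$ is a sensible strategy, and you correctly identify the crux: the scheme-theoretic equality $\theta^{-1}(Y) = H \times Y$, equivalently that $\theta \colon G \times Y \rightarrow X$ is an $H$-torsor for the antidiagonal action $h \cdot (g, y) = (g h^{-1}, h y)$. Your arguments for finiteness of $H$ (dimension count via finiteness of $\theta$) and for injectivity of $\imath^{*}$ (faithful flatness of $\theta$ together with $G$-invariance) are both correct. However, the promised degree count cannot succeed, because $\theta^{-1}(Y) = H \times Y$ is \emph{not} a consequence of the stated hypotheses. For example, let $G = \mathbb{G}_{m}$ act on $X = \mathbb{G}_{m}$ by multiplication and take $Y = \lbrace 1, 4 \rbrace$. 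Then $\theta \colon G \times Y \rightarrow X$ is the disjoint union of two isomorphisms, hence finite, flat and surjective of degree $2$; but $4 \cdot 1 \in Y$ while $4 \cdot 4 = 16 \notin Y$, so the stabilizer $H$ is trivial. Thus $\lvert H \rvert = 1 \neq 2 = \deg \theta$, $H \times Y \subsetneq \theta^{-1}(Y)$, and $\mathbb{Q}[X]^{G} = \mathbb{Q} \subsetneq \mathbb{Q} \times \mathbb{Q} = \mathbb{Q}[Y]^{H}$, so $\imath^{*}$ is not surjective. (One can even arrange $X$ and $Y$ smooth and irreducible, e.g.\ $G = \mathbb{G}_{m}$ acting diagonally on $X = \mathbb{G}_{m}^{2}$ with $Y$ the curve $x_{2} = x_{1}^{2} -1$ inside $X$.)

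The paper itself omits the proof and points to SGA3, Expos\'{e}~V, Lemme~6.1. What that reference gives, applied to the action groupoid $G \times X \rightrightarrows X$, is an isomorphism $X/G \cong Y/R$ where $R = \theta^{-1}(Y) \rightrightarrows Y$ is the \emph{restricted groupoid}, not the $H$-action groupoid; the two quotients coincide precisely when $\theta^{-1}(Y) = H \times Y$. So the equality you isolated really is an additional hypothesis that must be verified in each application rather than deduced from finite-flat-surjectivity. In the case the paper cares about, $G = \Aff$, $X = \Poly_{d}$, $Y = \Poly_{d}^{\mc}$, the computation in the proof of Claim~2.4 shows precisely this: the only $\phi = \alpha z + \beta$ carrying one monic centered polynomial to another satisfy $\alpha^{d -1} = 1$ and $\beta = 0$, so $\theta$ has degree $d -1 = \lvert \mu_{d -1} \rvert$ and the torsor claim holds. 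Once you add $\theta^{-1}(Y) = H \times Y$ as a hypothesis (or verify it in the application), the rest of your descent argument goes through.
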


We now turn to the construction of the moduli space $\mathcal{P}_{d}$ of polynomial maps of degree $d$. Consider the space \[ \Poly_{d} = \left\lbrace a_{d} z^{d} +\dotsb +a_{1} z +a_{0} : a_{d} \neq 0 \right\rbrace \] of all polynomial maps of degree $d$. Identifying a polynomial with its coefficients, $\Poly_{d}$ is naturally an affine variety over $\mathbb{Q}$ such that \[ \mathbb{Q}\left[ \Poly_{d} \right] = \mathbb{Q}\left[ a_{0}, a_{1}, \dotsc, a_{d}, a_{d}^{-1} \right] \, \text{.} \] Also, the space $\Aff = \lbrace \alpha z +\beta : \alpha \neq 0 \rbrace$ of all affine transformations is an algebraic group over $\mathbb{Q}$ under composition. Moreover, $\Aff$ acts on $\Poly_{d}$ by conjugation, via $\phi \centerdot f = \phi \circ f \circ \phi^{-1}$. We shall prove the existence of some geometric quotient $\mathcal{P}_{d}$ of $\Poly_{d}$ by $\Aff$, which is necessarily unique up to isomorphism.

\begin{remark}
As the algebraic group $\Aff$ is not reductive, one cannot directly apply the geometric invariant theory developed in~\cite{MFK1994} to prove that there exists a geometric quotient $\mathcal{P}_{d}$ of $\Poly_{d}$ by $\Aff$.
\end{remark}

Now, consider the space \[ \Poly_{d}^{\mc} = \left\lbrace z^{d} +b_{d -2} z^{d -2} +\dotsb +b_{1} z +b_{0} \right\rbrace \] of all monic centered polynomial maps of degree $d$. Note that $\Poly_{d}^{\mc}$ is naturally a closed subvariety of $\Poly_{d}$. Moreover, its stabilizer for the action of $\Aff$ on $\Poly_{d}$ by conjugation is the algebraic subgroup $\mu_{d -1} = \left\lbrace \omega : \omega^{d -1} = 1 \right\rbrace$ of $\Aff$, under the identification of $\omega \in \mu_{d -1}$ with $\omega z \in \Aff$. Thus, we have an induced action of $\mu_{d -1}$ on $\Poly_{d}^{\mc}$ by conjugation, which is given by \[ \omega \centerdot \left( z^{d} +\sum_{j = 0}^{d -2} b_{j} z^{j} \right) = z^{d} +\sum_{j = 0}^{d -2} \omega^{1 -j} b_{j} z^{j} \, \text{.} \] Moreover, since $\mu_{d -1}$ is finite and $\Poly_{d}^{\mc}$ is affine, the $\mathbb{Q}$\nobreakdash-algebra $\mathbb{Q}\left[ \Poly_{d}^{\mc} \right]^{\mu_{d -1}}$ is finitely generated and the affine variety $\mathcal{P}_{d}^{\mc}$ over $\mathbb{Q}$ such that \[ \mathbb{Q}\left[ \mathcal{P}_{d}^{\mc} \right] = \mathbb{Q}\left[ \Poly_{d}^{\mc} \right]^{\mu_{d -1}} \] together with the natural morphism $\pi_{d}^{\mc} \colon \Poly_{d}^{\mc} \rightarrow \mathcal{P}_{d}^{\mc}$ is a geometric quotient of $\Poly_{d}^{\mc}$ by $\mu_{d -1}$, according to Lemma~\ref{lemma:quotient1}.

Finally, we shall apply Lemma~\ref{lemma:quotient2} to prove the existence of a geometric quotient $\mathcal{P}_{d} \cong \mathcal{P}_{d}^{\mc}$ of $\Poly_{d}$ by $\Aff$. To do so, let us first prove the statement below, which implies the well-known fact that each polynomial of degree $d$ over an algebraically closed field of characteristic $0$ is conjugate to a monic centered polynomial. Here, we denote by $\theta \colon \Aff \times \Poly_{d}^{\mc} \rightarrow \Poly_{d}$ the morphism induced by the action of $\Aff$ on $\Poly_{d}$ by conjugation.

\begin{claim}
\label{claim:quotient}
The morphism $\theta \colon \Aff \times \Poly_{d}^{\mc} \rightarrow \Poly_{d}$ is finite, flat and surjective.
\end{claim}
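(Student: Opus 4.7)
The plan is to write $\theta$ in affine coordinates and identify $\mathbb{Q}\left[ \Aff \times \Poly_{d}^{\mc} \right]$ with an explicit free extension of $\mathbb{Q}\left[ \Poly_{d} \right]$, from which all three properties will drop out at once. First I would parametrize $\Aff$ by $(\alpha, \beta)$ with $\alpha \neq 0$ and expand
\[
\theta(\phi, g)(z) = \alpha \, g\!\left( \frac{z -\beta}{\alpha} \right) +\beta
\]
for $g(z) = z^{d} +b_{d -2} z^{d -2} +\dotsb +b_{0}$. Matching coefficients against $f(z) = \sum_{i = 0}^{d} a_{i} z^{i}$ gives $\theta^{*}\!\left( a_{d} \right) = \alpha^{1 -d}$ and $\theta^{*}\!\left( a_{d -1} \right) = -d \, \beta \, \alpha^{1 -d}$; more generally, the formula for $a_{i}$, with $i \leq d -2$, involves $b_{i}, b_{i +1}, \dotsc, b_{d -2}$ in a descending-triangular way, with $b_{i}$ first entering the equation for $a_{i}$. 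This structure lets one solve recursively for $b_{d -2}, \dotsc, b_{0}$ as polynomials in $\alpha$, $1/\alpha$, $\beta$, and the $a_{i}$'s.

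Surjectivity on geometric points is then immediate: given $f \in \Poly_{d}(K)$ over an algebraically closed field $K$ of characteristic $0$, one picks any $\alpha \in K$ with $\alpha^{d -1} = 1/a_{d}$, sets $\beta = -a_{d -1} / (d \, a_{d})$, and reads off $b_{d -2}, \dotsc, b_{0}$ from the recursion; every fiber has exactly $d -1$ points. The very same formulas yield $\beta \in \theta^{*}\!\left( \mathbb{Q}\left[ \Poly_{d} \right] \right)$, $1/\alpha = \alpha^{d -2} \cdot \theta^{*}\!\left( a_{d} \right)$, and $b_{j} \in \theta^{*}\!\left( \mathbb{Q}\left[ \Poly_{d} \right] \right)\!\left[ \alpha \right]$ for all $j$. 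Since $\alpha$ satisfies the monic relation $\alpha^{d -1} = \theta^{*}\!\left( 1/a_{d} \right)$, I thus obtain a surjective $\mathbb{Q}\left[ \Poly_{d} \right]$\nobreakdash-algebra homomorphism
\[
\Phi \colon \mathbb{Q}\left[ \Poly_{d} \right][X] / \left( X^{d -1} -1/a_{d} \right) \twoheadrightarrow \mathbb{Q}\left[ \Aff \times \Poly_{d}^{\mc} \right] \, \text{,} \quad X \longmapsto \alpha \, \text{.}
\]

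To finish, I would argue that $\Phi$ is an isomorphism. The source is manifestly a free $\mathbb{Q}\left[ \Poly_{d} \right]$\nobreakdash-module of rank $d -1$, and it is a domain because $X^{d -1} -1/a_{d}$ is irreducible in $\mathbb{Q}\left[ \Poly_{d} \right][X]$: up to the unit $1/a_{d}$, it equals $a_{d} X^{d -1} -1$, which is linear and content-free when viewed in the variable $a_{d}$, hence irreducible in $\mathbb{Q}\left[ a_{0}, \dotsc, a_{d} \right][X]$ and a fortiori after inverting $a_{d}$. The target of $\Phi$ is also a domain of the same Krull dimension $d +1$, so $\ker \Phi$ is a prime ideal of height $0$ and therefore trivial. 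Thus $\Phi$ is an isomorphism, and $\mathbb{Q}\left[ \Aff \times \Poly_{d}^{\mc} \right]$ becomes a free $\mathbb{Q}\left[ \Poly_{d} \right]$\nobreakdash-module of rank $d -1$, yielding finiteness and flatness simultaneously. The main obstacle is the irreducibility step combined with the explicit inversion of the coordinate formulas; both are elementary but must be done with some care.
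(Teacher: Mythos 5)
Your proof is correct, and it takes a genuinely different route from the paper's. The paper works more indirectly: it forms, for each monic centered coefficient $b_j$, the $(d-1)$th-degree ``norm'' polynomial $Q_j(T) = \prod_{\xi^{d-1}=a_d}(T - B_j^{(\xi)})$ in $\mathbb{Q}[\Poly_d][T]$ (invariant under the Galois action permuting the $\xi$), uses these to cut out a closed subvariety $Z \subseteq \Aff \times \Poly_d^{\mc}$ that is visibly finite and surjective over $\Poly_d$, then deduces $Z = \Aff \times \Poly_d^{\mc}$ from $\dim Z \geq d+1$ and irreducibility; flatness is then extracted from miracle flatness using smoothness of source and target. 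You instead compute the coordinate ring head-on: the triangular structure of the coefficient equations lets you show $\theta^*$ identifies $\mathbb{Q}[\Aff \times \Poly_d^{\mc}]$ with $\mathbb{Q}[\Poly_d][X]/(X^{d-1}-1/a_d)$, a free $\mathbb{Q}[\Poly_d]$-module of rank $d-1$, from which finiteness and flatness drop out simultaneously (no appeal to miracle flatness) and the degree $d-1$ of $\theta$ is visible on the nose. What the paper's route buys is less explicit bookkeeping with the $b_j$'s — one doesn't need to actually invert the triangular system, only to know that each $b_j$ satisfies an integral equation — whereas your route buys an explicit presentation of the module structure and a more elementary flatness argument. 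One small stylistic point: for the irreducibility of $X^{d-1}-1/a_d$, the ``a fortiori after inverting $a_d$'' step is fine (it uses that $a_d X^{d-1}-1$ and $a_d$ are coprime primes in a UFD, so localizing at $a_d$ preserves primality), but it is arguably quicker to note directly that the quotient is $\mathbb{Q}[a_0,\dotsc,a_{d-1}][X,X^{-1}]$ via $a_d \mapsto X^{1-d}$, which is transparently a domain of dimension $d+1$.
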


\begin{proof}
For simplicity, write \[ R = \mathbb{Q}\left[ \Poly_{d} \right] = \mathbb{Q}\left[ a_{0}, a_{1}, \dotsc, a_{d}, a_{d}^{-1} \right] \] and \[ S = \mathbb{Q}\left[ \Aff \times \Poly_{d}^{\mc} \right] = \mathbb{Q}\left[ \alpha, \alpha^{-1}, \beta, b_{0}, \dotsc, b_{d -2} \right] \, \text{.} \] Also denote by $K$ an algebraic closure of the field of fractions $\mathbb{Q}\left( \Poly_{d} \right)$ of $R$. For each $\xi \in K$ such that $\xi^{d -1} = a_{d}$, we have \[ \left( \xi z +\frac{a_{d -1} \xi}{d \cdot a_{d}} \right) \centerdot \sum_{j = 0}^{d} a_{j} z^{j} = z^{d} +\sum_{j = 0}^{d -2} B_{j}^{(\xi)} z^{j} \in \Poly_{d}^{\mc}\left( R_{\xi} \right) \, \text{,} \] with \[ R_{\xi} = \mathbb{Q}\left[ a_{0}, \dotsc, a_{d -1}, \xi, \xi^{-1} \right] \quad \text{and} \quad B_{0}^{(\xi)}, \dotsc, B_{d -2}^{(\xi)} \in R_{\xi} \, \text{,} \] and hence \[ \left( \frac{1}{\xi} z -\frac{a_{d -1}}{d \cdot a_{d}} \right) \centerdot \left( z^{d} +\sum_{j = 0}^{d -2} B_{j}^{(\xi)} z^{j} \right) = \sum_{j = 0}^{d} a_{j} z^{j} \, \text{.} \] In addition, for each $j \in \lbrace 0, \dotsc, d -2 \rbrace$ and all $\xi_{1}, \xi_{2} \in K$ such that $\xi_{1}^{d -1} = a_{d}$ and $\xi_{2}^{d -1} = a_{d}$, we have $H_{\xi_{1}, \xi_{2}}\left( B_{j}^{\left( \xi_{1} \right)} \right) = B_{j}^{\left( \xi_{2} \right)}$, where $H_{\xi_{1}, \xi_{2}} \colon R_{\xi_{1}} \rightarrow R_{\xi_{2}}$ is the unique $\mathbb{Q}$\nobreakdash-algebra homomorphism such that $H_{\xi_{1}, \xi_{2}}\left( a_{k} \right) = a_{k}$ for all $k \in \lbrace 0, \dotsc, d -1 \rbrace$ and $H_{\xi_{1}, \xi_{2}}\left( \xi_{1} \right) = \xi_{2}$. Therefore, for each $j \in \lbrace 0, \dotsc, d -2 \rbrace$, the polynomial \[ Q_{j}(T) = \prod_{\xi \in K, \, \xi^{d -1} = a_{d}} \left( T -B_{j}^{(\xi)} \right) \] lies in $R[T]$. For $j \in \lbrace 0, \dotsc, d -2 \rbrace$, define $P_{j} = \theta^{*}\left( Q_{j} \right) \in S[T]$, where $\theta^{*} \colon R \rightarrow S$ denotes the $\mathbb{Q}$\nobreakdash-algebra homomorphism induced by $\theta \colon \Aff \times \Poly_{d}^{\mc} \rightarrow \Poly_{d}$. Now, consider the closed subvariety $Z$ of $\Aff \times \Poly_{d}^{\mc}$ given by \[ Z = \left\lbrace \alpha^{d -1} = \theta^{*}\left( \frac{1}{a_{d}} \right) \right\rbrace \cap \left\lbrace \beta = \theta^{*}\left( \frac{-a_{d -1}}{d \cdot a_{d}} \right) \right\rbrace \cap \bigcap_{j = 0}^{d -2} \left\lbrace P_{j}\left( b_{j} \right) = 0 \right\rbrace \, \text{.} \] Then the morphism $\theta \colon Z \rightarrow \Poly_{d}$ is finite and surjective. In particular, we have \[ \dim(Z) \geq \dim\left( \Poly_{d} \right) = d +1 = \dim\left( \Aff \times \Poly_{d}^{\mc} \right) \, \text{,} \] and hence $Z = \Aff \times \Poly_{d}^{\mc}$ since $\Aff \times \Poly_{d}^{\mc}$ is irreducible. Thus, the morphism $\theta \colon \Aff \times \Poly_{d}^{\mc} \rightarrow \Poly_{d}$ is both finite and surjective. Finally, as $\Aff \times \Poly_{d}^{\mc}$ and $\Poly_{d}$ are both smooth over $\mathbb{Q}$, the morphism $\theta$ is also flat by the miracle flatness theorem (see~\cite[Theorem~23.1]{M1986}). Thus, the claim is proved.
\end{proof}

By Lemma~\ref{lemma:quotient2} and Claim~\ref{claim:quotient}, the closed immersion $\imath \colon \Poly_{d}^{\mc} \hookrightarrow \Poly_{d}$ induces an isomorphism $\imath^{*} \colon \mathbb{Q}\left[ \Poly_{d} \right]^{\Aff} \rightarrow \mathbb{Q}\left[ \Poly_{d}^{\mc} \right]^{\mu_{d -1}}$ of $\mathbb{Q}$\nobreakdash-algebras, and thus we have the commutative diagram below.
\begin{center}
\begin{tikzpicture}
\node (M00) at (0,0) {$\mathbb{Q}\left[ \Poly_{d} \right]^{\Aff}$};
\node (M01) at (4,0) {$\mathbb{Q}\left[ \Poly_{d}^{\mc} \right]^{\mu_{d -1}}$};
\node (M10) at (0,-2) {$\mathbb{Q}\left[ \Poly_{d} \right]$};
\node (M11) at (4,-2) {$\mathbb{Q}\left[ \Poly_{d}^{\mc} \right]$};
\draw[->,>=stealth,line width=0.5pt] (M00) to node[below]{$\sim$} node[above]{$\imath^{*}$} (M01);
\draw[->>,>=stealth,line width=0.5pt] (M10) to node[below]{$\imath^{*}$} (M11);
\draw[{Hooks[length=3pt,width=8pt,right]}->,>=stealth,line width=0.5pt] (M00) to (M10);
\draw[{Hooks[length=3pt,width=8pt,right]}->,>=stealth,line width=0.5pt] (M01) to (M11);
\end{tikzpicture}
\end{center}
Moreover, the affine variety $\mathcal{P}_{d}$ over $\mathbb{Q}$ given by $\mathbb{Q}\left[ \mathcal{P}_{d} \right] = \mathbb{Q}\left[ \Poly_{d} \right]^{\Aff}$ together with the morphism $\pi_{d} \colon \Poly_{d} \rightarrow \mathcal{P}_{d}$ induced by the inclusion $\mathbb{Q}\left[ \Poly_{d} \right]^{\Aff} \subseteq \mathbb{Q}\left[ \Poly_{d} \right]$ is a geometric quotient of $\Poly_{d}$ by $\Aff$. This variety $\mathcal{P}_{d}$ has dimension $d -1$ and is called the \emph{moduli space of polynomial maps} of degree $d$. We have the commutative diagram below.
\begin{center}
\begin{tikzpicture}
\node (M00) at (0,0) {$\Poly_{d}^{\mc}$};
\node (M01) at (4,0) {$\Poly_{d}$};
\node (M10) at (0,-2) {$\mathcal{P}_{d}^{\mc}$};
\node (M11) at (4,-2) {$\mathcal{P}_{d}$};
\draw[{Hooks[length=3pt,width=8pt,right]}->,>=stealth,line width=0.5pt] (M00) to node[above]{$\imath$} (M01);
\draw[->,>=stealth,line width=0.5pt] (M10) to node[below]{$\sim$} (M11);
\draw[->>,>=stealth,line width=0.5pt] (M00) to node[left]{$\pi_{d}^{\mc}$} (M10);
\draw[->>,>=stealth,line width=0.5pt] (M01) to node[right]{$\pi_{d}$} (M11);
\end{tikzpicture}
\end{center}
For $f \in \Poly_{d}(R)$, with $R$ a commutative $\mathbb{Q}$\nobreakdash-algebra, we write $[f] = \pi_{d}(f) \in \mathcal{P}_{d}(R)$.

\begin{remark}
One can explicitly describe the inverse of the $\mathbb{Q}$\nobreakdash-algebra isomorphism $\imath^{*} \colon \mathbb{Q}\left[ \Poly_{d} \right]^{\Aff} \rightarrow \mathbb{Q}\left[ \Poly_{d}^{\mc} \right]^{\mu_{d -1}}$. Suppose that $\psi \in \mathbb{Q}\left[ \Poly_{d}^{\mc} \right]^{\mu_{d -1}}$. Write \[ \mathbb{Q}\left[ \Poly_{d} \right] = \mathbb{Q}\left[ a_{0}, a_{1}, \dotsc, a_{d}, a_{d}^{-1} \right] \quad \text{and} \quad \mathbb{Q}\left[ \Poly_{d}^{\mc} \right] = \mathbb{Q}\left[ b_{0}, \dotsc, b_{d -2} \right] \, \text{,} \] denote by $\overline{\mathbb{Q}\left( \Poly_{d} \right)}$ the algebraic closure of the field of fractions of $\mathbb{Q}\left[ \Poly_{d} \right]$ and choose any $\xi \in \overline{\mathbb{Q}\left( \Poly_{d} \right)}$ such that $\xi^{d -1} = a_{d}$. Now, define \[ \varphi = \psi\left( B_{0}^{(\xi)}, \dotsc, B_{d -2}^{(\xi)} \right) \, \text{,} \] with $B_{0}^{(\xi)}, \dotsc, B_{d -2}^{(\xi)} \in \overline{\mathbb{Q}\left( \Poly_{d} \right)}$ as in the proof of Claim~\ref{claim:quotient}. Then $\varphi \in \mathbb{Q}\left[ \Poly_{d} \right]$. Moreover, $\varphi$ is the unique element of $\mathbb{Q}\left[ \Poly_{d} \right]^{\Aff}$ such that $\imath^{*}(\varphi) = \psi$.
\end{remark}

\begin{remark}
For every field $K$ of characteristic $0$, the base change $\Aff_{K}$ of $\Aff$ to $K$ also acts on the base change $\left( \Poly_{d} \right)_{K}$ of $\Poly_{d}$ to $K$ by conjugation, and the base change $\left( \mathcal{P}_{d} \right)_{K}$ of $\mathcal{P}_{d}$ to $K$ is a geometric quotient of $\left( \Poly_{d} \right)_{K}$ by $\Aff_{K}$.
\end{remark}

Finally, we shall briefly describe the complex analytic structure of $\mathcal{P}_{d}(\mathbb{C})$. The set $\mathcal{P}_{d}(\mathbb{C})$ of complex polynomial maps of degree $d$ modulo conjugation by complex affine transformations is naturally a complex analytic space of dimension $d -1$. In fact, since $\Poly_{d}(\mathbb{C}) \cong \mathbb{C}^{d} \times \mathbb{C}^{*}$ is a complex manifold and the action of $\Aff(\mathbb{C})$ on $\Poly_{d}(\mathbb{C})$ by conjugation is proper, faithful and its stabilizers are all finite, $\mathcal{P}_{d}(\mathbb{C})$ is a complex orbifold. Moreover, $\mathcal{P}_{d}^{\mc}(\mathbb{C})$ is also a complex orbifold, and we have a natural biholomorphism $\mathcal{P}_{d}(\mathbb{C}) \cong \mathcal{P}_{d}^{\mc}(\mathbb{C})$. The complex topology of $\mathcal{P}_{d}(\mathbb{C})$ is the quotient topology: any subset $U$ of $\mathcal{P}_{d}(\mathbb{C})$ is open if and only if $\pi_{d}^{-1}(U)$ is an open subset of $\Poly_{d}(\mathbb{C})$. We refer to~\cite{C2022} for further information about orbifolds.

We say that a sequence $\left( f_{n} \right)_{n \geq 0}$ of elements of $\Poly_{d}(\mathbb{C})$ \emph{degenerates} in $\mathcal{P}_{d}(\mathbb{C})$ if, for every compact subset $K$ of $\mathcal{P}_{d}(\mathbb{C})$, we have $\left[ f_{n} \right] \in \mathcal{P}_{d}(\mathbb{C}) \setminus K$ for all sufficiently large $n$. Thus, any sequence $\left( f_{n} \right)_{n \geq 0}$ of elements of $\Poly_{d}(\mathbb{C})$ degenerates in $\mathcal{P}_{d}(\mathbb{C})$ if and only if there does not exist any sequence $\left( \phi_{n} \right)_{n \geq 0}$ of elements of $\Aff(\mathbb{C})$ such that $\left( \phi_{n} \centerdot f_{n} \right)_{n \geq 0}$ has a convergent subsequence in $\Poly_{d}(\mathbb{C})$. We can also express degeneration in $\mathcal{P}_{d}(\mathbb{C})$ in terms of maximal escape rates. Explicitly, any sequence $\left( f_{n} \right)_{n \geq 0}$ of elements of $\Poly_{d}(\mathbb{C})$ degenerates in the moduli space $\mathcal{P}_{d}(\mathbb{C})$ if and only if $\lim\limits_{n \rightarrow +\infty} M_{f_{n}} = +\infty$ (see~\cite[Proposition~3.6]{BH1988}).

\subsection{Multiplier spectrum morphisms}

Now, let us give a precise definition of the morphisms $\Mult_{d}^{(P)}$, with $P \geq 1$. To do this, we shall first recall the notions of dynatomic and multiplier polynomials associated with a polynomial map. We refer the reader to~\cite{MP1994} and~\cite{VH1992} for further details.

Suppose that $R$ is any $\mathbb{Q}$\nobreakdash-algebra that is an integral domain and $f \in \Poly_{d}(R)$. Then there is a unique sequence $\left( \Phi_{f}^{(p)} \right)_{p \geq 1}$ of elements of $R[z]$ such that, for each $p \geq 1$, we have \[ f^{\circ p}(z) -z = \prod_{k \mid p} \Phi_{f}^{(k)}(z) \, \text{.} \] For $p \geq 1$, the polynomial $\Phi_{f}^{(p)} \in R[z]$ is called the $p$th \emph{dynatomic polynomial} of $f$. For every $p \geq 1$, we have $\deg\left( \Phi_{f}^{(p)} \right) = \nu_{d}^{(p)}$, where \[ \nu_{d}^{(p)} = \sum_{k \mid p} \mu\left( \frac{p}{k} \right) d^{k} \] and $\mu \colon \mathbb{Z}_{\geq 1} \rightarrow \lbrace -1, 0, 1 \rbrace$ denotes the M\"{o}bius function.

The result below gives the relation between the periodic points of a polynomial map and its dynatomic polynomials.

\begin{proposition}[{\cite[Proposition~3.2]{MS1995}}]
\label{proposition:dynatomic}
Assume that $R$ is any $\mathbb{Q}$\nobreakdash-algebra that is an integral domain, $f \in \Poly_{d}(R)$ and $p \geq 1$. Then $z_{0} \in R$ is a root of $\Phi_{f}^{(p)}$ if and only if either $z_{0}$ is a periodic point for $f$ with period $p$ or $z_{0}$ is a periodic point for $f$ with period a proper divisor $k$ of $p$ and multiplier a primitive $\frac{p}{k}$th root of unity.
\end{proposition}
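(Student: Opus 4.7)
The plan is to reduce to the case of an algebraically closed field and then to analyze the roots of $\Phi_{f}^{(p)}$ using the defining factorization $f^{\circ p}(z) -z = \prod_{k \mid p} \Phi_{f}^{(k)}(z)$, the chain rule for derivatives of iterates, and M\"{o}bius inversion. Since $R$ is an integral domain containing $\mathbb{Q}$, I embed $R$ into the algebraic closure $K$ of its fraction field. The questions of whether $z_{0} \in R$ is a root of $\Phi_{f}^{(p)}$, of the exact period of $z_{0}$, and of the multiplicative order of the multiplier $\lambda = \left( f^{\circ k} \right)^{\prime}\left( z_{0} \right)$ are invariant under this embedding, so I may assume $R = K$, which allows polynomials in $z$ to be factored completely.

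The core input is the following multiplicity computation. If $z_{0} \in K$ has exact period $k$, then by the chain rule $\left( f^{\circ n} \right)^{\prime}\left( z_{0} \right) = \lambda^{n/k}$ whenever $k \mid n$. Hence $z_{0}$ is a root of $f^{\circ n}(z) -z$ precisely when $k \mid n$, and, when it is, the multiplicity of $z_{0}$ as such a root equals $1$ if $\lambda^{n/k} \neq 1$ and is at least $2$ if $\lambda^{n/k} = 1$. Applying M\"{o}bius inversion to the defining factorization yields $\operatorname{ord}_{z_{0}}\left( \Phi_{f}^{(p)} \right) = \sum_{d \mid p} \mu(p/d) \operatorname{ord}_{z_{0}}\left( f^{\circ d}(z) -z \right)$, so whether $z_{0}$ is a root of $\Phi_{f}^{(p)}$ is controlled by how the multiplicities of $z_{0}$ in $f^{\circ d}(z) -z$ vary with $d \mid p$.

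The argument then proceeds by induction on $p$, with base case $p = 1$ given by $\Phi_{f}^{(1)}(z) = f(z) -z$. For the inductive step, I fix a periodic point $z_{0}$ of exact period $k \mid p$ with multiplier $\lambda$ and split into three cases: either $k = p$; or $k < p$ and $\lambda$ is a primitive $(p/k)$-th root of unity; or $k < p$ and $\lambda$ is not a primitive $(p/k)$-th root of unity. In each case I compare $\operatorname{ord}_{z_{0}}\left( f^{\circ p}(z) -z \right)$ with the sum of the inductively known multiplicities $\operatorname{ord}_{z_{0}}\left( \Phi_{f}^{(n)} \right)$ for $n \mid p$, $n < p$, and read off whether the remaining contribution $\operatorname{ord}_{z_{0}}\left( \Phi_{f}^{(p)} \right)$ is positive. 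The main obstacle is the third case, and in particular its subcase where $\lambda$ is a non-primitive $(p/k)$-th root of unity: here several intermediate divisors of $p$ contribute to the factorization, and their multiplicities must cancel exactly against the extra vanishing of $f^{\circ p}(z) -z$ at $z_{0}$; sorting out this cancellation via M\"{o}bius inversion is where the combinatorial heart of the proof lies.
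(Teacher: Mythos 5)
The paper does not prove this proposition; it is stated with attribution to Morton--Silverman, so there is no in-paper argument to compare against. Your overall plan---pass to the algebraic closure $K$ of $\operatorname{Frac}(R)$, express $\operatorname{ord}_{z_0}(\Phi_f^{(p)})$ by M\"obius inversion as $\sum_{d \mid p} \mu(p/d)\,\operatorname{ord}_{z_0}(f^{\circ d}(z) - z)$, and case-split on the exact period $k$ of $z_0$ and the multiplicative order of its multiplier $\lambda$---is the correct skeleton and is essentially what Morton and Silverman do. But there is a genuine gap exactly at the step you flag as ``the combinatorial heart.'' You establish only the lower bound $\operatorname{ord}_{z_0}(f^{\circ n}(z) - z) \geq 2$ when $k \mid n$ and $\lambda^{n/k} = 1$; that information is not enough to evaluate the M\"obius sum. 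In the subcase $k \mid p$, $k < p$, with $\lambda$ a root of unity of order $r$ satisfying $r \mid p/k$ and $r < p/k$, set $q = kr$; after the order-$1$ contributions cancel, the sum reduces to $\sum_{q \mid d \mid p} \mu(p/d)\,\operatorname{ord}_{z_0}(f^{\circ d}(z) - z)$, and knowing only ``$\geq 2$'' leaves this undetermined. Concretely, for $p = 4$, $k = 1$, $r = 2$, this residual sum equals $\operatorname{ord}_{z_0}(f^{\circ 4}(z) - z) - \operatorname{ord}_{z_0}(f^{\circ 2}(z) - z)$, which a priori could be positive, so you cannot conclude that $z_0$ is not a root of $\Phi_f^{(4)}$.

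The missing ingredient is an exact stabilization lemma, and it is precisely here that the $\mathbb{Q}$-algebra hypothesis is needed and never invoked in your sketch. If $g \in K[z]$ fixes $z_0$ with $g'(z_0) = 1$ and $g(z) = z + a(z - z_0)^M + \cdots$ with $a \neq 0$, $M \geq 2$, then $g^{\circ s}(z) = z + sa(z - z_0)^M + \cdots$ for all $s \geq 1$, and since $\operatorname{char} K = 0$ the coefficient $sa$ never vanishes, so $\operatorname{ord}_{z_0}(g^{\circ s}(z) - z) = M$ for every $s \geq 1$. Applying this with $g = f^{\circ q}$ shows that $\operatorname{ord}_{z_0}(f^{\circ n}(z) - z)$ takes a single constant value $M$ for every multiple $n$ of $q$, and the full M\"obius sum then collapses to $(M - 1)\sum_{m \mid p/q} \mu(m)$, which is $M - 1 \geq 1$ when $p/q = 1$ (that is, when $\lambda$ is a primitive $(p/k)$th root of unity) and $0$ when $p/q > 1$. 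You should isolate and prove this lemma; once you have it, the induction on $p$ you propose is unnecessary, since the M\"obius formula then computes $\operatorname{ord}_{z_0}(\Phi_f^{(p)})$ directly in each case.
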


Suppose that $R$ is any $\mathbb{Q}$\nobreakdash-algebra that is an integral domain and $f \in \Poly_{d}(R)$. For every $p \geq 1$, there exists a unique monic polynomial $\chi_{f}^{(p)} \in R[\lambda]$ such that \[ \chi_{f}^{(p)}(\lambda)^{p} = a_{d}^{-m_{d}^{(p)}} \res_{z}\left( \Phi_{f}^{(p)}(z), \lambda -\left( f^{\circ p} \right)^{\prime}(z) \right) \, \text{,} \] where $a_{d} \in R^{*}$ denotes the leading coefficient of $f$, $\res_{z}$ denotes the resultant with respect to $z$ and \[ m_{d}^{(p)} = \begin{cases} d -1 & \text{if } p = 1\\ \frac{\nu_{d}^{(p)} \left( d^{p} -1 \right)}{d -1} & \text{if } p \geq 2 \end{cases} \, \text{.} \] For $p \geq 1$, the polynomial $\chi_{f}^{(p)} \in R[\lambda]$ is called the $p$th \emph{multiplier polynomial} of $f$. For every $p \geq 1$, we have $\deg\left( \chi_{f}^{(p)} \right) = N_{d}^{(p)}$, where $N_{d}^{(p)} = \frac{\nu_{d}^{(p)}}{p}$.

For an algebraically closed field $K$ of characteristic $0$, $f \in \Poly_{d}(K)$ and $p \geq 1$, we denote by $\Lambda_{f}^{(p)} \in K^{N_{d}^{(p)}}/\mathfrak{S}_{N_{d}^{(p)}}$ the multiset of roots of $\chi_{f}^{(p)}$.

Using Proposition~\ref{proposition:dynatomic}, we immediately obtain the result below, which relates the multiplier polynomials of a polynomial map to its multipliers.

\begin{proposition}
Assume that $K$ is an algebraically closed field of characteristic $0$, $f \in \Poly_{d}(K)$ and $p \geq 1$. Then $\lambda \in K$ lies in $\Lambda_{f}^{(p)}$ if and only if at least one of the following two conditions is satisfied:
\begin{itemize}
\item $\lambda$ is a multiplier of $f$ at a cycle with period $p$;
\item $\lambda = 1$ and $f$ has a cycle with period a proper divisor $k$ of $p$ and multiplier a primitive $\frac{p}{k}$th root of unity.
\end{itemize}
In particular, if $f$ has no parabolic cycle with period dividing $p$, then $\Lambda_{f}^{(p)}$ consists precisely of the multipliers of $f$ at its cycles with period $p$.
\end{proposition}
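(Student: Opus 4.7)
The plan is to read off the roots of $\chi_{f}^{(p)}$ directly from its defining identity
$$\chi_{f}^{(p)}(\lambda)^{p} = a_{d}^{-m_{d}^{(p)}}\, \res_{z}\bigl(\Phi_{f}^{(p)}(z),\, \lambda -(f^{\circ p})^{\prime}(z)\bigr),$$
combined with the description of the zeros of $\Phi_{f}^{(p)}$ in Proposition~\ref{proposition:dynatomic}. Since $K$ is algebraically closed and $a_{d} \in K^{\ast}$, membership $\lambda \in \Lambda_{f}^{(p)}$ is equivalent to the vanishing of the resultant on the right-hand side. By the standard vanishing criterion for resultants (applied in $K[z]$, where both $\Phi_{f}^{(p)}(z)$ and $\lambda -(f^{\circ p})^{\prime}(z)$ have nonzero leading coefficient), this happens if and only if there exists $z_{0} \in K$ with $\Phi_{f}^{(p)}(z_{0}) = 0$ and $\lambda = (f^{\circ p})^{\prime}(z_{0})$. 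The whole task thus reduces to evaluating $(f^{\circ p})^{\prime}$ on the zero set of $\Phi_{f}^{(p)}$.

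I would then split according to the two kinds of roots furnished by Proposition~\ref{proposition:dynatomic}. If $z_{0}$ has exact period $p$, the chain rule
$$(f^{\circ p})^{\prime}(z_{0}) = \prod_{j = 0}^{p -1} f^{\prime}\bigl(f^{\circ j}(z_{0})\bigr)$$
gives precisely the multiplier of $f$ at the cycle of $z_{0}$, so as $z_{0}$ varies over the period-$p$ periodic points one recovers exactly the first alternative of the statement. If instead $z_{0}$ has period $k$, a proper divisor of $p$, and multiplier $\mu = (f^{\circ k})^{\prime}(z_{0})$ a primitive $(p/k)$-th root of unity, then factoring $f^{\circ p} = (f^{\circ k})^{\circ (p/k)}$ and iterating the chain rule along the $k$-cycle yields $(f^{\circ p})^{\prime}(z_{0}) = \mu^{p/k} = 1$, which is the second alternative.

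Together these two cases exhaust all possible roots of $\chi_{f}^{(p)}$, yielding the stated characterization. The ``in particular'' clause is then immediate, since the second case requires precisely a parabolic cycle of period $k \mid p$ with $k < p$, so ruling out such cycles forces every element of $\Lambda_{f}^{(p)}$ to be a genuine multiplier at a cycle of exact period $p$. I do not anticipate any serious obstacle: the argument is a direct unwinding of the definitions once Proposition~\ref{proposition:dynatomic} is available, and since the statement concerns set membership rather than multiset multiplicity in $\Lambda_{f}^{(p)}$, there is no need to track multiplicities of roots of $\Phi_{f}^{(p)}$ or the factor of $p$ appearing on the left of the defining identity.
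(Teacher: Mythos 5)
Your proof is correct, and it is essentially the argument the paper has in mind: the author states the proposition as an immediate consequence of Proposition~3.2 without writing out a proof, and your proposal simply unwinds the definitions in the expected way. The key steps — reducing membership in $\Lambda_{f}^{(p)}$ to vanishing of the resultant (using that both $\Phi_{f}^{(p)}(z)$ and $\lambda -(f^{\circ p})^{\prime}(z)$ have nonzero, $\lambda$-independent leading coefficients in $z$), invoking the root characterization of $\Phi_{f}^{(p)}$, and computing $(f^{\circ p})^{\prime}(z_{0}) = \mu^{p/k} = 1$ along a parabolic $k$-cycle — are exactly what the paper implicitly relies on, and your closing remark that only set membership (not multiset multiplicity) is at stake is the right observation to justify ignoring the exponent $p$ in the defining identity.
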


Now, consider the generic polynomial \[ \boldsymbol{f}(z) = \sum_{j = 0}^{d} a_{j} z^{j} \in \Poly_{d}\left( \mathbb{Q}\left[ \Poly_{d} \right] \right) \, \text{.} \] For $p \geq 1$, write \[ \chi_{\boldsymbol{f}}^{(p)}(\lambda) = \lambda^{N_{d}^{(p)}} +\sum_{j = 1}^{N_{d}^{(p)}} (-1)^{j} \boldsymbol{\sigma}_{d, j}^{(p)} \lambda^{N_{d}^{(p)} -j} \in \mathbb{Q}\left[ \Poly_{d} \right][\lambda] \, \text{.} \] Specializing, for every $\mathbb{Q}$\nobreakdash-algebra $R$ that is an integral domain, every $f \in \Poly_{d}(R)$ and every $p \geq 1$, we have \[ \chi_{f}^{(p)}(\lambda) = \lambda^{N_{d}^{(p)}} +\sum_{j = 1}^{N_{d}^{(p)}} (-1)^{j} \boldsymbol{\sigma}_{d, j}^{(p)}(f) \lambda^{N_{d}^{(p)} -j} \, \text{.} \] Thus, for every algebraically closed field $K$ of characteristic $0$, every $f \in \Poly_{d}(K)$ and every $p \geq 1$, the $\sigma_{d, j}^{(p)}(f)$, with $j \in \left\lbrace 1, \dotsc, N_{d}^{(p)} \right\rbrace$, are the elementary symmetric functions of the elements of $\Lambda_{f}^{(p)}$. As the multiplier is invariant under conjugation, it follows that the regular function $\boldsymbol{\sigma}_{d, j}^{(p)} \in \mathbb{Q}\left[ \Poly_{d} \right]$ is invariant under the action of $\Aff$ on $\Poly_{d}$ by conjugation for each $p \geq 1$ and each $j \in \left\lbrace 1, \dotsc, N_{d}^{(p)} \right\rbrace$. Therefore, for every $p \geq 1$ and every $j \in \left\lbrace 1, \dotsc, N_{d}^{(p)} \right\rbrace$, there exists a unique regular function $\sigma_{d, j}^{(p)} \in \mathbb{Q}\left[ \mathcal{P}_{d} \right]$ such that $\sigma_{d, j}^{(p)}\left( [f] \right) = \boldsymbol{\sigma}_{d, j}^{(p)}(f)$ for each commutative $\mathbb{Q}$\nobreakdash-algebra $R$ and each $f \in \Poly_{d}(R)$. For $P \geq 1$, we define the \emph{multiplier spectrum morphism} \[ \Mult_{d}^{(P)} = \left( \left( \sigma_{d, j}^{(1)} \right)_{1 \leq j \leq N_{d}^{(1)}}, \dotsc, \left( \sigma_{d, j}^{(P)} \right)_{1 \leq j \leq N_{d}^{(P)}} \right) \colon \mathcal{P}_{d} \rightarrow \prod_{p = 1}^{P} \mathbb{A}^{N_{d}^{(p)}} \, \text{.} \] For $P \geq 1$, we denote by $\Sigma_{d}^{(P)}$ the scheme-theoretic image of $\Mult_{d}^{(P)}$, which equals the Zariski-closure of $\Mult_{d}^{(P)}\left( \mathcal{P}_{d}(\mathbb{Q}) \right)$ in $\prod\limits_{p = 1}^{P} \mathbb{A}^{N_{d}^{(p)}}$.

Finally, we shall recall a few facts about the multipliers at the fixed points. For every algebraically closed field $K$ of characteristic $0$ and every $f \in \Poly_{d}(K)$ such that $\lambda \neq 1$ for all $\lambda \in \Lambda_{f}^{(1)}$, we have \[ \sum_{\lambda \in \Lambda_{f}^{(1)}} \frac{1}{1 -\lambda} = 0 \, \text{.} \] The relation above is known as the holomorphic fixed-point formula. Therefore, in $\mathbb{Q}\left[ \mathcal{P}_{d} \right]$, we have \[ d +\sum_{j = 1}^{d} (-1)^{j} (d -j) \sigma_{d, j}^{(1)} = 0 \, \text{.} \] In fact, denoting by $s_{1}, \dotsc, s_{d}$ the standard coordinates on $\mathbb{A}^{d}$, we have \[ \Sigma_{d}^{(1)} = \left\lbrace d +\sum_{j = 1}^{d} (-1)^{j} (d -j) s_{j} = 0 \right\rbrace \subseteq \mathbb{A}^{d} \, \text{,} \] and Fujimura showed in~\cite{F2007} that the morphism $\Mult_{d}^{(1)} \colon \mathcal{P}_{d} \rightarrow \Sigma_{d}^{(1)}$ has degree $(d -2)!$. In addition, Fujimura also proved that the morphism $\Mult_{d}^{(1)} \colon \mathcal{P}_{d} \rightarrow \Sigma_{d}^{(1)}$ is neither surjective nor quasifinite when $d \geq 4$. We refer the reader to Sugiyama's articles~\cite{S2017} and~\cite{S2023} for the exact number of conjugacy classes $[f] \in \mathcal{P}_{d}(\mathbb{C})$ that satisfy $\Lambda_{f}^{(1)} = \Lambda$, for each $\Lambda \in \mathbb{C}^{d}/\mathfrak{S}_{d}$ such that $\lambda \neq 1$ for all $\lambda \in \Lambda$.

\subsection{The cases of quadratic and cubic polynomial maps}

To conclude this section, let us briefly describe the moduli spaces $\mathcal{P}_{2}$ and $\mathcal{P}_{3}$ of quadratic and cubic polynomial maps, respectively, and show that the morphisms $\Mult_{2}^{(1)}$ and $\Mult_{3}^{(1)}$ induced by the multipliers at the fixed points are isomorphisms onto their images. The results presented here are well known.

\begin{example}
We first study the case of quadratic polynomial maps. Since we have a natural isomorphism $\mathcal{P}_{2} \cong \mathcal{P}_{2}^{\mc}$, we may restrict our attention to monic centered quadratic polynomials. As $\mu_{1} = \lbrace 1 \rbrace$ is the trivial algebraic group, we have \[ \mathcal{P}_{2}^{\mc} \cong \Poly_{2}^{\mc} = \left\lbrace z^{2} +a_{0} \right\rbrace \quad \text{and} \quad \mathbb{Q}\left[ \mathcal{P}_{2}^{\mc} \right] = \mathbb{Q}\left[ \Poly_{2}^{\mc} \right] = \mathbb{Q}\left[ a_{0} \right] \, \text{.} \] Now, computing the polynomial $\chi_{f}^{(1)} \in \mathbb{Q}\left[ \mathcal{P}_{2}^{\mc} \right][\lambda]$ for $f(z) = z^{2} +a_{0} \in \Poly_{2}^{\mc}$, we obtain \[ \sigma_{2, 1}^{(1)} = 2 \in \mathbb{Q}\left[ \mathcal{P}_{2}^{\mc} \right] \quad \text{and} \quad \sigma_{2, 2}^{(1)} = 4 a_{0} \in \mathbb{Q}\left[ \mathcal{P}_{2}^{\mc} \right] \] via the natural isomorphism $\mathcal{P}_{2} \cong \mathcal{P}_{2}^{\mc}$. Therefore, we have \[ \mathbb{Q}\left[ \Sigma_{2}^{(1)} \right] = \mathbb{Q}\left[ \sigma_{2, 1}^{(1)}, \sigma_{2, 2}^{(1)} \right] = \mathbb{Q}\left[ a_{0} \right] = \mathbb{Q}\left[ \mathcal{P}_{2}^{\mc} \right] \, \text{,} \] where $\Sigma_{2}^{(1)}$ denotes the image of the morphism $\Mult_{2}^{(1)} = \left( \sigma_{2, 1}^{(1)}, \sigma_{2, 2}^{(1)} \right) \colon \mathcal{P}_{2}^{\mc} \rightarrow \mathbb{A}^{2}$. Thus, $\Mult_{2}^{(1)}$ induces an isomorphism from $\mathcal{P}_{2}$ onto its image $\Sigma_{2}^{(1)}$.
\end{example}

\begin{example}
We now turn to the case of cubic polynomial maps. A similar discussion can be found in~\cite[Appendix~A]{M1992}. As $\mathcal{P}_{3} \cong \mathcal{P}_{3}^{\mc}$, we restrict our attention to monic centered cubic polynomials. Recall that \[ \Poly_{3}^{\mc} = \left\lbrace z^{3} +a_{1} z +a_{0} \right\rbrace \] and that the algebraic group $\mu_{2} = \lbrace \pm 1 \rbrace$ acts on $\Poly_{3}^{\mc}$ by \[ \omega \centerdot \left( z^{3} +a_{1} z +a_{0} \right) = z^{3} +a_{1} z +\omega a_{0} \, \text{.} \] Therefore, we have \[ \mathbb{Q}\left[ \mathcal{P}_{3}^{\mc} \right] = \mathbb{Q}\left[ \Poly_{3}^{\mc} \right]^{\mu_{2}} = \mathbb{Q}[\alpha, \beta] \, \text{,} \quad \text{with} \quad \alpha = a_{1} \quad \text{and} \quad \beta = a_{0}^{2} \, \text{.} \] Now, for simplicity, write $s_{j} = \sigma_{3, j}^{(1)}$ for $j \in \lbrace 1, 2, 3 \rbrace$. Computing $\chi_{f}^{(1)} \in \mathbb{Q}\left[ \mathcal{P}_{3}^{\mc} \right][\lambda]$ for $f(z) = z^{3} +a_{1} z +a_{0} \in \Poly_{3}^{\mc}$, we obtain \[ s_{1} = -3 \alpha +6 \, \text{,} \quad s_{2} = -6 \alpha +9 \quad \text{and} \quad s_{3} = 4 \alpha^{3} -12 \alpha^{2} +9 \alpha +27 \beta \] via the natural isomorphism $\mathcal{P}_{3} \cong \mathcal{P}_{3}^{\mc}$, which yields \[ \alpha = \frac{-1}{3} s_{1} +2 \quad \text{and} \quad \beta = \frac{4}{729} s_{1}^{3} -\frac{4}{81} s_{1}^{2} +\frac{1}{9} s_{1} +\frac{1}{27} s_{3} -\frac{2}{27} \, \text{.} \] Therefore, we have \[ \mathbb{Q}\left[ \Sigma_{3}^{(1)} \right] = \mathbb{Q}\left[ s_{1}, s_{2}, s_{3} \right] = \mathbb{Q}[\alpha, \beta] = \mathbb{Q}\left[ \mathcal{P}_{3}^{\mc} \right] \, \text{,} \] where $\Sigma_{3}^{(1)}$ denotes the image of the morphism $\Mult_{3}^{(1)} = \left( s_{1}, s_{2}, s_{3} \right) \colon \mathcal{P}_{3}^{\mc} \rightarrow \mathbb{A}^{3}$. Thus, $\Mult_{3}^{(1)}$ induces an isomorphism from $\mathcal{P}_{3}$ onto its image $\Sigma_{3}^{(1)}$.
\end{example}

\section{Multipliers at small cycles and maximal escape rates for complex polynomial maps}
\label{section:degenArch}

In this section, we shall first prove Theorem~\ref{theorem:degenB} in the complex case. Our proof is inspired by the article~\cite{DMMM2008} by DeMarco and McMullen. It relies on a combinatorial argument and on an inequality relating the modulus of the multiplier at a repelling periodic point and the modulus of some annulus. As the latter is already known, the main novelty here is a combinatorial result concerning sublevel sets of the Green function of a polynomial map with disconnected Julia set. Nonetheless, we provide a detailed proof of Theorem~\ref{theorem:degenB} in the complex setting for completeness and to exhibit the similarity with our proof in the non-Archimedean case. Finally, we shall close this section by deriving Corollaries~\ref{corollary:degenLocal} and~\ref{corollary:degenGlobal} from Theorem~\ref{theorem:degenA}.

We fix here an integer $d \geq 2$. In this section and in the next one, we sometimes use the letter $e$ to denote the degree of certain maps, but we never write $e$ for the exponential function, which we denote by $\exp$.

\subsection{The Green function of a complex polynomial map}

First, let us recall some well-known facts regarding the Green function of a complex polynomial map. We refer to~\cite[Chapter~III, Section~4]{CG1993} and~\cite[Expos\'{e}~VIII, Section~I]{DH1984} for further information.

Suppose that $f \in \Poly_{d}(\mathbb{C})$. Recall that the \emph{filled Julia set} $\mathcal{K}_{f}$ of $f$ is given by \[ \mathcal{K}_{f} = \left\lbrace z \in \mathbb{C} : \sup_{n \geq 0} \left\lvert f^{\circ n}(z) \right\rvert < +\infty \right\rbrace \, \text{.} \] Also recall that the \emph{Green function} $g_{f} \colon \mathbb{C} \rightarrow \mathbb{R}_{\geq 0}$ of $f$ is given by \[ g_{f}(z) = \lim_{n \rightarrow +\infty} \frac{1}{d^{n}} \log^{+}\left\lvert f^{\circ n}(z) \right\rvert \, \text{.} \] This map $g_{f}$ is well defined, continuous and subharmonic on $\mathbb{C}$ and it is harmonic on $\mathbb{C} \setminus \mathcal{K}_{f}$. Moreover, we have $g_{f} \circ f = d \cdot g_{f}$ and $g_{f}(z) = \log\lvert z \rvert +O(1)$ as $z \rightarrow \infty$, and in particular $\left\lbrace g_{f} = 0 \right\rbrace = \mathcal{K}_{f}$. Define the \emph{maximal escape rate} $M_{f}$ of $f$ by \[ M_{f} = \max\left\lbrace g_{f}(c) : c \in \mathbb{C}, \, f^{\prime}(c) = 0 \right\rbrace \, \text{.} \] It follows from the Riemann--Hurwitz formula that the set $\mathcal{K}_{f}$ is connected if and only if $M_{f} = 0$ or, equivalently, if and only if the critical points for $f$ all lie in $\mathcal{K}_{f}$ (see~\cite[Theorem~9.5.1]{B1991}).

Note that $f$ is conjugate to $z \mapsto z^{d}$ near infinity by B\"{o}ttcher's theorem since $\infty$ is a fixed point for $f$ with local degree $d$, viewing $f$ as a rational map $f \colon \widehat{\mathbb{C}} \rightarrow \widehat{\mathbb{C}}$. In fact, there exists a biholomorphism \[ \phi_{f} \colon \left\lbrace g_{f} > M_{f} \right\rbrace \rightarrow \mathbb{C} \setminus \overline{D\left( 0, \exp\left( M_{f} \right) \right)} \] such that $\lim\limits_{z \rightarrow \infty} \phi_{f}(z) = \infty$ and $\phi_{f} \circ f = \phi_{f}^{d}$. This biholomorphism $\phi_{f}$ is unique up to multiplication by a $(d -1)$th root of unity and is called a \emph{B\"{o}ttcher coordinate} of $f$ at infinity. Moreover, we have $g_{f} = \log\left\lvert \phi_{f} \right\rvert$.

Now, we say that a compact subset $K$ of $\mathbb{C}$ is \emph{full} if the set $\mathbb{C} \setminus K$ is connected. It follows from the maximum principle that $\left\lbrace g_{f} \leq \eta \right\rbrace$ is a full compact subset of $\mathbb{C}$ for all $\eta \in \mathbb{R}_{\geq 0}$. In addition, $\left\lbrace g_{f} < \eta \right\rbrace$ is the interior of $\left\lbrace g_{f} \leq \eta \right\rbrace$ for all $\eta \in \mathbb{R}_{> 0}$ as $g_{f}$ has no local maximum on $\mathbb{C} \setminus \mathcal{K}_{f}$. Therefore, for every $\eta \in \mathbb{R}_{> 0}$, the connected components of $\left\lbrace g_{f} < \eta \right\rbrace$ are all bounded simply connected subsets of $\mathbb{C}$. Moreover, for every $\eta \in \mathbb{R}_{> 0}$, the connected components of $\left\lbrace g_{f} < \eta \right\rbrace$ all intersect $\mathcal{K}_{f}$ since $g_{f}$ has no local minimum on $\mathbb{C} \setminus \mathcal{K}_{f}$.

For each $\eta \in \left[ M_{f}, +\infty \right)$, the set $\left\lbrace g_{f} \leq \eta \right\rbrace$ is connected as $\widehat{\mathbb{C}} \setminus \left\lbrace g_{f} \leq \eta \right\rbrace$ is biholomorphic to $\widehat{\mathbb{C}} \setminus \overline{D\left( 0, \exp(\eta) \right)}$ under any B\"{o}ttcher coordinate $\phi_{f}$ of $f$ at infinity. As a result, $\left\lbrace g_{f} < \eta \right\rbrace$ is also connected for all $\eta \in \left( M_{f}, +\infty \right)$. In contrast, we have the following:

\begin{lemma}
\label{lemma:disconnected}
Suppose that $f \in \Poly_{d}(\mathbb{C})$ has a disconnected filled Julia set $\mathcal{K}_{f}$, and define $C \geq 1$ to be the number of critical points $c \in \mathbb{C}$ for $f$ such that $g_{f}(c) = M_{f}$, counting multiplicities. Then $\left\lbrace g_{f} < M_{f} \right\rbrace$ has exactly $C +1$ connected components.
\end{lemma}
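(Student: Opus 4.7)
The plan is to relate the components of $\lbrace g_{f} < M_{f} \rbrace$ to the preimage structure of $f$ over a suitable level set and then count critical points via Riemann--Hurwitz. Let $V = \lbrace g_{f} < d \cdot M_{f} \rbrace$. Since $d \cdot M_{f} > M_{f}$, the set $V$ is bounded, connected and simply connected: indeed, under any B\"{o}ttcher coordinate $\phi_{f}$ at infinity, $\widehat{\mathbb{C}} \setminus V$ corresponds to $\widehat{\mathbb{C}} \setminus D\bigl( 0, \exp(d \cdot M_{f}) \bigr)$, which is biholomorphic to a disk. Also, the functional equation $g_{f} \circ f = d \cdot g_{f}$ yields $f^{-1}(V) = \lbrace g_{f} < M_{f} \rbrace$. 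Since $V$ is bounded in $\mathbb{C}$, its preimage under $f \colon \widehat{\mathbb{C}} \rightarrow \widehat{\mathbb{C}}$ avoids $\infty$, so the restriction \[ f \colon \lbrace g_{f} < M_{f} \rbrace \rightarrow V \] is a proper holomorphic map of degree $d$.

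Now let $U_{1}, \dotsc, U_{N}$ be the connected components of $\lbrace g_{f} < M_{f} \rbrace$, each of which is bounded and simply connected. For each $i$, the map $f \colon U_{i} \rightarrow V$ is proper (if $K \subseteq V$ is compact, then $f^{-1}(K) \cap U_{i}$ is a closed subset of the compact set $f^{-1}(K) \cap \lbrace g_{f} < M_{f} \rbrace$), and its image is open and relatively closed in the connected set $V$, hence equals $V$. Writing $d_{i} \geq 1$ for the degree of $f \colon U_{i} \rightarrow V$, we obtain $\sum_{i = 1}^{N} d_{i} = d$. Applying the Riemann--Hurwitz formula to the proper holomorphic map $f \colon U_{i} \rightarrow V$ between simply connected surfaces, $1 = d_{i} \cdot 1 -\deg\left( R_{i} \right)$, so the total multiplicity of critical points for $f$ in $U_{i}$ equals $d_{i} -1$.

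Summing over $i$, the total multiplicity of critical points for $f$ lying in $\lbrace g_{f} < M_{f} \rbrace$ equals $\sum_{i = 1}^{N} (d_{i} -1) = d -N$. On the other hand, $f$ has exactly $d -1$ critical points in $\mathbb{C}$, counted with multiplicity, and each such critical point $c$ satisfies $g_{f}(c) \leq M_{f}$ by definition of $M_{f}$. Thus the critical points split into those with $g_{f}(c) < M_{f}$ (contributing $d -N$ with multiplicity) and those with $g_{f}(c) = M_{f}$ (contributing $C$), giving $(d -N) +C = d -1$, i.e.\ $N = C +1$. The main point requiring care is the application of Riemann--Hurwitz to the proper restrictions $f \colon U_{i} \rightarrow V$, but since both $U_{i}$ and $V$ are simply connected plane domains and $f$ is proper of finite degree, this is standard.
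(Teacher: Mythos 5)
Your proof is correct and takes essentially the same approach as the paper: identify $\lbrace g_{f} < M_{f} \rbrace = f^{-1}\bigl( \lbrace g_{f} < d \cdot M_{f} \rbrace \bigr)$, note that each component $U_{i}$ maps properly onto the simply connected domain $\lbrace g_{f} < d \cdot M_{f} \rbrace$ with some degree $d_{i}$ summing to $d$, apply Riemann--Hurwitz to get $C_{i} = d_{i} - 1$ critical points in $U_{i}$, and close the count using the fact that $f$ has $d - 1$ critical points in $\mathbb{C}$ total with exactly $C$ of them on the level set $\lbrace g_{f} = M_{f} \rbrace$. The only difference is that you spell out the properness and surjectivity of the restrictions $f \colon U_{i} \rightarrow V$ and the simple connectivity of $V$ in slightly more detail, whereas the paper relies on the surrounding discussion for these facts.
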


\begin{proof}
Denote here by $U_{1}, \dotsc, U_{N}$ the connected components of $\left\lbrace g_{f} < M_{f} \right\rbrace$, with $N \geq 1$. For $j \in \lbrace 1, \dotsc, N \rbrace$, denote by $C_{j} \geq 0$ the number of critical points for $f$ in $U_{j}$, counting multiplicities. Note that $\sum\limits_{j = 1}^{N} C_{j} = d -1 -C$. Now, as \[ \left\lbrace g_{f} < M_{f} \right\rbrace = f^{-1}\left( \left\lbrace g_{f} < d \cdot M_{f} \right\rbrace \right) \, \text{,} \] the map $f \colon U_{j} \rightarrow \left\lbrace g_{f} < d \cdot M_{f} \right\rbrace$ is proper of degree $d_{j} \geq 1$ for each $j \in \lbrace 1, \dotsc, N \rbrace$, and we have $d = \sum\limits_{j = 1}^{N} d_{j}$. In addition, for each $j \in \lbrace 1, \dotsc, N \rbrace$, we have $d_{j} = C_{j} +1$ by the Riemann--Hurwitz formula because $U_{j}$ is simply connected by the previous discussion. Therefore, we have \[ d = \sum_{j = 1}^{N} \left( C_{j} +1 \right) = d -1 -C +N \, \text{,} \] and hence $N = C +1$. Thus, the lemma is proved.
\end{proof}

\begin{remark}
In fact, we shall only use the well-known fact that, if $f \in \Poly_{d}(\mathbb{C})$ has a disconnected filled Julia set $\mathcal{K}_{f}$, then $\left\lbrace g_{f} < M_{f} \right\rbrace$ is disconnected.
\end{remark}

\subsection{A combinatorial argument}

Now, let us count the critical points in certain sublevel sets of the Green function to obtain a result implying that the Julia set of any polynomial map either is connected or has a connected component consisting only of a periodic point with period $1$ or $2$. We shall present this as a consequence of a general two-islands lemma.

To obtain our two-islands lemma, we shall prove a result regarding preimages of simply connected domains under holomorphic maps. To do so, we shall first prove the general fact below.

\begin{lemma}
\label{lemma:connected}
Suppose that $X$ is a topological space that is both connected and locally connected, $A$ is a connected subset of $X$ and $B$ is a clopen subset of $X \setminus A$. Then $A \cup B$ is connected.
\end{lemma}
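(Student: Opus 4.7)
The plan is to argue by contradiction. Suppose $A \cup B = P \sqcup Q$ with both $P$ and $Q$ nonempty and clopen in $A \cup B$. Since $A$ is connected, we may assume $A \subseteq P$, so $Q \subseteq B$. The strategy is to promote $Q$ to a clopen subset of $X$ itself: connectedness of $X$ then forces $Q \in \lbrace \emptyset, X \rbrace$, which combined with $A \cap Q = \emptyset$ and $A \neq \emptyset$ gives $Q = \emptyset$, contradicting the nonemptiness of $Q$.

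Two preparatory observations: first, $Q$ is clopen in $X \setminus A$, because it is clopen in $A \cup B$, hence in $B$, which is clopen in $X \setminus A$ by hypothesis. Second, writing $Q = U \cap (A \cup B)$ and $P = V \cap (A \cup B)$ with $U, V$ open in $X$, the condition $Q \cap A = \emptyset$ forces $U \cap A = \emptyset$, so $U \subseteq X \setminus A$.

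Closedness of $Q$ in $X$ is essentially automatic. For $y \notin Q$: if $y \in A$, then $y \in P \subseteq V$ and $V \cap Q \subseteq V \cap (A \cup B) \cap Q = P \cap Q = \emptyset$; if $y \in X \setminus A$, the first preparatory observation gives an open set $W \subseteq X$ containing $y$ with $W \cap (X \setminus A) \cap Q = \emptyset$, and since $Q \subseteq X \setminus A$, this forces $W \cap Q = \emptyset$. In either case $y$ has an open neighborhood in $X$ disjoint from $Q$.

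The openness of $Q$ in $X$ is the step where local connectedness of $X$ enters. Given $x \in Q \subseteq U$, choose a connected open neighborhood $N$ of $x$ in $X$ with $N \subseteq U \subseteq X \setminus A$. By the first observation, $N \cap B$ is clopen in $N$, and it is nonempty (it contains $x$), so by connectedness of $N$ it equals $N$; hence $N \subseteq B \cap U = Q$. The only real obstacle lies in this final step: without shrinking $U$ to a connected open neighborhood, $U$ might stray beyond $B$ into $X \setminus (A \cup B)$, and one could not invoke the two-valued argument for the clopen subset $B$ of $X \setminus A$ to trap the neighborhood of $x$ inside $Q$.
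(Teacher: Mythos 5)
Your proof is correct and takes a genuinely different route from the paper's. The paper argues directly: it reduces to showing $A \cup C$ is connected for each connected component $C$ of $B$, proves that such a $C$ is in fact a connected component of $X \setminus A$ (using that $B$ is clopen there), uses local connectedness to place $\partial C$ inside $\partial A$, uses connectedness of $X$ to see $\partial C \neq \varnothing$, and then a boundary point of $C$ witnesses either $A \cap \overline{C} \neq \varnothing$ or $\overline{A} \cap C \neq \varnothing$, so $A \cup C$ is connected. You instead argue by contradiction: given a nontrivial clopen partition $P \sqcup Q$ of $A \cup B$ with $A \subseteq P$ (hence $Q \subseteq B$), you promote $Q$ to a clopen subset of the ambient space $X$ and appeal to connectedness of $X$. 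Local connectedness enters at a different point in your argument --- when you shrink the $X$-open set $U$ (with $Q = U \cap (A \cup B)$ and $U \subseteq X \setminus A$) to a connected open neighborhood $N$ of $x$, so that the two-valued argument for the clopen set $B$ in $X \setminus A$ forces $N \subseteq B$ and hence $N \subseteq B \cap U = Q$ --- and you correctly single this out as the load-bearing step. Your route is shorter and bypasses components and boundaries entirely; the paper's buys additional structure (each component of $B$ is a full component of $X \setminus A$ whose boundary meets $\overline{A}$). One tiny loose end: your closing sentence invokes $A \neq \varnothing$, which is not among the hypotheses. The paper disposes of the case $A = \varnothing$ at the outset; your argument also survives it (if $Q = X$ then $P = \varnothing$, again a contradiction), but the case distinction should be stated.
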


\begin{proof}
Note that the desired result is immediate if $A = \varnothing$. From now on, suppose that $A \neq \varnothing$. Then it suffices to prove that $A \cup C$ is connected for each connected component $C$ of $B$. Thus, assume that $C$ is a connected component of $B$. Denote by $D$ the connected component of $X \setminus A$ containing $C$. Then $B \cap D = D$ because $B \cap D$ is a nonempty clopen subset of $D$ and $D$ is connected, which yields $D \subseteq B$, and hence $D = C$. Thus, $C$ is a connected component of $X \setminus A$, and in particular \[ \partial C \subseteq \partial (X \setminus A) = \partial A \] since $X$ is locally connected. Furthermore, $\partial C \neq \varnothing$ since, otherwise, $C$ would be a nonempty clopen subset of $X$ contained in $X \setminus A$. Choose $x \in \partial C$. If $x \in A$, then $x \in A \cap \overline{C}$. Now, suppose that $x \in X \setminus A$. Denote by $C^{\prime}$ the connected component of $X \setminus A$ containing $x$. Then $C \cup C^{\prime}$ is connected because $C$ and $C^{\prime}$ are connected and $x \in \overline{C} \cap C^{\prime}$, and hence $C = C^{\prime}$. As a result, $x \in \overline{A} \cap C$. Thus, we have proved that $A \cap \overline{C} \neq \varnothing$ or $\overline{A} \cap C \neq \varnothing$. As $A$ and $C$ are connected, it follows that $A \cup C$ is connected. This completes the proof of the lemma.
\end{proof}

Using the previous lemma, we obtain the general result below, which the author was unable to find in the literature.

\begin{lemma}
\label{lemma:simply}
Suppose that $U, V$ are nonempty simply connected open subsets of $\mathbb{C}$ and $f \colon U \rightarrow \mathbb{C}$ is a holomorphic map. Then every connected component of $f^{-1}(V)$ is simply connected.
\end{lemma}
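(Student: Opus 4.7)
Plan: Fix a connected component $W$ of $f^{-1}(V)$. Since $f^{-1}(V)$ is open in $\mathbb{C}$, which is locally connected, $W$ is open. I will show that $W$ is simply connected by showing that $\widehat{\mathbb{C}} \setminus W$ is connected, and I will obtain this from Lemma~\ref{lemma:connected} applied with $X = \widehat{\mathbb{C}}$ (connected and locally connected), $A = \widehat{\mathbb{C}} \setminus f^{-1}(V)$, and $B = f^{-1}(V) \setminus W$. The set $B$ is the union of the connected components of $f^{-1}(V)$ other than $W$, each of which is open and closed in $f^{-1}(V)$; hence $B$ is clopen in $X \setminus A = f^{-1}(V)$. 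Consequently, once $A$ is shown to be connected, Lemma~\ref{lemma:connected} yields that $A \cup B = \widehat{\mathbb{C}} \setminus W$ is connected, as desired.

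The heart of the proof is therefore the statement that $\widehat{\mathbb{C}} \setminus f^{-1}(V)$ is connected. Suppose for contradiction it is not, and let $B'$ be a connected component of $\widehat{\mathbb{C}} \setminus f^{-1}(V)$ not containing $\infty$. Then $B'$ is a nonempty compact subset of $\mathbb{C}$. Since $\widehat{\mathbb{C}} \setminus U$ is connected (as $U$ is simply connected), contains $\infty$, and is contained in $\widehat{\mathbb{C}} \setminus f^{-1}(V)$, it lies entirely in the component containing $\infty$; hence $B' \subseteq U$ and $B' \subseteq f^{-1}(\mathbb{C} \setminus V)$, so $f(B') \subseteq \mathbb{C} \setminus V$.

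I will derive a contradiction via the argument principle. Since $B'$ is a connected component of the compact Hausdorff space $\widehat{\mathbb{C}} \setminus f^{-1}(V)$, components and quasi-components coincide, and $B'$ can be enclosed in a clopen subset of $\widehat{\mathbb{C}} \setminus f^{-1}(V)$ lying in an arbitrarily small neighborhood of $B'$ inside $U$. Approximating such a clopen by a finite union of closed squares of a fine grid produces a cycle $\gamma$ in $f^{-1}(V) \subseteq U$ with $n(\gamma, z) = 1$ for $z \in B'$ and $n(\gamma, z) = 0$ for $z$ outside a prescribed small neighborhood of $B'$; in particular, $n(\gamma, z) = 0$ for all $z \notin U$, so $\gamma$ is null-homologous in $U$. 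For any $w_0 \in \mathbb{C} \setminus V$, the cycle $f \circ \gamma$ lies in the simply connected region $V$, so $n(f \circ \gamma, w_0) = 0$; but the argument principle for $f$ on $U$ yields
\[
n(f \circ \gamma, w_0) \;=\; \sum_{z \in U,\, f(z) = w_0} n(\gamma, z) \cdot \operatorname{ord}_z(f - w_0) \;=\; \sum_{z \in B' \cap f^{-1}(w_0)} \operatorname{ord}_z(f - w_0),
\]
because every preimage of $w_0$ in $U$ lies in $\widehat{\mathbb{C}} \setminus f^{-1}(V)$ and only those in $B'$ have nonzero winding number. Hence $B' \cap f^{-1}(w_0) = \varnothing$ for every $w_0 \in \mathbb{C} \setminus V$, which forces $f(B') \subseteq V$ and contradicts $f(B') \subseteq \mathbb{C} \setminus V$ with $B' \neq \varnothing$. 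The main technical step is the construction of $\gamma$ with these prescribed winding-number properties; it combines the component-equals-quasi-component fact for compact Hausdorff spaces with a careful polygonal grid approximation, after which the remaining ingredients (the argument principle for null-homologous cycles and Lemma~\ref{lemma:connected}) are standard.
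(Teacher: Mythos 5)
Your argument is correct in outline but takes a genuinely different route from the paper's, and it has two small gaps worth flagging.

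The paper also reduces to checking that $\widehat{\mathbb{C}} \setminus U_0$ is connected for each component $U_0$, but it does so directly: it fixes a clopen $A \subseteq \widehat{\mathbb{C}} \setminus U_0$ avoiding $\infty$, observes $A \subseteq U$ because $\widehat{\mathbb{C}} \setminus U$ is connected, sets $B = f(A \cup U_0) \setminus V$, and shows $B$ is simultaneously open in $\widehat{\mathbb{C}} \setminus V$ (by the open mapping theorem) and compact (as $f(A \setminus f^{-1}(V))$ with $A \setminus f^{-1}(V)$ compact in $U$), so $B = \varnothing$ by connectedness of $\widehat{\mathbb{C}} \setminus V$; hence $A \subseteq f^{-1}(V)$, Lemma~\ref{lemma:connected} (applied with the connected set being $U_0$ and the clopen set being $A$) forces $A \subseteq U_0$, and $A = \varnothing$. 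No cycles or integration appear. You instead prove the a priori stronger statement that $\widehat{\mathbb{C}} \setminus f^{-1}(V)$ itself is connected, using the component-equals-quasi-component theorem to box a putative bounded component $B'$ inside a clopen set, the grid construction of a separating cycle $\gamma \subseteq f^{-1}(V)$ null-homologous in $U$, and the homological argument principle. You then feed that connectedness into Lemma~\ref{lemma:connected} with $A = \widehat{\mathbb{C}} \setminus f^{-1}(V)$ and $B$ the union of the other components. What the paper's route buys is economy: the only nontrivial ingredient beyond point-set topology is the openness of nonconstant holomorphic maps, and the compactness/openness interplay replaces the entire cycle-construction machinery. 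What your route buys is a sharper intermediate conclusion and a proof that will feel familiar to anyone who reasons habitually with winding numbers.

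Two issues. First, you never set aside the constant case. Your reduction step and the final contradiction are fine for constant $f$, but the argument-principle identity $n(f \circ \gamma, w_0) = \sum n(\gamma, z)\,\operatorname{ord}_z(f - w_0)$ is meaningless when $f$ is constant, so the proof that $\widehat{\mathbb{C}} \setminus f^{-1}(V)$ is connected silently assumes $f$ nonconstant. The constant case is trivial (then $f^{-1}(V)$ is $U$ or $\varnothing$), but it should be said, as the paper does. Second, the displayed equality
\[
n(f \circ \gamma, w_0) = \sum_{z \in B' \cap f^{-1}(w_0)} \operatorname{ord}_z(f - w_0)
\]
is not quite right: the clopen set $C \supseteq B'$ furnished by the quasi-component argument may strictly contain $B'$, and points of $C \setminus B'$ — as well as points inside the grid squares that lie outside $K' = \widehat{\mathbb{C}} \setminus f^{-1}(V)$ altogether — also carry winding number $1$. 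The cure is free: the left-hand side is $0$ and every term on the right is nonnegative, so all terms vanish, and in particular no $z \in B'$ can map to $w_0$; taking $w_0 = f(z_0)$ for any $z_0 \in B'$ gives the contradiction. Replace the equality with an inequality (or simply cite nonnegativity) and the argument closes correctly.
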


\begin{proof}
Note that the desired result is immediate if $f$ is constant. Assume now that $f$ is not constant. Recall that a connected open subset $D$ of $\mathbb{C}$ is simply connected if and only if $\widehat{\mathbb{C}} \setminus D$ is connected, where $\widehat{\mathbb{C}}$ is the Riemann sphere. Suppose that $U_{0}$ is a connected component of $f^{-1}(V)$, and let us show that $U_{0}$ is simply connected. Thus, assume that $A$ is a clopen subset of $\widehat{\mathbb{C}} \setminus U_{0}$ that does not contain $\infty$, and let us prove that $A = \varnothing$. Note that $A \setminus U$ is a clopen subset of $\widehat{\mathbb{C}} \setminus U$, which does not contain $\infty$, and hence $A \subseteq U$ since $U \subseteq \mathbb{C}$ is simply connected. Now, define \[ B = f\left( A \cup U_{0} \right) \setminus V \, \text{.} \] Then $B$ is an open subset of $\widehat{\mathbb{C}} \setminus V$ since $A \cup U_{0} = \widehat{\mathbb{C}} \setminus \left( \left( \widehat{\mathbb{C}} \setminus U_{0} \right) \setminus A \right)$ is an open subset of $U$ and the map $f$ is open. Moreover, we have \[ B = f\left( A \setminus f^{-1}(V) \right) \] and $A \setminus f^{-1}(V)$ is a compact subset of $U$ since it is closed in $\widehat{\mathbb{C}}$. It follows that $B$ is compact, and in particular it is also closed in $\widehat{\mathbb{C}} \setminus V$. Therefore, $B = \varnothing$ because $V \subseteq \mathbb{C}$ is simply connected and $B \subseteq \mathbb{C}$, and hence $A \subseteq f^{-1}(V)$. Moreover, $A \cup U_{0}$ is connected by Lemma~\ref{lemma:connected}. Therefore, $A \subseteq U_{0}$ since $U_{0}$ is a connected component of $f^{-1}(V)$, and hence $A = \varnothing$. This completes the proof of the lemma.
\end{proof}

Finally, counting critical points, we deduce the two-islands lemma below. This statement, which greatly simplified the author's exposition, was communicated to him by Buff. For comparison, we refer the reader to~\cite{B2000} for information about the classical Ahlfors five-islands theorem.

\begin{lemma}
\label{lemma:islandsArch}
Suppose that $U, V$ are nonempty simply connected open subsets of $\mathbb{C}$, $f \colon U \rightarrow V$ is a proper holomorphic map and $V_{1}, V_{2}$ are disjoint nonempty simply connected open subsets of $V$. Then there exist an index $j \in \lbrace 1, 2 \rbrace$ and a connected component $U_{j}$ of $f^{-1}\left( V_{j} \right)$ such that $f$ induces a biholomorphism from $U_{j}$ to $V_{j}$.
\end{lemma}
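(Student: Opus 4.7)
My plan is to use Riemann--Hurwitz twice---once globally for $f \colon U \to V$, once on each component of the preimages $f^{-1}\left( V_{j} \right)$---and then to combine the resulting counts with an elementary arithmetic comparison to force a component on which $f$ has local degree $1$.

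First, I would observe that $f \colon U \to V$ is proper of some degree $d \geq 1$ and that, since $U$ and $V$ are simply connected planar domains (both of Euler characteristic $1$), the Riemann--Hurwitz formula gives exactly $d -1$ critical points of $f$ in $U$, counted with multiplicity. Next, for each $j \in \lbrace 1, 2 \rbrace$, I would list the connected components $W_{j, 1}, \dotsc, W_{j, N_{j}}$ of $f^{-1}\left( V_{j} \right)$, which are all simply connected by Lemma~\ref{lemma:simply}. Each $W_{j, i}$ is closed in the open set $f^{-1}\left( V_{j} \right)$, and a short argument using properness of $f$ then shows that the restriction $f \colon W_{j, i} \to V_{j}$ is itself proper; hence it has a well-defined degree $d_{j, i} \geq 1$, with $\sum_{i = 1}^{N_{j}} d_{j, i} = d$, and Riemann--Hurwitz applied on each piece gives exactly $d_{j, i} -1$ critical points of $f$ in $W_{j, i}$.

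Since $V_{1} \cap V_{2} = \varnothing$, the $W_{j, i}$, with $j \in \lbrace 1, 2 \rbrace$ and $i \in \lbrace 1, \dotsc, N_{j} \rbrace$, are pairwise disjoint, and their combined critical point count cannot exceed the global total $d -1$. This yields the key inequality
\[
2 d -N_{1} -N_{2} = \sum_{j = 1}^{2} \sum_{i = 1}^{N_{j}} \left( d_{j, i} -1 \right) \leq d -1 \, \text{,}
\]
that is, $N_{1} +N_{2} \geq d +1$. Now, if every $d_{j, i}$ were at least $2$, then $2 N_{j} \leq \sum_{i} d_{j, i} = d$ for each $j$, so that $N_{1} +N_{2} \leq d$, contradicting the previous inequality. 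Therefore some $d_{j, i}$ equals $1$, and for such a choice the restriction $f \colon W_{j, i} \to V_{j}$ is a proper holomorphic bijection between simply connected planar domains, hence a biholomorphism.

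The only point that I expect to require mild care is the verification that each restriction $f \colon W_{j, i} \to V_{j}$ is proper, and this follows quickly from the fact that components of the open set $f^{-1}\left( V_{j} \right)$ are closed in it combined with properness of $f \colon U \to V$. Once this is in place, the remainder is straightforward Riemann--Hurwitz bookkeeping followed by the arithmetic comparison above.
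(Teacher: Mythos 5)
Your proof is correct and follows essentially the same approach as the paper: invoke Lemma~\ref{lemma:simply} to see that each component $W_{j,i}$ is simply connected, apply Riemann--Hurwitz globally and on each component to count critical points, and derive the existence of a degree-one component from an arithmetic comparison. The paper phrases the final step in terms of $e = C_j + N_j$ and $e \geq 2\min\lbrace C_1, C_2 \rbrace + 1$, whereas you phrase it as $N_1 + N_2 \geq d + 1$ versus $N_1 + N_2 \leq d$, but these are the same bookkeeping.
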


\begin{proof}
Denote by $e \geq 1$ the degree of $f \colon U \rightarrow V$. For $j \in \lbrace 1, 2 \rbrace$, denote by $C_{j} \geq 0$ the number of critical points for $f$ in $f^{-1}\left( V_{j} \right)$, counting multiplicities. Then \[ e = C +1 \geq C_{1} +C_{2} +1 \geq 2 \min\left\lbrace C_{1}, C_{2} \right\rbrace +1 \] by the Riemann--Hurwitz formula, where $C \geq 0$ is the number of critical points for $f$ in $U$, counting multiplicities. Now, for $j \in \lbrace 1, 2 \rbrace$, denote by $U_{j}^{(1)}, \dotsc, U_{j}^{\left( N_{j} \right)}$, with $N_{j} \geq 1$, the connected components of $f^{-1}\left( V_{j} \right)$. Then, for each $j \in \lbrace 1, 2 \rbrace$ and each $\ell \in \left\lbrace 1, \dotsc, N_{j} \right\rbrace$, the map $f \colon U_{j}^{(\ell)} \rightarrow V_{j}$ is proper of degree $e_{j}^{(\ell)} \geq 1$. For $j \in \lbrace 1, 2 \rbrace$, we have $e = \sum\limits_{\ell = 1}^{N_{j}} e_{j}^{(\ell)}$. Moreover, $U_{j}^{(\ell)}$ is a simply connected open subset of $\mathbb{C}$ for all $j \in \lbrace 1, 2 \rbrace$ and all $\ell \in \left\lbrace 1, \dotsc, N_{j} \right\rbrace$ by Lemma~\ref{lemma:simply}. By the Riemann--Hurwitz formula, it follows that $e_{j}^{(\ell)} = C_{j}^{(\ell)} +1$, where $C_{j}^{(\ell)} \geq 0$ is the number of critical points for $f$ in $U_{j}^{(\ell)}$, counting multiplicities, for all $j \in \lbrace 1, 2 \rbrace$ and all $\ell \in \left\lbrace 1, \dotsc, N_{j} \right\rbrace$. Thus, \[ \forall j \in \lbrace 1, 2 \rbrace, \, e = \sum_{\ell = 1}^{N_{j}} \left( C_{j}^{(\ell)} +1 \right) = C_{j} +N_{j} \, \text{.} \] Therefore, there exists $j \in \lbrace 1, 2 \rbrace$ such that $C_{j} < N_{j}$ as, otherwise, we would have $e \leq 2 \min\left\lbrace C_{1}, C_{2} \right\rbrace$. Then $C_{j}^{(\ell)} = 0$ for some $\ell \in \left\lbrace 1, \dotsc, N_{j} \right\rbrace$, and we have $e_{j}^{(\ell)} = 1$. Thus, $f$ induces a biholomorphism from $U_{j}^{(\ell)}$ to $V_{j}$, and the lemma is proved.
\end{proof}

\begin{remark}
\label{remark:islands}
In fact, under the hypotheses of Lemma~\ref{lemma:islandsArch}, one of the following two conditions is satisfied:
\begin{enumerate}
\item\label{item:islandsArch1} there exist an index $j \in \lbrace 1, 2 \rbrace$ and distinct connected components $U_{j}^{(1)}$ and $U_{j}^{(2)}$ of $f^{-1}\left( V_{j} \right)$ that are both mapped biholomorphically onto $V_{j}$ by $f$;
\item\label{item:islandsArch2} there exist connected components $U_{1}$ and $U_{2}$ of $f^{-1}\left( V_{1} \right)$ and $f^{-1}\left( V_{2} \right)$ that are mapped biholomorphically onto $V_{1}$ and $V_{2}$ by $f$, respectively.
\end{enumerate}
Returning to our proof of Lemma~\ref{lemma:islandsArch}, the condition~\eqref{item:islandsArch1} is satisfied if $C_{1} \neq C_{2}$ and the condition~\eqref{item:islandsArch2} is satisfied if $C_{1} = C_{2}$.
\end{remark}

Applying the previous lemma to a dynamical setting, we easily obtain the result below, which is a key ingredient in our proof of Theorem~\ref{theorem:degenB} in the complex case.

\begin{lemma}
\label{lemma:combinatArch}
Suppose that $f \in \Poly_{d}(\mathbb{C})$ has a disconnected filled Julia set $\mathcal{K}_{f}$. Then one of the following two conditions is satisfied:
\begin{enumerate}
\item\label{item:combinatArch1} there exists a connected component $U$ of $\left\lbrace g_{f} < \frac{M_{f}}{d} \right\rbrace$ such that $f$ induces a biholomorphism from $U$ onto the connected component $V$ of $\left\lbrace g_{f} < M_{f} \right\rbrace$ containing $U$;
\item\label{item:combinatArch2} for all distinct connected components $V, V^{\prime}$ of $\left\lbrace g_{f} < M_{f} \right\rbrace$, there exists a connected component $U$ of $\left\lbrace g_{f} < \frac{M_{f}}{d} \right\rbrace$ contained in $V$ such that $f$ induces a biholomorphism from $U$ onto $V^{\prime}$.
\end{enumerate}
In addition, if $d \in \lbrace 2, 3 \rbrace$, then there exists a connected component $V$ of $\left\lbrace g_{f} < M_{f} \right\rbrace$ such that $f$ induces a biholomorphism from $V$ onto $\left\lbrace g_{f} < d \cdot M_{f} \right\rbrace$.
\end{lemma}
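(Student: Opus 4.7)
The plan is to derive both conclusions from the two-islands lemma (Lemma~\ref{lemma:islandsArch}), applied to the restriction of $f$ to each connected component of $\{g_f < M_f\}$. First I would assemble the pieces. Using $g_f \circ f = d \cdot g_f$, we have $f^{-1}(\{g_f < M_f\}) = \{g_f < M_f/d\}$ and $f^{-1}(\{g_f < d M_f\}) = \{g_f < M_f\}$. Since $\mathcal{K}_f$ is disconnected, $M_f > 0$ and, by Lemma~\ref{lemma:disconnected}, $\{g_f < M_f\}$ splits into $N \geq 2$ components $V_1, \ldots, V_N$, each a bounded simply connected domain; the set $\{g_f < d M_f\}$ is itself a bounded simply connected domain (since $d M_f > M_f$), by the general description of sublevel sets of $g_f$ recalled above. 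Hence, for each $j$, the restriction $f \colon V_j \to \{g_f < d M_f\}$ is a proper holomorphic map between simply connected domains, of some degree $d_j \geq 1$ with $\sum_j d_j = d$. I would also observe that every connected component of $\{g_f < M_f/d\}$ lies in a unique $V_j$ and is sent by $f$ into a unique component of $\{g_f < M_f\}$, so that the connected components of $f^{-1}(V) \cap V_j$ are precisely those components of $\{g_f < M_f/d\}$ contained in $V_j$ which $f$ maps into $V$.

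Next I would establish the dichotomy. Suppose condition~(1) fails. For any two distinct components $V$ and $V'$ of $\{g_f < M_f\}$, I apply Lemma~\ref{lemma:islandsArch} to $f \colon V \to \{g_f < d M_f\}$, taking as islands the two disjoint simply connected open subsets $V, V' \subseteq \{g_f < d M_f\}$. The conclusion is that some connected component $U$ of $\{g_f < M_f/d\}$ contained in $V$ is mapped biholomorphically by $f$ onto either $V$ or $V'$. The former alternative is excluded by the failure of~(1), so $f \colon U \to V'$ is a biholomorphism, and since $(V, V')$ was arbitrary this establishes condition~(2).

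Finally, for $d \in \{2, 3\}$: Riemann--Hurwitz applied to the proper map $f \colon V_j \to \{g_f < d M_f\}$ between simply connected domains gives $d_j = C_j + 1 \geq 1$, where $C_j$ is the critical multiplicity of $f$ in $V_j$. Since $\sum_j d_j = d$ and $N \geq 2$, having $d_j \geq 2$ for every $j$ would force $d \geq 2N \geq 4$; hence for $d \leq 3$ some $d_j$ equals $1$ and the corresponding $f \colon V_j \to \{g_f < d M_f\}$ is a biholomorphism. The only real subtlety in the whole argument is verifying that each open set fed into Lemma~\ref{lemma:islandsArch} is simply connected rather than merely connected; this is immediate from the properties of sublevel sets of $g_f$ recalled in the preceding subsection.
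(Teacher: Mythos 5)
Your proposal is correct and follows essentially the same route as the paper: apply the two-islands lemma (Lemma~\ref{lemma:islandsArch}) to $f \colon V \rightarrow \{g_f < d M_f\}$ with islands $V$ and $V'$ to get the dichotomy, then count degrees $d_j$ with $\sum_j d_j = d$ and $N \geq 2$ to handle $d \in \{2,3\}$. The invocation of Riemann--Hurwitz in the last paragraph is harmless but unnecessary, since $d_j \geq 1$ already follows from properness.
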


\begin{proof}
If the condition~\eqref{item:combinatArch1} is satisfied, we are done. Now, suppose that this is not the case, and let us show that the condition~\eqref{item:combinatArch2} is satisfied. Assume that $V, V^{\prime}$ are two distinct connected components of $\left\lbrace g_{f} < M_{f} \right\rbrace$. Then $f \colon V \rightarrow \left\lbrace g_{f} < d \cdot M_{f} \right\rbrace$ is a proper map and there is no connected component of $\left\lbrace g_{f} < \frac{M_{f}}{d} \right\rbrace$ contained in $V$ that is mapped biholomorphically onto $V$ by $f$. As a result, by Lemma~\ref{lemma:islandsArch}, $f$ maps a connected component of $\left\lbrace g_{f} < \frac{M_{f}}{d} \right\rbrace$ contained in $V$ biholomorphically onto $V^{\prime}$. Thus, the desired result is proved.

Now, assume that $d \in \lbrace 2, 3 \rbrace$. Denote by $V_{1}, \dotsc, V_{N}$, with $N \geq 2$, the connected components of $\left\lbrace g_{f} < M_{f} \right\rbrace$. For $j \in \lbrace 1, \dotsc, N \rbrace$, the map $f \colon V_{j} \rightarrow \left\lbrace g_{f} < d \cdot M_{f} \right\rbrace$ is proper of degree $d_{j} \geq 1$. We have $d = \sum\limits_{j = 1}^{N} d_{j}$. Therefore, as $N \geq 2$ and $d \leq 3$, there exists $j \in \lbrace 1, \dotsc, N \rbrace$ such that $d_{j} = 1$. Thus, $f$ induces a biholomorphism from $V_{j}$ onto $\left\lbrace g_{f} < d \cdot M_{f} \right\rbrace$, and the lemma is proved.
\end{proof}

\begin{remark}
As the connected components of $\left\lbrace g_{f} < \eta \right\rbrace$, with $\eta \in \mathbb{R}_{> 0}$, are known to be simply connected, our proof of Lemma~\ref{lemma:combinatArch} does not actually require Lemma~\ref{lemma:simply} and one could have simply added as an assumption in the statement of Lemma~\ref{lemma:islandsArch} the fact that the connected components of $f^{-1}\left( V_{j} \right)$, with $j \in \lbrace 1, 2 \rbrace$, are all simply connected. Nonetheless, Lemmas~\ref{lemma:simply} and~\ref{lemma:islandsArch} are of interest in their own right.
\end{remark}

\begin{remark}
\label{remark:combinatArch}
Using Remark~\ref{remark:islands}, one can replace the condition~\eqref{item:combinatArch1} in the statement of Lemma~\ref{lemma:combinatArch} by the following one:
\begin{enumerate}
\item[(1')] there exist a connected component $V$ of $\left\lbrace g_{f} < M_{f} \right\rbrace$ and distinct connected components $U, U^{\prime}$ of $\left\lbrace g_{f} < \frac{M_{f}}{d} \right\rbrace$ contained in $V$ such that $f$ maps both $U$ and $U^{\prime}$ biholomorphically onto $V$.
\end{enumerate}
\end{remark}

As a consequence of Lemma~\ref{lemma:combinatArch}, we easily obtain the result below, which is not used in our proof of Theorem~\ref{theorem:degenB} but may be of independent interest.

\begin{proposition}
\label{proposition:singleton}
Assume that $f \in \Poly_{d}(\mathbb{C})$ has a disconnected filled Julia set $\mathcal{K}_{f}$. Then $\mathcal{K}_{f}$ has a connected component that consists only of a periodic point for $f$ with period $1$ or $2$. Furthermore, if $d \in \lbrace 2, 3 \rbrace$, then $\mathcal{K}_{f}$ has a connected component that consists only of a fixed point for $f$.
\end{proposition}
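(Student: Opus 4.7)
The plan is to apply Lemma~\ref{lemma:combinatArch} and produce, in each of its configurations, a bounded simply connected open set $V \subseteq \mathbb{C}$, a proper open subset $W \subsetneq V$ with $\overline{W} \subset V$, and a period $p \in \lbrace 1, 2 \rbrace$ such that $f^{\circ p}$ induces a biholomorphism from $W$ onto $V$. Applying Schwarz--Pick to the inverse $h = \left( f^{\circ p}|_{W} \right)^{-1} \colon V \to V$ then yields a fixed point $z_{0}$ of $f^{\circ p}$ in $W$, and a contraction argument forces the connected component of $\mathcal{K}_{f}$ through $z_{0}$ to be the singleton $\lbrace z_{0} \rbrace$.

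Since $\mathcal{K}_{f}$ is disconnected, $M_{f} > 0$. In case~(1) of the lemma I set $p = 1$, $W = U$, and $V$ equal to the component of $\lbrace g_{f} < M_{f} \rbrace$ containing $U$. In case~(2) I fix distinct components $V_{1}, V_{2}$ of $\lbrace g_{f} < M_{f} \rbrace$ together with components $U_{i} \subset V_{i}$ of $\lbrace g_{f} < M_{f}/d \rbrace$ such that $f \colon U_{i} \to V_{3 -i}$ is a biholomorphism, and set $p = 2$, $V = V_{1}$, and $W = \left( f|_{U_{1}} \right)^{-1}\left( U_{2} \right) \subsetneq U_{1}$; then $f^{\circ 2} \colon W \to V_{1}$ is a biholomorphism. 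For $d \in \lbrace 2, 3 \rbrace$, the last clause of the lemma gives a component $W$ of $\lbrace g_{f} < M_{f} \rbrace$ mapped biholomorphically by $f$ onto $V := \lbrace g_{f} < d \cdot M_{f} \rbrace$, and I take $p = 1$. In every case, $V$ is bounded and simply connected and $\overline{W} \subset V$, since $\overline{W}$ is a connected subset of some sublevel set of $g_{f}$ strictly inside the one defining $V$, hence lies in the component $V$.

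As $h(V) = W \subsetneq V$, the map $h$ is not an automorphism of $V$, so Schwarz--Pick shows it strictly contracts the hyperbolic metric $\rho_{V}$ on $V$. Iterating on the compact set $\overline{W}$ gives a unique fixed point $z_{0} \in W$ of $h$, hence of $f^{\circ p}$, and $z_{0}$ is periodic for $f$ with period $1$ or $2$. Let $K$ be the connected component of $\mathcal{K}_{f}$ containing $z_{0}$. I first check that $K \subset W$: in case~(1) and in the $d \in \lbrace 2, 3 \rbrace$ case, $K$ is a connected subset of the relevant sublevel set of $g_{f}$ whose component through $z_{0}$ is $W$; in case~(2), additionally $f(K)$ is a connected subset of $\lbrace g_{f} < M_{f}/d \rbrace$ containing $f\left( z_{0} \right) \in U_{2}$, so $f(K) \subset U_{2}$ and thus $K \subset \left( f|_{U_{1}} \right)^{-1}\left( U_{2} \right) = W$. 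Total invariance of $\mathcal{K}_{f}$ gives $h(K) \subset K$; conversely, since $h \circ f^{\circ p} = \operatorname{id}$ on $W \supset K$ and $f^{\circ p}(K) \subset K$, also $K \subset h(K)$, so $h(K) = K$. If $K$ had positive $\rho_{V}$-diameter $D$, picking $p_{0}, q_{0} \in K$ realizing $D$ and preimages $p_{1}, q_{1} \in K$ with $h\left( p_{1} \right) = p_{0}$ and $h\left( q_{1} \right) = q_{0}$ would give $D = \rho_{V}\left( h\left( p_{1} \right), h\left( q_{1} \right) \right) < \rho_{V}\left( p_{1}, q_{1} \right) \leq D$, a contradiction; hence $K = \lbrace z_{0} \rbrace$. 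The main subtlety is verifying $K \subset W$ in case~(2), where $W$ is obtained by pulling back rather than as a component of a level set of $g_{f}$.
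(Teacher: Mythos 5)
Your proof is correct and takes essentially the same route as the paper's: apply Lemma~\ref{lemma:combinatArch} to produce simply connected open sets $W \Subset V$ and $p \in \lbrace 1, 2 \rbrace$ with $f^{\circ p} \colon W \rightarrow V$ a biholomorphism, then use the Schwarz--Pick contraction of the inverse map to show that the connected component of $\mathcal{K}_{f}$ through its unique fixed point degenerates to that point. The only cosmetic differences are that the paper phrases the final step via the nested intersection $\bigcap_{n \geq 0} g^{\circ n}(U)$ whereas you use an invariance-plus-hyperbolic-diameter argument, and that the extra verification of $K \subset W$ in case~(2) was avoidable: your $W = \left( f|_{U_{1}} \right)^{-1}\left( U_{2} \right)$ is itself a connected component of $\lbrace g_{f} < M_{f}/d^{2} \rbrace$ (since $f^{-1}\left( \lbrace g_{f} < M_{f}/d \rbrace \right) = \lbrace g_{f} < M_{f}/d^{2} \rbrace$), so $K \subset W$ follows exactly as in the other cases.
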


\begin{proof}
Assume for a moment that the condition~\eqref{item:combinatArch2} of Lemma~\ref{lemma:combinatArch} holds. Denote by $V_{1}, \dotsc, V_{N}$, with $N \geq 2$, the connected components of $\left\lbrace g_{f} < M_{f} \right\rbrace$. Then there are connected components $U_{1}, U_{2}$ of $\left\lbrace g_{f} < \frac{M_{f}}{d} \right\rbrace$ contained in $V_{1}, V_{2}$, respectively, such that $f$ induces biholomorphisms from $U_{1}$ to $V_{2}$ and from $U_{2}$ to $V_{1}$. Define $g_{1}$ to be the inverse of $f \colon U_{1} \rightarrow V_{2}$. Then $g_{1}\left( U_{1} \right)$ is a connected component of $\left\lbrace g_{f} < \frac{M_{f}}{d^{2}} \right\rbrace$ contained in $U_{1}$ and the map $f^{\circ 2} \colon g_{1}\left( U_{1} \right) \rightarrow V_{1}$ is a biholomorphism.

Thus, by Lemma~\ref{lemma:combinatArch}, there exist $\eta \in \mathbb{R}_{> 0}$, $p \in \lbrace 1, 2 \rbrace$ and a connect component $U$ of $\left\lbrace g_{f} < \frac{\eta}{d^{p}} \right\rbrace$ such that $f^{\circ p}$ induces a biholomorphism from U onto the connected component $V$ of $\left\lbrace g_{f} < \eta \right\rbrace$ containing $U$, and we can take $p = 1$ if $d \in \lbrace 2, 3 \rbrace$. Now, denote by $g$ the inverse of $f^{\circ p} \colon U \rightarrow V$, and define \[ S = \bigcap_{n \geq 0} g^{\circ n}(U) \, \text{.} \] Let us show that $S$ is a connected component of $\mathcal{K}_{f}$, which consists only of a fixed point for $f^{\circ p}$. Note that $S \subseteq \mathcal{K}_{f}$ because $U$ is bounded. Moreover, $S$ is nonempty and connected since $S = \bigcap\limits_{n \geq 0} g^{\circ n}\left( \overline{U} \right)$ is the intersection of a decreasing sequence of nonempty, connected and compact subsets of $\mathbb{C}$. Now, denote by $C$ the connected component of $\mathcal{K}_{f}$ containing $S$. Then, for every $n \geq 0$, we have \[ f^{\circ p n}(C) \subseteq \mathcal{K}_{f} \subseteq \left\lbrace g_{f} < \frac{\eta}{d^{p}} \right\rbrace \quad \text{and} \quad \varnothing \neq f^{\circ p n}(S) \subseteq f^{\circ p n}(C) \cap U \, \text{,} \] and hence $f^{\circ p n}(C) \subseteq U$ as $f^{\circ p n}(C)$ is connected and $U$ is a connected component of $\left\lbrace g_{f} < \frac{\eta}{d^{p}} \right\rbrace$. It follows by induction that \[ \forall n \geq 0, \, C = g^{\circ n}\left( f^{\circ p n}(C) \right) \subseteq g^{\circ n}(U) \, \text{.} \] Thus, $C = S$. Now, note that $g$ is contracting with respect to the Poincar\'{e} metric on $V$ by the Schwarz lemma, since $U \Subset V$. Therefore, $S$ consists of a single point, which is necessarily fixed for $f^{\circ p}$ because $S$ is invariant under $f^{\circ p}$. This completes the proof of the proposition.
\end{proof}

\begin{remark}
Note that, if $f \in \Poly_{d}(\mathbb{C})$ has a disconnected filled Julia set $\mathcal{K}_{f}$, then $\mathcal{K}_{f}$ has uncountably many connected components. In~\cite{QY2009}, Qiu and Yin proved that, for every $f \in \Poly_{d}(\mathbb{C})$, all but countably many connected components of $\mathcal{K}_{f}$ consist of a single point. In contrast, as shown by McMullen in~\cite{MM1988}, there are rational maps $f \colon \widehat{\mathbb{C}} \rightarrow \widehat{\mathbb{C}}$ whose Julia set $\mathcal{J}_{f}$ is disconnected but has no connected component that consists of a single point. The existence of connected components of the Julia set consisting of a single point for transcendental maps was studied by Dom\'{\i}nguez in~\cite{D1997} and~\cite{D1998}.
\end{remark}

\begin{remark}
In fact, using Remark~\ref{remark:combinatArch}, one can show that, for every $f \in \Poly_{d}(\mathbb{C})$, either $\mathcal{K}_{f}$ is connected or $\mathcal{K}_{f}$ has a connected component consisting only of a periodic point for $f$ with period $2$.
\end{remark}

\subsection{Multipliers and maximal escape rates}

Here, let us obtain lower bounds on moduli on multipliers in terms of maximal escape rates. These bounds depend on combinatorial information regarding sublevel sets of Green functions. A similar discussion can be found in~\cite[Section~4]{DMMM2008}, where the corresponding results are stated in terms of trees. Thus, we give details for the reader's convenience.

First, we shall briefly recall a few necessary elements from conformal geometry. We say that a Riemann surface $A$ is an \emph{annulus} if its fundamental group $\pi_{1}(A)$ is isomorphic to $\mathbb{Z}$. For every $z_{0} \in \mathbb{C}$ and all $r, R \in \mathbb{R}_{> 0}$, with $r < R$, \[ \mathcal{A}_{z_{0}}(r, R) = \left\lbrace z \in \mathbb{C} : r < \left| z -z_{0} \right| < R \right\rbrace \] is an annulus. In fact, every annulus $A$ is biholomorphic to the punctured disk $\mathbb{D}^{*}$, the punctured plane $\mathbb{C}^{*}$ or the round annulus $\mathcal{A}_{0}(1, R)$ for a unique $R \in (1, +\infty)$. We call \emph{modulus} of an annulus $A$ the number \[ \Mod(A) = \begin{cases} \frac{1}{2 \pi} \log(R) & \text{if } A \cong \mathcal{A}_{0}(1, R) \text{, with } R \in (1, +\infty)\\ +\infty & \text{if } A \cong \mathbb{D}^{*} \text{ or } \mathbb{C}^{*} \end{cases} \, \text{.} \] We shall only use the following two facts about moduli of annuli:
\begin{itemize}
\item If $A$ is a Riemann surface, $B$ is an annulus and $f \colon A \rightarrow B$ is a holomorphic covering map of degree $e \geq 1$, then $A$ is an annulus and \[ \Mod(B) = e \cdot \Mod(A) \, \text{.} \]
\item Given an annulus $B$, we say that a nonempty connected open subset $A$ of $B$ is a \emph{subannulus} of $B$ if the inclusion $\imath \colon A \hookrightarrow B$ induces an isomorphism $\imath_{*} \colon \pi_{1}(A) \rightarrow \pi_{1}(B)$. If $B$ is an annulus and $A_{1}, \dotsc, A_{r}$ are pairwise disjoint subannuli of $B$, with $r \geq 0$, then \[ \sum_{j = 1}^{r} \Mod\left( A_{j} \right) \leq \Mod(B) \, \text{.} \] This statement is known as Gr\"{o}tzsch's inequality.
\end{itemize}
We refer the reader to~\cite[Section~3.2]{H2006} for further details about annuli.

Recall that a compact subset $K$ of $\mathbb{C}$ is full if the set $\mathbb{C} \setminus K$ is connected. Note that, if $V$ is a simply connected open subset of $\mathbb{C}$ and $K$ is a nonempty, connected and full compact subset of $V$, then $V \setminus K$ is an annulus. Also note that, if $W \subseteq V$ are simply connected open subsets of $\mathbb{C}$ and $K \subseteq L$ are nonempty, connected and full compact subsets of $W$, then $W \setminus L$ is a subannulus of $V \setminus K$.

We shall crucially use the inequality below, which relates moduli of multipliers at repelling fixed points to moduli of certain annuli.

\begin{lemma}
\label{lemma:modulusArch}
Suppose that $U, V$ are nonempty simply connected open subsets of $\mathbb{C}$ such that $\overline{U}$ is a full compact subset of $V$ and $f \colon U \rightarrow V$ is a biholomorphism. Then $f$ has a unique fixed point $z_{0} \in U$ and we have \[ \frac{1}{2 \pi} \log\left\lvert f^{\prime}\left( z_{0} \right) \right\rvert \geq \Mod\left( V \setminus \overline{U} \right) \, \text{.} \]
\end{lemma}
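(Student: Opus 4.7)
The plan is to treat the two assertions separately: uniqueness of the fixed point follows from the standard Schwarz/Denjoy--Wolff argument, and the modulus inequality comes from iterating $f^{-1}$ to produce infinitely many subannuli of the same modulus, then bounding their total modulus from above via the Koebe one-quarter theorem.

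For uniqueness, the inverse $g = f^{-1} \colon V \to U$ is a holomorphic self-map of $V$ (through the inclusion $U \hookrightarrow V$) whose image is compactly contained in $V$. Therefore $g$ strictly contracts the Poincar\'{e} metric of $V$, so by the Denjoy--Wolff theorem it has a unique fixed point $z_{0} \in V$, which necessarily lies in $g(V) = U$. Equivalently, $z_{0}$ is the unique fixed point of $f$ in $U$, and $\lvert f^{\prime}(z_{0}) \rvert = 1/\lvert g^{\prime}(z_{0}) \rvert > 1$.

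For the modulus bound, set $m = \Mod(V \setminus \overline{U})$ and $\lambda = f^{\prime}(z_{0})$. Define a nested sequence of simply connected open subsets $V = U_{0} \supsetneq U = U_{1} \supsetneq U_{2} \supsetneq \dotsb$ by $U_{n +1} = f^{-1}(U_{n})$. A short induction, using the continuity of $f^{-1}$ on $V$, gives $\overline{U_{n +1}} \subset U_{n}$ for every $n \geq 0$, so that $A_{n} := U_{n} \setminus \overline{U_{n +1}}$ is a genuine annulus. Since $f$ restricts to a biholomorphism $U_{n +1} \to U_{n}$ and hence to a biholomorphism $A_{n +1} \to A_{n}$, all the $A_{n}$ share a common modulus $m$. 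For each $k \geq 1$, the annulus $B_{k} := V \setminus \overline{U_{k}}$ contains $A_{0}, \dotsc, A_{k -1}$ as pairwise disjoint subannuli separating its two boundary components, so Gr\"{o}tzsch's inequality yields
\[ \Mod(B_{k}) \geq k m \, \text{.} \]

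For the matching upper bound, observe that $f^{\circ k} \colon U_{k} \to V$ is a biholomorphism with derivative $\lambda^{k}$ at $z_{0}$, so the conformal radius of $(U_{k}, z_{0})$ equals $r_{0} / \lvert \lambda \rvert^{k}$, where $r_{0}$ is that of $(V, z_{0})$. Choosing a Riemann map $\Psi \colon \mathbb{D} \to V$ with $\Psi(0) = z_{0}$ and $\Psi^{\prime}(0) = r_{0}$, the set $\widetilde{U}_{k} := \Psi^{-1}(U_{k})$ is a simply connected neighborhood of $0$ in $\mathbb{D}$ whose Riemann map from $\mathbb{D}$ has derivative $\lvert \lambda \rvert^{-k}$ at $0$. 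By the Koebe one-quarter theorem, $D\left( 0, \lvert \lambda \rvert^{-k}/4 \right) \subset \widetilde{U}_{k}$, and by monotonicity of the modulus under inclusion of subannuli around the same hole,
\[ \Mod(B_{k}) = \Mod\left( \mathbb{D} \setminus \overline{\widetilde{U}_{k}} \right) \leq \Mod\left( \mathbb{D} \setminus \overline{D\left( 0, \lvert \lambda \rvert^{-k}/4 \right)} \right) = \frac{k \log\lvert \lambda \rvert}{2 \pi} +\frac{\log 4}{2 \pi} \, \text{.} \]
Combining with the lower bound $\Mod(B_{k}) \geq k m$, dividing by $k$, and letting $k \to +\infty$ gives $m \leq \log\lvert \lambda \rvert/(2 \pi)$, which is the stated inequality.

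The main obstacle is the upper bound on $\Mod(B_{k})$: one needs the growth to be linear in $k$ with slope exactly $\log\lvert \lambda \rvert/(2 \pi)$, up to a bounded error that disappears in the limit. The Koebe one-quarter theorem is the natural tool, because it converts the scaling information $(f^{\circ k})^{\prime}(z_{0}) = \lambda^{k}$---which controls the conformal radius of $U_{k}$ at $z_{0}$---into a genuine Euclidean lower bound on $\widetilde{U}_{k}$ inside the unit disk. The other routine verifications (the inductive inclusion $\overline{U_{n +1}} \subset U_{n}$ and the fact that the $A_{n}$ are subannuli of $B_{k}$) are straightforward but must be done carefully.
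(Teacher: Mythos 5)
Your proof is correct, and you obtain exactly the inequality claimed. Your uniqueness argument (Schwarz/Denjoy--Wolff for $g = f^{-1}$, using that $\overline{g(V)} = \overline{U}$ is compact in $V$) is the same as the paper's, and your lower bound $\Mod(B_k) \geq k\,\Mod(A)$ via Gr\"{o}tzsch applied to the pairwise disjoint subannuli $A_0, \dotsc, A_{k-1} = g^{\circ(k-1)}(A)$ is essentially what the paper does (the paper works with the iterated sets $g^{\circ N}(V) \setminus g^{\circ n}(\overline{U})$ rather than with $V \setminus \overline{U_k}$, but this is only a cosmetic difference). Where you genuinely diverge from the paper is in the matching upper bound on $\Mod(B_k)$. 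The paper fixes any $\alpha > |f'(z_0)|$, uses the first-order estimate $|f(z) - z_0| \leq \alpha\,|z - z_0|$ on a small Euclidean disk $D(z_0, R)$, and then shows $g^{\circ N}(V) \setminus g^{\circ n}(\overline{U})$ sits inside the round annulus $\mathcal{A}_{z_0}(r/\alpha^{n -N}, R)$, which gives a linear bound with slope $\log\alpha/(2\pi)$; finally it lets $\alpha \to |f'(z_0)|$. You instead observe that $f^{\circ k} \colon U_k \to V$ has derivative $\lambda^k$ at $z_0$, so the conformal radius of $(U_k, z_0)$ scales by $|\lambda|^{-k}$, and you convert this into a Euclidean inclusion $D(0, |\lambda|^{-k}/4) \subset \widetilde{U}_k$ via the Koebe one-quarter theorem. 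This gives the exact slope $\log|\lambda|/(2\pi)$ immediately, with a bounded error $(\log 4)/(2 \pi)$ that disappears in the limit, and thereby avoids the auxiliary parameter $\alpha$ and the extra limiting step. The trade-off is that your argument invokes the Koebe $1/4$ theorem, a deeper (though standard) tool than the elementary local linearization estimate the paper relies on. Both routes are fully rigorous.
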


\begin{proof}
Set $A = V \setminus \overline{U}$. Note that $U, V$ are biholomorphic to the unit disk $\mathbb{D}$. Now, denote by $g \colon V \rightarrow U$ the inverse of $f$. By the Schwarz lemma, as $U \Subset V$, the map $g$ is a contraction with respect to the Poincar\'{e} metric on $V$. In particular, $g$ has a unique fixed point $z_{0} \in U$, which is necessarily attracting for $g$. Then $z_{0}$ is also the unique fixed point for $f$ and $z_{0}$ is repelling for $f$. Now, suppose that $\alpha > \left\lvert f^{\prime}\left( z_{0} \right) \right\rvert$. Then there exists $R \in \mathbb{R}_{> 0}$ such that $\left\lvert f(z) -z_{0} \right\rvert \leq \alpha \left\lvert z -z_{0} \right\rvert$ for all $z \in D\left( z_{0}, R \right)$. Since $g$ is contracting with respect to the Poincar\'{e} metric on $V$, there exists $N \geq 0$ such that $g^{\circ N}(V) \subseteq D\left( z_{0}, R \right)$. Take $r \in (0, R)$ such that $D\left( z_{0}, r \right) \subseteq g^{\circ N}(U)$. Note that $g^{\circ N}(A), \dotsc, g^{\circ n}(A)$ are pairwise disjoint subannuli of $g^{\circ N}(V) \setminus g^{\circ n}\left( \overline{U} \right)$ with modulus $\Mod(A)$ for all $n \geq N$. Moreover, for every $n \geq N$, we have \[ D\left( z_{0}, \frac{r}{\alpha^{n -N}} \right) \subseteq g^{\circ (n -N)}\left( D\left( z_{0}, r \right) \right) \subseteq g^{\circ n}(U) \] since $\alpha > 1$, and hence $g^{\circ N}(V) \setminus g^{\circ n}\left( \overline{U} \right)$ is a subannulus of $\mathcal{A}_{z_{0}}\left( \frac{r}{\alpha^{n -N}}, R \right)$. Thus, by Gr\"{o}tzsch's inequality, we have \[ (n -N +1) \Mod(A) \leq \Mod\left( \mathcal{A}_{z_{0}}\left( \frac{r}{\alpha^{n -N}}, R \right) \right) = \frac{1}{2 \pi} \log\left( \frac{R}{r} \right) +\frac{n -N}{2 \pi} \log(\alpha) \] for all $n \geq N$, which yields $\Mod(A) \leq \frac{1}{2 \pi} \log(\alpha)$ by dividing by $n -N$ and letting $n \rightarrow +\infty$. Letting $\alpha \rightarrow \left\lvert f^{\prime}\left( z_{0} \right) \right\rvert$, we obtain the desired inequality. Thus, the lemma is proved.
\end{proof}

Now, we shall prove the result below (compare~\cite[Lemma~4.6]{DMMM2008}). It is a key point in our proof of Theorem~\ref{theorem:degenB} in the complex case.

\begin{lemma}
\label{lemma:ineqArch}
Suppose that $f \in \Poly_{d}(\mathbb{C})$ has a disconnected filled Julia set $\mathcal{K}_{f}$, $\eta \in \left[ M_{f}, +\infty \right)$, $U_{0}, \dotsc, U_{p -1}$ are (not necessarily distinct) connected components of $\left\lbrace g_{f} < \frac{\eta}{d^{k}} \right\rbrace$, with $k \geq 0$ and $p \geq 1$, $V_{0}, \dotsc, V_{p -1}$ are the connected components of $\left\lbrace g_{f} < \frac{\eta}{d^{k -1}} \right\rbrace$ containing $U_{0}, \dotsc, U_{p -1}$, respectively, and $f$ induces a biholomorphism from $U_{j}$ to $V_{j +1 \pmod{p}}$ for all $j \in \lbrace 0, \dotsc, p -1 \rbrace$. Then $f^{\circ p}$ has a unique fixed point $z_{0} \in \mathbb{C}$ such that $f^{\circ j}\left( z_{0} \right) \in U_{j}$ for all $j \in \lbrace 0, \dotsc, p -1 \rbrace$. Furthermore, we have \[ \log\left\lvert \left( f^{\circ p} \right)^{\prime}\left( z_{0} \right) \right\rvert \geq (d -1) \left( \sum_{j = 0}^{p -1} \frac{1}{d_{j}} \right) \eta \, \text{,} \] where $d_{j}$ denotes the degree of $f^{\circ k} \colon V_{j} \rightarrow \left\lbrace g_{f} < d \cdot \eta \right\rbrace$ for all $j \in \lbrace 0, \dotsc, p -1 \rbrace$.
\end{lemma}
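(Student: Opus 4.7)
The plan is to construct the periodic cycle as the unique fixed point of a contracting composition of inverse branches, apply Lemma~\ref{lemma:modulusArch} to obtain a lower bound on the modulus of the multiplier, and then decompose this modulus via Gr\"{o}tzsch's inequality. For each $j \in \lbrace 0, \dotsc, p -1 \rbrace$, let $\sigma_{j} \colon V_{j +1 \bmod p} \rightarrow U_{j}$ be the holomorphic inverse of the biholomorphism $f \colon U_{j} \rightarrow V_{j +1 \bmod p}$, and set $\Psi_{j} := \sigma_{0} \circ \sigma_{1} \circ \cdots \circ \sigma_{j -1}$, with $\Psi_{0} = \mathrm{id}_{V_{0}}$. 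Writing $W_{j} := \Psi_{j}\left( V_{j} \right)$, we obtain nested simply connected domains $V_{0} = W_{0} \supsetneq W_{1} = U_{0} \supseteq W_{2} \supseteq \cdots \supseteq W_{p} =: U_{0}^{*}$, and $\Psi_{p} \colon V_{0} \rightarrow U_{0}^{*}$ is a biholomorphism whose inverse is precisely $f^{\circ p} \colon U_{0}^{*} \rightarrow V_{0}$. Since $\overline{U_{0}^{*}} \subseteq \overline{U_{0}} \Subset V_{0}$ and $V_{0}$ is a bounded simply connected domain, the Schwarz lemma furnishes a unique fixed point $z_{0} \in U_{0}^{*}$ of $\Psi_{p}$, which a direct computation shows is the unique fixed point of $f^{\circ p}$ in $\mathbb{C}$ whose orbit satisfies $f^{\circ j}\left( z_{0} \right) \in U_{j}$ for all $j \in \lbrace 0, \dotsc, p -1 \rbrace$.

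Applying Lemma~\ref{lemma:modulusArch} to the biholomorphism $f^{\circ p} \colon U_{0}^{*} \rightarrow V_{0}$ then gives
\[
\frac{1}{2 \pi} \log\left\lvert \left( f^{\circ p} \right)^{\prime}\left( z_{0} \right) \right\rvert \geq \Mod\left( V_{0} \setminus \overline{U_{0}^{*}} \right) \, \text{.}
\]
For each $j$, the biholomorphism $\Psi_{j}$ conformally identifies $V_{j} \setminus \overline{U_{j}}$ with $W_{j} \setminus \overline{W_{j +1}}$, so they share the same modulus. The $p$ resulting annuli are pairwise disjoint essential subannuli of $V_{0} \setminus \overline{U_{0}^{*}}$, since each has inner boundary enclosing $\overline{U_{0}^{*}}$, so Gr\"{o}tzsch's inequality yields
\[
\Mod\left( V_{0} \setminus \overline{U_{0}^{*}} \right) \geq \sum_{j = 0}^{p -1} \Mod\left( V_{j} \setminus \overline{U_{j}} \right) \, \text{.}
\]

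The main step is the estimate $\Mod\left( V_{j} \setminus \overline{U_{j}} \right) \geq \frac{(d -1) \eta}{2 \pi d_{j}}$. Set $\widetilde{V} = \left\lbrace g_{f} < d \eta \right\rbrace$, which is simply connected since $d \eta > M_{f}$, and consider the proper holomorphic map $h_{j} := f^{\circ k} \colon V_{j} \rightarrow \widetilde{V}$ of degree $d_{j}$. The B\"{o}ttcher coordinate of $f$ at infinity identifies the annulus $B := \widetilde{V} \setminus \overline{\lbrace g_{f} \leq \eta \rbrace}$ with $\left\lbrace e^{\eta} < \lvert w \rvert < e^{d \eta} \right\rbrace$, so $\Mod(B) = \frac{(d -1) \eta}{2 \pi}$. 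Because $h_{j}\left( \overline{U_{j}} \right) \subseteq \lbrace g_{f} \leq \eta \rbrace$ and $h_{j}\left( \partial V_{j} \right) \subseteq \partial\widetilde{V}$, the map $h_{j}$ sends every curve in $V_{j} \setminus \overline{U_{j}}$ joining $\partial U_{j}$ to $\partial V_{j}$ to a curve in $\widetilde{V}$ crossing the annulus $B$. Given an admissible conformal metric $\rho$ on $\widetilde{V}$ for the family of such crossing curves, the pullback $\widetilde{\rho}(z) := \rho\left( h_{j}(z) \right) \left\lvert h_{j}^{\prime}(z) \right\rvert$ is admissible on $V_{j}$ for the corresponding family there, and the area formula together with the pointwise bound $\left\lvert h_{j}^{-1}(w) \cap V_{j} \right\rvert \leq d_{j}$ yields $A_{\widetilde{\rho}}\left( V_{j} \right) \leq d_{j} \cdot A_{\rho}\left( \widetilde{V} \right)$. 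Translating this comparison into extremal lengths gives $\Mod\left( V_{j} \setminus \overline{U_{j}} \right) \geq \Mod(B) / d_{j}$, as desired, and combining the three displayed inequalities produces the stated lower bound on $\log\left\lvert \left( f^{\circ p} \right)^{\prime}\left( z_{0} \right) \right\rvert$.

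The main obstacle is the modulus estimate in this last step: in general $V_{j} \setminus \overline{U_{j}}$ contains additional closed sub-disks coming from components of $\left\lbrace g_{f} < \eta/d^{k} \right\rbrace$ inside $V_{j}$ distinct from $U_{j}$, so the restriction of $h_{j}$ to this annulus is not a covering of $B$ and may well have critical values in $B$. The extremal-length argument sidesteps both issues by using only the degree $d_{j}$ as a pointwise upper bound on the number of preimages of $h_{j}$.
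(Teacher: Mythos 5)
Your proof is correct and, in its first half, matches the paper's: you build the fixed point from inverse branches, invoke Lemma~\ref{lemma:modulusArch} for the biholomorphism $f^{\circ p} \colon U_{0}^{*} \rightarrow V_{0}$, and decompose $V_{0} \setminus \overline{U_{0}^{*}}$ via Gr\"otzsch into the $p$ disjoint essential subannuli $\Psi_{j}\left( V_{j} \setminus \overline{U_{j}} \right)$. Where you genuinely diverge is the key estimate $\Mod\left( V_{j} \setminus \overline{U_{j}} \right) \geq \frac{(d -1) \eta}{2 \pi d_{j}}$. The paper proves it by partitioning $\left( \eta/d^{k}, \eta/d^{k -1} \right)$ at the finitely many levels at which $g_{f}$ takes a value at a critical point of $f^{\circ k}$ in $V_{j}$; on each resulting slice $A_{j}^{(\ell)}$ the map $f^{\circ k}$ becomes an unramified covering of degree at most $d_{j}$ of a round B\"ottcher annulus, so the estimate follows from the exact covering relation for moduli and one more Gr\"otzsch decomposition. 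You instead treat $V_{j} \setminus \overline{U_{j}}$ in one stroke by pulling back an admissible metric for the crossing family of $B$ through the proper degree-$d_{j}$ map $h_{j} = f^{\circ k}$ and applying the length-area inequality, using only the fiber count $\left\lvert h_{j}^{-1}(w) \cap V_{j} \right\rvert \leq d_{j}$; this is the standard fact that the extremal length of a crossing family shrinks by at most the degree under a proper holomorphic map, whether or not it is a covering. Your closing remark correctly identifies the obstruction that the paper's slicing is designed to handle---the additional components of $\left\lbrace g_{f} < \eta/d^{k} \right\rbrace$ inside $V_{j}$ and the possible critical values of $h_{j}$ in $B$---and explains why the extremal-length comparison renders the slicing unnecessary. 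Both routes deliver the identical constant; yours is more compact at the cost of invoking the length-area method, while the paper's stays within the elementary covering-of-annuli and B\"ottcher-coordinate framework.
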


\begin{proof}
Denote by $g_{j}$ the inverse of $f \colon U_{j} \rightarrow V_{j +1 \pmod{p}}$ for $j \in \lbrace 0, \dotsc, p -1 \rbrace$, and define \[ W = g_{0} \circ \dotsb \circ g_{p -1}\left( V_{0} \right) \, \text{.} \] Then $W, V_{0}$ are simply connected open subsets of $\mathbb{C}$ such that $\overline{W}$ is a full compact subset of $V_{0}$ and $f^{\circ p} \colon W \rightarrow V_{0}$ is a biholomorphism. Therefore, by Lemma~\ref{lemma:modulusArch}, the map $f^{\circ p}$ has a unique fixed point $z_{0} \in W$ and we have \[ \log\left\lvert \left( f^{\circ p} \right)^{\prime}\left( z_{0} \right) \right\rvert \geq 2 \pi \Mod\left( V_{0} \setminus \overline{W} \right) \, \text{.} \] Observe that $z_{0}$ is the unique fixed point for $f^{\circ p}$ that satisfies $f^{\circ j}\left( z_{0} \right) \in U_{j}$ for all $j \in \lbrace 0, \dotsc, p -1 \rbrace$. Thus, it remains to prove the desired inequality. Note that the $g_{0} \circ \dotsb \circ g_{j -1}\left( V_{j} \setminus \overline{U_{j}} \right)$, with $j \in \lbrace 0, \dotsc, p -1 \rbrace$, are pairwise disjoint subannuli of $V_{0} \setminus \overline{W}$. Therefore, by Gr\"{o}tzsch's inequality, we have \[ \log\left\lvert \left( f^{\circ p} \right)^{\prime}\left( z_{0} \right) \right\rvert \geq 2 \pi \sum_{j = 0}^{p -1} \Mod\left( V_{j} \setminus \overline{U_{j}} \right) \, \text{.} \] Suppose that $j \in \lbrace 0, \dotsc, p -1 \rbrace$, and let us show that $\Mod\left( V_{j} \setminus \overline{U_{j}} \right) \geq \frac{1}{2 \pi} \left( \frac{d -1}{d_{j}} \right) \eta$. Define $\eta_{j}^{(0)} < \dotsb < \eta_{j}^{\left( N_{j} \right)}$, with $N_{j} \geq 1$, by $\eta_{j}^{(0)} = \frac{\eta}{d^{k}}$ and $\eta_{j}^{(N_{j})} = \frac{\eta}{d^{k -1}}$ and \[ \left\lbrace \eta_{j}^{(1)}, \dotsc, \eta_{j}^{\left( N_{j} -1 \right)} \right\rbrace = \left\lbrace g_{f}(\gamma) : \gamma \in V_{j}, \, g_{f}(\gamma) > \frac{\eta}{d^{k}}, \, \left( f^{\circ k} \right)^{\prime}(\gamma) = 0 \right\rbrace \, \text{.} \] For $\ell \in \left\lbrace 1, \dotsc, N_{j} \right\rbrace$, denote by $D_{j}^{(\ell)}$ the connected component of $\left\lbrace g_{f} < \eta_{j}^{(\ell)} \right\rbrace$ that contains $U_{j}$ and define \[ A_{j}^{(\ell)} = D_{j}^{(\ell)} \setminus \left\lbrace g_{f} \leq \eta_{j}^{(\ell -1)} \right\rbrace \, \text{.} \] Then $f^{\circ k}$ induces a covering map from $A_{j}^{(\ell)}$ to $\left\lbrace d^{k} \eta_{j}^{(\ell -1)} < g_{f} < d^{k} \eta_{j}^{(\ell)} \right\rbrace$ of degree at most $d_{j}$ for each $\ell \in \left\lbrace 1, \dotsc, N_{j} \right\rbrace$. Moreover, $A_j^{(1)}, \dotsc, A_{j}^{\left( N_{j} \right)}$ are pairwise disjoint subannuli of $V_{j} \setminus \overline{U_{j}}$. Therefore, we have \[ \Mod\left( V_{j} \setminus \overline{U_{j}} \right) \geq \frac{1}{d_{j}} \sum_{\ell = 1}^{N_{j}} \Mod\left( \left\lbrace d^{k} \eta_{j}^{(\ell -1)} < g_{f} < d^{k} \eta_{j}^{(\ell)} \right\rbrace \right) \] Finally, denote by $\phi_{f} \colon \left\lbrace g_{f} > M_{f} \right\rbrace \rightarrow \mathbb{C} \setminus \overline{D\left( 0, \exp\left( M_{f} \right) \right)}$ a B\"{o}ttcher coordinate of $f$ at infinity, which satisfies $g_{f} = \log\left\lvert \phi_{f} \right\rvert$. Then $\phi_{f}$ induces a biholomorphism from $\left\lbrace d^{k} \eta_{j}^{(\ell -1)} < g_{f} < \eta_{j}^{(\ell)} \right\rbrace$ onto the round annulus $\mathcal{A}_{0}\left( \exp\left( d^{k} \eta_{j}^{(\ell -1)} \right), \exp\left( d^{k} \eta_{j}^{(\ell)} \right) \right)$ for each $\ell \in \left\lbrace 1, \dotsc, N_{j} \right\rbrace$. Therefore, we have \[ \Mod\left( \left\lbrace d^{k} \eta_{j}^{(\ell -1)} < g_{f} < d^{k} \eta_{j}^{(\ell)} \right\rbrace \right) = \frac{d^{k}}{2 \pi} \left( \eta_{j}^{(\ell)} -\eta_{j}^{(\ell -1)} \right) \] for all $\ell \in \left\lbrace 1, \dotsc, N_{j} \right\rbrace$, and hence \[ \Mod\left( V_{j} \setminus \overline{U_{j}} \right) \geq \frac{d^{k}}{2 \pi d_{j}} \left( \eta_{j}^{\left( N_{j} \right)} -\eta_{j}^{(0)} \right) = \frac{1}{2 \pi} \left( \frac{d -1}{d_{j}} \right) \eta \, \text{.} \] This completes the proof of the lemma.
\end{proof}

\begin{remark}
In order to prove Theorem~\ref{theorem:degenB} in the Archimedean case, we shall only apply Lemma~\ref{lemma:ineqArch} with $\eta = M_{f}$, $k \in \lbrace 0, 1 \rbrace$ and $p \in \lbrace 1, 2 \rbrace$. Nonetheless, our general statement of Lemma~\ref{lemma:ineqArch} also allows us to prove a slightly weaker version of~\cite[Theorem~1.6]{EL1992}, which provides a lower bound on the characteristic exponent at any periodic point in terms of the minimum of the Green function on the set of critical points (see Proposition~\ref{proposition:ineqMin}).
\end{remark}

\subsection{Proof of Theorem~\ref{theorem:degenB} in the Archimedean case}

Finally, let us combine Lemmas~\ref{lemma:combinatArch} and~\ref{lemma:ineqArch} in order to prove Theorem~\ref{theorem:degenB} for polynomial maps defined over an algebraically closed Archimedean valued field.

\begin{proof}[Proof of Theorem~\ref{theorem:degenB} in the Archimedean case]
Suppose that $K$ is an algebraically closed field equipped with an Archimedean absolute value $\lvert . \rvert_{\infty}$ and $f \in \Poly_{d}(K)$. Define $\widehat{K}$ to be the completion of $K$. Then, by Ostrowski's theorem, there exist an embedding $\sigma \colon \widehat{K} \hookrightarrow \mathbb{C}$ and $s \in (0, 1]$ such that $\lvert z \rvert_{\infty} = \left\lvert \sigma(z) \right\rvert^{s}$ for all $z \in \widehat{K}$, where $\lvert . \rvert$ denotes the usual absolute value on $\mathbb{C}$ (see~\cite[Chapter~II, (4.2)]{N1999}). Since $K$ is algebraically closed, the critical points and the periodic points for $\sigma(f)$ in $\mathbb{C}$ all lie in $\sigma(K)$. Therefore, $M_{f} = s \cdot M_{\sigma(f)}$ and $M_{f}^{(p)} = s \cdot M_{\sigma(f)}^{(p)}$ for each $p \geq 1$. Thus, replacing $f$ by $\sigma(f)$ if necessary, we may assume that $f \in \Poly_{d}(\mathbb{C})$.

First, assume that $\mathcal{K}_{f}$ is connected. Then $M_{f} = 0$. Let us prove that $M_{f}^{(1)} \geq 0$. Denote by $\lambda_{1}, \dotsc, \lambda_{d}$ the multipliers of $f$ at its fixed points repeated according to their multiplicities. Then either $\lambda_{j} = 1$ for some $j \in \lbrace 1, \dotsc, d \rbrace$ or $\sum\limits_{j = 1}^{d} \frac{1}{1 -\lambda_{j}} = 0$ by the holomorphic fixed-point formula. Note that, if $\lambda \in D(0, 1)$, then $\Re\left( \frac{1}{1 -\lambda} \right) > \frac{1}{2}$. Therefore, there exists $j \in \lbrace 1, \dotsc, d \rbrace$ such that $\left\lvert \lambda_{j} \right\rvert \geq 1$, and thus $M_{f}^{(1)} \geq 0$.

Thus, assume now that $\mathcal{K}_{f}$ is disconnected. Denote by $V_{1}, \dotsc, V_{N}$, with $N \geq 2$, the connected components of $\left\lbrace g_{f} < M_{f} \right\rbrace$. For $j \in \lbrace 1, \dotsc, N \rbrace$, denote by $d_{j} \geq 1$ the degree of $f \colon V_{j} \rightarrow \left\lbrace g_{f} < d \cdot M_{f} \right\rbrace$. Then $\sum\limits_{j = 1}^{N} d_{j} = d$. Let us consider three cases.

Suppose that $d_{j} = 1$ for some $j \in \lbrace 1, \dotsc, N \rbrace$. Note that this holds if $d \in \lbrace 2, 3 \rbrace$. Then $f$ induces a biholomorphism from $V_{j}$ to $\left\lbrace g_{f} < d \cdot M_{f} \right\rbrace$. Thus, by Lemma~\ref{lemma:ineqArch}, the map $f$ has a unique fixed point $z_{0} \in V_{j}$ and we have $\log\left\lvert f^{\prime}\left( z_{0} \right) \right\rvert \geq (d -1) M_{f}$. In particular, $M_{f}^{(1)} \geq (d -1) M_{f}$.

Now, suppose that the condition~\eqref{item:combinatArch1} of Lemma~\ref{lemma:combinatArch} is satisfied and $d_{j} \geq 2$ for all $j \in \lbrace 1, \dotsc, N \rbrace$. Then there exist $j \in \lbrace 1, \dotsc, N \rbrace$ and a connected component $U_{j}$ of $\left\lbrace g_{f} < \frac{M_{f}}{d} \right\rbrace$ contained in $V_{j}$ such that $f$ induces a biholomorphism from $U_{j}$ to $V_{j}$. Therefore, by Lemma~\ref{lemma:ineqArch}, the map $f$ has a unique fixed point $z_{0} \in U_{j}$ and we have $\log\left\lvert f^{\prime}\left( z_{0} \right) \right\rvert \geq \frac{d -1}{d_{j}} M_{f}$. Moreover, $d_{j} = d -\sum\limits_{k \neq j} d_{k} \leq d -2$. Thus, $M_{f}^{(1)} \geq \frac{d -1}{d -2} M_{f}$.

Finally, suppose that the condition~\eqref{item:combinatArch2} of Lemma~\ref{lemma:combinatArch} is satisfied. Then there are connected components $U_{1}, U_{2}$ of $\left\lbrace g_{f} < \frac{M_{f}}{d} \right\rbrace$ contained in $V_{1}, V_{2}$, respectively, such that $f$ induces biholomorphisms from $U_{1}$ to $V_{2}$ and from $U_{2}$ to $V_{1}$. By Lemma~\ref{lemma:ineqArch}, it follows that the map $f^{\circ 2}$ has a unique fixed point $z_{0} \in \mathbb{C}$ such that $z_{0} \in U_{1}$ and $f\left( z_{0} \right) \in U_{2}$ and we have \[ \log\left\lvert \left( f^{\circ 2} \right)^{\prime}\left( z_{0} \right) \right\rvert \geq (d -1) \left( \frac{1}{d_{1}} +\frac{1}{d_{2}} \right) M_{f} \geq (d -1) \left( \frac{1}{d_{1}} +\frac{1}{d -d_{1}} \right) M_{f} \, \text{.} \] Therefore, $M_{f}^{(2)} \geq C_{d} \cdot M_{f}$, where \[ C_{d} = \min_{j \in \lbrace 1, \dotsc, d -1 \rbrace} \frac{d -1}{2} \left( \frac{1}{j} +\frac{1}{d -j} \right) = \begin{cases} \frac{2 (d -1)}{d} & \text{if } d \text{ is even}\\ \frac{2 d}{d +1} & \text{if } d \text{ is odd} \end{cases} \, \text{.} \] This completes the proof of the theorem.
\end{proof}

\begin{remark}
Using Remark~\ref{remark:combinatArch}, one can easily adapt the proof above to show that $M_{f}^{(2)} \geq M_{f}$ for all $f \in \Poly_{d}(\mathbb{C})$ with disconnected filled Julia set $\mathcal{K}_{f}$.
\end{remark}

Finally, note that Theorem~\ref{theorem:degenA} is proved since it is an immediate consequence of Theorem~\ref{theorem:degenB} in the Archimedean case.

\subsection{Consequences of Theorem~\ref{theorem:degenA}}
\label{subsection:corollaries}

To conclude this section, let us apply here Theorem~\ref{theorem:degenA} to establish results about the morphism $\Mult_{d}^{(2)}$. We shall also deduce Corollaries~\ref{corollary:degenLocal} and~\ref{corollary:degenGlobal}. For brevity, we omit the analogous results concerning the multipliers at the fixed points alone when $d \in \lbrace 2, 3 \rbrace$, as these can be obtained in a completely similar way.

To conclude this section, let us use here Theorem~1.2 to obtain results about the morphism $\Mult_{d}^{(2)}$. We shall also deduce Corollaries~\ref{corollary:degenLocal} and~\ref{corollary:degenGlobal}. For brevity, we omit the analogous results concerning the multipliers at the fixed points alone when $d \in \lbrace 2, 3 \rbrace$, as these can be obtained in a completely similar way.

First, we have the following direct consequence of Theorem~\ref{theorem:degenA}:

\begin{corollary}
\label{corollary:proper}
The holomorphic map $\Mult_{d}^{(2)} \colon \mathcal{P}_{d}(\mathbb{C}) \rightarrow \mathbb{C}^{d} \times \mathbb{C}^{\frac{d (d -1)}{2}}$ is proper.
\end{corollary}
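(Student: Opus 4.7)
The plan is to verify the sequential criterion for properness: for every compact subset $K \subseteq \mathbb{C}^{d} \times \mathbb{C}^{\frac{d (d -1)}{2}}$, I will show that any sequence $\left( \left[ f_{n} \right] \right)_{n \geq 0}$ in $\left( \Mult_{d}^{(2)} \right)^{-1}(K)$ admits a subsequence convergent in $\mathcal{P}_{d}(\mathbb{C})$. Since $\mathcal{P}_{d}(\mathbb{C})$ is Hausdorff and first countable and $\Mult_{d}^{(2)}$ is continuous, this suffices: the limit of any such subsequence automatically lies back in the (closed) preimage, so the preimage is both sequentially compact and closed, hence compact.

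The first step is to extract from boundedness of $\left( \Mult_{d}^{(2)}\left( \left[ f_{n} \right] \right) \right)_{n \geq 0}$ a uniform upper bound on the maximal characteristic exponents at the cycles of period $1$ and $2$. By definition of $\Mult_{d}^{(2)}$, the target coordinates are, up to sign, the elementary symmetric functions $\sigma_{d, j}^{(p)}\left( \left[ f_{n} \right] \right)$ for $p \in \lbrace 1, 2 \rbrace$, and these are precisely the coefficients of the monic multiplier polynomials $\chi_{f_{n}}^{(1)}$ and $\chi_{f_{n}}^{(2)}$. A uniform bound on these coefficients yields, via a standard estimate for roots of monic polynomials (for instance the Cauchy bound), a constant $C \in \mathbb{R}$ such that $\lvert \lambda \rvert \leq \exp(C)$ for every $n \geq 0$ and every $\lambda \in \Lambda_{f_{n}}^{(1)} \cup \Lambda_{f_{n}}^{(2)}$. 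In particular, $\max\left\lbrace M_{f_{n}}^{(1)}, M_{f_{n}}^{(2)} \right\rbrace \leq C$ for all $n \geq 0$.

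The second step is a direct appeal to Theorem~\ref{theorem:degenA}, used contrapositively. Were $\left( \left[ f_{n} \right] \right)_{n \geq 0}$ to degenerate in $\mathcal{P}_{d}(\mathbb{C})$, that theorem would force $\max\left\lbrace M_{f_{n}}^{(1)}, M_{f_{n}}^{(2)} \right\rbrace \rightarrow +\infty$, contradicting the bound just obtained. Thus $\left( \left[ f_{n} \right] \right)_{n \geq 0}$ does not degenerate, which by the characterization of degeneration recalled in Section~\ref{section:prelim} means that some subsequence remains in a compact subset of $\mathcal{P}_{d}(\mathbb{C})$ and therefore admits a convergent further subsequence. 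I do not expect any substantive obstacle: the argument is essentially a packaging of Theorem~\ref{theorem:degenA} together with the elementary continuity of roots with respect to coefficients of monic polynomials, and the only care needed is in the translation from boundedness of $\Mult_{d}^{(2)}\left( \left[ f_{n} \right] \right)$ to boundedness of the multisets $\Lambda_{f_{n}}^{(p)}$, which is immediate from the definitions.
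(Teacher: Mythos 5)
Your proof is correct and follows essentially the same route as the paper: bound the multipliers uniformly via the Cauchy estimate on the roots of the monic multiplier polynomials $\chi_{f_{n}}^{(1)}$ and $\chi_{f_{n}}^{(2)}$, then invoke Theorem~\ref{theorem:degenA} in contrapositive form. One small point of topological hygiene: the Hausdorff property together with first countability is not by itself enough to pass from ``closed and sequentially compact'' to ``compact'' (the ordinal space $[0,\omega_{1})$ is a counterexample); what makes your argument valid is that $\mathcal{P}_{d}(\mathbb{C})$, as a finite-dimensional complex orbifold, is second countable and locally compact Hausdorff, hence metrizable, so sequential compactness does coincide with compactness there.
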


\begin{proof}
Note that, for every $f \in \Poly_{d}(\mathbb{C})$ and every $p \geq 1$, we have \[ \exp\left( p \cdot M_{f}^{(p)} \right) = \max_{\lambda \in \Lambda_{f}^{(p)}} \lvert \lambda \rvert \leq 1 +\max_{j \in \left\lbrace 1, \dotsc, N_{d}^{(p)} \right\rbrace} \left\lvert \sigma_{d, j}^{(p)}\left( [f] \right) \right\rvert \] by Cauchy's bound on roots of a complex polynomial. As a result, by Theorem~\ref{theorem:degenA}, if $\left( f_{n} \right)_{n \geq 0}$ is any sequence of elements of $\Poly_{d}(\mathbb{C})$ that degenerates in $\mathcal{P}_{d}(\mathbb{C})$, then $\lim\limits_{n \rightarrow +\infty} \Mult_{d}^{(2)}\left( \left[ f_{n} \right] \right) = \infty$ in $\mathbb{C}^{d} \times \mathbb{C}^{\frac{d (d -1)}{2}}$. Thus, the corollary is proved.
\end{proof}

To derive further consequences of Theorem~\ref{theorem:degenA}, we shall apply the general result below from algebraic geometry.

\begin{lemma}
\label{lemma:finite}
Suppose that $X, Y$ are two affine varieties over $\mathbb{Q}$ and $\Psi \colon X \rightarrow Y$ is a morphism such that the induced holomorphic map $\Psi \colon X(\mathbb{C}) \rightarrow Y(\mathbb{C})$ is proper. Then $\Psi$ is a finite morphism.
\end{lemma}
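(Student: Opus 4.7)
The plan is to deduce finiteness from the hypothesis in three standard steps: reduce to $\mathbb{C}$-varieties by base change, upgrade analytic properness to scheme-theoretic properness, and then invoke the classical fact that a proper affine morphism is finite.

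First, I would pass from $\mathbb{Q}$ to $\mathbb{C}$. Being a finite morphism descends under faithfully flat quasi-compact base change, so it suffices to prove that the induced morphism $\Psi_{\mathbb{C}} \colon X_{\mathbb{C}} \to Y_{\mathbb{C}}$ between affine $\mathbb{C}$-varieties is finite. Note that $X_{\mathbb{C}}(\mathbb{C}) = X(\mathbb{C})$ and $Y_{\mathbb{C}}(\mathbb{C}) = Y(\mathbb{C})$, so the analytic properness of $\Psi$ is precisely the analytic properness of $\Psi_{\mathbb{C}}$.

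Next, I would invoke the standard comparison principle: a morphism of finite type between $\mathbb{C}$-schemes of finite type is proper in the scheme-theoretic sense if and only if its associated map of complex analytic spaces is topologically proper (see, e.g., \cite[Expos\'{e}~XII, Proposition~3.2]{SGA1970}). Applying this to $\Psi_{\mathbb{C}}$, the hypothesis yields that $\Psi_{\mathbb{C}}$ is a proper morphism of schemes.

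Finally, since $X_{\mathbb{C}}$ and $Y_{\mathbb{C}}$ are affine, the morphism $\Psi_{\mathbb{C}}$ is in particular affine. It is a classical result that a proper affine morphism of noetherian schemes is finite (a proper morphism has finite fibers at closed points in the affine case, and coherence of the pushforward of $\mathcal{O}_{X_{\mathbb{C}}}$ combined with affineness forces integrality over the base ring). Hence $\Psi_{\mathbb{C}}$ is finite, and by descent $\Psi$ is finite as well.

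The only nontrivial ingredient is the analytic-to-scheme-theoretic properness comparison; everything else is routine descent and elementary algebraic geometry. Since this comparison is well-established, the proof is essentially a citation combined with a short reduction.
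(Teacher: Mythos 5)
Your proof is correct and follows essentially the same route as the paper: base change to $\mathbb{C}$, invoke the GAGA-type comparison to upgrade analytic properness to scheme-theoretic properness, conclude finiteness from the proper-plus-affine criterion, and descend along $\operatorname{Spec}(\mathbb{C}) \to \operatorname{Spec}(\mathbb{Q})$. One small slip: the comparison theorem (Expos\'{e}~XII, Proposition~3.2) lives in SGA~1, which in this paper's bibliography is the key \texttt{SGA1971}, not \texttt{SGA1970} (the latter is SGA~3).
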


\begin{proof}
Denote here by $X_{\mathbb{C}}$ and $Y_{\mathbb{C}}$ the base changes of $X$ and $Y$ to $\mathbb{C}$, respectively. As the holomorphic map $\Psi \colon X(\mathbb{C}) \rightarrow Y(\mathbb{C})$ is proper, the morphism $\Psi \colon X_{\mathbb{C}} \rightarrow Y_{\mathbb{C}}$ is proper (see~\cite[Expos\'{e}~XII, Proposition~3.2]{SGA1971}). Therefore, since $X_{\mathbb{C}}$ and $Y_{\mathbb{C}}$ are affine, $\Psi \colon X_{\mathbb{C}} \rightarrow Y_{\mathbb{C}}$ is a finite morphism (see~\cite[Chapter~3, Lemma~3.17]{Li2002}). As a result, the morphism $\Psi \colon X \rightarrow Y$ is finite (see~\cite[(2.7.1)]{G1965}).
\end{proof}

Combining Corollary~\ref{corollary:proper} and Lemma~\ref{lemma:finite}, we immediately obtain the following:

\begin{corollary}
The morphism $\Mult_{d}^{(2)} \colon \mathcal{P}_{d} \rightarrow \mathbb{A}^{d} \times \mathbb{A}^{\frac{d (d -1)}{2}}$ is finite.
\end{corollary}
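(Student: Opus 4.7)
The plan is to deduce this immediately from the two results established just above. By Corollary~\ref{corollary:proper}, the induced holomorphic map $\Mult_{d}^{(2)} \colon \mathcal{P}_{d}(\mathbb{C}) \rightarrow \mathbb{C}^{d} \times \mathbb{C}^{\frac{d (d -1)}{2}}$ is proper. Since $\mathcal{P}_{d}$ and $\mathbb{A}^{d} \times \mathbb{A}^{\frac{d (d -1)}{2}}$ are both affine varieties over $\mathbb{Q}$ (the former by the construction carried out in Section~\ref{section:prelim}), Lemma~\ref{lemma:finite} applies verbatim with $X = \mathcal{P}_{d}$, $Y = \mathbb{A}^{d} \times \mathbb{A}^{\frac{d (d -1)}{2}}$ and $\Psi = \Mult_{d}^{(2)}$, and yields that $\Mult_{d}^{(2)}$ is a finite morphism.

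There is essentially no obstacle at this step: all of the substantive content has already been packaged. The analytic input is Theorem~\ref{theorem:degenA}, which via the Cauchy bound on roots forces $\Mult_{d}^{(2)}\left( \left[ f_{n} \right] \right) \to \infty$ whenever $\left[ f_{n} \right]$ leaves every compact subset of $\mathcal{P}_{d}(\mathbb{C})$, thereby giving the topological properness recorded in Corollary~\ref{corollary:proper}. The algebro-geometric input is the GAGA-type transfer principle in Lemma~\ref{lemma:finite}, which upgrades complex-analytic properness first to properness of the morphism after base change to $\mathbb{C}$ and then, using that source and target are affine, to finiteness over $\mathbb{Q}$. The proof therefore consists of a single sentence citing these two results.
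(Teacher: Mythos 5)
Your proof is correct and follows exactly the route the paper takes: combine the topological properness of $\Mult_{d}^{(2)}$ on complex points (Corollary~\ref{corollary:proper}) with the transfer lemma from analytic properness to finiteness of morphisms of affine varieties over $\mathbb{Q}$ (Lemma~\ref{lemma:finite}). There is nothing to add.
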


We shall now prove Corollaries~\ref{corollary:degenLocal} and~\ref{corollary:degenGlobal}. To do this, we first present general results about polynomial maps with rational coefficients that allow one to deduce facts over arbitrary valued fields from analogous facts in the complex setting.

Given a valued field $K$, we denote here by $\lvert . \rvert_{K}$ its absolute value and, for $n \geq 1$, we denote by $\lVert . \rVert_{K^{n}}$ the norm on $K^{n}$ defined by \[ \lVert \boldsymbol{t} \rVert_{K^{n}} = \max_{j \in \lbrace 1, \dotsc, n \rbrace} \left\lvert t_{j} \right\rvert_{K} \quad \text{for} \quad \boldsymbol{t} = \left( t_{1}, \dotsc, t_{n} \right) \in K^{n} \, \text{.} \] Given a valued field $K$ and an integer $n \geq 1$, we also define \[ (n)_{K} = \begin{cases} n & \text{if } K \text{ is Archimedean}\\ 1 & \text{if } K \text{ is non-Archimedean} \end{cases} \, \text{,} \] so that \[ \forall \boldsymbol{t} \in K^{n}, \, \left\lvert \sum_{j = 1}^{n} t_{j} \right\rvert_{K} \leq (n)_{K} \lVert \boldsymbol{t} \rVert_{K^{n}} \] by the triangle inequality.

\begin{lemma}
\label{lemma:minimum}
Suppose that $\boldsymbol{F} \colon \mathbb{A}^{m} \rightarrow \mathbb{A}^{n}$, with $m, n \geq 1$, is a morphism such that the induced holomorphic map $\boldsymbol{F} \colon \mathbb{C}^{m} \rightarrow \mathbb{C}^{n}$ has no zero in $\mathbb{C}^{m}$. Then, for every valued field $K$ of characteristic $0$, there exist some $\alpha_{K}^{(0)} \in \mathbb{R}_{> 0}$ depending only on the restriction of $\lvert . \rvert_{K}$ to $\mathbb{Q}$ and some $\delta \in \mathbb{R}_{\geq 0}$ not depending on $K$ such that \[ \forall \boldsymbol{t} \in K^{m}, \, \left\lVert \boldsymbol{F}(\boldsymbol{t}) \right\rVert_{K^{n}} \geq \alpha_{K}^{(0)} \max\left\lbrace 1, \lVert \boldsymbol{t} \rVert_{K^{m}} \right\rbrace^{-\delta} \, \text{.} \] Moreover, we can take $\alpha_{K}^{(0)} = 1$ for every non-Archimedean field $K$ with characteristic $0$ and residue characteristic outside a finite set $S^{(0)}$ of prime numbers.
\end{lemma}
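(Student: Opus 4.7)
The plan is to apply Hilbert's Nullstellensatz to the morphism $\boldsymbol{F} = \left( F_{1}, \dotsc, F_{n} \right)$, whose components lie in $\mathbb{Q}\left[ x_{1}, \dotsc, x_{m} \right]$ since $\mathbb{A}^{m}$ and $\mathbb{A}^{n}$ are defined over $\mathbb{Q}$. The hypothesis that $\boldsymbol{F}$ has no zero on $\mathbb{C}^{m}$ means that $\left( F_{1}, \dotsc, F_{n} \right)$ is the unit ideal of $\mathbb{C}\left[ x_{1}, \dotsc, x_{m} \right]$, so there exist $H_{1}, \dotsc, H_{n} \in \mathbb{C}\left[ x_{1}, \dotsc, x_{m} \right]$ such that $\sum_{j = 1}^{n} F_{j} H_{j} = 1$. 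Since the $F_{j}$ have rational coefficients, the existence of such $H_{j}$ with prescribed degree bounds is a $\mathbb{Q}$\nobreakdash-linear condition on the coefficients of the $H_{j}$; as it has a solution over $\mathbb{C}$, it also has one over $\mathbb{Q}$. Hence we may choose $G_{1}, \dotsc, G_{n} \in \mathbb{Q}\left[ x_{1}, \dotsc, x_{m} \right]$ with $\sum_{j = 1}^{n} F_{j} G_{j} = 1$.

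Next I set $\delta = \max_{j} \deg\left( G_{j} \right) \in \mathbb{R}_{\geq 0}$, which depends only on the $G_{j}$ and hence not on $K$. A routine triangle-inequality estimate shows that there exists $C_{K} \in \mathbb{R}_{> 0}$, depending only on the restriction of $\lvert . \rvert_{K}$ to $\mathbb{Q}$ (through the finitely many rational coefficients of the $G_{j}$), such that
\[ \left\lvert G_{j}(\boldsymbol{t}) \right\rvert_{K} \leq C_{K} \max\left\lbrace 1, \lVert \boldsymbol{t} \rVert_{K^{m}} \right\rbrace^{\delta} \]
for all $j \in \lbrace 1, \dotsc, n \rbrace$ and all $\boldsymbol{t} \in K^{m}$. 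Specializing the relation $\sum_{j = 1}^{n} F_{j} G_{j} = 1$ at $\boldsymbol{t}$ and invoking the inequality $\left\lvert \sum_{j = 1}^{n} t_{j} \right\rvert_{K} \leq (n)_{K} \lVert \boldsymbol{t} \rVert_{K^{n}}$ yields
\[ 1 \leq (n)_{K} \left\lVert \boldsymbol{F}(\boldsymbol{t}) \right\rVert_{K^{n}} \cdot C_{K} \max\left\lbrace 1, \lVert \boldsymbol{t} \rVert_{K^{m}} \right\rbrace^{\delta} \, \text{,} \]
so the first assertion follows by taking $\alpha_{K}^{(0)} = \left( (n)_{K} C_{K} \right)^{-1}$.

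For the second assertion, I let $S^{(0)}$ denote the (finite) set of prime numbers appearing in the denominators of the coefficients of $G_{1}, \dotsc, G_{n}$. If $K$ is non-Archimedean of characteristic $0$ with residue characteristic outside $S^{(0)}$, then every coefficient of every $G_{j}$ lies in the valuation ring of $K$. The ultrametric inequality then gives $\left\lvert G_{j}(\boldsymbol{t}) \right\rvert_{K} \leq \max\left\lbrace 1, \lVert \boldsymbol{t} \rVert_{K^{m}} \right\rbrace^{\delta}$ for every $j$, and since $(n)_{K} = 1$ for non-Archimedean $K$, the bound above holds with $C_{K} = 1$, so one can take $\alpha_{K}^{(0)} = 1$. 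The only mildly delicate step in the plan is the descent of the Nullstellensatz identity from $\mathbb{C}$ to $\mathbb{Q}$, but this is immediate from the linear-algebra observation above; everything else is a direct application of the triangle or ultrametric inequality.
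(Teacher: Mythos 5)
Your proof is correct and follows essentially the same route as the paper's: Nullstellensatz to produce rational $G_{j}$ with $\sum F_{j} G_{j} = 1$, then a triangle-inequality bound on the $G_{j}$. The only mild difference is the definition of $S^{(0)}$: the paper takes the primes $p$ where $\max_{j, \boldsymbol{\ell}} \left\lvert a_{j, \boldsymbol{\ell}} \right\rvert_{p} \neq 1$, while you take only the primes appearing in denominators (that is, primes where that max is $> 1$). Your smaller set still works, since having all coefficients in the valuation ring suffices for the ultrametric bound with $C_{K} = 1$; the paper's choice is merely what falls out of its explicit formula $\alpha_{K}^{(0)} = \bigl( (n)_{K} ((\delta+1)^{m})_{K} \max \left\lvert a_{j, \boldsymbol{\ell}} \right\rvert_{K} \bigr)^{-1}$. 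Your explicit descent argument (treating the Bézout identity with bounded-degree unknowns as a linear system over $\mathbb{Q}$ solvable over $\mathbb{C}$, hence over $\mathbb{Q}$) is a detail the paper passes over silently, and is a welcome clarification.
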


\begin{proof}
Write $\boldsymbol{F} = \left( F_{1}, \dotsc, F_{n} \right)$, with $F_{1}, \dotsc, F_{n} \in \mathbb{Q}\left[ T_{1}, \dotsc, T_{m} \right]$. Since $F_{1}, \dotsc, F_{n}$ have no common zero in $\mathbb{C}^{m}$ by hypothesis, it follows from the Nullstellensatz that there exist $G_{1}, \dotsc, G_{n} \in \mathbb{Q}\left[ T_{1}, \dotsc, T_{m} \right]$ such that \[ \sum_{j = 1}^{n} F_{j}\left( T_{1}, \dotsc, T_{m} \right) G_{j}\left( T_{1}, \dotsc, T_{m} \right) = 1 \, \text{.} \] Define $\delta = \max\limits_{j \in \lbrace 1, \dotsc, n \rbrace} \deg\left( G_{j} \right)$. For $j \in \lbrace 1, \dotsc, n \rbrace$, write \[ G_{j}\left( T_{1}, \dotsc, T_{m} \right) = \sum_{\boldsymbol{\ell} \in \lbrace 0, \dotsc, \delta \rbrace^{m}} a_{j, \boldsymbol{\ell}} \prod_{k = 1}^{m} T_{k}^{\ell_{k}} \in \mathbb{Q}\left[ T_{1}, \dotsc, T_{m} \right] \, \text{.} \] Consider the finite set \[ S^{(0)} = \left\lbrace p \text{ prime} : \max_{\substack{j \in \lbrace 1, \dotsc, n \rbrace\\ \boldsymbol{\ell} \in \lbrace 0, \dotsc, \delta \rbrace^{m}}} \left\lvert a_{j, \boldsymbol{\ell}} \right\rvert_{p} \neq 1 \right\rbrace \, \text{,} \] where $\lvert . \rvert_{p}$ denotes the $p$\nobreakdash-adic absolute value on $\mathbb{Q}$ for each prime number $p$. Now, suppose that $K$ is a valued field of characteristic $0$. Then we have \[ 1 \leq (n)_{K} \left\lVert \boldsymbol{F}(\boldsymbol{t}) \right\rVert_{K^{n}} \left( \max_{j \in \lbrace 1, \dotsc, n \rbrace} \left\lvert G_{j}(\boldsymbol{t}) \right\rvert_{K} \right) \leq A_{K}^{(0)} \left\lVert \boldsymbol{F}(\boldsymbol{t}) \right\rVert_{K^{n}} \max\left\lbrace 1, \lVert \boldsymbol{t} \rVert_{K^{m}} \right\rbrace^{\delta} \] for all $\boldsymbol{t} \in K^{m}$ by the triangle inequality, where \[ A_{K}^{(0)} = (n)_{K} \left( (\delta +1)^{m} \right)_{K} \left( \max_{\substack{j \in \lbrace 1, \dotsc, n \rbrace\\ \boldsymbol{\ell} \in \lbrace 0, \dotsc, \delta \rbrace^{m}}} \left\lvert a_{j, \boldsymbol{\ell}} \right\rvert_{K} \right) \in \mathbb{R}_{> 0} \, \text{.} \] Therefore, setting $\alpha_{K}^{(0)} = \left( A_{K}^{(0)} \right)^{-1}$, we have \[ \forall \boldsymbol{t} \in K^{m}, \, \left\lVert \boldsymbol{F}(\boldsymbol{t}) \right\rVert_{K^{n}} \geq \alpha_{K}^{(0)} \max\left\lbrace 1, \lVert \boldsymbol{t} \rVert_{K^{m}} \right\rbrace^{-\delta} \, \text{.} \] Finally, note that $\alpha_{K}^{(0)}$ depends only on the restriction of $\lvert . \rvert_{K}$ to $\mathbb{Q}$. Furthermore, we have $\alpha_{K}^{(0)} = 1$ if $K$ is non-Archimedean with residue characteristic not in $S^{(0)}$. This completes the proof of the lemma.
\end{proof}

Also, we have the following result about the growth of proper polynomial maps at infinity:

\begin{lemma}
\label{lemma:lojasiewcz}
Suppose that $\boldsymbol{F} \colon \mathbb{A}^{m} \rightarrow \mathbb{A}^{n}$, with $m, n \geq 1$, is a morphism such that the induced holomorphic map $\boldsymbol{F} \colon \mathbb{C}^{m} \rightarrow \mathbb{C}^{n}$ is proper. Then, for every valued field $K$ of characteristic $0$, there exist some $\alpha_{K}^{(\infty)}, R_{K}^{(\infty)} \in \mathbb{R}_{> 0}$ depending only on the restriction of $\lvert . \rvert_{K}$ to $\mathbb{Q}$ and some $\beta \in \mathbb{R}_{> 0}$ not depending on $K$ such that \[ \forall \boldsymbol{t} \in K^{m}, \, \lVert \boldsymbol{t} \rVert_{K^{m}} > R_{K}^{(\infty)} \Longrightarrow \left\lVert \boldsymbol{F}(\boldsymbol{t}) \right\rVert_{K^{n}} \geq \alpha_{K}^{(\infty)} \lVert \boldsymbol{t} \rVert_{K^{m}}^{\beta} \, \text{.} \] Moreover, we can take $\alpha_{K}^{(\infty)} = 1$ and $R_{K}^{(\infty)} = 1$ for every non-Archimedean field $K$ with characteristic $0$ and residue characteristic outside a finite set $S^{(\infty)}$ of prime numbers.
\end{lemma}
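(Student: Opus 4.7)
The plan is to apply Lemma~\ref{lemma:finite} to convert the properness hypothesis into the algebraic statement that $\boldsymbol{F}$ is a finite morphism, and then to extract the desired estimate from the resulting integral dependence relations.

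First, I would apply Lemma~\ref{lemma:finite} to conclude that the morphism $\boldsymbol{F} \colon \mathbb{A}^{m} \rightarrow \mathbb{A}^{n}$ is finite. Writing $\boldsymbol{F} = \left( F_{1}, \dotsc, F_{n} \right)$, this means that $\mathbb{Q}\left[ T_{1}, \dotsc, T_{m} \right]$ is a finitely generated module over the subring $\boldsymbol{F}^{*}\left( \mathbb{Q}\left[ Y_{1}, \dotsc, Y_{n} \right] \right)$, and hence each coordinate function $T_{i}$ satisfies a monic integral equation: there exist $N_{i} \geq 1$ and polynomials $G_{i, 0}, \dotsc, G_{i, N_{i} -1} \in \mathbb{Q}\left[ Y_{1}, \dotsc, Y_{n} \right]$ such that
\[ T_{i}^{N_{i}} +\sum_{k = 0}^{N_{i} -1} G_{i, k}\left( F_{1}, \dotsc, F_{n} \right) T_{i}^{k} = 0 \]
in $\mathbb{Q}\left[ T_{1}, \dotsc, T_{m} \right]$, for each $i \in \lbrace 1, \dotsc, m \rbrace$. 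I would then set $\beta = 1/E$, with $E = \max_{i, k} \deg\left( G_{i, k} \right)$, which manifestly does not depend on $K$.

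Next, I would evaluate these identities at an arbitrary $\boldsymbol{t} \in K^{m}$. Bounding the $K$\nobreakdash-absolute values of the coefficients of the $G_{i, k}$ yields a constant $C_{K} \geq 1$, depending only on the restriction of $\lvert . \rvert_{K}$ to $\mathbb{Q}$, such that $\left\lvert G_{i, k}\left( \boldsymbol{F}(\boldsymbol{t}) \right) \right\rvert_{K} \leq C_{K} \max\left\lbrace 1, \left\lVert \boldsymbol{F}(\boldsymbol{t}) \right\rVert_{K^{n}} \right\rbrace^{E}$ for all $i$ and $k$. Applying the triangle inequality to the monic equation, treating separately the indices $i$ with $\left\lvert t_{i} \right\rvert_{K} < 1$, and taking the maximum over $i$, after possibly enlarging $C_{K}$ I would obtain
\[ \lVert \boldsymbol{t} \rVert_{K^{m}} \leq C_{K} \max\left\lbrace 1, \left\lVert \boldsymbol{F}(\boldsymbol{t}) \right\rVert_{K^{n}} \right\rbrace^{E} \]
for every $\boldsymbol{t} \in K^{m}$. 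Setting $R_{K}^{(\infty)} = C_{K}$ then forces $\left\lVert \boldsymbol{F}(\boldsymbol{t}) \right\rVert_{K^{n}} > 1$ whenever $\lVert \boldsymbol{t} \rVert_{K^{m}} > R_{K}^{(\infty)}$, and rearranging yields $\left\lVert \boldsymbol{F}(\boldsymbol{t}) \right\rVert_{K^{n}} \geq C_{K}^{-1/E} \lVert \boldsymbol{t} \rVert_{K^{m}}^{\beta}$, so that $\alpha_{K}^{(\infty)} = C_{K}^{-1/E}$ works.

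For the non-Archimedean claim, the ultrametric inequality absorbs the combinatorial factors $(N_{i})_{K}$, and as soon as the residue characteristic of $K$ avoids the finite set $S^{(\infty)}$ of primes dividing some denominator of a coefficient of a $G_{i, k}$, the required coefficient bounds all hold with constant $1$. One may therefore take $C_{K} = 1$, hence $\alpha_{K}^{(\infty)} = R_{K}^{(\infty)} = 1$. The only substantive ingredient is Lemma~\ref{lemma:finite}, which packages the topological properness into algebraic finiteness; once that is available, the remaining work is bookkeeping of constants, and the mild obstacle is verifying that the resulting constants indeed depend only on the restriction of $\lvert . \rvert_{K}$ to $\mathbb{Q}$.
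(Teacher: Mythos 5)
Your proof follows essentially the same route as the paper's: apply Lemma~\ref{lemma:finite} to upgrade the analytic properness to finiteness of the morphism, extract monic integral equations for the coordinate functions over $\mathbb{Q}[F_{1}, \dotsc, F_{n}]$, and then run the triangle inequality to bound $\lVert \boldsymbol{t} \rVert_{K^{m}}$ by a power of $\max\lbrace 1, \lVert \boldsymbol{F}(\boldsymbol{t}) \rVert_{K^{n}} \rbrace$, taking $\beta$ to be the reciprocal of the maximal coefficient degree and letting $S^{(\infty)}$ collect the primes appearing in the coefficients. The only cosmetic difference is that the paper states its intermediate bound under the hypothesis $\lVert \boldsymbol{t} \rVert_{K^{m}} \geq 1$, while you note it holds unconditionally because $C_{K} \geq 1$; the final constants and logic are the same.
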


\begin{proof}
Write $\boldsymbol{F} = \left( F_{1}, \dotsc, F_{n} \right)$, with $F_{1}, \dotsc, F_{n} \in \mathbb{Q}\left[ T_{1}, \dotsc, T_{m} \right]$. As the holomorphic map $\boldsymbol{F} \colon \mathbb{C}^{m} \rightarrow \mathbb{C}^{n}$ is proper by hypothesis, $\boldsymbol{F} \colon \mathbb{A}^{m} \rightarrow \mathbb{A}^{n}$ is a finite morphism by Lemma~\ref{lemma:finite}, and hence $T_{j}$ is integral over $\mathbb{Q}\left[ F_{1}, \dotsc, F_{n} \right]$ for each $j \in \lbrace 1, \dotsc, m \rbrace$. Thus, for each $j \in \lbrace 1, \dotsc, m \rbrace$, there exist $P_{j, 0}, \dotsc, P_{j, D_{j} -1} \in \mathbb{Q}\left[ X_{1}, \dotsc, X_{n} \right]$, with $D_{j} \geq 1$, such that \[ T_{j}^{D_{j}} = \sum_{k = 0}^{D_{j} -1} P_{j, k}\left( F_{1}\left( T_{1}, \dotsc, T_{m} \right), \dotsc, F_{n}\left( T_{1}, \dotsc, T_{m} \right) \right) T_{j}^{k} \, \text{.} \] Define \[ \gamma = \max_{\substack{j \in \lbrace 1, \dotsc, m \rbrace\\ k \in \left\lbrace 0, \dotsc, D_{j} -1 \right\rbrace}} \deg\left( P_{j, k} \right) \in \mathbb{Z}_{\geq 1} \quad \text{and} \quad \beta = \frac{1}{\gamma} \in \mathbb{R}_{> 0} \, \text{.} \] For $j \in \lbrace 1, \dotsc, m \rbrace$ and $k \in \left\lbrace 0, \dotsc, D_{j} -1 \right\rbrace$, write \[ P_{j, k}\left( X_{1}, \dotsc, X_{n} \right) = \sum_{\boldsymbol{\ell} \in \lbrace 0, \dotsc, \gamma \rbrace^{n}} b_{j, k, \boldsymbol{\ell}} \prod_{r = 1}^{n} X_{r}^{\ell_{r}} \in \mathbb{Q}\left[ X_{1}, \dotsc, X_{n} \right] \, \text{.} \] Consider the finite set \[ S^{(\infty)} = \left\lbrace p \text{ prime} : \max_{\substack{j \in \lbrace 1, \dotsc, m \rbrace\\ k \in \left\lbrace 0, \dotsc, D_{j} -1 \right\rbrace\\ \boldsymbol{\ell} \in \lbrace 0, \dotsc, \gamma \rbrace^{n}}} \left\lvert b_{j, k, \boldsymbol{\ell}} \right\rvert_{p} \neq 1 \right\rbrace \, \text{,} \] where $\lvert . \rvert_{p}$ denotes the $p$\nobreakdash-adic absolute value on $\mathbb{Q}$ for each prime number $p$. Now, suppose that $K$ is a valued field of characteristic $0$. Set \[ A_{K}^{(\infty)} = \left( \max_{j \in \lbrace 1, \dotsc, m \rbrace} \left( D_{j} \right)_{K} \right) \left( (\gamma +1)^{n} \right)_{K} \left( \max_{\substack{j \in \lbrace 1, \dotsc, m \rbrace\\ k \in \left\lbrace 0, \dotsc, D_{j} -1 \right\rbrace\\ \boldsymbol{\ell} \in \lbrace 0, \dotsc, \gamma \rbrace^{n}}} \left\lvert b_{j, k, \boldsymbol{\ell}} \right\rvert_{K} \right) \in \mathbb{R}_{> 0} \, \text{.} \] For every $\boldsymbol{t} \in K^{m}$, as $t_{j}^{D_{j}} = \sum\limits_{k = 0}^{D_{j} -1} P_{j, k}\left( \boldsymbol{F}(\boldsymbol{t}) \right) t_{j}^{k}$ for all $j \in \lbrace 1, \dotsc, m \rbrace$, we have \[ \forall j \in \lbrace 1, \dotsc, m \rbrace, \, \left\lvert t_{j} \right\rvert_{K}^{D_{j}} \leq A_{K}^{(\infty)} \max\left\lbrace 1, \left\lVert \boldsymbol{F}(\boldsymbol{t}) \right\rVert_{K^{n}} \right\rbrace^{\gamma} \max\left\lbrace 1, \left\lvert t_{j} \right\rvert_{K} \right\rbrace^{D_{j} -1} \] by the triangle inequality. Therefore, we have \[ \forall \boldsymbol{t} \in K^{m}, \, \lVert \boldsymbol{t} \rVert_{K^{m}} \geq 1 \Longrightarrow \lVert \boldsymbol{t} \rVert_{K^{m}} \leq A_{K}^{(\infty)} \max\left\lbrace 1, \left\lVert \boldsymbol{F}(\boldsymbol{t}) \right\rVert_{K^{n}} \right\rbrace^{\gamma} \, \text{.} \] As a result, setting $\alpha_{K}^{(\infty)} = \left( A_{K}^{(\infty)} \right)^{-\beta}$ and $R_{K}^{(\infty)} = \max\left\lbrace 1, A_{K}^{(\infty)} \right\rbrace$, we have \[ \forall \boldsymbol{t} \in K^{m}, \, \lVert \boldsymbol{t} \rVert_{K^{m}} > R_{K}^{(\infty)} \Longrightarrow \left\lVert \boldsymbol{F}(\boldsymbol{t}) \right\rVert_{K^{n}} \geq \alpha_{K}^{(\infty)} \lVert \boldsymbol{t} \rVert_{K^{m}}^{\beta} \, \text{.} \] Finally, note that both $\alpha_{K}^{(\infty)}$ and $R_{K}^{(\infty)}$ depend only on the restriction of $\lvert . \rvert_{K}$ to $\mathbb{Q}$. Moreover, we have $\alpha_{K}^{(\infty)} = 1$ and $R_{K}^{(\infty)} = 1$ if $K$ is non-Archimedean with residue characteristic not in $S^{(\infty)}$. This completes the proof of the lemma.
\end{proof}

\begin{remark}
Given a polynomial map $\boldsymbol{F} \colon \mathbb{C}^{m} \rightarrow \mathbb{C}^{n}$, with $m, n \geq 1$, the supremum of all $\beta \in \mathbb{R}$ for which there exist $\alpha, R \in \mathbb{R}_{> 0}$ such that $\left\lVert \boldsymbol{F}(\boldsymbol{t}) \right\rVert_{\mathbb{C}^{n}} \geq \alpha \lVert \boldsymbol{t} \rVert_{\mathbb{C}^{m}}^{\beta}$ for all $\boldsymbol{t} \in \mathbb{C}^{m}$ such that $\lVert \boldsymbol{t} \rVert_{\mathbb{C}^{m}} > R$ is known as the \L{}ojasiewicz exponent of $\boldsymbol{F}$ at infinity. We refer the reader to~\cite[Corollary~2]{CK1997} for an analytic proof that every proper complex polynomial map has positive \L{}ojasiewicz exponent at infinity.
\end{remark}

Combining Lemmas~\ref{lemma:minimum} and~\ref{lemma:lojasiewcz}, we easily obtain the general result below.

\begin{lemma}
\label{lemma:everywhere}
Suppose that $\boldsymbol{F} \colon \mathbb{A}^{m} \rightarrow \mathbb{A}^{n}$, with $m, n \geq 1$, is a morphism such that the induced holomorphic map $\boldsymbol{F} \colon \mathbb{C}^{m} \rightarrow \mathbb{C}^{n}$ is proper and has no zero in $\mathbb{C}^{m}$. Then, for every valued field $K$ of characteristic $0$, there exist some $\alpha_{K} \in \mathbb{R}_{> 0}$ depending only on the restriction of $\lvert . \rvert_{K}$ to $\mathbb{Q}$ and some $\beta \in \mathbb{R}_{> 0}$ not depending on $K$ such that \[ \forall \boldsymbol{t} \in K^{m}, \, \left\lVert \boldsymbol{F}(\boldsymbol{t}) \right\rVert_{K^{n}} \geq \alpha_{K} \max\left\lbrace 1, \lVert \boldsymbol{t} \rVert_{K^{m}} \right\rbrace^{\beta} \, \text{.} \] Moreover, we can take $\alpha_{K} = 1$ for every non-Archimedean field $K$ with characteristic $0$ and residue characteristic outside a finite set $S$ of prime numbers.
\end{lemma}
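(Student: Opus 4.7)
The plan is to deduce Lemma~\ref{lemma:everywhere} by gluing the estimate from Lemma~\ref{lemma:minimum}, which is sharp near the zero set but degrades at infinity, to the estimate from Lemma~\ref{lemma:lojasiewcz}, which is only valid at infinity but supplies the positive exponent we need. Concretely, first I would apply Lemma~\ref{lemma:minimum} to obtain constants $\alpha_{K}^{(0)} \in \mathbb{R}_{> 0}$ and $\delta \in \mathbb{R}_{\geq 0}$ (with $\alpha_{K}^{(0)} = 1$ for non-Archimedean $K$ whose residue characteristic avoids a finite set $S^{(0)}$) satisfying
\[ \left\lVert \boldsymbol{F}(\boldsymbol{t}) \right\rVert_{K^{n}} \geq \alpha_{K}^{(0)} \max\left\lbrace 1, \lVert \boldsymbol{t} \rVert_{K^{m}} \right\rbrace^{-\delta} \quad \text{for all } \boldsymbol{t} \in K^{m} \, \text{,} \]
and Lemma~\ref{lemma:lojasiewcz} to obtain $\alpha_{K}^{(\infty)}, R_{K}^{(\infty)} \in \mathbb{R}_{> 0}$ and $\beta \in \mathbb{R}_{> 0}$ (with $\alpha_{K}^{(\infty)} = R_{K}^{(\infty)} = 1$ outside a finite set $S^{(\infty)}$) so that $\lVert \boldsymbol{F}(\boldsymbol{t}) \rVert_{K^{n}} \geq \alpha_{K}^{(\infty)} \lVert \boldsymbol{t} \rVert_{K^{m}}^{\beta}$ whenever $\lVert \boldsymbol{t} \rVert_{K^{m}} > R_{K}^{(\infty)}$. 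I would then take $\beta$ in the statement to be this same $\beta$ from Lemma~\ref{lemma:lojasiewcz}.

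Next, set $R_{K} = \max\{1, R_{K}^{(\infty)}\}$ and split $K^{m}$ into the two regions $\{ \lVert \boldsymbol{t} \rVert_{K^{m}} \leq R_{K} \}$ and $\{ \lVert \boldsymbol{t} \rVert_{K^{m}} > R_{K} \}$. On the outer region, $\max\{1, \lVert \boldsymbol{t} \rVert_{K^{m}}\}^{\beta} = \lVert \boldsymbol{t} \rVert_{K^{m}}^{\beta}$ and the Łojasiewicz-type bound gives directly $\lVert \boldsymbol{F}(\boldsymbol{t}) \rVert_{K^{n}} \geq \alpha_{K}^{(\infty)} \max\{1, \lVert \boldsymbol{t} \rVert_{K^{m}}\}^{\beta}$. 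On the inner region, $\max\{1, \lVert \boldsymbol{t} \rVert_{K^{m}}\}^{\beta} \leq R_{K}^{\beta}$, and Lemma~\ref{lemma:minimum} combined with $\max\{1, \lVert \boldsymbol{t} \rVert_{K^{m}}\}^{-\delta} \geq R_{K}^{-\delta}$ yields
\[ \left\lVert \boldsymbol{F}(\boldsymbol{t}) \right\rVert_{K^{n}} \geq \alpha_{K}^{(0)} R_{K}^{-\delta} \geq \alpha_{K}^{(0)} R_{K}^{-\delta -\beta} \max\left\lbrace 1, \lVert \boldsymbol{t} \rVert_{K^{m}} \right\rbrace^{\beta} \, \text{.} \]
Therefore setting $\alpha_{K} = \min\bigl\lbrace \alpha_{K}^{(\infty)}, \, \alpha_{K}^{(0)} R_{K}^{-\delta -\beta} \bigr\rbrace$ delivers the desired inequality on all of $K^{m}$.

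Finally, I would verify the two dependence statements. That $\alpha_{K}$ depends only on the restriction of $\lvert . \rvert_{K}$ to $\mathbb{Q}$ is immediate because each of $\alpha_{K}^{(0)}$, $\alpha_{K}^{(\infty)}$ and $R_{K}^{(\infty)}$ enjoys this property. For the non-Archimedean refinement, take $S = S^{(0)} \cup S^{(\infty)}$: if $K$ is non-Archimedean with residue characteristic outside $S$, then $\alpha_{K}^{(0)} = \alpha_{K}^{(\infty)} = 1$ and $R_{K} = 1$, so $\alpha_{K}^{(0)} R_{K}^{-\delta -\beta} = 1$ and hence $\alpha_{K} = 1$. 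There is essentially no obstacle here; the only mild subtlety is being careful that $R_{K}^{(\infty)}$ need not exceed $1$, which is why I replace it by $R_{K} = \max\{1, R_{K}^{(\infty)}\}$ before combining the two bounds.
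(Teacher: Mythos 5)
Your proof is correct and follows essentially the same route as the paper: split $K^{m}$ at the radius $\max\{1, R_{K}^{(\infty)}\}$, use Lemma~\ref{lemma:lojasiewcz} on the outer region and Lemma~\ref{lemma:minimum} (weakened by a factor of $R_{K}^{-\beta}$) on the inner region, then take $\alpha_{K} = \min\{\alpha_{K}^{(\infty)}, \alpha_{K}^{(0)} \max\{1, R_{K}^{(\infty)}\}^{-\delta-\beta}\}$ and $S = S^{(0)} \cup S^{(\infty)}$. The constants and the gluing argument coincide with the paper's; the only cosmetic difference is that you name the threshold $R_{K}$ explicitly.
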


\begin{proof}
Suppose that $K$ is any valued field of characteristic $0$. Define \[ \alpha_{K} = \min\left\lbrace \alpha_{K}^{(0)} \max\left\lbrace 1, R_{K}^{(\infty)} \right\rbrace^{-\beta -\delta}, \alpha_{K}^{(\infty)} \right\rbrace \in \mathbb{R}_{> 0} \quad \text{and} \quad S = S^{(0)} \cup S^{(\infty)} \, \text{,} \] with $\alpha_{K}^{(0)} \in \mathbb{R}_{> 0}$, $\delta \in \mathbb{R}_{\geq 0}$, $S^{(0)}$ as in Lemma~\ref{lemma:minimum} and $\alpha_{K}^{(\infty)}, R_{K}^{(\infty)}, \beta \in \mathbb{R}_{> 0}$, $S^{(\infty)}$ as in Lemma~\ref{lemma:lojasiewcz}. Then $\alpha_{K}$ depends only on the restriction of $\lvert . \rvert_{K}$ to $\mathbb{Q}$, and we have $\alpha_{K} = 1$ if $K$ is non-Archimedean with residue characteristic outside $S$. Moreover, for every $\boldsymbol{t} \in K^{m}$ such that $\lVert \boldsymbol{t} \rVert_{K^{m}} \leq \max\left\lbrace 1, R_{K}^{(\infty)} \right\rbrace$, we have \[ \left\lVert \boldsymbol{F}(\boldsymbol{t}) \right\rVert_{K^{n}} \geq \alpha_{K}^{(0)} \max\left\lbrace 1, R_{K}^{(\infty)} \right\rbrace^{-\delta} \geq \alpha_{K} \max\left\lbrace 1, \lVert \boldsymbol{t} \rVert_{K^{m}} \right\rbrace^{\beta} \, \text{.} \] Also, for every $\boldsymbol{t} \in K^{m}$ such that $\lVert \boldsymbol{t} \rVert_{K^{m}} > \max\left\lbrace 1, R_{K}^{(\infty)} \right\rbrace$, we clearly have \[ \left\lVert \boldsymbol{F}(\boldsymbol{t}) \right\rVert_{K^{n}} \geq \alpha_{K} \max\left\lbrace 1, \lVert \boldsymbol{t} \rVert_{K^{m}} \right\rbrace^{\beta} \, \text{.} \] Thus, the lemma is proved.
\end{proof}

We shall now apply the lemma above to obtain results about the multipliers of polynomials over arbitrary algebraically closed valued fields of characteristic $0$. To do this, we work with polynomial maps in a particular form, which was introduced by Ingram in~\cite{I2012}. Although the two claims below concerning this normal form are already known, we include proofs for the reader's convenience.

Given any field $K$ of characteristic $0$ and $\boldsymbol{c} = \left( c_{1}, \dotsc, c_{d -1} \right) \in K^{d -1}$, define \[ f_{\boldsymbol{c}}(z) = \frac{1}{d} z^{d} +\sum_{j = 1}^{d -1} \frac{(-1)^{j} \tau_{j}(\boldsymbol{c})}{d -j} z^{d -j} \in \Poly_{d}(K) \, \text{,} \] where $\tau_{1}(\boldsymbol{c}), \dotsc, \tau_{d -1}(\boldsymbol{c})$ denote the elementary symmetric functions of $c_{1}, \dotsc, c_{d -1}$, so that \[ f_{\boldsymbol{c}}(0) = 0 \quad \text{and} \quad f_{\boldsymbol{c}}^{\prime}(z) = \prod_{j = 1}^{d -1} \left( z -c_{j} \right) \, \text{.} \] Consider the morphism $F \colon \mathbb{A}^{d -1} \rightarrow \mathcal{P}_{d}$ defined by $F(\boldsymbol{c}) = f_{\boldsymbol{c}}$.

\begin{claim}
\label{claim:moduli}
The holomorphic map $F \colon \mathbb{C}^{d -1} \rightarrow \mathcal{P}_{d}(\mathbb{C})$ is proper.
\end{claim}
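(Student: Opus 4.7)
The plan is to verify properness via the sequential criterion: it suffices to show that if $\left( \boldsymbol{c}_{n} \right)_{n \geq 0}$ is a sequence in $\mathbb{C}^{d -1}$ with $\left\lVert \boldsymbol{c}_{n} \right\rVert \rightarrow +\infty$, then $\left[ f_{\boldsymbol{c}_{n}} \right]$ leaves every compact subset of $\mathcal{P}_{d}(\mathbb{C})$. We would argue by contradiction using the characterization of non-degeneration recalled at the end of Subsection~2.1: if $\left( \left[ f_{\boldsymbol{c}_{n}} \right] \right)$ does not degenerate, then, after passing to a subsequence, there exist affine transformations $\phi_{n}(z) = \alpha_{n} z +\beta_{n} \in \Aff(\mathbb{C})$ such that $\phi_{n} \centerdot f_{\boldsymbol{c}_{n}}$ converges in $\Poly_{d}(\mathbb{C})$ to some polynomial $g$ of degree $d$. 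The aim is then to extract from this convergence uniform bounds on each $c_{j, n}$, in contradiction with the hypothesis.

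The argument splits naturally into three short steps. First, track the leading coefficient: since every $f_{\boldsymbol{c}_{n}}$ has fixed leading coefficient $\frac{1}{d}$, the leading coefficient of $\phi_{n} \centerdot f_{\boldsymbol{c}_{n}}$ equals $\frac{\alpha_{n}^{1 -d}}{d}$, so convergence to the nonzero leading coefficient of $g$ forces $\left\lvert \alpha_{n} \right\rvert$ to remain in a compact subset of $\mathbb{R}_{> 0}$. Second, exploit the common fixed point at the origin: since $f_{\boldsymbol{c}_{n}}(0) = 0$, the point $\beta_{n} = \phi_{n}(0)$ is a fixed point of $\phi_{n} \centerdot f_{\boldsymbol{c}_{n}}$, and the $d$ roots of $\phi_{n} \centerdot f_{\boldsymbol{c}_{n}}(z) -z$, viewed as a multiset, converge to the $d$ roots of $g(z) -z$ by continuity of the roots of a polynomial of fixed degree in its coefficients; this yields boundedness of $\left( \beta_{n} \right)$. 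Third, the critical points of $\phi_{n} \centerdot f_{\boldsymbol{c}_{n}}$ are precisely $\phi_{n}\left( c_{j, n} \right) = \alpha_{n} c_{j, n} +\beta_{n}$ for $j \in \lbrace 1, \dotsc, d -1 \rbrace$, and they converge as a multiset to the critical points of $g$, hence remain bounded. Combining the three bounds, each coordinate $c_{j, n}$ stays bounded, contradicting $\left\lVert \boldsymbol{c}_{n} \right\rVert \rightarrow +\infty$.

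No step poses a genuine obstacle: the entire proof is driven by two distinctive features of the Ingram normal form $f_{\boldsymbol{c}}$, namely that its leading coefficient is the constant $\frac{1}{d}$ and that it fixes $0$, together with the fact that its critical points are literally the coordinates $c_{1}, \dotsc, c_{d -1}$. The only analytic ingredient is continuity of the roots of a polynomial of fixed degree in its coefficients, which is entirely standard. The sole point requiring a little care is to organize the successive subsequence extractions cleanly so that all three bounds hold along a single common subsequence.
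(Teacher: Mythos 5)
Your proof is correct and matches the paper's argument essentially verbatim: both use the sequential criterion for properness, extract conjugating affine maps from (non-)degeneration in $\mathcal{P}_{d}(\mathbb{C})$, and derive the needed bounds on $\alpha_{n}$, $\beta_{n}$ and the conjugated critical points from the convergence of the leading coefficient, the multiset of fixed points, and the multiset of critical points, respectively. The only superficial difference is that you argue by contradiction from $\lVert \boldsymbol{c}_{n} \rVert \rightarrow +\infty$, while the paper directly shows that convergence of $\left[ f_{\boldsymbol{c}_{n}} \right]$ forces boundedness of $\boldsymbol{c}_{n}$; these are equivalent formulations.
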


\begin{proof}
Assume that $\left( \boldsymbol{c}_{n} \right)_{n \geq 0}$ is a sequence of elements of $\mathbb{C}^{d -1}$ such that $\left( \left[ f_{\boldsymbol{c}_{n}} \right] \right)_{n \geq 0}$ converges in $\mathcal{P}_{d}(\mathbb{C})$. We shall show that $\left( \boldsymbol{c}_{n} \right)_{n \geq 0}$ is bounded in $\mathbb{C}^{d -1}$. There exists a sequence $\left( \phi_{n} \right)_{n \geq 0}$ of elements of $\Aff(\mathbb{C})$ such that the sequence $\left( f_{n} \right)_{n \geq 0}$ given by $f_{n} = \phi_{n} \centerdot f_{\boldsymbol{c}_{n}}$ converges to some $g \in \Poly_{d}(\mathbb{C})$. The multiset of all the fixed points for $f_{n}$ tends to the multiset of all the fixed points for $g$ in $\mathbb{C}^{d}/\mathfrak{S}_{d}$ as $n \rightarrow +\infty$. In particular, $\left( \phi_{n}(0) \right)_{n \geq 0}$ is bounded in $\mathbb{C}$. In addition, the multiset of all the critical points for $f_{n}$ tends to the multiset of all the critical points for $g$ in $\mathbb{C}^{d -1}/\mathfrak{S}_{d -1}$ as $n \rightarrow +\infty$. As a result, writing $\boldsymbol{c}_{n} = \left( c_{n}^{(1)}, \dotsc, c_{n}^{(d -1)} \right)$ for each $n \geq 0$, the sequence $\left( \phi_{n}\left( c_{n}^{(j)} \right) \right)_{n \geq 0}$ is bounded in $\mathbb{C}$ for each $j \in \lbrace 1, \dotsc, d -1 \rbrace$. Furthermore, writing $\phi_{n}(z) = \alpha_{n} z +\beta_{n}$ for all $n \geq 0$, the polynomial $f_{n}$ has leading coefficient $\frac{\alpha_{n}^{1 -d}}{d}$ for all $n \geq 0$, which yields $\lim\limits_{n \rightarrow +\infty} \left\lvert \alpha_{n} \right\rvert = \left\lvert d \cdot a_{d} \right\rvert^{\frac{-1}{d -1}} \in \mathbb{R}_{> 0}$, where $a_{d} \in \mathbb{C}^{*}$ denotes the leading coefficient of $g$. Therefore, as $c_{n}^{(j)} = \frac{\phi_{n}\left( c_{n}^{(j)} \right) -\phi_{n}(0)}{\alpha_{n}}$ for all $j \in \lbrace 1, \dotsc, d -1 \rbrace$ and all $n \geq 0$, the sequence $\left( \boldsymbol{c}_{n} \right)_{n \geq 0}$ is bounded in $\mathbb{C}^{d -1}$. This completes the proof of the claim.
\end{proof}

Moreover, using the triangle inequality, we easily obtain an upper bound on the Green functions of the polynomials $f_{\boldsymbol{c}}$, with $K$ an algebraically closed valued field of characteristic $0$ and $\boldsymbol{c} \in K^{d -1}$.

\begin{claim}
\label{claim:estimates1}
For each algebraically closed valued field $K$ of characteristic $0$, we have \[ g_{f_{\boldsymbol{c}}}(z) \leq \log^{+}\left( \max\left\lbrace \lVert \boldsymbol{c} \rVert_{K^{d -1}}, \lvert z \rvert_{K} \right\rbrace \right) +\Delta_{K} \] for all $\boldsymbol{c} \in K^{d -1}$ and all $z \in K$, and in particular $M_{f_{\boldsymbol{c}}} \leq \log^{+}\lVert \boldsymbol{c} \rVert_{K^{d -1}} +\Delta_{K}$ for all $\boldsymbol{c} \in K^{d -1}$, where \[ \Delta_{K} = \frac{1}{d -1} \log(d)_{K} +\frac{1}{d -1} \log\left( \max_{j \in \lbrace 0, \dotsc, d -1 \rbrace} \frac{1}{\lvert d -j \rvert_{K}} \left( \binom{d -1}{j} \right)_{K} \right) \in \mathbb{R}_{\geq 0} \, \text{.} \]
\end{claim}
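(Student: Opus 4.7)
The plan is a direct iteration: bound $\lvert f_{\boldsymbol{c}}(w) \rvert_{K}$ in one step by a constant multiple of $\max\left\lbrace \lVert \boldsymbol{c} \rVert_{K^{d -1}}, \lvert w \rvert_{K} \right\rbrace^{d}$, apply this recursively along the orbit of $z$, and extract $g_{f_{\boldsymbol{c}}}(z)$ via the defining limit. Setting $C_{K} = (d)_{K} \cdot \max\limits_{j \in \lbrace 0, \dotsc, d -1 \rbrace} \frac{1}{\lvert d -j \rvert_{K}} \left( \binom{d -1}{j} \right)_{K}$ so that $\log C_{K} = (d -1) \Delta_{K}$, I would first prove
\[ \lvert f_{\boldsymbol{c}}(w) \rvert_{K} \leq C_{K} \max\left\lbrace \lVert \boldsymbol{c} \rVert_{K^{d -1}}, \lvert w \rvert_{K} \right\rbrace^{d} \quad \text{for all } w \in K \, \text{.} \]
This is immediate from expanding $f_{\boldsymbol{c}}(w) = \sum_{j = 0}^{d -1} \frac{(-1)^{j} \tau_{j}(\boldsymbol{c})}{d -j} w^{d -j}$ as a sum of $d$ monomials, using the generalized triangle inequality $\lvert \sum_{j = 0}^{d -1} a_{j} \rvert_{K} \leq (d)_{K} \max_{j} \lvert a_{j} \rvert_{K}$, bounding $\lvert \tau_{j}(\boldsymbol{c}) \rvert_{K} \leq \left( \binom{d -1}{j} \right)_{K} \lVert \boldsymbol{c} \rVert_{K^{d -1}}^{j}$ (since $\tau_{j}$ is a sum of $\binom{d -1}{j}$ products of $j$ coordinates of $\boldsymbol{c}$), and finally $\lVert \boldsymbol{c} \rVert_{K^{d -1}}^{j} \lvert w \rvert_{K}^{d -j} \leq \max\left\lbrace \lVert \boldsymbol{c} \rVert_{K^{d -1}}, \lvert w \rvert_{K} \right\rbrace^{d}$.

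Next I would verify that $C_{K} \geq 1$, which is needed for the iteration: the $j = 0$ term contributes $(d)_{K} / \lvert d \rvert_{K}$, which is $1$ in the Archimedean case (both equal $d$) and $\geq 1$ in the non-Archimedean case (since $(d)_{K} = 1$ and $\lvert d \rvert_{K} \leq 1$). Introducing $Q_{n} = \max\left\lbrace 1, \lVert \boldsymbol{c} \rVert_{K^{d -1}}, \lvert f_{\boldsymbol{c}}^{\circ n}(z) \rvert_{K} \right\rbrace$ for $n \geq 0$, the one-step estimate combined with $C_{K} Q_{n}^{d} \geq Q_{n} \geq \max\left\lbrace 1, \lVert \boldsymbol{c} \rVert_{K^{d -1}} \right\rbrace$ gives the recursion $Q_{n +1} \leq C_{K} Q_{n}^{d}$. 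Taking logarithms and iterating then yields
\[ \log Q_{n} \leq d^{n} \log Q_{0} +\left( d^{n} -1 \right) \Delta_{K} \quad \text{for all } n \geq 0 \, \text{.} \]

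Finally, since $\log^{+}\lvert f_{\boldsymbol{c}}^{\circ n}(z) \rvert_{K} \leq \log Q_{n}$, dividing by $d^{n}$ and letting $n \rightarrow +\infty$ delivers $g_{f_{\boldsymbol{c}}}(z) \leq \log Q_{0} +\Delta_{K} = \log^{+}\max\left\lbrace \lVert \boldsymbol{c} \rVert_{K^{d -1}}, \lvert z \rvert_{K} \right\rbrace +\Delta_{K}$, which is the stated inequality. The bound $M_{f_{\boldsymbol{c}}} \leq \log^{+}\lVert \boldsymbol{c} \rVert_{K^{d -1}} +\Delta_{K}$ then follows by specializing $z$ to a critical point, each of which satisfies $\lvert z \rvert_{K} \leq \lVert \boldsymbol{c} \rVert_{K^{d -1}}$. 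There is no genuine obstacle here; the only care required is the bookkeeping of constants to match $\Delta_{K}$ exactly, and the uniform treatment of the Archimedean and non-Archimedean cases via the $(n)_{K}$ notation.
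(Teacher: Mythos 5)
Your proof is correct and takes essentially the same approach as the paper: the same one-step triangle-inequality bound with the same constant, iterated by induction, followed by passing to the limit in the defining formula for the Green function. Introducing $Q_{n}$ to package the recursion is a cosmetic variation of the paper's explicit inductive estimate on $\lvert f_{\boldsymbol{c}}^{\circ n}(z) \rvert_{K}$.
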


\begin{proof}
For every $\boldsymbol{c} \in K^{d -1}$ and every $z \in K$, we have \[ \left\lvert f_{\boldsymbol{c}}(z) \right\rvert_{K} \leq (d)_{K} \left( \max_{j \in \lbrace 0, \dotsc, d -1 \rbrace} \left\lvert \frac{(-1)^{j} \tau_{j}(\boldsymbol{c})}{d -j} z^{d -j} \right\rvert_{K} \right) \leq \delta_{K} \max\left\lbrace \lVert \boldsymbol{c} \rVert_{K^{d -1}}, \lvert z \rvert_{K} \right\rbrace^{d} \] by the triangle inequality, where $\tau_{0}(\boldsymbol{c}) = 1$ by convention and \[ \delta_{K} = (d)_{K} \left( \max_{j \in \lbrace 0, \dotsc, d -1 \rbrace} \frac{1}{\lvert d -j \rvert_{K}} \left( \binom{d -1}{j} \right)_{K} \right) \in \mathbb{R}_{\geq 1} \, \text{.} \] It follows by induction that \[ \left\lvert f_{\boldsymbol{c}}^{\circ n}(z) \right\rvert_{K} \leq \delta_{K}^{\frac{d^{n} -1}{d -1}} \max\left\lbrace 1, \lVert \boldsymbol{c} \rVert_{K^{d -1}}, \lvert z \rvert_{K} \right\rbrace^{d^{n}} \] for all $\boldsymbol{c} \in K^{d -1}$, all $z \in K$ and all $n \geq 0$. Therefore, for every $\boldsymbol{c} \in K^{d -1}$ and every $z \in K$, we have \[ \frac{1}{d^{n}} \log^{+}\left\lvert f_{\boldsymbol{c}}^{\circ n}(z) \right\rvert_{K} \leq \frac{d^{n} -1}{d^{n} (d -1)} \log\left( \delta_{K} \right) +\log^{+}\left( \max\left\lbrace \lVert \boldsymbol{c} \rVert_{K^{d -1}}, \lvert z \rvert_{K} \right\rbrace \right) \] for all $n \geq 0$, which yields the desired result by letting $n \rightarrow +\infty$. Thus, the claim is proved.
\end{proof}

Finally, we obtain the following result regarding polynomial maps over arbitrary algebraically closed valued fields of characteristic $0$:

\begin{proposition}
\label{proposition:corollary}
Assume that $K$ is an algebraically closed valued field of characteristic $0$. Then there exist some $A \in \mathbb{R}_{> 0}$ not depending on $K$ and some $B_{K} \in \mathbb{R}$ depending only on the restriction of $\lvert . \rvert_{K}$ to $\mathbb{Q}$ such that \[ \max\left\lbrace M_{f}^{(1)}, M_{f}^{(2)} \right\rbrace \geq A \cdot M_{f} +B_{K} \] for all $f \in \Poly_{d}(K)$. Furthermore, we can take $B_{K} = 0$ if $K$ is non-Archimedean with residue characteristic outside some finite set $S$ of prime numbers.
\end{proposition}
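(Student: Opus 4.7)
The plan is to reduce to Ingram's normal form $f_{\boldsymbol{c}}$ and apply Lemma~\ref{lemma:everywhere} to the composite morphism
\[ \boldsymbol{G} = \Mult_{d}^{(2)} \circ F \colon \mathbb{A}^{d -1} \longrightarrow \mathbb{A}^{N} \, \text{,} \]
where $N = d +\frac{d (d -1)}{2}$. Since $K$ is algebraically closed of characteristic $0$, every $f \in \Poly_{d}(K)$ is $\Aff(K)$-conjugate to some $f_{\boldsymbol{c}}$ with $\boldsymbol{c} \in K^{d -1}$ (translate a fixed point of $f$ to $0$ and rescale by an appropriate $(d -1)$th root to normalize the leading coefficient to $\frac{1}{d}$). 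As $M_{f}$, $M_{f}^{(1)}$ and $M_{f}^{(2)}$ are invariants of the $\Aff(K)$-action, I may assume $f = f_{\boldsymbol{c}}$.

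First I would check that the holomorphic map $\boldsymbol{G} \colon \mathbb{C}^{d -1} \to \mathbb{C}^{N}$ is both proper and nowhere vanishing. Properness follows from Claim~\ref{claim:moduli} composed with Corollary~\ref{corollary:proper}. The non-vanishing is a critical-point count: if $\boldsymbol{G}(\boldsymbol{c}) = 0$ then all elementary symmetric functions of the fixed-point multipliers $\lambda_{1}, \dotsc, \lambda_{d}$ of $f_{\boldsymbol{c}}$ vanish, forcing every $\lambda_{i} = 0$, so each of the $d$ fixed points of $f_{\boldsymbol{c}}$ would have to be critical, contradicting the existence of only $d -1$ critical points counted with multiplicity. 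Lemma~\ref{lemma:everywhere} then furnishes some $\alpha_{K} \in \mathbb{R}_{> 0}$ depending only on $\lvert . \rvert_{K}|_{\mathbb{Q}}$ (equal to $1$ outside a finite set of residue characteristics) and some $\beta \in \mathbb{R}_{> 0}$ not depending on $K$ such that
\[ \bigl\lVert \boldsymbol{G}(\boldsymbol{c}) \bigr\rVert_{K^{N}} \geq \alpha_{K} \max\bigl\lbrace 1, \lVert \boldsymbol{c} \rVert_{K^{d -1}} \bigr\rbrace^{\beta} \]
for every $\boldsymbol{c} \in K^{d -1}$.

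Next I would bound $\bigl\lVert \boldsymbol{G}(\boldsymbol{c}) \bigr\rVert_{K^{N}}$ from above by Vieta's formulas and the triangle inequality: each coordinate $\sigma_{d, j}^{(p)}\bigl( [f_{\boldsymbol{c}}] \bigr)$ is the $j$th elementary symmetric function of $N_{d}^{(p)}$ numbers of absolute value at most $\exp\bigl( p \cdot M_{f_{\boldsymbol{c}}}^{(p)} \bigr)$, which yields
\[ \bigl\lVert \boldsymbol{G}(\boldsymbol{c}) \bigr\rVert_{K^{N}} \leq \Gamma_{K} \exp\bigl( D \cdot \max\bigl\lbrace 0, M_{f_{\boldsymbol{c}}}^{(1)}, M_{f_{\boldsymbol{c}}}^{(2)} \bigr\rbrace \bigr) \]
with $D = \max\bigl\lbrace d, 2 N_{d}^{(2)} \bigr\rbrace$ and $\Gamma_{K}$ depending only on $\lvert . \rvert_{K}|_{\mathbb{Q}}$. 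Combining this with the lower bound above and with Claim~\ref{claim:estimates1}, which gives $\log^{+}\lVert \boldsymbol{c} \rVert_{K^{d -1}} \geq M_{f_{\boldsymbol{c}}} -\Delta_{K}$, I obtain
\[ \max\bigl\lbrace 0, M_{f_{\boldsymbol{c}}}^{(1)}, M_{f_{\boldsymbol{c}}}^{(2)} \bigr\rbrace \geq A \cdot M_{f_{\boldsymbol{c}}} +B_{K}^{\prime} \]
with $A = \beta/D > 0$ not depending on $K$ and $B_{K}^{\prime} \in \mathbb{R}$ depending only on $\lvert . \rvert_{K}|_{\mathbb{Q}}$.

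The main obstacle is the last step, namely strengthening this to $\max\bigl\lbrace M_{f_{\boldsymbol{c}}}^{(1)}, M_{f_{\boldsymbol{c}}}^{(2)} \bigr\rbrace \geq A \cdot M_{f_{\boldsymbol{c}}} +B_{K}$. This is automatic when one of $M_{f_{\boldsymbol{c}}}^{(1)}, M_{f_{\boldsymbol{c}}}^{(2)}$ is nonnegative. Otherwise, the inequality $0 \geq A \cdot M_{f_{\boldsymbol{c}}} +B_{K}^{\prime}$ bounds $M_{f_{\boldsymbol{c}}}$ above by a $K$-dependent constant, while the uniform lower bound $\bigl\lVert \boldsymbol{G}(\boldsymbol{c}) \bigr\rVert_{K^{N}} \geq \alpha_{K}$ forces $\bigl\lvert \sigma_{d, j}^{(p)}\bigl( [f_{\boldsymbol{c}}] \bigr) \bigr\rvert_{K} \geq \alpha_{K}$ for some $p$ and some $j$; the reverse Vieta bound $\bigl\lvert \sigma_{d, j}^{(p)}\bigl( [f_{\boldsymbol{c}}] \bigr) \bigr\rvert_{K} \leq \bigl( \binom{N_{d}^{(p)}}{j} \bigr)_{K} \exp\bigl( j p \cdot M_{f_{\boldsymbol{c}}}^{(p)} \bigr)$ then delivers a $K$-dependent lower bound on $\max\bigl\lbrace M_{f_{\boldsymbol{c}}}^{(1)}, M_{f_{\boldsymbol{c}}}^{(2)} \bigr\rbrace$. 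Choosing $B_{K} \leq B_{K}^{\prime}$ sufficiently negative to absorb this bounded regime yields the desired inequality with the same $A = \beta/D$; for non-Archimedean $K$ with residue characteristic outside a larger finite set of primes, all of $\alpha_{K}, \Gamma_{K}, \Delta_{K}$ trivialize and the residual case becomes vacuous, so one can take $B_{K} = 0$, which establishes the final clause.
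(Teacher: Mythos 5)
Your proposal is correct and follows essentially the same route as the paper: reduce to Ingram's normal form, set up the composite morphism $\boldsymbol{G} = \Mult_{d}^{(2)} \circ F$, establish properness via Claim~\ref{claim:moduli} and Corollary~\ref{corollary:proper} and non-vanishing, then combine Lemma~\ref{lemma:everywhere}, Claim~\ref{claim:estimates1} and a Vieta-type upper bound on the coordinates. Your final constant $A = \beta/D$ agrees with the paper's $A = A'/(d(d-1))$. Two remarks are in order. First, for non-vanishing you count critical fixed points, whereas the paper simply observes that the holomorphic fixed-point formula $d + \sum_{j = 1}^{d} (-1)^{j} (d -j) \sigma_{d,j}^{(1)} = 0$ forbids all $\sigma_{d,j}^{(1)}$ from vanishing simultaneously; both are correct, but the latter is a one-liner that sidesteps any fuss over multiple fixed points. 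Second, and more to the point: the detour through $\max\left\lbrace 0, M_{f_{\boldsymbol{c}}}^{(1)}, M_{f_{\boldsymbol{c}}}^{(2)} \right\rbrace$ and the ensuing case analysis is avoidable. The paper instead bounds $\log\left\lVert \boldsymbol{G}(\boldsymbol{c}) \right\rVert$ above by $\max_{p, j}\left( \log\left( \binom{N_{d}^{(p)}}{j} \right)_{K} +j p \cdot M_{f_{\boldsymbol{c}}}^{(p)} \right)$ and then, letting $(p_{0}, j_{0})$ achieve this maximum, reads off
\[ M_{f_{\boldsymbol{c}}}^{(p_{0})} \geq \frac{A^{\prime} M_{f_{\boldsymbol{c}}} +B_{K}^{\prime} -A^{\prime} \Delta_{K} -\log\left( \binom{N_{d}^{(p_{0})}}{j_{0}} \right)_{K}}{j_{0} p_{0}} \geq A \cdot M_{f_{\boldsymbol{c}}} +B_{K} \, \text{,} \]
where the last inequality uses only $M_{f_{\boldsymbol{c}}} \geq 0$ together with $j_{0} p_{0} \leq d (d -1)$ and the definition of $B_{K}$ as the minimum over $(p, j)$ of the constant terms. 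No separate treatment of a "bounded regime" is needed; the "main obstacle" you identify disappears once the Vieta bound is kept coordinate-by-coordinate rather than bundled into a single exponential estimate with an inserted $0$.
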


\begin{proof}
Consider the morphism $\boldsymbol{G} \colon \mathbb{A}^{d -1} \rightarrow \mathbb{A}^{d} \times \mathbb{A}^{\frac{d (d -1)}{2}}$ defined by \[ \boldsymbol{G}(\boldsymbol{c}) = \Mult_{d}^{(2)}\left( \left[ f_{\boldsymbol{c}} \right] \right) = \left( \left( \sigma_{d, j}^{(1)}\left( \left[ f_{\boldsymbol{c}} \right] \right) \right)_{1 \leq j \leq d}, \left( \sigma_{d, j}^{(2)}\left( \left[ f_{\boldsymbol{c}} \right] \right) \right)_{1 \leq j \leq \frac{d (d -1)}{2}} \right) \, \text{.} \] Then the holomorphic map $\boldsymbol{G} \colon \mathbb{C}^{d -1} \rightarrow \mathbb{C}^{d} \times \mathbb{C}^{\frac{d (d -1)}{2}}$ has no zero in $\mathbb{C}^{d -1}$ because $d +\sum\limits_{j = 1}^{d} (-1)^{j} (d -j) \sigma_{d, j}^{(1)} = 0$ by the holomorphic fixed-point formula. In addition, the map $\boldsymbol{G} \colon \mathbb{C}^{d -1} \rightarrow \mathbb{C}^{d} \times \mathbb{C}^{\frac{d (d -1)}{2}}$ is proper by Corollary~\ref{corollary:proper} and Claim~\ref{claim:moduli}. Thus, by Lemma~\ref{lemma:everywhere}, there exist some $A^{\prime} \in \mathbb{R}_{> 0}$ not depending on $K$ and some $B_{K}^{\prime} \in \mathbb{R}$ depending only on the restriction of $\lvert . \rvert_{K}$ to $\mathbb{Q}$ such that \[ \forall \boldsymbol{c} \in K^{d -1}, \, \log\left\lVert \boldsymbol{G}(\boldsymbol{c}) \right\rVert_{K^{d} \times K^{\frac{d (d -1)}{2}}} \geq A^{\prime} \cdot \log^{+}\lVert \boldsymbol{c} \rVert_{K^{d -1}} +B_{K}^{\prime} \, \text{.} \] Moreover, we can take $B_{K}^{\prime} = 0$ if $K$ is non-Archimedean with residue characteristic outside some finite set $S^{\prime}$ of prime numbers. By Claim~\ref{claim:estimates1}, it follows that \[ \log\left\lVert \boldsymbol{G}(\boldsymbol{c}) \right\rVert_{K^{d} \times K^{\frac{d (d -1)}{2}}} \geq A^{\prime} \cdot M_{f_{\boldsymbol{c}}} +B_{K}^{\prime} -A^{\prime} \cdot \Delta_{K} \] for all $\boldsymbol{c} \in K^{d -1}$. Now, for every $\boldsymbol{c} \in K^{d -1}$, we have \[ \left\lvert \sigma_{d, j}^{(p)}\left( \left[ f_{\boldsymbol{c}} \right] \right) \right\rvert_{K} \leq \left( \binom{N_{d}^{(p)}}{j} \right)_{K} \left( \max_{\lambda \in \Lambda_{f_{\boldsymbol{c}}}^{(p)}} \lvert \lambda \rvert \right)^{j} = \left( \binom{N_{d}^{(p)}}{j} \right)_{K} \exp\left( j p \cdot M_{f_{\boldsymbol{c}}}^{(p)} \right) \] for all $p \geq 1$ and all $j \in \left\lbrace 1, \dotsc, N_{d}^{(p)} \right\rbrace$ by the triangle inequality, and hence \[ \log\left\lVert \boldsymbol{G}(\boldsymbol{c}) \right\rVert_{K^{d} \times K^{\frac{d (d -1)}{2}}} \leq \max_{\substack{p \in \lbrace 1, 2 \rbrace\\ j \in \left\lbrace 1, \dotsc, N_{d}^{(p)} \right\rbrace}} \left( \log\left( \binom{N_{d}^{(p)}}{j} \right)_{K} +j p \cdot M_{f_{\boldsymbol{c}}}^{(p)} \right) \, \text{.} \] Therefore, setting \[ A = \frac{A^{\prime}}{d (d -1)} \quad \text{and} \quad B_{K} = \min_{\substack{p \in \lbrace 1, 2 \rbrace\\ j \in \left\lbrace 1, \dotsc, N_{d}^{(p)} \right\rbrace}} \left( \frac{B_{K}^{\prime} -A^{\prime} \cdot \Delta_{K} -\log\left( \binom{N_{d}^{(p)}}{j} \right)_{K}}{j p} \right) \, \text{,} \] we have \[ \forall \boldsymbol{c} \in K^{d -1}, \, \max\left\lbrace M_{f_{\boldsymbol{c}}}^{(1)}, M_{f_{\boldsymbol{c}}}^{(2)} \right\rbrace \geq A \cdot M_{f_{\boldsymbol{c}}} +B_{K} \, \text{.} \] Now, note that $A \in \mathbb{R}_{> 0}$ does not depend on $K$ and $B_{K} \in \mathbb{R}$ depends only on the restriction of $\lvert . \rvert_{K}$ to $\mathbb{Q}$. Furthermore, setting \[ S = S^{\prime} \cup \left\lbrace q \text{ prime} : 2 \leq q \leq \max\left\lbrace d, \frac{d (d -1)}{2} \right\rbrace \right\rbrace \, \text{,} \] we have $B_{K} = 0$ if $K$ is non-Archimedean with residue characteristic not in $S$.

To conclude, assume that $f \in \Poly_{d}(K)$ has leading coefficient $a_{d} \in K^{*}$. Choose any $(d -1)$th root $\alpha \in K^{*}$ of $d \cdot a_{d}$ and any fixed point $w \in K$ for $f$, and consider $\phi(z) = \alpha (z -w) \in \Aff(K)$. Then $\phi \centerdot f \in \Poly_{d}(K)$ has leading coefficient $\frac{1}{d}$ and it satisfies $\phi \centerdot f(0) = 0$, and hence $\phi \centerdot f = f_{\boldsymbol{c}}$, where $c_{1}, \dotsc, c_{d -1} \in K$ are the critical points for $\phi \centerdot f$ and $\boldsymbol{c} = \left( c_{1}, \dotsc, c_{d -1} \right)$. Since $M_{f} = M_{f_{\boldsymbol{c}}}$ and $M_{f}^{(p)} = M_{f_{\boldsymbol{c}}}^{(p)}$ for each integer $p \geq 1$, we have $\max\left\lbrace M_{f}^{(1)}, M_{f}^{(2)} \right\rbrace \geq A \cdot M_{f} +B_{K}$ by the discussion above. Thus, the proposition is proved.
\end{proof}

\begin{remark}
It follows immediately from Theorem~\ref{theorem:degenB} that we can take $S$ to be the set of all primes less than or equal to $d$ in the statement of Proposition~\ref{proposition:corollary}.
\end{remark}

Note that Corollary~\ref{corollary:degenLocal} is simply a weaker version of Proposition~\ref{proposition:corollary}. We shall now show that Corollary~\ref{corollary:degenGlobal} also follows directly from Proposition~\ref{proposition:corollary}. To do this, let us first briefly recall various notions of height. We refer to~\cite[Chapter~3]{L1983}, \cite[Chapter~3]{S2007} and~\cite[Chapter~5]{S2012} for further details.

We denote here by $\mathbb{P}$ the set of all prime numbers. For $p \in \mathbb{P}$, denote by $\overline{\mathbb{Q}}_{p}$ the algebraic closure of the field $\mathbb{Q}_{p}$ of $p$\nobreakdash-adic numbers and by $\lvert . \rvert_{p}$ the natural absolute value on $\overline{\mathbb{Q}}_{p}$. Also set $\overline{\mathbb{Q}}_{\infty} = \mathbb{C}$ and denote by $\lvert . \rvert_{\infty}$ the usual absolute value on $\overline{\mathbb{Q}}_{\infty}$. The \emph{standard height} $h \colon \overline{\mathbb{Q}} \rightarrow \mathbb{R}_{\geq 0}$ is given by \[ h(t) = \frac{1}{[\mathbb{K} \colon \mathbb{Q}]} \sum_{p \in \mathbb{P} \cup \lbrace \infty \rbrace} \sum_{\sigma \mathpunct{:} \mathbb{K} \hookrightarrow \overline{\mathbb{Q}}_{p}} \log^{+}\left\lvert \sigma(t) \right\rvert_{p} \, \text{,} \quad \text{with} \quad t \in \mathbb{K} \quad \text{and} \quad [\mathbb{K} \colon \mathbb{Q}] < +\infty \, \text{.} \]

Suppose that $f \in \Poly_{d}\left( \overline{\mathbb{Q}} \right)$. The \emph{canonical height} $\widehat{h}_{f} \colon \overline{\mathbb{Q}} \rightarrow \mathbb{R}_{\geq 0}$ relative to $f$ is defined by \[ \widehat{h}_{f}(z) = \lim_{n \rightarrow +\infty} \frac{1}{d^{n}} h\left( f^{\circ n}(z) \right) \, \text{.} \] For every number field $\mathbb{K}$ such that $f \in \Poly_{d}(\mathbb{K})$ and every $z \in \mathbb{K}$, we have \[ \widehat{h}_{f}(z) = \frac{1}{[\mathbb{K} \colon \mathbb{Q}]} \sum_{p \in \mathbb{P} \cup \lbrace \infty \rbrace} \sum_{\sigma \mathpunct{:} \mathbb{K} \hookrightarrow \overline{\mathbb{Q}}_{p}} g_{\sigma(f)}\left( \sigma(z) \right) \, \text{.} \] Now, denote by $\Gamma_{f} \subseteq \overline{\mathbb{Q}}$ the set of critical points for $f$ and, for $\gamma \in \Gamma_{f}$, define $\rho_{\gamma}$ to be its multiplicity as a critical point for $f$. The \emph{critical height} $H_{f}$ of $f$ is given by \[ H_{f} = \sum_{\gamma \in \Gamma_{f}} \rho_{\gamma} \cdot \widehat{h}_{f}(\gamma) \, \text{.} \]

\begin{proof}[Proof of Corollary~\ref{corollary:degenGlobal}]
By Proposition~\ref{proposition:corollary}, there exist $A^{\prime} \in \mathbb{R}_{> 0}$ and $B_{p}^{\prime} \in \mathbb{R}$, for $p \in \mathbb{P} \cup \lbrace \infty \rbrace$, such that \[ \forall p \in \mathbb{P} \cup \lbrace \infty \rbrace, \, \forall g \in \Poly_{d}\left( \overline{\mathbb{Q}}_{p} \right), \, \max\left\lbrace M_{g}^{(1)}, M_{g}^{(2)} \right\rbrace \geq A^{\prime} \cdot M_{g} +B_{p}^{\prime} \, \text{.} \] Moreover, we can take $B_{p}^{\prime} = 0$ for all but finitely many $p \in \mathbb{P} \cup \lbrace \infty \rbrace$. Define \[ A = \frac{A^{\prime}}{d -1} \in \mathbb{R}_{> 0} \quad \text{and} \quad B = \sum_{p \in \mathbb{P} \cup \lbrace \infty \rbrace} B_{p}^{\prime} \in \mathbb{R} \, \text{.} \] Now, suppose that $f \in \Poly_{d}\left( \overline{\mathbb{Q}} \right)$. Fix a number field $\mathbb{K}$ containing the coefficients of $f$, its critical points and its multipliers at all its cycles with period $1$ or $2$. Then \[ \begin{split} \max\left\lbrace H_{f}^{(1)}, H_{f}^{(2)} \right\rbrace & = \frac{1}{[\mathbb{K} \colon \mathbb{Q}]} \sum_{p \in \mathbb{P} \cup \lbrace \infty \rbrace} \sum_{\sigma \mathpunct{:} \mathbb{K} \hookrightarrow \overline{\mathbb{Q}}_{p}} \max_{\substack{q \in \lbrace 1, 2 \rbrace\\ \lambda \in \Lambda_{f}^{(q)}}} \left( \frac{1}{q} \log^{+}\left\lvert \sigma(\lambda) \right\rvert_{p} \right)\\ & \geq \frac{1}{[\mathbb{K} \colon \mathbb{Q}]} \sum_{p \in \mathbb{P} \cup \lbrace \infty \rbrace} \sum_{\sigma \mathpunct{:} \mathbb{K} \hookrightarrow \overline{\mathbb{Q}}_{p}} \max\left\lbrace M_{\sigma(f)}^{(1)}, M_{\sigma(f)}^{(2)} \right\rbrace\\ & \geq \frac{1}{[\mathbb{K} \colon \mathbb{Q}]} \sum_{p \in \mathbb{P} \cup \lbrace \infty \rbrace} \sum_{\sigma \mathpunct{:} \mathbb{K} \hookrightarrow \overline{\mathbb{Q}}_{p}} \left( A^{\prime} \cdot M_{\sigma(f)} +B_{p}^{\prime} \right)\\ & \geq \frac{A}{[\mathbb{K} \colon \mathbb{Q}]} \sum_{p \in \mathbb{P} \cup \lbrace \infty \rbrace} \sum_{\sigma \mathpunct{:} \mathbb{K} \hookrightarrow \overline{\mathbb{Q}}_{p}} \left( \sum_{\gamma \in \Gamma_{f}} \rho_{\gamma} \cdot g_{\sigma(f)}\left( \sigma(\gamma) \right) \right) +B\\ & = A \cdot H_{f} +B \, \text{.} \end{split} \] Thus, the corollary is proved.
\end{proof}

\section{The non-Archimedean case}
\label{section:degenNonArch}

We shall adapt here the discussion of Section~\ref{section:degenArch} in order to prove Theorem~\ref{theorem:degenB} in the non-Archimedean case.

Throughout this section, we fix an integer $d \geq 2$ and an algebraically closed field $K$ of characteristic $0$ equipped with a non-Archimedean absolute value $\lvert . \rvert$. We also assume that the residue characteristic of $K$ either equals $0$ or is greater than $d$, so that $\lvert j \rvert = 1$ for all $j \in \lbrace 1, \dotsc, d \rbrace$. In addition, we assume that $\lvert . \rvert$ is not the trivial absolute value, as Theorem~\ref{theorem:degenB} would be immediate otherwise. Note that we do not assume $K$ to be complete here, although there is no gain in generality in not doing so since Theorem~\ref{theorem:degenB} clearly holds for $K$ if it holds for its completion $\widehat{K}$.

\subsection{A few preliminaries on non-Archimedean analysis}

First, let us recall some basic facts about disks and polynomial maps in the non-Archimedean setting. We omit proofs and refer to~\cite[Chapters~2 and~3]{B2019} for further information.

In this section, we shall only work with finite unions of disks. Given $w \in K$ and $r \in \mathbb{R}_{> 0}$, we denote by $D(w, r)$ and $\overline{D(w, r)}$ the open and closed disks of center $w$ and radius $r$, respectively, which are given by \[ D(w, r) = \left\lbrace z \in K : \lvert z -w \rvert < r \right\rbrace \quad \text{and} \quad \overline{D(w, r)} = \left\lbrace z \in K : \lvert z -w \rvert \leq r \right\rbrace \, \text{.} \] Note that a disk has a unique radius. In contrast, each point of a disk is a center. Although all disks are both open and closed topologically, we say here that a disk is \emph{open} if it is of the form $D(w, r)$, with $w \in K$ and $r \in \mathbb{R}_{> 0}$, and we say that it is \emph{closed} if it is of the form $\overline{D(w, r)}$, with $w \in K$ and $r \in \mathbb{R}_{> 0}$. Now, note that a disk is both open and closed if and only if its radius does not lie in $\left\lvert K^{*} \right\rvert$.

Suppose that $U$ is a finite union of disks in $K$. Then $U$ can be written uniquely as the union of finitely many pairwise disjoint disks $U_{1}, \dotsc, U_{N}$ in $K$, with $N \geq 0$. In addition, every disk contained in $U$ is contained in $U_{j}$ for some $j \in \lbrace 1, \dotsc, N \rbrace$. These disks $U_{1}, \dotsc, U_{N}$ are called the \emph{disk components} of $U$. The disk components of every finite union of open disks are all open. Similarly, the disk components of every finite union of closed disks are all closed.

Now, every nonconstant polynomial in $K[z]$ maps open disks to open disks and closed disks to closed disks. Given disks $U, V$ in $K$, we say that a polynomial map $f \colon U \rightarrow V$ has \emph{degree} $e \geq 1$ if every element of $V$ has exactly $e$ preimages under $f$ in $U$, counting multiplicities.

Given a polynomial $f \in K[z]$ of degree $D \geq 1$ with leading coefficient $a_{D} \in K^{*}$ and $w \in K$, it is not hard to show that, for all $r \in \mathbb{R}_{> 0}$ sufficiently large, we have \[ f\left( D(w, r) \right) = D\left( f(w), \left\lvert a_{D} \right\rvert r^{D} \right) \quad \text{and} \quad f\left( \overline{D(w, r)} \right) = \overline{D\left( f(w), \left\lvert a_{D} \right\rvert r^{D} \right)} \] and the maps $f \colon D(w, r) \rightarrow D\left( f(w), \left\lvert a_{D} \right\rvert r^{D} \right)$ and $f \colon \overline{D(w, r)} \rightarrow \overline{D\left( f(w), \left\lvert a_{D} \right\rvert r^{D} \right)}$ have degree $D$. More generally, the result below describes precisely the images of disks under nonconstant polynomial maps in the non-Archimedean setting.

\begin{lemma}
\label{lemma:image}
Suppose that $f \in K[z]$ has degree $D \geq 1$, $w \in K$ and $r \in \mathbb{R}_{> 0}$. Set \[ s = \max_{j \in \lbrace 1, \dotsc, D \rbrace} \left\lvert \frac{f^{(j)}(w)}{j!} \right\rvert r^{j} \in \mathbb{R}_{> 0} \, \text{,} \] and define $e_{\min}$ and $e_{\max}$ to be the smallest and largest integers $j \in \lbrace 1, \dotsc, D \rbrace$ such that $s = \left\lvert \frac{f^{(j)}(w)}{j!} \right\rvert r^{j}$, respectively. Then we have \[ f\left( D(w, r) \right) = D\left( f(w), s \right) \quad \text{and} \quad f\left( \overline{D(w, r)} \right) = \overline{D\left( f(w), s \right)} \, \text{.} \] Moreover, the maps $f \colon D(w, r) \rightarrow D\left( f(w), s \right)$ and $f \colon \overline{D(w, r)} \rightarrow \overline{D\left( f(w), s \right)}$ have degrees $e_{\min}$ and $e_{\max}$, respectively.
\end{lemma}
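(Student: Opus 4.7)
The plan is to work with the Taylor expansion
\[ f(z) = f(w) + \sum_{j = 1}^{D} a_{j} (z - w)^{j} \, \text{,} \quad a_{j} = \frac{f^{(j)}(w)}{j!} \, \text{,} \]
so that $s = \max_{j} \lvert a_{j} \rvert r^{j}$. The two inclusions $f(\overline{D(w,r)}) \subseteq \overline{D(f(w), s)}$ and $f(D(w,r)) \subseteq D(f(w), s)$ are immediate from the ultrametric inequality applied termwise: if $\lvert z - w \rvert \leq r$ then $\lvert a_{j} (z-w)^{j} \rvert \leq s$ for every $j$, and if $\lvert z - w \rvert < r$ then even $\lvert a_{j} (z-w)^{j} \rvert < \lvert a_{j} \rvert r^{j} \leq s$ whenever $a_{j} \neq 0$, giving the strict version.

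For the reverse inclusions and the degree computation, fix $y$ in the target disk and study the polynomial
\[ g(u) = f(w + u) - y = c_{0} + \sum_{j = 1}^{D} a_{j} u^{j} \, \text{,} \quad c_{0} = f(w) - y \, \text{.} \]
Its roots $u$ with $\lvert u \rvert < r$ (resp.\ $\lvert u \rvert \leq r$), counted with multiplicity, are in bijection with the preimages of $y$ under $f$ inside $D(w, r)$ (resp.\ $\overline{D(w, r)}$). Substituting $u = r t$ and dividing by $s$, the normalized polynomial $\tilde{g}(t) = s^{-1} g(r t) = \sum_{j = 0}^{D} b_{j} t^{j}$ has coefficients of absolute value at most $1$, with $\lvert b_{j} \rvert = 1$ (for $j \geq 1$) exactly on $J := \left\lbrace j \geq 1 : \lvert a_{j} \rvert r^{j} = s \right\rbrace$, and $\lvert b_{0} \rvert = \lvert c_{0} \rvert / s$.

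The main step is then a Newton polygon count for $\tilde{g}$. By definition, the extreme elements of $J$ are $e_{\min}$ and $e_{\max}$, so the lower convex hull of the points $(j, -\log\lvert b_{j} \rvert)$ runs along the $x$-axis from $x = e_{\min}$ to $x = e_{\max}$; segments to the right of $e_{\max}$ have strictly positive slope and segments to the left of $e_{\min}$ have strictly negative slope. Moreover the leftmost vertex sits strictly above the $x$-axis precisely when $\lvert c_{0} \rvert < s$. Reading off horizontal lengths gives exactly $e_{\max}$ roots with $\lvert u \rvert \leq r$ in every case, and exactly $e_{\min}$ roots with $\lvert u \rvert < r$ when $\lvert c_{0} \rvert < s$, while there is no root with $\lvert u \rvert < r$ when $\lvert c_{0} \rvert = s$. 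This yields both equalities of disks and the claimed degrees.

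The only real obstacle is keeping the Newton polygon bookkeeping crisp, and I would package the key fact as a small lemma: for any $h \in K[t]$ with $\max_{j} \lvert h_{j} \rvert = 1$ and all $\lvert h_{j} \rvert \leq 1$, the number of roots of $h$ in $\overline{D(0,1)}$, counted with multiplicity, equals the largest $j$ with $\lvert h_{j} \rvert = 1$, and the number in $D(0,1)$ equals the smallest such $j$ when $\lvert h_{0} \rvert < 1$ and $0$ when $\lvert h_{0} \rvert = 1$. Given this lemma (provable by the slope analysis above, or equivalently by reducing modulo the maximal ideal and tracking the multiplicity of $0$ in the reduction $\bar{h}$), the proposition follows by applying it to $\tilde{g}$ and unpacking the substitutions.
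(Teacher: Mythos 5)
The paper does not actually supply a proof of Lemma~\ref{lemma:image}; it defers to Benedetto's book. So the comparison is to the standard reference rather than to an argument in the paper. Your proof via the Newton polygon is correct and is essentially the standard one: the two easy inclusions come from the ultrametric triangle inequality, and the reverse inclusions together with the degree counts come from counting roots of $g(u) = f(w+u) - y$ in $D(0,r)$ and $\overline{D(0,r)}$ by reading off the Newton polygon. Your auxiliary lemma (roots of a normalized polynomial $h$ with $\max_j \lvert h_j\rvert = 1$ in $\overline{D(0,1)}$ and $D(0,1)$ are counted by the largest and smallest $j$ with $\lvert h_j\rvert = 1$) is exactly the right packaging, and the case analysis on $\lvert c_0\rvert < s$ versus $\lvert c_0\rvert = s$ correctly distinguishes the open and closed disk counts.

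One small technical point you should make explicit: the normalization $\tilde g(t) = s^{-1} g(rt)$ is only a polynomial over $K$ when $r$ and $s$ lie in $\lvert K^*\rvert$, which the lemma does not assume (the paper explicitly discusses disks whose radius is not in $\lvert K^*\rvert$). This is harmless --- one can either pass to an extension of $K$ containing elements of those absolute values, or state the Newton polygon count directly for $g$ using the supporting line of slope $\log r$ (the points $(j, -\log\lvert a_j\rvert)$ touching that line are indexed exactly by $J$, and the horizontal widths of the portions of the polygon lying strictly below resp.\ not above that line give the two counts). Either fix is routine, but since you allow arbitrary $r \in \mathbb{R}_{>0}$, the argument should acknowledge it. Apart from this, the proof is complete and matches the approach the paper's citation points to.
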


We can also describe the preimages of disks under polynomial maps in the non-Archimedean setting.

\begin{lemma}
\label{lemma:preimage}
Suppose that $U, V$ are disks in $K$, $f \colon U \rightarrow V$ is a polynomial map of degree $e \geq 1$ and $W$ is a disk contained in $V$. Then $f^{-1}(W)$ is a nonempty finite union of disks, and its disk components $U_{1}, \dotsc, U_{N}$, with $N \geq 1$, are all open if $W$ is open and all closed if $W$ is closed. Moreover, the map $f \colon U_{j} \rightarrow W$ has a degree $e_{j} \geq 1$ for each $j \in \lbrace 1, \dotsc, N \rbrace$, and we have $e = \sum\limits_{j = 1}^{N} e_{j}$.
\end{lemma}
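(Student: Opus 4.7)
The plan is to describe each disk component of $f^{-1}(W)$ explicitly as the unique maximal disk of the appropriate type around any one of its points, and then to count preimages of a fixed point of $W$ in order to sum the local degrees. Since $f \colon U \to V$ has degree $e \geq 1$, it is surjective onto $V$, so $f^{-1}(W) \neq \varnothing$. The case $W = V$ is trivial: $f^{-1}(W) = U$ is itself a single disk component of degree $e$. So I shall assume $W \subsetneq V$.

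For each $w \in f^{-1}(W)$, I would set $s_{w}(r) = \max_{j \geq 1} \lvert f^{(j)}(w)/j! \rvert r^{j}$ for $r > 0$. Since $f$ is nonconstant, $s_{w} \colon (0, +\infty) \to (0, +\infty)$ is continuous and strictly increasing with limits $0$ and $+\infty$ at the endpoints, hence a bijection. Let $\rho$ denote the radius of $W$, set $r_{w} = s_{w}^{-1}(\rho)$, and define $D_{w} = D(w, r_{w})$ when $W$ is open and $D_{w} = \overline{D(w, r_{w})}$ when $W$ is closed. By Lemma~\ref{lemma:image}, $f(D_{w})$ is the disk of center $f(w)$ and radius $\rho$ of the same type as $W$; since $f(w) \in W$ and every point of a non-Archimedean disk is a center, $f(D_{w}) = W$. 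Because $w \in D_{w} \cap U$, the two disks $D_{w}$ and $U$ are comparable, and the inclusion $U \subsetneq D_{w}$ would give $V = f(U) \subseteq f(D_{w}) = W$, contradicting $W \subsetneq V$; hence $D_{w} \subseteq U$. If now $w' \in D_{w}$, then $D_{w}$ is already a disk of radius $r_{w}$ around $w'$, so Lemma~\ref{lemma:image} gives $s_{w'}(r_{w}) = \rho$, whence $r_{w'} = r_{w}$ by strict monotonicity and $D_{w'} = D_{w}$. On the other hand, any disk $D \subseteq f^{-1}(W)$ containing $w$ must be contained in $D_{w}$ by a short case analysis on the types of $D$ and $W$ combined with the strict monotonicity of $s_{w}$. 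Thus the distinct $D_{w}$'s form a pairwise disjoint family of disks, all of the same type as $W$, whose union is $f^{-1}(W)$, and each $f \colon D_{w} \to W$ has a well-defined degree $e_{w} \geq 1$ by Lemma~\ref{lemma:image}.

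For the final count, I would fix any $c \in W$: the fiber $f^{-1}(c) \cap U$ contains $e$ elements with multiplicity by hypothesis, and it decomposes as the disjoint union of its intersections with the $D_{w}$'s, each contributing exactly $e_{w}$ preimages with multiplicity; hence $\sum_{w} e_{w} = e$, which bounds the number of disk components by $e$ and yields the required finiteness. The main technical obstacle I foresee is the bookkeeping of open versus closed types in the case analysis above, since the inclusion patterns of disks of equal radius but different types depend on whether the common radius lies in $\lvert K^{*} \rvert$; a careful application of Lemma~\ref{lemma:image} in each of the four type combinations should however settle the matter.
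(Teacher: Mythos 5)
The paper itself does not prove this lemma: it is stated in the preliminaries with the remark ``We omit proofs and refer to~\cite[Chapters~2 and~3]{B2019}'', so there is no in-paper argument for me to compare against. Your proof is nonetheless correct and self-contained given Lemma~\ref{lemma:image}. The structure is sound: you rule out the trivial case $W = V$, then for each $w \in f^{-1}(W)$ use the strictly increasing bijection $s_{w} \colon (0, +\infty) \to (0, +\infty)$ to pin down the unique radius $r_{w}$ mapping to $\rho$, set $D_{w}$ to be the disk of that radius about $w$ of the same type as $W$, and verify in turn that $f(D_{w}) = W$, that $D_{w} \subseteq U$, that $D_{w}$ depends only on the disk component (via $r_{w'} = r_{w}$ for $w' \in D_{w}$), and that $D_{w}$ is maximal. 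Summing local degrees over a single fiber $f^{-1}(c) \cap U$ then gives both the finiteness and the identity $e = \sum_{j} e_{j}$.

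The deferred bookkeeping is indeed routine, but one point deserves to be spelled out when you write it up. In the four-way type analysis, the delicate subcase is $W$ open and $D = \overline{D(w, r)}$ closed with $s_{w}(r) = \rho$ and $\rho \notin |K^{*}|$: a priori $\overline{D(w, r_{w})}$ is not obviously contained in $D(w, r_{w})$. This resolves because $s_{w}$ carries $|K^{*}|$ into $|K^{*}|$ (for $r \in |K^{*}|$, each nonzero term $\lvert f^{(j)}(w)/j! \rvert \, r^{j}$ lies in $|K^{*}|$, hence so does their maximum), so $\rho \notin |K^{*}|$ forces $r_{w} \notin |K^{*}|$ and thus $\overline{D(w, r_{w})} = D(w, r_{w}) = D_{w}$ as sets. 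Elsewhere you use, implicitly, that $|K^{*}|$ is dense in $\mathbb{R}_{> 0}$, which holds here since $K$ is algebraically closed with a nontrivial absolute value (the value group is a nontrivial divisible, hence dense, subgroup of $\mathbb{R}_{> 0}$). With those two observations noted, the argument closes completely.
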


\begin{remark}
Note that, if $f \in K[z]$ is a polynomial of degree $e \geq 1$ and $W$ is a disk in $K$, then the conclusion of Lemma~\ref{lemma:preimage} still holds. Indeed, in this case, there exist disks $U, V$ in $K$ such that $U = f^{-1}(V)$ and $W \subseteq V$, and $f \colon U \rightarrow V$ has degree $e$.
\end{remark}

Finally, we have a non-Archimedean analogue of the Riemann--Hurwitz formula for disks, which relates the degree of a map to the number of its critical points.

\begin{lemma}
\label{lemma:rhFormula}
Suppose that $U, V$ are disks in $K$ and $f \colon U \rightarrow V$ is a polynomial map of degree $e \geq 1$. Also assume that $e$ is less than the residue characteristic of $K$ if the latter is positive. Then $e = C +1$, where $C \geq 0$ is the number of critical points for $f$ in $U$, counting multiplicities.
\end{lemma}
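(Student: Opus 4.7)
The plan is to identify $C$ as the number of zeros of $f' \in K[z]$ in $U$ counted with multiplicity and to compute this count using the ``ultrametric argument principle'' that is baked into Lemma~\ref{lemma:image}, comparing the extremal-index description one would obtain for $f'$ with the one coming from Lemma~\ref{lemma:image} applied to $f$ itself. The residue-characteristic hypothesis would enter at exactly one point: it guarantees $\lvert k +1 \rvert = 1$ for every $k +1 \in \lbrace 1, \dotsc, e \rbrace$, which is precisely what forces the extremal index attached to $f'$ to equal $e -1$.

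After translation I would assume $U = D(0, r)$ or $\overline{D(0, r)}$ and write $f(z) = \sum_{j = 0}^{D} a_j z^j$ with $D = \deg f$; by Lemma~\ref{lemma:image}, $e$ is the smallest (open case) or largest (closed case) index $j \in \lbrace 1, \dotsc, D \rbrace$ attaining $s := \max_{k \geq 1} \lvert a_k \rvert r^k$. I would then record the following direct corollary of Lemma~\ref{lemma:image}, applied to any $P(z) = \sum_{k = 0}^{n} b_k z^k \in K[z]$ and any $r > 0$: the number of zeros of $P$ in $\overline{D(0, r)}$ (resp.\ $D(0, r)$), counted with multiplicity, equals the largest (resp.\ smallest) $k \in \lbrace 0, \dotsc, n \rbrace$ for which $\lvert b_k \rvert r^k$ attains $\max_j \lvert b_j \rvert r^j$. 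This follows by viewing $P$ as a polynomial map onto $\overline{D(P(0), s_P)}$ with $s_P = \max_{k \geq 1} \lvert b_k \rvert r^k$ and counting preimages of $0$, distinguishing the cases $\lvert b_0 \rvert \leq s_P$ (in which case the count equals the degree of the map) and $\lvert b_0 \rvert > s_P$ (in which case both sides vanish).

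Applying this corollary to $f'(z) = \sum_{k = 0}^{D -1} (k +1) a_{k +1} z^k$ would reduce $C$ to the extremal $k$ attaining $\max_k \lvert (k +1) a_{k +1} \rvert r^k$. Since $\lvert k +1 \rvert = 1$ for $k +1 \leq e$, one has $\lvert (k +1) a_{k +1} \rvert r^k = \lvert a_{k +1} \rvert r^{k +1}/r$ on that range, which is $\leq s/r$ and equal to $s/r$ at $k +1 = e$; and for $k +1 > e$, $\lvert (k +1) a_{k +1} \rvert r^k \leq \lvert a_{k +1} \rvert r^{k +1}/r$, while the description of $e$ as $e_{\min}$ (resp.\ $e_{\max}$) forces $\lvert a_{k +1} \rvert r^{k +1} < s$ for $k +1 < e$ (resp.\ $k +1 > e$), producing a strict inequality $< s/r$ on the appropriate side of $k = e -1$. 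Hence the extremal index is $k = e -1$ in both cases, giving $C = e -1$, i.e., $e = C +1$. The sole technical point is this absolute-value bookkeeping, and it relies essentially on the residue-characteristic hypothesis: without $\lvert e \rvert = 1$, the extremum $s/r$ might fail to be attained at $k = e -1$, and the formula $e = C +1$ could break.
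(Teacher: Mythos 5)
The paper does not give its own proof of Lemma~\ref{lemma:rhFormula}: it is stated, along with Lemmas~\ref{lemma:image} and~\ref{lemma:preimage}, with a pointer to~\cite[Chapters~2 and~3]{B2019}, so there is no in-paper argument to compare yours against directly.

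Your reconstruction is correct, and it is essentially the standard Newton-polygon proof, cleanly packaged through the paper's own Lemma~\ref{lemma:image}. You identify $C$ with the Weierstrass degree of $f'$ on $U$ (the extremal index $k$ for $\lvert (k+1)a_{k+1}\rvert r^{k}$), deduced by applying Lemma~\ref{lemma:image} to $f'$ and counting preimages of $0$, and you compare it with the extremal index $e$ attached to $f$ itself. The residue-characteristic hypothesis enters at exactly the point you flag: it gives $\lvert m \rvert = 1$ for $m \in \lbrace 1,\dotsc,e \rbrace$, so the term at $k = e-1$ equals $s/r$, while the terms on the wrong side of $e-1$ (those with $k+1 < e$ in the open case, $k+1 > e$ in the closed case) are strictly below $s/r$ by the strict part of the extremality of $e$ together with the universal bound $\lvert k+1 \rvert \leq 1$. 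That forces the extremal index of $f'$ to be exactly $e-1$, hence $C = e-1$. Two small bookkeeping points worth making explicit in a full write-up: the corollary you derive from Lemma~\ref{lemma:image} presupposes $\deg P \geq 1$, so the degenerate case $\deg f = 1$ (where $f'$ is a nonzero constant, $C = 0$, $e = 1$) deserves a one-line separate remark; and when a disk happens to be both open and closed (radius not in $\lvert K^{*} \rvert$), the two readings of your computation give the same $C$ because $e$ is intrinsic, so no ambiguity arises, but it is cleaner to say so.
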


\begin{remark}
The assumption on the degree of the map in Lemma~\ref{lemma:rhFormula} is necessary, as the following shows: Suppose that $p \geq 2$ is a prime number. Then the algebraic closure $\overline{\mathbb{Q}}_{p}$ of the field $\mathbb{Q}_{p}$ of $p$\nobreakdash-adic numbers is naturally a non-Archimedean field with residue characteristic $p$. The polynomial $f(z) = z^{p} -p z \in \overline{\mathbb{Q}}_{p}[z]$ maps $D(0, 1)$ onto itself with degree $p$, while the critical points for $f$ all lie outside $D(0, 1)$. The polynomial $g(z) = z^{p +1} -z^{p} \in \overline{\mathbb{Q}}_{p}[z]$ maps $D(0, 1)$ onto itself with degree $p$, while the critical points for $g$ all lie in $D(0, 1)$. In particular, the conclusion of Lemma~\ref{lemma:rhFormula} does not hold for the maps $f \colon D(0, 1) \rightarrow D(0, 1)$ and $g \colon D(0, 1) \rightarrow D(0, 1)$.
\end{remark}

\subsection{The Green function of a polynomial map in the non-Archimedean setting}

Now, let us adapt our discussion of Green functions and maximal escape rates for complex polynomial maps to the non-Archimedean setting.

Suppose that $f \in \Poly_{d}(K)$. Recall that the \emph{Green function} $g_{f} \colon K \rightarrow \mathbb{R}_{\geq 0}$ of $f$ is given by \[ g_{f}(z) = \lim_{n \rightarrow +\infty} \frac{1}{d^{n}} \log^{+}\left\lvert f^{\circ n}(z) \right\rvert \, \text{.} \] This map $g_{f}$ is well defined and satisfies $g_{f} \circ f = d \cdot g_{f}$. Moreover, for each $z \in K$, we have $g_{f}(z) = 0$ if and only if $\sup\limits_{n \geq 0} \left\lvert f^{\circ n}(z) \right\rvert < +\infty$. Also recall that the \emph{maximal escape rate} $M_{f}$ of $f$ is defined by \[ M_{f} = \max\left\lbrace g_{f}(c) : c \in K, \, f^{\prime}(c) = 0 \right\rbrace \, \text{.} \] For every $\phi \in \Aff(K)$, we have $g_{\phi \centerdot f} = g_{f} \circ \phi^{-1}$, and hence $M_{\phi \centerdot f} = M_{f}$.

Thus, using conjugation, we may first restrict our attention to polynomials in a particular form. For $\boldsymbol{c} = \left( c_{1}, \dotsc, c_{d -1} \right) \in K^{d -1}$, define \[ f_{\boldsymbol{c}}(z) = \frac{1}{d} z^{d} +\sum_{j = 1}^{d -1} \frac{(-1)^{j} \tau_{j}(\boldsymbol{c})}{d -j} z^{d -j} \in \Poly_{d}(K) \, \text{,} \] where $\tau_{1}(\boldsymbol{c}), \dotsc, \tau_{d -1}(\boldsymbol{c})$ denote the elementary symmetric functions of $c_{1}, \dotsc, c_{d -1}$, so that \[ f_{\boldsymbol{c}}(0) = 0 \quad \text{and} \quad f_{\boldsymbol{c}}^{\prime}(z) = \prod_{j = 1}^{d -1} \left( z -c_{j} \right) \, \text{.} \] These polynomials have already been studied by Ingram in~\cite{I2012}. Nevertheless, for completeness and to specify the values of certain constants in the present case, we include details.

For $\boldsymbol{c} = \left( c_{1}, \dotsc, c_{d -1} \right) \in K^{d -1}$, define \[ \lVert \boldsymbol{c} \rVert = \max_{j \in \lbrace 1, \dotsc, d -1 \rbrace} \left\lvert c_{j} \right\rvert \in \mathbb{R}_{\geq 0} \, \text{.} \] Using the ultrametric triangle inequality and our assumption on the residue characteristic of $K$, we obtain the following:

\begin{claim}
\label{claim:estimates2}
We have $g_{f_{\boldsymbol{c}}}(z) \leq \log^{+}\left( \max\left\lbrace \lVert \boldsymbol{c} \rVert, \lvert z \rvert \right\rbrace \right)$ for all $\boldsymbol{c} \in K^{d -1}$ and all $z \in K$. Moreover, we have $g_{f_{\boldsymbol{c}}}(z) = \log^{+}\lvert z \rvert$ for all $\boldsymbol{c} \in K^{d -1}$ and all $z \in K \setminus \overline{D\left( 0, \lVert \boldsymbol{c} \rVert \right)}$.
\end{claim}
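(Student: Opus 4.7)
The plan is to exploit the ultrametric triangle inequality together with the assumption that the residue characteristic is either $0$ or greater than $d$, which gives $\lvert j \rvert = 1$ for $j \in \lbrace 1, \dotsc, d \rbrace$, and hence $\lvert 1/d \rvert = 1$ and $\lvert 1/(d-j) \rvert = 1$ for $j \in \lbrace 0, \dotsc, d-1 \rbrace$. Since each elementary symmetric function $\tau_{j}(\boldsymbol{c})$ is a sum of products of $j$ of the $c_{k}$'s, the ultrametric inequality immediately yields $\lvert \tau_{j}(\boldsymbol{c}) \rvert \leq \lVert \boldsymbol{c} \rVert^{j}$.

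First I would prove the general upper bound. Setting $R = \max\lbrace \lvert z \rvert, \lVert \boldsymbol{c} \rVert \rbrace$, applying the ultrametric inequality term-by-term to
\[ f_{\boldsymbol{c}}(z) = \frac{1}{d} z^{d} +\sum_{j = 1}^{d -1} \frac{(-1)^{j} \tau_{j}(\boldsymbol{c})}{d -j} z^{d -j} \]
gives $\lvert f_{\boldsymbol{c}}(z) \rvert \leq \max_{j \in \lbrace 0, \dotsc, d -1 \rbrace} \lVert \boldsymbol{c} \rVert^{j} \lvert z \rvert^{d -j} \leq R^{d}$. Consequently $\log^{+}\lvert f_{\boldsymbol{c}}(z) \rvert \leq d \log^{+} R$ (in both cases $R \geq 1$ and $R < 1$, using $R^{d} \leq R$ when $R < 1$). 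In particular $\max\lbrace \lvert f_{\boldsymbol{c}}(z) \rvert, \lVert \boldsymbol{c} \rVert \rbrace \leq \max\lbrace R^{d}, R \rbrace$, so the same kind of argument gives $\log^{+}\max\lbrace \lvert f_{\boldsymbol{c}}(z) \rvert, \lVert \boldsymbol{c} \rVert \rbrace \leq d \log^{+} R$. An immediate induction on $n$ then yields $\log^{+}\lvert f_{\boldsymbol{c}}^{\circ n}(z) \rvert \leq d^{n} \log^{+} R$ for all $n \geq 0$, and dividing by $d^{n}$ and letting $n \to +\infty$ gives the first assertion.

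For the equality statement, assume $\lvert z \rvert > \lVert \boldsymbol{c} \rVert$. The leading term $\frac{1}{d} z^{d}$ has absolute value exactly $\lvert z \rvert^{d}$, while every other term satisfies $\lvert \tau_{j}(\boldsymbol{c}) / (d -j) \rvert \lvert z \rvert^{d -j} \leq \lVert \boldsymbol{c} \rVert^{j} \lvert z \rvert^{d -j} < \lvert z \rvert^{d}$. Strict ultrametric domination of the leading term therefore gives $\lvert f_{\boldsymbol{c}}(z) \rvert = \lvert z \rvert^{d}$. If $\lvert z \rvert \leq 1$, the already established upper bound reads $g_{f_{\boldsymbol{c}}}(z) \leq \log^{+}\lvert z \rvert = 0$, so $g_{f_{\boldsymbol{c}}}(z) = 0 = \log^{+}\lvert z \rvert$. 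If $\lvert z \rvert > 1$, then $\lvert f_{\boldsymbol{c}}(z) \rvert = \lvert z \rvert^{d} \geq \lvert z \rvert > \lVert \boldsymbol{c} \rVert$, so one can iterate the equality and obtain by induction $\lvert f_{\boldsymbol{c}}^{\circ n}(z) \rvert = \lvert z \rvert^{d^{n}}$ for all $n \geq 0$, which yields $g_{f_{\boldsymbol{c}}}(z) = \log\lvert z \rvert = \log^{+}\lvert z \rvert$.

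The whole argument is a routine unwinding of the ultrametric inequality, so there is no real obstacle; the only point one has to handle with some care is ensuring that iteration remains legitimate when $R < 1$ in the upper bound and that $\lvert f_{\boldsymbol{c}}(z) \rvert > \lVert \boldsymbol{c} \rVert$ persists under iteration in the lower bound, both of which follow automatically from the case analyses above.
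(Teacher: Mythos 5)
Your proof is correct and follows essentially the same approach as the paper: bound each term of $f_{\boldsymbol{c}}$ by the ultrametric triangle inequality (using that the residue characteristic kills no denominator) to get the upper bound inductively, then use strict ultrametric domination of the leading term for $\lvert z\rvert > \lVert\boldsymbol{c}\rVert$ and iterate, handling $\lvert z\rvert \leq 1$ separately via the upper bound. The only cosmetic difference is that the paper derives the first assertion as the special case $\Delta_{K} = 0$ of the earlier Claim~\ref{claim:estimates1}, whereas you reprove it directly in the ultrametric setting.
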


\begin{proof}
Note that the first assertion is simply a particular case of Claim~\ref{claim:estimates1}. Thus, let us prove the second one. Suppose that $\boldsymbol{c} \in K^{d -1}$. For every $z \in K \setminus \overline{D\left( 0, \lVert \boldsymbol{c} \rVert \right)}$, we have \[ \max_{j \in \lbrace 1, \dotsc, d -1 \rbrace} \left\lvert \frac{(-1)^{j} \tau_{j}(\boldsymbol{c})}{d -j} z^{d -j} \right\rvert \leq \max_{j \in \lbrace 1, \dotsc, d -1 \rbrace} \lVert \boldsymbol{c} \rVert^{j} \lvert z \rvert^{d -j} < \lvert z \rvert^{d} = \left\lvert \frac{1}{d} z^{d} \right\rvert \, \text{,} \] which yields $\left\lvert f_{\boldsymbol{c}}(z) \right\rvert = \lvert z \rvert^{d}$ by the ultrametric triangle inequality. By induction, we deduce that $\left\lvert f_{\boldsymbol{c}}^{\circ n}(z) \right\rvert = \lvert z \rvert^{d^{n}}$ for all $z \in K \setminus \overline{D\left( 0, \max\left\lbrace 1, \lVert \boldsymbol{c} \rVert \right\rbrace \right)}$ and all $n \geq 0$. As a result, for every $z \in K \setminus \overline{D\left( 0, \max\left\lbrace 1, \lVert \boldsymbol{c} \rVert \right\rbrace \right)}$, we have $\frac{1}{d^{n}} \log^{+}\left\lvert f_{\boldsymbol{c}}^{\circ n}(z) \right\rvert = \log^{+}\lvert z \rvert$ for all $n \geq 0$, which yields $g_{f_{\boldsymbol{c}}}(z) = \log^{+}\lvert z \rvert$ by letting $n \rightarrow +\infty$. Finally, we also have $g_{f_{\boldsymbol{c}}}(z) = \log^{+}\lvert z \rvert$ for each $z \in \overline{D(0, 1)} \setminus \overline{D\left( 0, \lVert \boldsymbol{c} \rVert \right)}$ by the first assertion of the claim. Thus, the claim is proved.
\end{proof}

Now, let us determine the maximal escape rates of these polynomials. To do so, we shall use Macaulay resultants to have an effective version of the Nullstellensatz for $r$ homogeneous polynomials in $r$ variables over a commutative ring, with $r \geq 1$. Thus, let us start by recalling a few necessary facts about resultants.

Suppose that $R$ is a commutative ring and $P_{1}, \dotsc, P_{r} \in R\left[ T_{1}, \dotsc, T_{r} \right]$ are homogeneous polynomials of degrees $e_{1}, \dotsc, e_{r} \geq 1$, respectively, with $r \geq 1$. Then there exists an element $\res\left( P_{1}, \dotsc, P_{r} \right) \in R$, called the \emph{Macaulay resultant} of $P_{1}, \dotsc, P_{r}$, that satisfies the following:
\begin{itemize}
\item There are an integer $E \geq \max\limits_{k \in \lbrace 1, \dotsc, r \rbrace} e_{k}$ and some homogeneous polynomials $Q_{j, k} \in R\left[ T_{1}, \dotsc, T_{r} \right]$ of degrees $E -e_{k}$, with $j, k \in \lbrace 1, \dotsc, r \rbrace$, such that \[ \res\left( P_{1}, \dotsc, P_{r} \right) T_{j}^{E} = \sum_{k = 1}^{r} P_{k}\left( T_{1}, \dotsc, T_{r} \right) Q_{j, k}\left( T_{1}, \dotsc, T_{r} \right) \] for each $j \in \lbrace 1, \dotsc, r \rbrace$.
\item For any algebraically closed field $\Omega$ and any ring homomorphism $\varphi \colon R \rightarrow \Omega$, we have $\varphi\left( \res\left( P_{1}, \dotsc, P_{r} \right) \right) = 0$ if and only if the homogeneous polynomials $\varphi\left( P_{1} \right), \dotsc, \varphi\left( P_{r} \right) \in \Omega\left[ T_{1}, \dotsc, T_{r} \right]$ have a common zero in $\Omega^{r} \setminus \lbrace 0 \rbrace$, where $\varphi \colon R\left[ T_{1}, \dotsc, T_{r} \right] \rightarrow \Omega\left[ T_{1}, \dotsc, T_{r} \right]$ denotes the unique ring homomorphism that extends $\varphi \colon R \rightarrow \Omega$ and satisfies $\varphi\left( T_{j} \right) = T_{j}$ for all $j \in \lbrace 1, \dotsc, r \rbrace$.
\end{itemize}
We refer to~\cite[Chapter~IX, Section~3]{La2002} for further details about resultants.

We now return to the study of the polynomials $f_{\boldsymbol{c}} \in \Poly_{d}(K)$, with $\boldsymbol{c} \in K^{d -1}$. Consider the subring $A = \mathbb{Z}\left[ \frac{1}{2}, \dotsc, \frac{1}{d} \right]$ of $K$. For each $a \in A$, we have $\lvert a \rvert \leq 1$, with equality holding if and only if $a$ is not divisible by the residue characteristic of $K$. For $j \in \lbrace 1, \dotsc, d -1 \rbrace$, also consider the polynomial $F_{j} \in A\left[ T_{1}, \dotsc, T_{d -1} \right]$ given by $F_{j}(\boldsymbol{c}) = f_{\boldsymbol{c}}\left( c_{j} \right)$ for all $\boldsymbol{c} = \left( c_{1}, \dotsc, c_{d -1} \right) \in K^{d -1}$. For every $j \in \lbrace 1, \dotsc, d -1 \rbrace$, the polynomial $F_{j}$ is homogeneous of degree $d$. Furthermore, we have the following:

\begin{claim}
\label{claim:resultant}
We have $\left\lvert \res\left( F_{1}, \dotsc, F_{d -1} \right) \right\rvert = 1$.
\end{claim}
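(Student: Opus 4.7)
The plan is to use the two fundamental properties of the Macaulay resultant recalled above. First, since $\res(F_1,\dotsc,F_{d-1}) \in A$ and every element of $A$ has absolute value $\le 1$ in $K$ (all primes dividing denominators are $\le d$, hence invertible in the valuation ring since the residue characteristic is $0$ or exceeds $d$), it suffices to show that its image in the residue field $k$ of $K$ is nonzero. By functoriality of the Macaulay resultant with respect to ring homomorphisms, this image equals $\res(\bar{F}_1,\dotsc,\bar{F}_{d-1})$, where $\bar{F}_j \in k[T_1,\dotsc,T_{d-1}]$ is the reduction of $F_j$. By the second property of the resultant, it is then enough to establish that $\bar{F}_1,\dotsc,\bar{F}_{d-1}$ have no common zero in $\bar{k}^{d-1} \setminus \lbrace 0 \rbrace$.

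To obtain the latter, the plan is to argue by contradiction. Suppose $\boldsymbol{c} = (c_1,\dotsc,c_{d-1}) \in \bar{k}^{d-1} \setminus \lbrace 0 \rbrace$ satisfies $f_{\boldsymbol{c}}(c_j) = 0$ for all $j$. Recall that $f_{\boldsymbol{c}}(0) = 0$ and that the critical points of $f_{\boldsymbol{c}}$ (with multiplicity) are precisely $c_1,\dotsc,c_{d-1}$. Since the polynomial $f_{\boldsymbol{c}}$ has degree $d$ and its multiset of roots contains $0$ and all $c_j$'s, these constitute all the roots with multiplicity. Let $r_1,\dotsc,r_m$ be the distinct values appearing in the multiset $\lbrace 0, c_1, \dotsc, c_{d-1} \rbrace$ with respective multiplicities $a_1,\dotsc,a_m \in \lbrace 1, \dotsc, d \rbrace$, so that $a_1 +\dotsb +a_m = d$ and $f_{\boldsymbol{c}}(z) = \frac{1}{d} \prod_{i=1}^m (z -r_i)^{a_i}$.

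The key step is a short combinatorial/algebraic computation. Differentiating the product formula and factoring out $R(z) = \prod_i (z -r_i)^{a_i -1}$ yields $f_{\boldsymbol{c}}^{\prime}(z) = \frac{1}{d} R(z) S(z)$, where $S(z) = \sum_i a_i \prod_{k \neq i} (z -r_k)$ has degree $m -1$. Since the critical points $c_1,\dotsc,c_{d-1}$ all lie in $\lbrace r_1,\dotsc,r_m \rbrace$, every root of $S$ must be one of the $r_j$. However, $S(r_j) = a_j \prod_{k \neq j} (r_j -r_k)$ is nonzero: the factors $r_j -r_k$ are nonzero by distinctness, and the coefficient $a_j$ is nonzero in $\bar{k}$ because $1 \leq a_j \leq d$ and the characteristic of $\bar k$ is either $0$ or strictly greater than $d$. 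Therefore $S$ has no roots at all, which forces $m -1 = 0$, i.e., $m = 1$. Since $0$ belongs to the multiset of roots, the unique $r_i$ must be $0$, so $c_1 = \dotsb = c_{d-1} = 0$, contradicting $\boldsymbol{c} \neq 0$.

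The main obstacle is the step $S(r_j) \neq 0$: this is where the residue-characteristic hypothesis is crucially used, since in characteristic $p \leq d$ one of the multiplicities $a_j$ could vanish in $\bar{k}$ and the combinatorial argument would break down. Apart from this point, the proof consists of routine bookkeeping with the Macaulay resultant and elementary manipulations of polynomials.
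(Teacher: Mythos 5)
Your overall strategy (reduce to the residue field, apply the vanishing criterion for the Macaulay resultant, then show the reduced $F_j$ have no common nonzero root) is essentially the paper's, which instead phrases the reduction as ``$\res(F_1,\dotsc,F_{d-1})\notin pA$ for every prime $p>d$''; these amount to the same thing. The combinatorial heart is also the same idea (count critical multiplicities against root multiplicities), just packaged differently: the paper counts directly, you introduce the Lagrange-type factor $S$ of $f_{\boldsymbol{c}}^{\prime}$ and show it has no roots.

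There is, however, a circular step in the middle. You assert that ``the multiset of roots of $f_{\boldsymbol{c}}$ is $\{0,c_1,\dotsc,c_{d-1}\}$'' and justify it by a degree count, but the degree count only compares sizes; it does not show the multiset $\{0,c_1,\dotsc,c_{d-1}\}$ is a sub-multiset of the root multiset with the \emph{same} multiplicities. Indeed, if some $c_j\neq 0$ occurs $m$ times in $\boldsymbol{c}$, it is a critical point of multiplicity $m$ and a root, hence a root of multiplicity $m+1$, which already exceeds its multiplicity $m$ in $\{0,c_1,\dotsc,c_{d-1}\}$ and would force the root count past $d$. In other words, the equality of those two multisets holds only when $\boldsymbol{c}=0$, so asserting it is essentially asserting the conclusion. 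The fix is straightforward and keeps your argument intact: define $r_1,\dotsc,r_m$ to be the \emph{distinct roots} of $f_{\boldsymbol{c}}$ with their root multiplicities $a_1,\dotsc,a_m$ (so $\sum a_i=d$ and the factorization $f_{\boldsymbol{c}}=\tfrac{1}{d}\prod(z-r_i)^{a_i}$ is automatic), and observe only that $0$ and all $c_j$ lie among the $r_i$. The computation $f_{\boldsymbol{c}}^{\prime}=\tfrac{1}{d}RS$, the nonvanishing $S(r_i)\neq 0$ (using $1\leq a_i\leq d<\mathrm{char}$), the fact that any root of $S$ would be a critical point and hence a root of $f_{\boldsymbol{c}}$, and the conclusion $\deg S=m-1=0$ then all go through as you wrote them. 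With that repair the proof is correct.
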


\begin{proof}
Suppose that $p > d$ is a prime number. Let us show that $\res\left( F_{1}, \dotsc, F_{d -1} \right)$ is not divisible by $p$ in $A$. Note that $A/p A$ is the field $\mathbb{F}_{p}$ with $p$ elements. Thus, denoting by $\overline{\mathbb{F}}_{p}$ the algebraic closure of $\mathbb{F}_{p}$, we have a natural ring homomorphism $\varphi \colon A \rightarrow \overline{\mathbb{F}}_{p}$. Then $\res\left( F_{1}, \dotsc, F_{d -1} \right) \in p A$ if and only if $\varphi\left( \res\left( F_{1}, \dotsc, F_{d -1} \right) \right) = 0$, which occurs if and only if $\varphi\left( F_{1} \right), \dotsc, \varphi\left( F_{d -1} \right)$ have a common zero in $\overline{\mathbb{F}}_{p}^{d -1} \setminus \lbrace 0 \rbrace$. Suppose that $\mathfrak{c} = \left( c_{1}, \dotsc, c_{d -1} \right) \in \overline{\mathbb{F}}_{p}^{d -1}$ is a common zero of $\varphi\left( F_{1} \right), \dotsc, \varphi\left( F_{d -1} \right)$. We shall prove that $\mathfrak{c} = 0$. Define \[ \mathfrak{f}(z) = \frac{1}{d} z^{d} +\sum_{j = 1}^{d -1} \frac{(-1)^{j} \tau_{j}(\mathfrak{c})}{d -j} z^{d -j} \in \overline{\mathbb{F}}_{p}[z] \, \text{,} \] where $\tau_{1}(\mathfrak{c}), \dotsc, \tau_{d -1}(\mathfrak{c})$ denote the elementary symmetric functions of $c_{1}, \dotsc, c_{d -1}$. We have $\mathfrak{f}^{\prime}(z) = \prod\limits_{j = 1}^{d -1} \left( z -c_{j} \right)$ and $\mathfrak{f}\left( c_{j} \right) = \varphi\left( F_{j} \right)(\mathfrak{c}) = 0$ for each $j \in \lbrace 1, \dotsc, d -1 \rbrace$. Now, define $\Gamma = \left\lbrace c_{1}, \dotsc, c_{d -1} \right\rbrace$ and, for $\gamma \in \Gamma$, denote by $\rho_{\gamma}$ the number of indices $j \in \lbrace 1, \dotsc, d -1 \rbrace$ such that $c_{j} = \gamma$. We have $\mathfrak{f}^{\prime}(z) = \prod\limits_{\gamma \in \Gamma} (z -\gamma)^{\rho_{\gamma}}$ and $\mathfrak{f}(\gamma) = 0$ for each $\gamma \in \Gamma$. As a result, as $\overline{\mathbb{F}}_{p}$ has characteristic $p > d$, each $\gamma \in \Gamma$ has multiplicity $\rho_{\gamma} +1$ as a preimage of $0$ under $\mathfrak{f}$ and every other preimage of $0$ has multiplicity $1$. It follows that \[ d = r +\sum_{\gamma \in \Gamma} \left( \rho_{\gamma} +1 \right) = r +d -1 +s \, \text{,} \] where $r \geq 0$ is the number of preimages of $0$ under $\mathfrak{f}$ that are not in $\Gamma$ and $s \geq 1$ is the cardinality of $\Gamma$, which yields $r = 0$ and $s = 1$. Therefore, as $\mathfrak{f}(0) = 0$, we have $\Gamma = \lbrace 0 \rbrace$, and hence $\mathfrak{c} = 0$. Thus, we have proved that $\res\left( F_{1}, \dotsc, F_{d -1} \right) \in A \setminus p A$ for each prime number $p > d$. In particular, $\res\left( F_{1}, \dotsc, F_{d -1} \right) \in A$ is not divisible by the residue characteristic of $K$. This completes the proof of the claim.
\end{proof}

This allows us to determine the maximal escape rate $M_{f_{\boldsymbol{c}}}$ of $f_{\boldsymbol{c}}$, with $\boldsymbol{c} \in K^{d -1}$.

\begin{claim}
\label{claim:maxEscape}
We have $M_{f_{\boldsymbol{c}}} = \log^{+}\lVert \boldsymbol{c} \rVert$ for all $\boldsymbol{c} \in K^{d -1}$.
\end{claim}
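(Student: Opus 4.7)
The upper bound is immediate: since the critical points of $f_{\boldsymbol{c}}$ are $c_{1}, \dotsc, c_{d-1}$, Claim~\ref{claim:estimates2} gives $g_{f_{\boldsymbol{c}}}(c_{j}) \leq \log^{+}\lVert \boldsymbol{c} \rVert$ for each $j$, and hence $M_{f_{\boldsymbol{c}}} \leq \log^{+}\lVert \boldsymbol{c} \rVert$. Thus the plan reduces to proving the reverse inequality, which is trivial when $\lVert \boldsymbol{c} \rVert \leq 1$, so I will assume $\lVert \boldsymbol{c} \rVert > 1$ and exhibit a critical point $c_{j}$ with $g_{f_{\boldsymbol{c}}}(c_{j}) \geq \log \lVert \boldsymbol{c} \rVert$.

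The key step is to exploit Claim~\ref{claim:resultant}. By the defining property of the Macaulay resultant, there exist an integer $E \geq d$ and homogeneous polynomials $Q_{j,k} \in A[T_{1}, \dotsc, T_{d-1}]$ of degree $E - d$ such that
\[ \res\left( F_{1}, \dotsc, F_{d -1} \right) T_{j}^{E} = \sum_{k = 1}^{d -1} F_{k}\left( T_{1}, \dotsc, T_{d -1} \right) Q_{j, k}\left( T_{1}, \dotsc, T_{d -1} \right) \]
for each $j \in \lbrace 1, \dotsc, d-1 \rbrace$. Specializing at $\boldsymbol{c}$ and using $F_{k}(\boldsymbol{c}) = f_{\boldsymbol{c}}(c_{k})$, the ultrametric triangle inequality together with $\lvert a \rvert \leq 1$ for every $a \in A$ yields
\[ \left\lvert \res\left( F_{1}, \dotsc, F_{d -1} \right) \right\rvert \lvert c_{j} \rvert^{E} \leq \left( \max_{k \in \lbrace 1, \dotsc, d -1 \rbrace} \left\lvert f_{\boldsymbol{c}}\left( c_{k} \right) \right\rvert \right) \lVert \boldsymbol{c} \rVert^{E -d} \, \text{.} \]
Taking the maximum over $j$ and using $\lvert \res\left( F_{1}, \dotsc, F_{d -1} \right) \rvert = 1$ from Claim~\ref{claim:resultant}, I obtain
\[ \max_{k \in \lbrace 1, \dotsc, d -1 \rbrace} \left\lvert f_{\boldsymbol{c}}\left( c_{k} \right) \right\rvert \geq \lVert \boldsymbol{c} \rVert^{d} \, \text{.} \]

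Finally, I pick an index $k_{0}$ attaining this maximum. Since $\lVert \boldsymbol{c} \rVert > 1$, we have $\left\lvert f_{\boldsymbol{c}}\left( c_{k_{0}} \right) \right\rvert \geq \lVert \boldsymbol{c} \rVert^{d} > \lVert \boldsymbol{c} \rVert$, so $f_{\boldsymbol{c}}\left( c_{k_{0}} \right) \in K \setminus \overline{D\left( 0, \lVert \boldsymbol{c} \rVert \right)}$. The second assertion of Claim~\ref{claim:estimates2} then gives $g_{f_{\boldsymbol{c}}}\left( f_{\boldsymbol{c}}\left( c_{k_{0}} \right) \right) = \log\left\lvert f_{\boldsymbol{c}}\left( c_{k_{0}} \right) \right\rvert \geq d \log \lVert \boldsymbol{c} \rVert$, and the functional equation $g_{f_{\boldsymbol{c}}} \circ f_{\boldsymbol{c}} = d \cdot g_{f_{\boldsymbol{c}}}$ yields $g_{f_{\boldsymbol{c}}}\left( c_{k_{0}} \right) \geq \log \lVert \boldsymbol{c} \rVert$, which shows $M_{f_{\boldsymbol{c}}} \geq \log^{+}\lVert \boldsymbol{c} \rVert$ and completes the proof.

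The main novelty and also the most delicate point is the Macaulay resultant manipulation: everything else is packaging. The only subtlety there is checking that the constants involved ($\res\left( F_{1}, \dotsc, F_{d -1} \right)$ and the coefficients of the $Q_{j,k}$) all live in the subring $A$ with non-Archimedean absolute value at most $1$, which crucially uses the hypothesis on the residue characteristic of $K$ built into Claim~\ref{claim:resultant}. After that bound, the iteration via the functional equation $g_{f_{\boldsymbol{c}}} \circ f_{\boldsymbol{c}} = d \cdot g_{f_{\boldsymbol{c}}}$ transfers the escape estimate at $f_{\boldsymbol{c}}(c_{k_{0}})$ back to the critical point $c_{k_{0}}$ itself, which is what $M_{f_{\boldsymbol{c}}}$ sees.
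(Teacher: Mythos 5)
Your proof is correct and follows essentially the same strategy as the paper: the upper bound via Claim~\ref{claim:estimates2}, the lower bound via the Macaulay resultant identity combined with Claim~\ref{claim:resultant} and the ultrametric triangle inequality (using that the auxiliary polynomials have coefficients in $A$), and then transferring the escape estimate from $f_{\boldsymbol{c}}(c_{k_0})$ back to $c_{k_0}$ via the second assertion of Claim~\ref{claim:estimates2} and the functional equation $g_{f_{\boldsymbol{c}}} \circ f_{\boldsymbol{c}} = d \cdot g_{f_{\boldsymbol{c}}}$. (You even correct a small index typo in the paper's final display, which writes $c_j$ where it should be the chosen $c_k$.)
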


\begin{proof}
Suppose that $\boldsymbol{c} = \left( c_{1}, \dotsc, c_{d -1} \right) \in K^{d -1}$. We have $g_{f_{\boldsymbol{c}}}\left( c_{j} \right) \leq \log^{+}\lVert \boldsymbol{c} \rVert$ for all $j \in \lbrace 1, \dotsc, d -1 \rbrace$ by the first assertion of Claim~\ref{claim:estimates2}, and hence $M_{f_{\boldsymbol{c}}} \leq \log^{+}\lVert \boldsymbol{c} \rVert$. It remains to show that $M_{f_{\boldsymbol{c}}} \geq \log^{+}\lVert \boldsymbol{c} \rVert$. If $\lVert \boldsymbol{c} \rVert \leq 1$, this is immediate. Thus, assume now that $\lVert \boldsymbol{c} \rVert > 1$. There exist some integer $D \geq d$ and homogeneous polynomials $G_{j, k} \in A\left[ T_{1}, \dotsc, T_{d -1} \right]$ of degree $D -d$, with $j, k \in \lbrace 1, \dotsc, d -1 \rbrace$, such that \[ \res\left( F_{1}, \dotsc, F_{d -1} \right) c_{j}^{D} = \sum_{k = 1}^{d -1} f_{\boldsymbol{c}}\left( c_{k} \right) G_{j, k}(\boldsymbol{c}) \] for each $j \in \lbrace 1, \dotsc, d -1 \rbrace$. Therefore, by Claim~\ref{claim:resultant}, we have \[ \left\lvert c_{j} \right\rvert^{D} \leq \max_{k \in \lbrace 1, \dotsc, d -1 \rbrace} \left\lvert f_{\boldsymbol{c}}\left( c_{k} \right) G_{j, k}(\boldsymbol{c}) \right\rvert \leq \left( \max_{k \in \lbrace 1, \dotsc, d -1 \rbrace} \left\lvert f_{\boldsymbol{c}}\left( c_{k} \right) \right\rvert \right) \lVert \boldsymbol{c} \rVert^{D -d} \] for all $j \in \lbrace 1, \dotsc, d -1 \rbrace$, and hence $\max\limits_{k \in \lbrace 1, \dotsc, d -1 \rbrace} \left\lvert f_{\boldsymbol{c}}\left( c_{k} \right) \right\rvert \geq \lVert \boldsymbol{c} \rVert^{d}$. Thus, there exists $k \in \lbrace 1, \dotsc, d -1 \rbrace$ such that $\left\lvert f_{\boldsymbol{c}}\left( c_{k} \right) \right\rvert \geq \lVert \boldsymbol{c} \rVert^{d}$. By the second assertion of Claim~\ref{claim:estimates2}, as $\lVert \boldsymbol{c} \rVert > 1$, it follows that \[ d \cdot g_{f_{\boldsymbol{c}}}\left( c_{j} \right) = g_{f_{\boldsymbol{c}}}\left( f_{\boldsymbol{c}}\left( c_{j} \right) \right) = \log^{+}\left\lvert f_{\boldsymbol{c}}\left( c_{j} \right) \right\rvert \geq d \cdot \log^{+}\lVert \boldsymbol{c} \rVert \, \text{.} \] Thus, we have $M_{f_{\boldsymbol{c}}} \geq \log^{+}\lVert \boldsymbol{c} \rVert$, and the claim is proved.
\end{proof}

From the discussion above, we now derive results about the Green functions of arbitrary polynomials in $\Poly_{d}(K)$.

\begin{lemma}
\label{lemma:greenDisks1}
Suppose that $f \in \Poly_{d}(K)$ has leading coefficient $a_{d} \in K^{*}$. Then, for each $\eta \in \mathbb{R}_{> 0}$, the set $\left\lbrace g_{f} < \eta \right\rbrace$ is a nonempty finite union of open disks and the set $\left\lbrace g_{f} \leq \eta \right\rbrace$ is a nonempty finite union of closed disks. Moreover, $\left\lbrace g_{f} < \eta \right\rbrace$ is an open disk of radius $\left\lvert a_{d} \right\rvert^{\frac{-1}{d -1}} \exp(\eta)$ for all $\eta \in \left( M_{f}, +\infty \right)$. In addition, $\left\lbrace g_{f} \leq \eta \right\rbrace$ is a closed disk of radius $\left\lvert a_{d} \right\rvert^{\frac{-1}{d -1}} \exp(\eta)$ for all $\eta \in \left[ M_{f}, +\infty \right)$.
\end{lemma}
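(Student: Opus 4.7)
The plan is to reduce everything to the normal-form polynomials $f_{\boldsymbol{c}}$, handle the high sublevel sets directly from Claims~\ref{claim:estimates2} and~\ref{claim:maxEscape}, and then extend to every $\eta > 0$ by pulling back along $f_{\boldsymbol{c}}$ via the functional equation $g_{f_{\boldsymbol{c}}} \circ f_{\boldsymbol{c}} = d \cdot g_{f_{\boldsymbol{c}}}$ and the non-Archimedean preimage structure provided by Lemma~\ref{lemma:preimage}.

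First I would carry out the conjugation reduction used already in the proof of Proposition~\ref{proposition:corollary}. Since $K$ is algebraically closed and $\deg(f) = d \geq 2$, the polynomial $f(z) -z$ has a root $w \in K$. Pick any $\alpha \in K^{*}$ with $\alpha^{d -1} = d \cdot a_{d}$ and set $\phi(z) = \alpha (z -w) \in \Aff(K)$; then $\phi \centerdot f = f_{\boldsymbol{c}}$ for $\boldsymbol{c} \in K^{d -1}$ the critical points of $\phi \centerdot f$, and $g_{f} = g_{f_{\boldsymbol{c}}} \circ \phi$, $M_{f} = M_{f_{\boldsymbol{c}}}$. Under our residue-characteristic hypothesis $\lvert d \rvert = 1$, so $\lvert \alpha \rvert = \lvert a_{d} \rvert^{1/(d -1)}$. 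Affine maps send (open, resp.\ closed) disks bijectively onto (open, resp.\ closed) disks, scaling radii by $\lvert \alpha \rvert^{-1}$, and in particular preserve the property of being a finite union of such disks. Thus the lemma for $f$ follows from the analogous statements for $f_{\boldsymbol{c}}$, with the extra factor $\lvert a_{d} \rvert^{-1/(d -1)}$ appearing exactly as claimed.

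Next I would establish the lemma for $f_{\boldsymbol{c}}$ in the regime $\eta > M_{f_{\boldsymbol{c}}}$ (resp.\ $\eta \geq M_{f_{\boldsymbol{c}}}$). By Claim~\ref{claim:maxEscape}, $M_{f_{\boldsymbol{c}}} = \log^{+}\lVert \boldsymbol{c} \rVert$, so $\exp(\eta) \geq \max\{1, \lVert \boldsymbol{c} \rVert\}$ in the closed case and a strict inequality holds in the open case. The second assertion of Claim~\ref{claim:estimates2} gives $g_{f_{\boldsymbol{c}}}(z) = \log\lvert z \rvert$ whenever $\lvert z \rvert > \lVert \boldsymbol{c} \rVert$, while the first assertion yields $g_{f_{\boldsymbol{c}}}(z) \leq \log^{+}\lVert \boldsymbol{c} \rVert = M_{f_{\boldsymbol{c}}}$ for $\lvert z \rvert \leq \lVert \boldsymbol{c} \rVert$. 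Combining these two regimes gives
\[
\{g_{f_{\boldsymbol{c}}} < \eta\} = D(0, \exp(\eta)) \quad \text{for } \eta > M_{f_{\boldsymbol{c}}},
\]
and similarly $\{g_{f_{\boldsymbol{c}}} \leq \eta\} = \overline{D(0, \exp(\eta))}$ for $\eta \geq M_{f_{\boldsymbol{c}}}$, i.e.\ a single (open or closed) disk of radius $\exp(\eta)$.

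Finally I would handle arbitrary $\eta \in \mathbb{R}_{> 0}$ by pullback. Pick $n \geq 0$ with $d^{n} \eta > M_{f_{\boldsymbol{c}}}$; the functional equation $g_{f_{\boldsymbol{c}}} \circ f_{\boldsymbol{c}} = d \cdot g_{f_{\boldsymbol{c}}}$ gives
\[
\{g_{f_{\boldsymbol{c}}} < \eta\} = f_{\boldsymbol{c}}^{-n}\bigl(\{g_{f_{\boldsymbol{c}}} < d^{n} \eta\}\bigr),
\]
and the inner set is a single open disk by the previous step. By Lemma~\ref{lemma:preimage} (together with the remark that it still applies to any $f \in K[z]$ and any disk $W$), the preimage of an open disk under $f_{\boldsymbol{c}}$ is a nonempty finite union of open disks; iterating this $n$ times shows $\{g_{f_{\boldsymbol{c}}} < \eta\}$ is a nonempty finite union of open disks. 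The closed-disk statement is identical with $\leq$ replacing $<$. Nonemptiness at the outset needs only that $f_{\boldsymbol{c}}$ has a fixed point $z_{0} \in K$, whose orbit is finite, so $g_{f_{\boldsymbol{c}}}(z_{0}) = 0 < \eta$. The only delicate point is really the bookkeeping of radii through $\phi$ and the iteration of Lemma~\ref{lemma:preimage}; there is no genuine obstacle once the normal-form reduction is in place.
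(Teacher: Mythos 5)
Your proposal is correct and follows essentially the same route as the paper's proof: conjugate to the Ingram normal form $f_{\boldsymbol{c}}$, identify the high sublevel sets $\{g_{f_{\boldsymbol{c}}} < \eta\}$ (resp.\ $\{g_{f_{\boldsymbol{c}}} \leq \eta\}$) as single disks $D(0,\exp(\eta))$ (resp.\ $\overline{D(0,\exp(\eta))}$) via Claims~\ref{claim:estimates2} and~\ref{claim:maxEscape}, transfer through $\phi^{-1}$ to pick up the radius factor $\lvert a_{d}\rvert^{-1/(d-1)}$, and handle arbitrary $\eta > 0$ by pulling back along an iterate using the functional equation $g_{f}\circ f = d\cdot g_{f}$ and Lemma~\ref{lemma:preimage}. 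The only nit is that Claim~\ref{claim:estimates2} gives $g_{f_{\boldsymbol{c}}}(z) = \log^{+}\lvert z\rvert$ (not $\log\lvert z\rvert$) on $K\setminus\overline{D(0,\lVert\boldsymbol{c}\rVert)}$, though this slip does not affect the conclusion.
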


\begin{proof}
Choose a $(d -1)$th root $\alpha \in K^{*}$ of $d \cdot a_{d}$ and a fixed point $w \in K$ for $f$, and define $\phi(z) = \alpha (z -w) \in \Aff(K)$. Then $\phi \centerdot f \in \Poly_{d}(K)$ has leading coefficient $\frac{1}{d}$ and satisfies $\phi \centerdot f(0) = 0$, and therefore $\phi \centerdot f = f_{\boldsymbol{c}}$, where $c_{1}, \dotsc, c_{d -1} \in K$ are the critical points for $\phi \centerdot f$ and $\boldsymbol{c} = \left( c_{1}, \dotsc, c_{d -1} \right)$. Now, by Claims~\ref{claim:estimates2} and~\ref{claim:maxEscape}, we have $\left\lbrace g_{f_{\boldsymbol{c}}} < \eta \right\rbrace = D\left( 0, \exp(\eta) \right)$ for all $\eta \in \left( M_{f_{\boldsymbol{c}}}, +\infty \right)$. Moreover, we have $g_{f_{\boldsymbol{c}}} = g_{f} \circ \phi^{-1}$ and $M_{f_{\boldsymbol{c}}} = M_{f}$ by conjugation. Therefore, as $\lvert \alpha \rvert = \left\lvert a_{d} \right\rvert^{\frac{1}{d -1}}$, we have \[ \forall \eta \in \left( M_{f}, +\infty \right), \, \left\lbrace g_{f} < \eta \right\rbrace = \phi^{-1}\left( \left\lbrace g_{f_{\boldsymbol{c}}} < \eta \right\rbrace \right) = D\left( w, \left\lvert a_{d} \right\rvert^{\frac{-1}{d -1}} \exp(\eta) \right) \, \text{.} \] Similarly, we have $\left\lbrace g_{f_{\boldsymbol{c}}} \leq \eta \right\rbrace = \overline{D\left( 0, \exp(\eta) \right)}$ for each $\eta \in \left[ M_{f_{\boldsymbol{c}}}, +\infty \right)$ by Claims~\ref{claim:estimates2} and~\ref{claim:maxEscape}, and hence \[ \forall \eta \in \left[ M_{f}, +\infty \right), \, \left\lbrace g_{f} \leq \eta \right\rbrace = \phi^{-1}\left( \left\lbrace g_{f_{\boldsymbol{c}}} \leq \eta \right\rbrace \right) = \overline{D\left( w, \left\lvert a_{d} \right\rvert^{\frac{-1}{d -1}} \exp(\eta) \right)} \, \text{.} \] Finally, suppose that $\eta \in \mathbb{R}_{> 0}$. There exists an integer $k \geq 0$ such that $d^{k} \eta > M_{f}$. Then $\left\lbrace g_{f} < d^{k} \eta \right\rbrace$ is an open disk in $K$ by the previous discussion. By Lemma~\ref{lemma:preimage}, it follows that \[ \left\lbrace g_{f} < \eta \right\rbrace = \left( f^{\circ k} \right)^{-1}\left( \left\lbrace g_{f} < d^{k} \eta \right\rbrace \right) \] is a nonempty finite union of open disks. Similarly, $\left\lbrace g_{f} \leq d^{k} \eta \right\rbrace$ is a closed disk in $K$ by the previous discussion, and hence \[ \left\lbrace g_{f} \leq \eta \right\rbrace = \left( f^{\circ k} \right)^{-1}\left( \left\lbrace g_{f} \leq d^{k} \eta \right\rbrace \right) \] is a nonempty finite union of closed disks by Lemma~\ref{lemma:preimage}. This completes the proof of the lemma.
\end{proof}

Finally, we also have the following non-Archimedean analogue of Lemma~\ref{lemma:disconnected}:

\begin{lemma}
\label{lemma:greenDisks2}
Suppose that $f \in \Poly_{d}(K)$ satisfies $M_{f} > 0$, and denote by $a_{d} \in K^{*}$ its leading coefficient and by $C \geq 1$ the number of critical points $c \in K$ for $f$ such that $g_{f}(c) = M_{f}$, counting multiplicities. Then $\left\lbrace g_{f} < M_{f} \right\rbrace$ has exactly $C +1$ disk components, and these are all open disks of radius $\left\lvert a_{d} \right\rvert^{\frac{-1}{d -1}} \exp\left( M_{f} \right)$.
\end{lemma}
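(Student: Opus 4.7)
The plan is to adapt the complex proof of Lemma \ref{lemma:disconnected}: disk components play the role of connected components, Lemma \ref{lemma:rhFormula} replaces the Riemann--Hurwitz formula, and an additional argument using Lemma \ref{lemma:image} pins down the radii.

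Write $R = \lvert a_{d} \rvert^{-1/(d-1)} \exp(M_{f})$ and $R_{V} = \lvert a_{d} \rvert^{-1/(d-1)} \exp(d M_{f})$. Since $M_{f} > 0$ and $d \geq 2$, Lemma \ref{lemma:greenDisks1} identifies $\{g_{f} \leq M_{f}\}$ as a single closed disk $\overline{D(w_{0}, R)}$ and $V := \{g_{f} < d M_{f}\}$ as a single open disk of radius $R_{V}$. The functional equation $g_{f} \circ f = d \cdot g_{f}$ gives $\{g_{f} < M_{f}\} = f^{-1}(V)$, and the remark following Lemma \ref{lemma:preimage} expresses this as a disjoint union of open disks $U_{1}, \dotsc, U_{N}$ with $f \colon U_{j} \to V$ of degree $d_{j} \geq 1$ and $\sum_{j} d_{j} = d$.

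To obtain $N = C + 1$, apply Lemma \ref{lemma:rhFormula} to each map $f \colon U_{j} \to V$, which is permitted since $d_{j} \leq d$ is less than the residue characteristic of $K$ (or the latter is zero). This yields $d_{j} = C_{j} + 1$, where $C_{j}$ is the number of critical points of $f$ in $U_{j}$ counted with multiplicity. The critical points lying outside $\{g_{f} < M_{f}\}$ are precisely those $c$ with $g_{f}(c) = M_{f}$, of which there are $C$, so $\sum_{j} C_{j} = (d - 1) - C$. Summing, $d = \sum_{j} d_{j} = (d - 1 - C) + N$, whence $N = C + 1$.

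The main novelty is the radius assertion, which has no complex analogue. Fix $j$ and pick any $w_{j} \in U_{j}$; then $w_{j} \in \overline{D(w_{0}, R)}$, hence $D(w_{j}, R) \subseteq \overline{D(w_{0}, R)}$. A short surjectivity check using $g_{f} \circ f = d \cdot g_{f}$ shows $f\bigl(\overline{D(w_{0}, R)}\bigr) = \{g_{f} \leq d M_{f}\}$, a closed disk of radius $R_{V}$. By Lemma \ref{lemma:image}, $f(D(w_{j}, R))$ is an open disk contained in this closed disk of radius $R_{V}$ and containing $f(U_{j}) = V$, so it must coincide with $V$. Therefore $D(w_{j}, R) \subseteq f^{-1}(V) = \{g_{f} < M_{f}\}$, which forces $D(w_{j}, R) \subseteq U_{j}$ because $U_{j}$ is the disk component containing $w_{j}$. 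Conversely, $U_{j} \subseteq \overline{D(w_{0}, R)}$ has radius at most $R$ and contains $w_{j}$, so $U_{j} \subseteq D(w_{j}, R)$. Hence $U_{j} = D(w_{j}, R)$, an open disk of radius $R$. The main obstacle is precisely this radius step: in the non-Archimedean world, a single closed disk of radius $R$ contains many distinct open disks of the same radius, so one must argue carefully that each component $U_{j}$ saturates the maximal admissible radius rather than being some strictly smaller open disk.
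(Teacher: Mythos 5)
Your proof is correct, and the count $N = C+1$ follows the paper's argument exactly (adapting Lemma~\ref{lemma:disconnected} via Lemma~\ref{lemma:preimage} and Lemma~\ref{lemma:rhFormula}). For the radius claim, the paper and you both exploit the crucial observation from Lemma~\ref{lemma:greenDisks1} that $\left\lbrace g_{f} < d M_{f} \right\rbrace$ and $\left\lbrace g_{f} \leq d M_{f} \right\rbrace$ are disks of the \emph{same} radius, but you package it differently. The paper's route is computational: picking a common center $w \in U_{j} \subseteq \left\lbrace g_{f} \leq M_{f} \right\rbrace$, Lemma~\ref{lemma:image} expresses the radii of $f(U_{j})$ and $f\left( \left\lbrace g_{f} \leq M_{f} \right\rbrace \right)$ as $s(r_{j})$ and $s(R)$ for the same strictly increasing function $s(r) = \max_{k} \left\lvert f^{(k)}(w)/k! \right\rvert r^{k}$; since the images share a radius, $r_{j} = R$. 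Your route is a sandwich: you show $D\left( w_{j}, R \right) \subseteq U_{j}$ by proving $f\left( D\left( w_{j}, R \right) \right)$ is an open disk pinched between $V$ and $\overline{V}$ (hence equal to $V$), then combine with the easy inclusion $U_{j} \subseteq D\left( w_{j}, R \right)$. Your version has the small advantage of invoking Lemma~\ref{lemma:image} only qualitatively (open disk maps to open disk), without unpacking the radius formula or appealing to its monotonicity, at the cost of one extra layer of inclusions; both are equally valid. One tacit point worth flagging, common to both arguments, is that the step \textquotedblleft an open disk of radius $\rho > R$ cannot sit inside a closed disk of radius $R$\textquotedblright{} uses density of $\left\lvert K^{*} \right\rvert$ in $\mathbb{R}_{>0}$, which holds here because $K$ is algebraically closed and $\lvert . \rvert$ is nontrivial.
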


\begin{proof}
By Lemma~\ref{lemma:greenDisks1}, the set $\left\lbrace g_{f} < M_{f} \right\rbrace$ is a nonempty finite union of open disks. Denote here by $U_{1}, \dotsc, U_{N}$, with $N \geq 1$, its disk components. For $j \in \lbrace 1, \dotsc, N \rbrace$, denote by $C_{j} \geq 0$ the number of critical points for $f$ in $U_{j}$, counting multiplicities. Note that $\sum\limits_{j = 1}^{N} C_{j} = d -1 -C$. Now, as $\left\lbrace g_{f} < d \cdot M_{f} \right\rbrace$ is a disk by Lemma~\ref{lemma:greenDisks1} and \[ \left\lbrace g_{f} < M_{f} \right\rbrace = f^{-1}\left( \left\lbrace g_{f} < d \cdot M_{f} \right\rbrace \right) \, \text{,} \] the map $f \colon U_{j} \rightarrow \left\lbrace g_{f} < d \cdot M_{f} \right\rbrace$ has a degree $d_{j} \geq 1$ for all $j \in \lbrace 1, \dotsc, N \rbrace$, and we have $d = \sum\limits_{j = 1}^{N} d_{j}$, by Lemma~\ref{lemma:preimage}. In addition, $d_{j} = C_{j} +1$ for each $j \in \lbrace 1, \dotsc, N \rbrace$ by Lemma~\ref{lemma:rhFormula}. Therefore, we have \[ d = \sum_{j = 1}^{N} \left( C_{j} +1 \right) = d -1 -C +N \, \text{,} \] and hence $N = C +1$, as desired.

Finally, note that $\left\lbrace g_{f} \leq M_{f} \right\rbrace$ is a disk of radius $\left\lvert a_{d} \right\rvert^{\frac{-1}{d -1}} \exp\left( M_{f} \right)$ by Lemma~\ref{lemma:greenDisks1}. Moreover, for every $j \in \lbrace 1, \dotsc, C +1 \rbrace$, we have \[ f\left( U_{j} \right) = \left\lbrace g_{f} < d \cdot M_{f} \right\rbrace \quad \text{and} \quad f\left( \left\lbrace g_{f} \leq M_{f} \right\rbrace \right) = \left\lbrace g_{f} \leq d \cdot M_{f} \right\rbrace \, \text{,} \] and these are two disks of the same radius by Lemma~\ref{lemma:greenDisks1}. By Lemma~\ref{lemma:image}, it follows that $U_{j}$ also has radius $\left\lvert a_{d} \right\rvert^{\frac{-1}{d -1}} \exp\left( M_{f} \right)$ for all $j \in \lbrace 1, \dotsc, C +1 \rbrace$. This completes the proof of the lemma.
\end{proof}

\subsection{A combinatorial argument}

Now, let us obtain an analogue of Lemma~\ref{lemma:combinatArch}, which plays a key role in our proof of Theorem~\ref{theorem:degenB} in the Archimedean case.

To do this, we shall first prove the two-islands lemma below, which is the non-Archimedean counterpart of Lemma~\ref{lemma:islandsArch}.

\begin{lemma}
\label{lemma:islandsNonArch}
Suppose that $U, V$ are disks, $f \colon U \rightarrow V$ is a polynomial map of degree $e \geq 1$ and $V_{1}, V_{2}$ are disjoint disks contained in $V$. Also assume that $e$ is less than the residue characteristic of $K$ if the latter is positive. Then there exist $j \in \lbrace 1, 2 \rbrace$ and a disk component $U_{j}$ of $f^{-1}\left( V_{j} \right)$ such that $f$ induces a bijection from $U_{j}$ to $V_{j}$.
\end{lemma}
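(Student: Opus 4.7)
The plan is to mirror the Archimedean proof of Lemma~\ref{lemma:islandsArch} almost verbatim, replacing the complex-analytic Riemann--Hurwitz formula with its non-Archimedean analogue (Lemma~\ref{lemma:rhFormula}) and replacing ``connected component'' with ``disk component.'' The crucial input we have in the non-Archimedean setting, which substitutes for Lemma~\ref{lemma:simply}, is Lemma~\ref{lemma:preimage}: the preimage of a disk under a polynomial map from a disk is automatically a finite disjoint union of disks, so every disk component of $f^{-1}(V_j)$ is itself a disk, and we do not need a separate argument to ensure we can apply Riemann--Hurwitz to each piece.

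Concretely, I would first apply Lemma~\ref{lemma:rhFormula} to $f \colon U \to V$ to write $e = C +1$, where $C \geq 0$ denotes the number of critical points for $f$ in $U$ counted with multiplicities. For $j \in \lbrace 1, 2 \rbrace$ let $C_j \geq 0$ be the number of critical points for $f$ in $f^{-1}(V_j)$ counted with multiplicities; since $V_1 \cap V_2 = \varnothing$, we have $C_1 + C_2 \leq C = e -1$, hence $2 \min\lbrace C_1, C_2 \rbrace \leq e -1$. Next, for each $j \in \lbrace 1, 2 \rbrace$, apply Lemma~\ref{lemma:preimage} to decompose $f^{-1}(V_j)$ into disk components $U_j^{(1)}, \dotsc, U_j^{(N_j)}$, each of which is a disk, and write $e = \sum_{\ell=1}^{N_j} e_j^{(\ell)}$, where $e_j^{(\ell)} \geq 1$ is the degree of $f \colon U_j^{(\ell)} \to V_j$. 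Each $e_j^{(\ell)} \leq e$, so the hypothesis on the residue characteristic of $K$ lets us apply Lemma~\ref{lemma:rhFormula} to every restriction, yielding $e_j^{(\ell)} = C_j^{(\ell)} +1$, where $C_j^{(\ell)}$ is the number of critical points in $U_j^{(\ell)}$; summing gives $e = C_j + N_j$.

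Combining these identities, the inequality $2 \min\lbrace C_1, C_2 \rbrace \leq e -1$ forces $C_j < N_j$ for some $j \in \lbrace 1, 2 \rbrace$, for otherwise we would have $e = C_j + N_j \leq 2 C_j$ for both indices and hence $e \leq 2 \min\lbrace C_1, C_2 \rbrace \leq e -1$, a contradiction. Picking such a $j$, there exists $\ell \in \lbrace 1, \dotsc, N_j \rbrace$ with $C_j^{(\ell)} = 0$, whence $e_j^{(\ell)} = 1$. A polynomial map of degree one between disks is a bijection (indeed each point of $V_j$ has exactly one preimage in $U_j^{(\ell)}$), so $f$ induces a bijection from $U_j = U_j^{(\ell)}$ onto $V_j$, as required.

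There is no real obstacle here: the entire combinatorial skeleton of the Archimedean argument transfers, and the non-Archimedean setting is actually simpler because Lemma~\ref{lemma:preimage} bypasses the simple-connectivity step. The only point that requires attention is tracking the hypothesis on the residue characteristic so that Lemma~\ref{lemma:rhFormula} applies uniformly to $f \colon U \to V$ and to each $f \colon U_j^{(\ell)} \to V_j$; this is immediate from $e_j^{(\ell)} \leq e$ and the standing assumption on $e$.
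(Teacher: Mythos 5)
Your proof is correct and follows essentially the same route as the paper's: Lemma~\ref{lemma:preimage} to decompose $f^{-1}(V_j)$ into disk components, Lemma~\ref{lemma:rhFormula} to relate degrees and critical points, and the resulting counting inequality $e = C_j + N_j \geq 2\min\{C_1,C_2\}+1$ to extract a degree-one component. The only cosmetic difference is that you explicitly flag that $e_j^{(\ell)} \leq e$ ensures the residue-characteristic hypothesis propagates to each restriction, which the paper leaves implicit.
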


\begin{proof}
For $j \in \lbrace 1, 2 \rbrace$, denote here by $C_{j} \geq 0$ the number of critical points for $f$ in $f^{-1}\left( V_{j} \right)$, counting multiplicities. Then, by Lemma~\ref{lemma:rhFormula}, we have \[ e = C +1 \geq C_{1} +C_{2} +1 \geq 2 \min\left\lbrace C_{1}, C_{2} \right\rbrace +1 \, \text{,} \] where $C \geq 0$ is the number of critical points for $f$ in $U$, counting multiplicities. By Lemma~\ref{lemma:preimage}, the set $f^{-1}\left( V_{j} \right)$ is a nonempty finite union of disks for each $j \in \lbrace 1, 2 \rbrace$. For $j \in \lbrace 1, 2 \rbrace$, denote by $U_{j}^{(1)}, \dotsc, U_{j}^{\left( N_{j} \right)}$ its disk components, with $N_{j} \geq 1$. Then, by Lemma~\ref{lemma:preimage}, for each $j \in \lbrace 1, 2 \rbrace$, the map $f \colon U_{j}^{(\ell)} \rightarrow V_{j}$ has a degree $e_{j}^{(\ell)} \geq 1$ for all $\ell \in \left\lbrace 1, \dotsc, N_{j} \right\rbrace$, and we have $e = \sum\limits_{\ell = 1}^{N_{j}} e_{j}^{(\ell)}$. In addition, for all $j \in \lbrace 1, 2 \rbrace$ and all $\ell \in \left\lbrace 1, \dotsc, N_{j} \right\rbrace$, we have $e_{j}^{(\ell)} = C_{j}^{(\ell)} +1$ by Lemma~\ref{lemma:rhFormula}, where $C_{j}^{(\ell)} \geq 0$ denotes the number of critical points for $f$ in $U_{j}^{(\ell)}$, counting multiplicities. Thus, we have \[ \forall j \in \lbrace 1, 2 \rbrace, \, e = \sum_{\ell = 1}^{N_{j}} \left( C_{j}^{(\ell)} +1 \right) = C_{j} +N_{j} \, \text{.} \] Therefore, as $e \geq 2 \min\left\lbrace C_{1}, C_{2} \right\rbrace +1$, there exists $j \in \lbrace 1, 2 \rbrace$ such that $C_{j} < N_{j}$. As a result, $C_{j}^{(\ell)} = 0$ for some $\ell \in \left\lbrace 1, \dotsc, N_{j} \right\rbrace$, and we have $e_{j}^{(\ell)} = 1$. Thus, $f$ induces a bijection from $U_{j}^{(\ell)}$ to $V_{j}$, and the lemma is proved.
\end{proof}

\begin{remark}
The assumptions that $f \colon U \rightarrow V$ is surjective and that its degree $e$ is finite and satisfies a certain condition related to the residue characteristic of $K$ are essential in our proof of Lemma~\ref{lemma:islandsNonArch}. In~\cite{Be2003}, Benedetto proved a more involved two-islands theorem for holomorphic functions on a disk in an algebraically closed field that is complete with respect to a non-Archimedean and nontrivial absolute value. In~\cite{B2008}, Benedetto also proved a non-Archimedean four-islands theorem for meromorphic functions. We refer the reader to these articles for more details.
\end{remark}

Finally, we have the result below, which is completely analogous to Lemma~\ref{lemma:combinatArch}.

\begin{lemma}
\label{lemma:combinatNonArch}
Suppose that $f \in \Poly_{d}(K)$ satisfies $M_{f} > 0$. Then one of the following two conditions is satisfied:
\begin{enumerate}
\item\label{item:combinatNonArch1} there exists a disk component $U$ of $\left\lbrace g_{f} < \frac{M_{f}}{d} \right\rbrace$ such that $f$ induces a bijection from $U$ onto the disk component $V$ of $\left\lbrace g_{f} < M_{f} \right\rbrace$ containing $U$;
\item\label{item:combinatNonArch2} for all distinct disk components $V, V^{\prime}$ of $\left\lbrace g_{f} < M_{f} \right\rbrace$, there exists a disk component $U$ of $\left\lbrace g_{f} < \frac{M_{f}}{d} \right\rbrace$ contained in $V$ such that $f$ induces a bijection from $U$ onto $V^{\prime}$.
\end{enumerate}
In addition, if $d \in \lbrace 2, 3 \rbrace$, then there exists a disk component $V$ of $\left\lbrace g_{f} < M_{f} \right\rbrace$ such that $f$ induces a bijection from $V$ onto $\left\lbrace g_{f} < d \cdot M_{f} \right\rbrace$.
\end{lemma}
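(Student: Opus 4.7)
The plan is to imitate the proof of Lemma~\ref{lemma:combinatArch} line for line, substituting the non-Archimedean two-islands lemma (Lemma~\ref{lemma:islandsNonArch}) for its complex counterpart and exploiting the rigid disk structure of sublevel sets of $g_{f}$ provided by Lemmas~\ref{lemma:greenDisks1} and~\ref{lemma:greenDisks2}. Since $M_{f} > 0$, Lemma~\ref{lemma:greenDisks2} gives $N \geq 2$ disk components $V_{1}, \dotsc, V_{N}$ of $\left\lbrace g_{f} < M_{f} \right\rbrace$, while Lemma~\ref{lemma:greenDisks1} tells us that $\left\lbrace g_{f} < d \cdot M_{f} \right\rbrace$ is a single open disk. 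The identity $\left\lbrace g_{f} < M_{f} \right\rbrace = f^{-1}\left( \left\lbrace g_{f} < d \cdot M_{f} \right\rbrace \right)$, combined with Lemma~\ref{lemma:preimage}, then promotes each restriction $f \colon V_{j} \rightarrow \left\lbrace g_{f} < d \cdot M_{f} \right\rbrace$ to a polynomial map of some degree $d_{j} \geq 1$, with $\sum_{j = 1}^{N} d_{j} = d$. From this the last assertion is immediate: if $d \in \lbrace 2, 3 \rbrace$, then $N \geq 2$ and $\sum_{j} d_{j} = d \leq 3$ force some $d_{j}$ to equal $1$, and the corresponding $V_{j}$ is mapped bijectively onto $\left\lbrace g_{f} < d \cdot M_{f} \right\rbrace$.

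For the dichotomy we will assume that~\eqref{item:combinatNonArch1} fails and deduce~\eqref{item:combinatNonArch2}. Fix two distinct disk components $V, V^{\prime}$ of $\left\lbrace g_{f} < M_{f} \right\rbrace$. We apply Lemma~\ref{lemma:islandsNonArch} to the polynomial map $f \colon V \rightarrow \left\lbrace g_{f} < d \cdot M_{f} \right\rbrace$ of degree $d_{V} \leq d$, for which the residue-characteristic hypothesis of that lemma holds by the standing assumption on $K$, taking as islands the two disjoint disks $V, V^{\prime} \subseteq \left\lbrace g_{f} < d \cdot M_{f} \right\rbrace$. Since each disk component of $\left\lbrace g_{f} < \frac{M_{f}}{d} \right\rbrace$ lying in $V$ is a disk that $f$ sends into a single disk component of $\left\lbrace g_{f} < M_{f} \right\rbrace$, the disk components of $f^{-1}(V) \cap V$ (respectively $f^{-1}\left( V^{\prime} \right) \cap V$) are precisely the disk components of $\left\lbrace g_{f} < \frac{M_{f}}{d} \right\rbrace$ contained in $V$ whose $f$-image is $V$ (respectively $V^{\prime}$). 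The two-islands lemma then returns such a disk component mapped bijectively either onto $V$---which would verify~\eqref{item:combinatNonArch1}, contradicting our assumption---or onto $V^{\prime}$, which is exactly what~\eqref{item:combinatNonArch2} requires.

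The only non-routine point in this plan is the identification, used just above, of the disk components of $f^{-1}(V) \cap V$ and $f^{-1}\left( V^{\prime} \right) \cap V$ with disk components of $\left\lbrace g_{f} < \frac{M_{f}}{d} \right\rbrace$; once that is justified from Lemmas~\ref{lemma:image} and~\ref{lemma:preimage} together with the disjointness of the $V_{k}$, the argument mirrors its Archimedean counterpart step by step, the hypothesis on the residue characteristic being what enables the single nontrivial invocation of Lemma~\ref{lemma:islandsNonArch}.
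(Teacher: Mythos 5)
Your proposal is correct and follows essentially the same route as the paper's proof: the last assertion for $d \in \{2,3\}$ via the degree count $\sum d_j = d$ with $N \geq 2$ components, and the dichotomy by assuming~\eqref{item:combinatNonArch1} fails and invoking Lemma~\ref{lemma:islandsNonArch} on $f \colon V \to \{g_f < d \cdot M_f\}$ with islands $V$ and $V'$. Your careful remark identifying the disk components of $f^{-1}(V) \cap V$ with disk components of $\{g_f < M_f/d\}$ is exactly the point the paper compresses into its invocation of Lemma~\ref{lemma:preimage}, and your justification (each such component maps into a single disk component of $\{g_f < M_f\}$, by connectedness of disks) is sound.
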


\begin{proof}
Assume here that the condition~\eqref{item:combinatNonArch1} does not hold, and let us show that the condition~\eqref{item:combinatNonArch2} is satisfied. By Lemma~\ref{lemma:greenDisks1}, $\left\lbrace g_{f} < \frac{M_{f}}{d} \right\rbrace$ and $\left\lbrace g_{f} < M_{f} \right\rbrace$ are nonempty finite unions of disks. Now, assume that $V, V^{\prime}$ are two distinct disk components of $\left\lbrace g_{f} < M_{f} \right\rbrace$. It follows from Lemma~\ref{lemma:preimage} that $f^{-1}(V)$ and $f^{-1}\left( V^{\prime} \right)$ are both unions of disk components of $\left\lbrace g_{f} < \frac{M_{f}}{d} \right\rbrace$. Now, as $\left\lbrace g_{f} < d \cdot M_{f} \right\rbrace$ is a disk by Lemma~\ref{lemma:greenDisks1}, the induced map $f \colon V \rightarrow \left\lbrace g_{f} < d \cdot M_{f} \right\rbrace$ has a degree $e \in \lbrace 1, \dotsc, d \rbrace$ by Lemma~\ref{lemma:preimage}. Moreover, no disk component of $\left\lbrace g_{f} < \frac{M_{f}}{d} \right\rbrace$ contained in $V$ is mapped bijectively onto $V$ by $f$ by hypothesis. Therefore, by Lemma~\ref{lemma:islandsNonArch}, $f$ maps a disk component of $\left\lbrace g_{f} < \frac{M_{f}}{d} \right\rbrace$ contained in $V$ bijectively onto $V^{\prime}$. Thus, the desired result is proved.

Finally, assume that $d \in \lbrace 2, 3 \rbrace$. By Lemma~\ref{lemma:greenDisks1}, $\left\lbrace g_{f} < M_{f} \right\rbrace$ is a nonempty finite union of disks and $\left\lbrace g_{f} < d \cdot M_{f} \right\rbrace$ is a disk. In fact, by Lemma~\ref{lemma:greenDisks2}, $\left\lbrace g_{f} < M_{f} \right\rbrace$ has several disk components $V_{1}, \dotsc, V_{N}$, with $N \geq 2$. By Lemma~\ref{lemma:preimage}, the induced map $f \colon V_{j} \rightarrow \left\lbrace g_{f} < d \cdot M_{f} \right\rbrace$ has a degree $d_{j} \geq 1$ for each $j \in \lbrace 1, \dotsc, N \rbrace$, and $d = \sum\limits_{j = 1}^{N} d_{j}$. Therefore, as $d \leq 3$, there exists $j \in \lbrace 1, \dotsc, N \rbrace$ such that $d_{j} = 1$, and $f$ induces a bijection from $V_{j}$ onto $\left\lbrace g_{f} < d \cdot M_{f} \right\rbrace$. This completes the proof of the lemma.
\end{proof}

\subsection{Multipliers and maximal escape rates}

Here, let us relate multipliers at periodic points to maximal escape rates under certain combinatorial assumptions. More precisely, let us obtain a non-Archimedean analogue of Lemma~\ref{lemma:ineqArch}.

In the non-Archimedean setting, ratios of radii of disks play the role of moduli of complex annuli. Thus, we have the well-known result below, which follows from Lemma~\ref{lemma:image} and is the non-Archimedean counterpart of Lemma~\ref{lemma:modulusArch}.

\begin{lemma}
\label{lemma:modulusNonArch}
Suppose that $U \subsetneq V$ are disks and $f \colon U \rightarrow V$ is a bijective polynomial map. Then $f$ has a unique fixed point $z_{0} \in U$ and $\left\lvert f^{\prime}\left( z_{0} \right) \right\rvert = \frac{s}{r}$, where $r, s \in \mathbb{R}_{> 0}$ are the radii of $U, V$, respectively.
\end{lemma}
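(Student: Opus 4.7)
The plan is to apply Lemma~\ref{lemma:image} twice: first to $f$ itself, in order to identify $\lvert f'(w) \rvert$ on $U$, and then to the polynomial $g(z) = f(z) - z$, in order to recover the fixed point as the unique zero of $g$ in $U$. This is the non-Archimedean analogue of the iterative Schwarz-lemma argument used in the complex case of Lemma~\ref{lemma:modulusArch}, but it shortcuts the iteration via a single direct computation.

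I pick any $w \in U$, so that $w$ is a center for the disk $U$ of radius $r$. Since $f \colon U \to V$ is bijective onto $V$, Lemma~\ref{lemma:image} gives $s = \max_{j \geq 1} \lvert f^{(j)}(w)/j! \rvert \, r^j$, and the fact that the degree equals $1$ forces $j = 1$ to be the unique index attaining this maximum (in the open case via $e_{\min} = 1$, in the closed case via $e_{\max} = 1$; the two coincide since $1 \leq e_{\min} \leq e_{\max}$). Thus $\lvert f'(w) \rvert = s/r$, and $\lvert f^{(j)}(w)/j! \rvert \, r^j < s$ strictly for every $j \geq 2$. Lemma~\ref{lemma:image} also shows that the image of an open (respectively closed) disk under a nonconstant polynomial is again open (respectively closed), so $U$ and $V$ are necessarily of the same type; then $U \subsetneq V$ forces $r < s$.

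Now I apply Lemma~\ref{lemma:image} to $g(z) = f(z) - z$ at the same center $w$. Since $g^{(j)} = f^{(j)}$ for $j \geq 2$ and $\lvert g'(w) \rvert = \lvert f'(w) - 1 \rvert = s/r$ by the ultrametric inequality (as $s/r > 1$), one has $\max_{j \geq 1} \lvert g^{(j)}(w)/j! \rvert \, r^j = s$, attained uniquely at $j = 1$. Lemma~\ref{lemma:image} then yields that $g(U)$ is a disk of the same type as $U$, centered at $g(w) = f(w) - w$ with radius $s$, and that $g \colon U \to g(U)$ has degree $1$, hence is a bijection. Since $w, f(w) \in V$, one has $\lvert g(w) \rvert \leq s$ (strictly so if $V$ is open), and in either case $0 \in g(U)$; thus $g$ has a unique zero $z_0 \in U$, which is the unique fixed point of $f$ in $U$. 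Its multiplier satisfies $\lvert f'(z_0) \rvert = s/r$ by the computation of the first step. The only real subtlety in this plan is keeping track of the open-versus-closed dichotomy in Lemma~\ref{lemma:image}, which is needed both to conclude $r < s$ and to check that $0 \in g(U)$.
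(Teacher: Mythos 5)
Your proof is correct and takes essentially the same approach as the paper: apply Lemma~\ref{lemma:image} first to $f$ to get $\lvert f'(w)\rvert = s/r$, then to $g(z) = f(z) - z$ to locate the unique zero of $g$, hence the unique fixed point of $f$. The one imprecision is the claim that $j = 1$ is the \emph{unique} maximizer and that $\lvert f^{(j)}(w)/j!\rvert\, r^j < s$ strictly for all $j \geq 2$: in the open case $e_{\min} = 1$ only says $1$ is the \emph{smallest} such index, so the inequality need not be strict — the paper is careful to state ``with strict inequality if $U$ is closed'' — but this does not affect the argument, since for $g$ you only need $e_{\min}(g) = 1$ (open case) or $e_{\max}(g) = 1$ (closed case), and the weak inequalities give the former while the strict ones give the latter. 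The paper instead picks $w \in f^{-1}(U)$, yielding $\lvert g(w)\rvert \leq r$ rather than your $\lvert g(w)\rvert \leq s$ (strict if $V$ open); both bounds suffice to place $0$ in $g(U)$.
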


\begin{proof}
Note that $s > r$ since $U \subsetneq V$ by hypothesis and $U$ and $V$ are both open or both closed by Lemma~\ref{lemma:image}. Now, choose $w \in f^{-1}(U)$. By Lemma~\ref{lemma:image}, as $w \in U$ and $f \colon U \rightarrow V$ is bijective, we have $\left\lvert f^{\prime}(w) \right\rvert = \frac{s}{r}$ and $s \geq \left\lvert \frac{f^{(j)}(w)}{j!} \right\rvert r^{j}$ for each $j \geq 2$, with strict inequality if $U$ is closed. Now, define $g(z) = f(z) -z \in K[z]$. Then we have $\left\lvert g^{\prime}(w) \right\rvert = \left\lvert f^{\prime}(w) -1 \right\rvert = \frac{s}{r}$ since $\left\lvert f^{\prime}(w) \right\rvert = \frac{s}{r} > 1$. Moreover, for each $j \geq 2$, we have $s \geq \left\lvert \frac{g^{(j)}(w)}{j!} \right\rvert r^{j}$, with strict inequality if $U$ is closed, since $g^{(j)}(w) = f^{(j)}(w)$. As a result, $g(U)$ is a disk of radius $s$ and the induced map $g \colon U \rightarrow g(U)$ is bijective by Lemma~\ref{lemma:image}. Moreover, we have $\left\lvert g(w) \right\rvert = \left\lvert f(w) -w \right\rvert \leq r$ as $w \in f^{-1}(U)$, and hence $0 \in g(U)$. Therefore, the map $f \colon U \rightarrow V$ has a unique fixed point $z_{0} \in U$. Finally, as $f \colon U \rightarrow V$ is bijective, we have $\left\lvert f^{\prime}\left( z_{0} \right) \right\rvert = \frac{s}{r}$ by Lemma~\ref{lemma:image}. This completes the proof of the lemma.
\end{proof}

We also have the well-known result below, which follows easily from Lemma~\ref{lemma:image} and is a non-Archimedean analogue of Gr\"{o}tzsch's inequality.

\begin{lemma}
\label{lemma:radii}
Suppose that $f \in K[z]$ is not constant and $U \subseteq V$ are disks. Then \[ \left( \frac{R}{r} \right)^{e} \leq \frac{S}{s} \leq \left( \frac{R}{r} \right)^{E} \, \text{,} \] where $r, R, s, S$ are the radii of $U, V, f(U), f(V)$, respectively, and $e$ and $E$ are the degrees of $f \colon U \rightarrow f(U)$ and $f \colon V \rightarrow f(V)$, respectively.
\end{lemma}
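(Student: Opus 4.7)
The plan is to derive both inequalities directly from Lemma~\ref{lemma:image}. Since $U \subseteq V$ and we are in a non-Archimedean setting, every point of $U$ is simultaneously a center of $U$ and of $V$. Fix such a common center $w \in U$, and recall that $r \leq R$ (containment of non-Archimedean disks forces the radii to satisfy this). Set $a_{j} = \left\lvert f^{(j)}(w)/j! \right\rvert$ for $j \in \lbrace 1, \dotsc, \deg(f) \rbrace$.

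By Lemma~\ref{lemma:image} applied to $U$, we have $s = \max_{j} a_{j} r^{j}$, and the degree $e$ of $f \colon U \rightarrow f(U)$ is one of the indices achieving this maximum (namely $e_{\min}$ if $U$ is open and $e_{\max}$ if $U$ is closed); in particular, $s = a_{e} r^{e}$. Applying the same lemma to $V$, we obtain $S = \max_{j} a_{j} R^{j}$ and $S = a_{E} R^{E}$, where $E$ is the degree of $f \colon V \rightarrow f(V)$.

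For the upper bound, since $a_{E} r^{E}$ appears in the maximum defining $s$, we have $a_{E} r^{E} \leq s$, and therefore
\[ S = a_{E} R^{E} = \left( a_{E} r^{E} \right) \left( \frac{R}{r} \right)^{E} \leq s \left( \frac{R}{r} \right)^{E} \, \text{.} \]
For the lower bound, since $a_{e} R^{e}$ appears in the maximum defining $S$, we have
\[ S \geq a_{e} R^{e} = \left( a_{e} r^{e} \right) \left( \frac{R}{r} \right)^{e} = s \left( \frac{R}{r} \right)^{e} \, \text{.} \]
Dividing by $s$ yields the two desired inequalities.

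The argument is essentially a direct computation, so there is no serious obstacle; the only care required is to identify $e$ and $E$ as genuine indices realizing the relevant maxima (via the open/closed dichotomy in Lemma~\ref{lemma:image}) and to exploit that a common center of $U$ and $V$ can always be chosen in a non-Archimedean disk.
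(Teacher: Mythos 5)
Your proof is correct and takes essentially the same route as the paper: choose a common center $w \in U$, apply Lemma~\ref{lemma:image} to both $U$ and $V$ to obtain $s = a_e r^e$ and $S = a_E R^E$ together with the characterizations $s = \max_j a_j r^j$ and $S = \max_j a_j R^j$, and then compare the cross-terms $a_E r^E$ and $a_e R^e$. The paper compresses the two observations $a_E r^E \le s$ and $a_e R^e \le S$ into one displayed inequality, but the content is identical.
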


\begin{proof}
Choose $w \in U$. Then $U, V$ are disks of center $w$ and radii $r, R$, respectively. Therefore, by Lemma~\ref{lemma:image}, we have \[ s = \left\lvert \frac{f^{(e)}(w)}{e!} \right\rvert r^{e} \geq \left\lvert \frac{f^{(E)}(w)}{E!} \right\rvert r^{E} \quad \text{and} \quad S = \left\lvert \frac{f^{(E)}(w)}{E!} \right\rvert R^{E} \geq \left\lvert \frac{f^{(e)}(w)}{e!} \right\rvert R^{e} \, \text{.} \] This completes the proof of the lemma.
\end{proof}

Finally, we obtain the result below, which is the non-Archimedean counterpart of Lemma~\ref{lemma:ineqArch}. Note that, in the present context, we also have an upper bound on the absolute values of multipliers.

\begin{lemma}
\label{lemma:ineqNonArch}
Suppose that $f \in \Poly_{d}(K)$ satisfies $M_{f} > 0$, $\eta \geq M_{f}$, $U_{0}, \dotsc, U_{p -1}$ are disk components of $\left\lbrace g_{f} < \frac{\eta}{d^{k}} \right\rbrace$, with $k \geq 0$ and $p \geq 1$, $V_{0}, \dotsc, V_{p -1}$ are the disk components of $\left\lbrace g_{f} < \frac{\eta}{d^{k -1}} \right\rbrace$ containing $U_{0}, \dotsc, U_{p -1}$, respectively, and $f$ induces a bijection from $U_{j}$ to $V_{j +1 \pmod{p}}$ for all $j \in \lbrace 0, \dotsc, p -1 \rbrace$. Then $f^{\circ p}$ has a unique fixed point $z_{0} \in K$ such that $f^{\circ j}\left( z_{0} \right) \in U_{j}$ for all $j \in \lbrace 0, \dotsc, p -1 \rbrace$. Furthermore, we have \[ (d -1) \left( \sum_{j = 0}^{p -1} \frac{1}{e_{j}} \right) \eta \leq \log\left\lvert \left( f^{\circ p} \right)^{\prime}\left( z_{0} \right) \right\rvert \leq (d -1) \left( \sum_{j = 0}^{p -1} \frac{1}{d_{j}} \right) \eta \, \text{,} \] where $d_{j}$ and $e_{j}$ denote the degrees of the induced maps $f^{\circ k} \colon \overline{U}_{j} \rightarrow \left\lbrace g_{f} \leq \eta \right\rbrace$ and $f^{\circ k} \colon V_{j} \rightarrow \left\lbrace g_{f} < d \cdot \eta \right\rbrace$, respectively, and $\overline{U}_{j}$ is the disk component of $\left\lbrace g_{f} \leq \frac{\eta}{d^{k}} \right\rbrace$ containing $U_{j}$ for all $j \in \lbrace 0, \dotsc, p -1 \rbrace$.
\end{lemma}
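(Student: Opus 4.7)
The plan is to mimic the proof of Lemma~\ref{lemma:ineqArch}, using ratios of radii of non-Archimedean disks in place of moduli of complex annuli, via Lemmas~\ref{lemma:modulusNonArch} and~\ref{lemma:radii}. For each $j \in \lbrace 0, \dotsc, p -1 \rbrace$, let $g_{j}$ denote the inverse of the bijection $f \colon U_{j} \to V_{j +1 \pmod{p}}$, and set $W = g_{0} \circ g_{1} \circ \dotsb \circ g_{p -1}(V_{0})$. Using $U_{j} \subseteq V_{j}$, an induction shows that $W$ is an open disk contained in $U_{0}$ and that $f^{\circ p}$ maps $W$ bijectively onto $V_{0}$. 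Since $M_{f} > 0$, Lemma~\ref{lemma:greenDisks2} forces $U_{0} \subsetneq V_{0}$, hence $W \subsetneq V_{0}$, so Lemma~\ref{lemma:modulusNonArch} produces a unique fixed point $z_{0} \in W$ of $f^{\circ p}$ with $\lvert (f^{\circ p})'(z_{0}) \rvert = \operatorname{rad}(V_{0})/\operatorname{rad}(W)$; uniqueness among fixed points whose orbit visits $U_{0}, \dotsc, U_{p -1}$ in order is automatic, as any such orbit must lie in $W$ by construction of the $g_{j}$.

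I next decompose $\log \lvert (f^{\circ p})'(z_{0}) \rvert$ as a telescoping sum. Each $f \colon U_{j} \to V_{j +1 \pmod{p}}$ is a degree-$1$ bijection of open disks, so Lemma~\ref{lemma:image} together with the residue characteristic hypothesis (which yields $\lvert n \rvert = 1$ for $1 \leq n \leq d$) implies that $\lvert f' \rvert$ is constant on $U_{j}$ with value $\operatorname{rad}(V_{j +1 \pmod{p}})/\operatorname{rad}(U_{j})$. The chain rule then gives
\[ \log \lvert (f^{\circ p})'(z_{0}) \rvert = \sum_{j = 0}^{p -1} \bigl( \log\operatorname{rad}(V_{j}) -\log\operatorname{rad}(U_{j}) \bigr) \, \text{.} \]

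Finally, I will bound each summand from both sides by applying Lemma~\ref{lemma:radii} to $f^{\circ k}$ twice: once to the open pair $U_{j} \subseteq V_{j}$ and once to the closed pair $\overline{U}_{j} \subseteq \overline{V}_{j}$, where $\overline{V}_{j}$ denotes the closed disk component of $\lbrace g_{f} \leq \eta/d^{k -1} \rbrace$ containing $V_{j}$. Combining Lemmas~\ref{lemma:greenDisks1} and~\ref{lemma:greenDisks2} with $\eta \geq M_{f}$, the images $f^{\circ k}(V_{j}) = \lbrace g_{f} < d \eta \rbrace$ and $f^{\circ k}(\overline{V}_{j}) = \lbrace g_{f} \leq d \eta \rbrace$ are open and closed disks of radius $\lvert a_{d} \rvert^{-1/(d -1)} \exp(d \eta)$, while $f^{\circ k}(U_{j})$ and $f^{\circ k}(\overline{U}_{j}) = \lbrace g_{f} \leq \eta \rbrace$ are open and closed disks of radius $\lvert a_{d} \rvert^{-1/(d -1)} \exp(\eta)$, with $a_{d}$ the leading coefficient of $f$. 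I expect the main subtlety to be the identification that $U_{j}$ and $\overline{U}_{j}$ share a common radius $u_{j}$ (and that $V_{j}$ and $\overline{V}_{j}$ share a common radius $v_{j}$); this should follow from Lemma~\ref{lemma:image}, since the image radius is given by $\max_{i \geq 1} \lvert (f^{\circ k})^{(i)}(w)/i! \rvert r^{i}$, a strictly increasing function of $r$, so equality of image radii forces equality of source radii. Granting this, both applications of Lemma~\ref{lemma:radii} involve the ratio $v_{j}/u_{j}$ and yield
\[ \exp((d -1) \eta) \leq (v_{j}/u_{j})^{e_{j}} \quad \text{and} \quad (v_{j}/u_{j})^{d_{j}} \leq \exp((d -1) \eta) \, \text{,} \]
whence $(d -1) \eta/e_{j} \leq \log(v_{j}/u_{j}) \leq (d -1) \eta/d_{j}$. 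Summing over $j$ then yields both inequalities claimed in the lemma.
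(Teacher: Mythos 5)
Your proposal is correct and takes essentially the same route as the paper: produce the periodic point with Lemma~\ref{lemma:modulusNonArch} applied to $f^{\circ p} \colon W \to V_{0}$, decompose $\log\left\lvert \left( f^{\circ p} \right)^{\prime}\left( z_{0} \right) \right\rvert$ as $\sum_{j} \left( \log\operatorname{rad}\left( V_{j} \right) -\log\operatorname{rad}\left( U_{j} \right) \right)$, and control each summand by applying Lemma~\ref{lemma:radii} to $f^{\circ k}$ between concentric sublevel disks of equal radius. A few points where you deviate or slip. Your per-step decomposition comes from the chain rule and the constancy of $\lvert f^{\prime} \rvert$ on $U_{j}$; that constancy is immediate from Lemma~\ref{lemma:image} alone (degree $1$ forces $e_{\min} = 1$, hence $\lvert f^{\prime}(w) \rvert r = s$ for every center $w$ of $U_{j}$), so the residue-characteristic hypothesis you invoke at that point is not needed. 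The paper instead telescopes the radii of the nested disks $W_{j}$ via Lemma~\ref{lemma:radii}; both routes give the same identity, yours a bit more directly. You apply Lemma~\ref{lemma:radii} twice, to $U_{j} \subseteq V_{j}$ and to $\overline{U}_{j} \subseteq \overline{V}_{j}$, whereas the paper applies it once to the mixed pair $\overline{U}_{j} \subseteq V_{j}$, which is permissible because Lemma~\ref{lemma:radii} places no open/closed matching condition on the two disks; so $\overline{V}_{j}$ is superfluous. Citing Lemma~\ref{lemma:greenDisks2} for $U_{0} \subsetneq V_{0}$ does not work for $k \geq 1$: the correct argument, as in the paper, is $f^{\circ k}\left( U_{0} \right) \subseteq \left\lbrace g_{f} \leq \eta \right\rbrace \subsetneq \left\lbrace g_{f} < d \eta \right\rbrace = f^{\circ k}\left( V_{0} \right)$, where the strict inclusion follows from Lemma~\ref{lemma:greenDisks1} (concentric disks of radii $\lvert a_{d} \rvert^{-1/(d -1)} \exp(\eta)$ and $\lvert a_{d} \rvert^{-1/(d -1)} \exp(d \eta)$). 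Finally, your radius identification compares $f^{\circ k}\left( U_{j} \right)$ with $f^{\circ k}\left( \overline{U}_{j} \right) = \left\lbrace g_{f} \leq \eta \right\rbrace$; when $\eta = M_{f}$ the first is only a disk component of $\left\lbrace g_{f} < M_{f} \right\rbrace$, so to match radii you do need Lemma~\ref{lemma:greenDisks2} there. The paper sidesteps this by pushing forward once more to $f^{\circ (k +1)}\left( U_{j} \right) = \left\lbrace g_{f} < d \eta \right\rbrace$ and $f^{\circ (k +1)}\left( \overline{U}_{j} \right) = \left\lbrace g_{f} \leq d \eta \right\rbrace$, where $d \eta > M_{f}$ makes both single disks by Lemma~\ref{lemma:greenDisks1} alone.
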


\begin{proof}
For $j \in \lbrace 0, \dotsc, p -1 \rbrace$, define $f_{j} \colon U_{j} \rightarrow V_{j +1 \pmod{p}}$ to be the bijective map induced by $f$. For $j \in \lbrace 0, \dotsc, p \rbrace$, define \[ W_{j} = \left( f_{j -1} \circ \dotsb \circ f_{0} \right)^{-1}\left( V_{j \pmod{p}} \right) = \left( f_{j -2} \circ \dotsb \circ f_{0} \right)^{-1}\left( U_{j -1} \right) \, \text{,} \] where $W_{0} = V_{0}$ and $W_{1} = U_{0}$ by convention. It follows from Lemma~\ref{lemma:preimage} that $W_{j}$ is a disk for all $j \in \lbrace 0, \dotsc, p \rbrace$. In addition, for each $j \in \lbrace 0, \dotsc, p -1 \rbrace$, we have \[ f^{\circ k}\left( U_{j} \right) \subseteq \left\lbrace g_{f} \leq \eta \right\rbrace \subsetneq \left\lbrace g_{f} < d \cdot \eta \right\rbrace = f^{\circ k}\left( V_{j} \right) \] by Lemmas~\ref{lemma:preimage} and~\ref{lemma:greenDisks1}, which yields $U_{j} \subsetneq V_{j}$, and hence $W_{j +1} \subsetneq W_{j}$. As a result, by Lemma~\ref{lemma:modulusNonArch}, the map $f_{p -1} \circ \dotsb \circ f_{0} \colon W_{p} \rightarrow W_{0}$ has a unique fixed point $z_{0} \in W_{p}$ and we have \[ \left\lvert \left( f^{\circ p} \right)^{\prime}\left( z_{0} \right) \right\rvert = \frac{\rho_{0}}{\rho_{p}} = \prod_{j = 0}^{p -1} \frac{\rho_{j}}{\rho_{j +1}} \, \text{,} \] where $\rho_{0}, \dotsc, \rho_{p}$ denote the radii of $W_{0}, \dotsc, W_{p}$, respectively. Now, note that $z_{0}$ is the unique fixed point for $f^{\circ p}$ that satisfies $f^{\circ j}\left( z_{0} \right) \in U_{j}$ for all $j \in \lbrace 0, \dotsc, p -1 \rbrace$. Thus, it remains to prove the desired inequalities. For $j \in \lbrace 0, \dotsc, p -1 \rbrace$, define $r_{j}$ and $R_{j}$ to be the radii of $U_{j}$ and $V_{j}$, respectively. Then, for all $j \in \lbrace 0, \dotsc, p -1 \rbrace$, we have $\frac{\rho_{j}}{\rho_{j +1}} = \frac{R_{j}}{r_{j}}$ by Lemma~\ref{lemma:radii} since $f_{j -1} \circ \dotsb \circ f_{0}$ maps bijectively $W_{j}$ onto $V_{j}$ and $W_{j +1}$ onto $U_{j}$. Thus, it suffices to prove that $\left( \frac{d -1}{e_{j}} \right) \eta \leq \log\left( \frac{R_{j}}{r_{j}} \right) \leq \left( \frac{d -1}{d_{j}} \right) \eta$ for all $j \in \lbrace 0, \dotsc, p -1 \rbrace$. Suppose that $j \in \lbrace 0, \dotsc, p -1 \rbrace$. As $d \cdot \eta > M_{f}$, we have $f^{\circ (k +1)}\left( U_{j} \right) = \left\lbrace g_{f} < d \cdot \eta \right\rbrace$ and $f^{\circ (k +1)}\left( \overline{U}_{j} \right) = \left\lbrace g_{f} \leq d \cdot \eta \right\rbrace$ by Lemmas~\ref{lemma:preimage} and~\ref{lemma:greenDisks1}, and these are disks of the same radius by Lemma~\ref{lemma:greenDisks1}. As a result, the disk $\overline{U}_{j}$ also has radius $r_{j}$ by Lemma~\ref{lemma:image}. Therefore, as $f^{\circ k}$ maps $\overline{U}_{j}$ onto $\left\lbrace g_{f} \leq \eta \right\rbrace$ with degree $d_{j}$ and $V_{j}$ onto $\left\lbrace g_{f} < d \cdot \eta \right\rbrace$ with degree $e_{j}$, we have \[ \left( \frac{R_{j}}{r_{j}} \right)^{d_{j}} \leq \exp\left( (d -1) \eta \right) \leq \left( \frac{R_{j}}{r_{j}} \right)^{e_{j}} \] by Lemmas~\ref{lemma:greenDisks1} and~\ref{lemma:radii}. This completes the proof of the lemma.
\end{proof}

\begin{remark}
To prove Theorem~\ref{theorem:degenB} in the non-Archimedean case, we shall only use Lemma~\ref{lemma:ineqNonArch} with $\eta = M_{f}$, $k \in \lbrace 0, 1 \rbrace$ and $p \in \lbrace 1, 2 \rbrace$ to only derive lower bounds on the absolute values of multipliers at certain small cycles. Nonetheless, our general statement of Lemma~\ref{lemma:ineqNonArch} also allows us to show that the bounds in Theorem~\ref{theorem:degenB} are optimal (see Propositions~\ref{proposition:sharp1} and~\ref{proposition:sharp2}) and to obtain a lower bound on the absolute values of multipliers at all cycles in terms of the periods and the minimum of the Green function on the set of critical points (see Proposition~\ref{proposition:ineqMin}).
\end{remark}

\subsection{Proof of Theorem~\ref{theorem:degenB} in the non-Archimedean case}

Now, let us derive Theorem~\ref{theorem:degenB} in the non-Archimedean case from Lemmas~\ref{lemma:combinatNonArch} and~\ref{lemma:ineqNonArch}. This is similar to our proof of Theorem~\ref{theorem:degenB} in the Archimedean case.

\begin{proof}[Proof of Theorem~\ref{theorem:degenB} in the non-Archimedean case]
Assume here that $K$ is an algebraically closed field of characteristic $0$ that is equipped with a non-Archimedean absolute value $\lvert . \rvert$ with residue characteristic $0$ or greater than $d$ and $f \in \Poly_{d}(K)$. Note that the desired result is immediate if the absolute value $\lvert . \rvert$ is trivial. Thus, from now on, assume that $\lvert . \rvert$ is not trivial.

First, suppose that $M_{f} = 0$. Denote by $\lambda_{1}, \dotsc, \lambda_{d}$ the multipliers of $f$ at all its fixed points repeated according to their multiplicities. Define $\sigma_{1}, \dotsc, \sigma_{d}$ to be the elementary symmetric functions of $\lambda_{1}, \dotsc, \lambda_{d}$. We have $d +\sum\limits_{j = 1}^{d} (-1)^{j} (d -j) \sigma_{j} = 0$ by the holomorphic fixed-point formula. Therefore, since $\lvert d \rvert = 1$, we have $\left\lvert \sigma_{j} \right\rvert \geq 1$ for some $j \in \lbrace 1, \dotsc, d \rbrace$ by the ultrametric triangle inequality. As a result, we have $\left\lvert \lambda_{k} \right\rvert \geq 1$ for some $k \in \lbrace 1, \dotsc, d \rbrace$ by the ultrametric triangle inequality. This shows that $M_{f}^{(1)} \geq 0$, as desired.

Thus, from now on, assume that $M_{f} > 0$. By Lemma~\ref{lemma:greenDisks1}, $\left\lbrace g_{f} < M_{f} \right\rbrace$ is a finite union of disks and $\left\lbrace g_{f} < d \cdot M_{f} \right\rbrace$ is a disk. Moreover, $\left\lbrace g_{f} < M_{f} \right\rbrace$ has several disk components $V_{1}, \dotsc, V_{N}$, with $N \geq 2$, by Lemma~\ref{lemma:greenDisks2}. Now, by Lemma~\ref{lemma:preimage}, the map $f \colon V_{j} \rightarrow \left\lbrace g_{f} < d \cdot M_{f} \right\rbrace$ has some degree $d_{j} \geq 1$ for all $j \in \lbrace 1, \dotsc, N \rbrace$, and we have $d = \sum\limits_{j = 1}^{N} d_{j}$. To conclude, let us consider three cases.

Suppose that $d_{j} = 1$ for some $j \in \lbrace 1, \dotsc, N \rbrace$. Note that this holds if $d \in \lbrace 2, 3 \rbrace$. Then $f$ induces a bijection from $V_{j}$ onto $\left\lbrace g_{f} < d \cdot M_{f} \right\rbrace$. Therefore, by Lemma~\ref{lemma:ineqNonArch}, $f$ has a unique fixed point $z_{0} \in V_{j}$ and we have $\log\left\lvert f^{\prime}\left( z_{0} \right) \right\rvert \geq (d -1) M_{f}$. Thus, we have $M_{f}^{(1)} \geq (d -1) M_{f}$.

Now, suppose that the condition~\eqref{item:combinatNonArch1} of Lemma~\ref{lemma:combinatNonArch} is satisfied and $d_{j} \geq 2$ for all $j \in \lbrace 1, \dotsc, N \rbrace$. Then there exist some $j \in \lbrace 1, \dotsc, N \rbrace$ and a disk component $U_{j}$ of $\left\lbrace g_{f} < \frac{M_{f}}{d} \right\rbrace$ contained in $V_{j}$ such that $f$ maps bijectively $U_{j}$ to $V_{j}$. As a result, by Lemma~\ref{lemma:ineqNonArch}, $f$ has a unique fixed point $z_{0} \in U_{j}$ and we have $\log\left\lvert f^{\prime}\left( z_{0} \right) \right\rvert \geq \frac{d -1}{d_{j}} M_{f}$. Moreover, $d_{j} = d -\sum\limits_{k \neq j} d_{k} \leq d -2$. Thus, we have $M_{f}^{(1)} \geq \frac{d -1}{d -2} M_{f}$.

Finally, suppose that the condition~\eqref{item:combinatNonArch2} of Lemma~\ref{lemma:combinatNonArch} is satisfied. Then there are disk components $U_{1}, U_{2}$ of $\left\lbrace g_{f} < \frac{M_{f}}{d} \right\rbrace$ contained in $V_{1}, V_{2}$, respectively, such that $f$ maps bijectively $U_{1}$ to $V_{2}$ and $U_{2}$ to $V_{1}$. As a result, by Lemma~\ref{lemma:ineqNonArch}, the map $f^{\circ 2}$ has a unique fixed point $z_{0} \in K$ such that $z_{0} \in U_{1}$ and $f\left( z_{0} \right) \in U_{2}$ and we have \[ \log\left\lvert \left( f^{\circ 2} \right)^{\prime}\left( z_{0} \right) \right\rvert \geq (d -1) \left( \frac{1}{d_{1}} +\frac{1}{d_{2}} \right) M_{f} \geq (d -1) \left( \frac{1}{d_{1}} +\frac{1}{d -d_{1}} \right) M_{f} \, \text{.} \] Moreover, $\frac{d -1}{2} \left( \frac{1}{d_{1}} +\frac{1}{d -d_{1}} \right) \geq C_{d}$. Thus, we have $M_{f}^{(2)} \geq C_{d} \cdot M_{f}$. This completes the proof of the theorem.
\end{proof}

\begin{remark}
As in the complex setting, one can strengthen Lemmas~\ref{lemma:islandsNonArch} and~\ref{lemma:combinatNonArch} and deduce that $M_{f}^{(2)} \geq 2 M_{f}$ for all $f \in \Poly_{d}(K)$ such that $M_{f} > 0$.
\end{remark}

\subsection{Sharpness of the bounds}

To end this section, let us show that the bounds in Theorem~\ref{theorem:degenB} are sharp in some sense. We shall first study the non-Archimedean case and then deduce that our bounds are also optimal in the complex setting. To avoid making our discussion too long, we omit the simpler cases where $d \in \lbrace 2, 3 \rbrace$.

We shall use the following result from non-Archimedean dynamics:

\begin{lemma}
\label{lemma:nonRepelling}
Suppose that $U \subseteq V$ are disks and $f \colon U \rightarrow V$ is a polynomial map of degree $e \geq 2$ whose critical points all lie in $\bigcap\limits_{n \geq 0} \left( f^{\circ n} \right)^{-1}(V)$. Also assume that $e$ is less than the residue characteristic of $K$ if the latter is positive. Then we have $\left\lvert \left( f^{\circ p} \right)^{\prime}\left( z_{0} \right) \right\rvert \leq 1$ for each periodic point $z_{0} \in K$ for $f$ with period $p \geq 1$.
\end{lemma}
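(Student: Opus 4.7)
The plan is to show that the filled Julia set $\mathcal{K} = \{z \in K : g_f(z) = 0\}$ of $f$, viewed as a polynomial in $K[z]$, is a closed disk on which $\lvert f'(z)\rvert \leq 1$ at every point. Because every periodic point $z_0 \in K$ has a finite (hence bounded) forward orbit, each $z_i = f^{\circ i}(z_0)$ lies in $\mathcal{K}$, and the chain rule identity $\lvert (f^{\circ p})'(z_0)\rvert = \prod_{i=0}^{p-1} \lvert f'(z_i)\rvert$ will then yield the conclusion directly.

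The first step is to observe that the hypothesis forces $M_f = 0$: every critical point $c$ of $f$ satisfies $f^{\circ n}(c) \in V$ for all $n \geq 0$, and since $V$ is a disk (hence bounded in $K$), the orbit of $c$ is bounded and $g_f(c) = 0$, so $M_f = 0$. Applying Lemma~\ref{lemma:greenDisks1} with $\eta = M_f = 0$ (for $f$ viewed as an element of $\Poly_e(K)$) then shows that $\mathcal{K}$ is a closed disk $\overline{D(w_0, R_0)}$ of positive radius $R_0 = \lvert a_e\rvert^{-1/(e-1)}$, where $a_e$ is the leading coefficient of $f$. Since any preimage of a point with bounded orbit itself has bounded orbit, $f^{-1}(\mathcal{K}) = \mathcal{K}$, so $f$ restricts to a surjection $\mathcal{K} \to \mathcal{K}$ of degree $e$.

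The key calculation then proceeds by applying Lemma~\ref{lemma:image} to $f \colon \overline{D(w_0, R_0)} \to \overline{D(w_0, R_0)}$. Writing $b_j = f^{(j)}(w_0)/j!$ for the Taylor coefficients of $f$ at $w_0$, the fact that this map has image of radius $R_0$ and closed-disk degree $e$ translates into $\lvert b_e\rvert R_0^e = R_0$ together with $\lvert b_j\rvert R_0^j \leq R_0$ for each $j \in \{1, \dots, e\}$. Combining with the ultrametric inequality and $\lvert j\rvert = 1$ for $1 \leq j \leq e$ (using the hypothesis that $e$ is less than the residue characteristic), one obtains, for any $z \in \mathcal{K}$ with $u = z - w_0$ and $\lvert u\rvert \leq R_0$:
\[
\lvert f'(z)\rvert = \Bigl\lvert \sum_{j=1}^{e} j\, b_j\, u^{j-1} \Bigr\rvert \leq \max_{1 \leq j \leq e} \lvert b_j\rvert R_0^{j-1} \leq 1.
\]
The multiplier bound at any periodic point follows at once. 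The one subtlety is the need to apply Lemma~\ref{lemma:greenDisks1} precisely at the boundary value $\eta = M_f = 0$, but this is built into that lemma's statement (the closed-disk range is $[M_f, +\infty)$ and the radius formula is positive at $\eta = 0$), so no extra argument is needed.
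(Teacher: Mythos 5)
Your proof breaks at the very first step, where you claim the hypothesis forces $M_{f} = 0$. The degree $e$ in the lemma is the degree of the \emph{restricted} map $f|_{U} \colon U \rightarrow V$, not the degree of $f$ as a polynomial in $K[z]$; the polynomial $f$ may well have degree $D > e$. Correspondingly, the hypothesis only controls the $e - 1$ critical points of $f$ lying \emph{in} $U$ (see the paper's use of Lemma~\ref{lemma:rhFormula} to count them), and says nothing about the other $D - e$ critical points of the polynomial, which may escape to infinity. In the actual applications of this lemma (Propositions~\ref{proposition:sharp1} and~\ref{proposition:sharp2}), $f$ has polynomial degree $d \geq 4$ and $M_{f} > 0$, and the lemma is applied to a restriction $f_{j} \colon V_{j} \rightarrow \left\lbrace g_{f} < d \cdot M_{f} \right\rbrace$ of degree $d_{j} < d$. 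There the filled Julia set of the polynomial $f$ is \emph{disconnected}, not a closed disk, and your identification of $\mathcal{K}$ with $\overline{D(w_{0}, R_{0})}$ via Lemma~\ref{lemma:greenDisks1} at $\eta = M_{f} = 0$ is simply not available. The phrase ``for $f$ viewed as an element of $\Poly_{e}(K)$'' has no content when $\deg f \neq e$.

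The final Taylor-coefficient computation is itself sound: given a degree-$e$ polynomial mapping a closed disk $\overline{D(w_{0}, R_{0})}$ onto itself with closed-disk degree $e$, Lemma~\ref{lemma:image} together with $\lvert j \rvert = 1$ for $j \leq e$ does yield $\lvert f^{\prime}(z) \rvert \leq 1$ on that disk. The obstruction is that you cannot manufacture such a global disk from the hypotheses. The paper's proof avoids this entirely by working inside $U$: it shows by induction, using Lemma~\ref{lemma:rhFormula} and the hypothesis on critical points, that each $(f^{\circ n})^{-1}(V)$ (preimage taken within the restriction) is a single disk and that $f$ maps $(f^{\circ (n+1)})^{-1}(V)$ onto $(f^{\circ n})^{-1}(V)$ with degree exactly $e$; the ratios of radii $\frac{r_{n}}{r_{n+1}}$ then satisfy $\frac{r_{n-1}}{r_{n}} = \left( \frac{r_{n}}{r_{n+1}} \right)^{e}$ by Lemma~\ref{lemma:radii}, so $\frac{r_{n}}{r_{n+p}} \rightarrow 1$ and, by Lemma~\ref{lemma:image}, $\left\lvert \left( f^{\circ p} \right)^{\prime}\left( z_{0} \right) \right\rvert \leq \frac{r_{n}}{r_{n+p}}$ for every $n$. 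That argument never leaves the nested disks and never touches the global Green function, which is what makes it correct when $M_{f} > 0$. If you want to keep your Taylor-coefficient calculation, the natural repair is to run it at the level of these nested disks rather than the global filled Julia set, but as written your argument does not apply in the cases the lemma is actually used for.
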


\begin{proof}
First, note that $f$ has exactly $e -1$ critical points in $U$, counting multiplicities, by Lemma~\ref{lemma:rhFormula}. Now, assume that $\left( f^{\circ n} \right)^{-1}(V)$ is a disk for some $n \geq 0$. Then, by Lemma~\ref{lemma:preimage}, $\left( f^{\circ (n +1)} \right)^{-1}(V)$ is the union of finitely many pairwise disjoint disks $W_{1}, \dotsc, W_{N}$, with $N \geq 1$, the induced map $f \colon W_{j} \rightarrow \left( f^{\circ n} \right)^{-1}(V)$ has some degree $e_{j} \geq 1$ for each $j \in \lbrace 1, \dotsc, N \rbrace$, and we have $e = \sum\limits_{j = 1}^{N} e_{j}$. For $j \in \lbrace 1, \dotsc, N \rbrace$, denote by $C_{j} \geq 0$ the number of critical points for $f$ in $W_{j}$, counting multiplicities. Then $\sum\limits_{j = 1}^{N} C_{j} = e -1$ by assumption. In addition, $e_{j} = C_{j} +1$ for each $j \in \lbrace 1, \dotsc, N \rbrace$ by Lemma~\ref{lemma:rhFormula}. As a result, we have $e = \sum\limits_{j = 1}^{N} \left( C_{j} +1 \right) = e -1 +N$, which yields $N = 1$ and $e_{1} = e$. Thus, $\left( \left( f^{\circ n} \right)^{-1}(V) \right)_{n \geq 0}$ is a decreasing sequence of disks and the map $f \colon \left( f^{\circ (n +1)} \right)^{-1}(V) \rightarrow \left( f^{\circ n} \right)^{-1}(V)$ has degree $e$ for each $n \geq 0$. For $n \geq 0$, denote by $r_{n} \in \mathbb{R}_{> 0}$ the radius of $\left( f^{\circ n} \right)^{-1}(V)$. We have $\frac{r_{n -1}}{r_{n}} = \left( \frac{r_{n}}{r_{n +1}} \right)^{e}$ for each $n \geq 1$ by Lemma~\ref{lemma:radii}. Now, suppose that $z_{0} \in K$ is a periodic point for $f$ with period $p \geq 1$. As $z_{0} \in \bigcap\limits_{n \geq 0} \left( f^{\circ n} \right)^{-1}(V)$, we have $\left\lvert \left( f^{\circ p} \right)^{\prime}\left( z_{0} \right) \right\rvert \leq \frac{r_{n}}{r_{n +p}}$ for each $n \geq 0$ by Lemma~\ref{lemma:image}. Thus, we have $\left\lvert \left( f^{\circ p} \right)^{\prime}\left( z_{0} \right) \right\rvert \leq \left( \frac{r_{0}}{r_{p}} \right)^{\frac{1}{e^{n}}}$ for each $n \geq 0$, which yields $\left\lvert \left( f^{\circ p} \right)^{\prime}\left( z_{0} \right) \right\rvert \leq 1$ by letting $n \rightarrow +\infty$. This completes the proof of the lemma.
\end{proof}

Exhibiting explicit examples, we obtain the two results below, which show that the bounds in Theorem~\ref{theorem:degenB} are optimal in the non-Archimedean case.

\begin{proposition}
\label{proposition:sharp1}
Assume here that $d \geq 4$. Then, for every $R \in \left\lvert K^{*} \right\rvert$, there exists $f \in \Poly_{d}(K)$ such that \[ M_{f} = \log^{+}(R) \, \text{,} \quad M_{f}^{(1)} = 0 \quad \text{and} \quad M_{f}^{(2)} = C_{d} \cdot M_{f} \, \text{,} \] where $C_{d} \in \mathbb{R}_{> 0}$ is defined in Theorem~\ref{theorem:degenB}.
\end{proposition}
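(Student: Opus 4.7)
When $R \leq 1$ the polynomial $f(z) = z^{d}$ does the job: every periodic multiplier is a power of $d$, hence of absolute value $1$ under our residue-characteristic hypothesis, so $M_{f} = M_{f}^{(1)} = M_{f}^{(2)} = 0$. For $R > 1$, fix $\pi \in K$ with $\lvert \pi \rvert = R$ and let $(d_{1}, d_{2})$ be the partition of $d$ into two as-equal-as-possible parts, i.e.\ $d_{1} = d_{2} = d/2$ if $d$ is even and $\{d_{1}, d_{2}\} = \{(d-1)/2, (d+1)/2\}$ if $d$ is odd, so that $\tfrac{d-1}{2}\bigl(\tfrac{1}{d_{1}} + \tfrac{1}{d_{2}}\bigr) = C_{d}$. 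The plan is to take $f$ to be the unique monic polynomial of degree $d$ whose derivative factors as $f'(z) = d(z - \pi)^{d_{1} - 1}(z + \pi)^{d_{2} - 1}(z - c_{0})$ and that satisfies $f(\pi) = \pi$ and $f(-\pi) = -\pi$. Integrating $f'$ and imposing these two conditions leads to a $2 \times 2$ linear system in $c_{0}$ and the integration constant whose determinant is $d(2\pi)^{d-1}$ times a Beta-type rational with numerator $\pm (d_{1}-1)!(d_{2}-1)!$ and denominator $(d-1)!$; the residue-characteristic hypothesis makes this rational a unit in $K$, so the system is uniquely solvable and an elementary estimate yields $\lvert c_{0} \rvert \leq R$ together with $\lvert c_{0} \pm \pi \rvert = R$, whence $c_{0} \notin D(\pi, R) \cup D(-\pi, R)$.

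Conjugating $f$ by $\phi(z) = d^{1/(d-1)}(z - \pi)$ puts $\phi \centerdot f$ in the normal form $f_{\boldsymbol{c}}$ of this section with critical multiset of norm $\lVert \boldsymbol{c} \rVert = R$, so Claim~\ref{claim:maxEscape} gives $M_{f} = \log R$. Maximality of $M_{f}$ then forces $g_{f}(c_{0}) = M_{f}$, and Lemmas~\ref{lemma:greenDisks1} and~\ref{lemma:greenDisks2} identify the two disk components of $\{g_{f} < M_{f}\}$ as $V_{1} = D(\pi, R)$ and $V_{2} = D(-\pi, R)$, with $f|_{V_{j}}$ of degree $d_{j}$. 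The multipliers at the super-attracting fixed points $\pi$ and $-\pi$ vanish by construction, so $M_{f}^{(1)} = 0$ reduces to showing that every other fixed point has multiplier of absolute value at most $1$. For a fixed point $q = \pi + w \neq \pi$ in $V_{1}$, the Taylor expansion $f(\pi + w) = \pi + a_{d_{1}} w^{d_{1}} + O(w^{d_{1} + 1})$ with $\lvert a_{d_{1}} \rvert = R^{d_{2}}$ and the equation $f(q) = q$ force $\lvert w \rvert = R^{-d_{2}/(d_{1} - 1)}$, and then
\[ \lvert f'(q) \rvert = \lvert d \rvert \cdot \lvert w \rvert^{d_{1} - 1} \cdot \lvert q + \pi \rvert^{d_{2} - 1} \cdot \lvert q - c_{0} \rvert = 1 \cdot R^{-d_{2}} \cdot R^{d_{2} - 1} \cdot R = 1, \]
using $\lvert q - c_{0} \rvert = R$ (which follows from $\lvert c_{0} - \pi \rvert = R$ and $\lvert w \rvert \ll R$). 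The symmetric calculation in $V_{2}$ completes the proof that $M_{f}^{(1)} = 0$.

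For $M_{f}^{(2)} = C_{d} \cdot M_{f}$, note that condition~(2) of Lemma~\ref{lemma:combinatNonArch} applies to $V_{1}, V_{2}$, and Lemma~\ref{lemma:ineqNonArch} (with $\eta = M_{f}$, $k = 1$, $p = 2$) then produces a $2$-cycle $\{z_{1}, z_{2}\}$ with $z_{1} = \pi + w_{1} \in V_{1}$ and $z_{2} = -\pi + w_{2} \in V_{2}$. The relation $f(z_{1}) = z_{2}$ combined with the Taylor expansion around $\pi$ forces $w_{2} = 2\pi + a_{d_{1}} w_{1}^{d_{1}} + (\text{smaller})$, and ultrametric balance between $\lvert 2\pi \rvert = R$ and $\lvert a_{d_{1}} w_{1}^{d_{1}} \rvert = R^{d_{2}} \lvert w_{1} \rvert^{d_{1}}$ yields $\lvert w_{1} \rvert = R^{(1 - d_{2})/d_{1}}$ and, symmetrically, $\lvert w_{2} \rvert = R^{(1 - d_{1})/d_{2}}$. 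The formula for $\lvert f' \rvert$ from the previous paragraph now gives $\lvert f'(z_{j}) \rvert = R^{(d-1)/d_{j}}$, so $\lvert (f^{\circ 2})'(z_{1}) \rvert = R^{(d-1)(1/d_{1} + 1/d_{2})} = R^{2 C_{d}}$ and $M_{f}^{(2)} \geq C_{d} \cdot M_{f}$. For the matching upper bound, the filled Julia set of $f$ is contained in $V_{1} \cup V_{2}$, so every other $2$-cycle has both of its points in a single $V_{j}$; a Taylor analysis analogous to the fixed-point case shows that such a cycle has multiplier of absolute value exactly $1$, contributing $0$ to $M_{f}^{(2)}$. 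The main obstacle is the careful ultrametric bookkeeping in the definition of $f$: one must show that the combinatorial denominators appearing in the system for $c_{0}$ and in the leading Taylor coefficient $a_{d_{1}}$ are all units of $K$ (which uses the residue-characteristic hypothesis) and rule out unexpected cancellation among the dominant terms.
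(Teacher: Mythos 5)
Your construction is the same as the paper's, just with the two super-attracting fixed points placed at $\pm\pi$ rather than at $0$ and $\gamma$: two critical points of multiplicities $d_1-1$ and $d_2-1$ sitting at super-attracting fixed points, one additional free critical point, and $\{g_f < M_f\}$ splitting into the two disks around those fixed points. The differences are in how the bookkeeping is discharged, and one of them leaves a real gap.

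First, the paper writes down an explicit closed-form expression for $f = f_{\boldsymbol{c}}$ and hence for the free critical point $\omega$, which makes $\lvert\omega\rvert = \lvert\omega - \gamma\rvert = R$ visible at a glance (the dominant term of $\omega$ is $\tfrac{d_0}{d}\gamma$, a unit times $\gamma$). You instead integrate $f'$ and solve a $2\times 2$ linear system for $c_0$ and the integration constant. Your computation of the determinant (unit times $d(2\pi)^{d-1} B(d_1,d_2)$, up to normalization) is right, so uniqueness is fine, but the crucial bound $\lvert c_0\rvert \leq R$ is asserted as an ``elementary estimate'' and never shown. That inequality is exactly what forces $\lVert\boldsymbol{c}\rVert = R$ after conjugation, hence $M_f = \log R$ by Claim~\ref{claim:maxEscape}, hence the disk-component picture and $\lvert c_0 \pm \pi\rvert = R$ that the rest of your argument relies on. Until you carry out that estimate (or just write down $c_0$ explicitly as the paper does for $\omega$), the proof is not complete.

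Second, once the disk picture is in place, you obtain the multiplier norms by direct ultrametric Taylor expansion at $\pm\pi$; I checked these and they are correct, including the crucial $(1-d_2)(d_1-1)/d_1 + d_2 = (d-1)/d_1$ simplification. The paper instead invokes Lemma~\ref{lemma:ineqNonArch} (verifying the degrees on the closed and open disk components coincide, so the two-sided bound collapses to an equality) for the swapping $2$-cycle, and Lemma~\ref{lemma:nonRepelling} for cycles entirely contained in one $V_j$. Your route is more hands-on and effectively reproves small cases of those lemmas; it works, but it requires you to rule out ``unexpected cancellation'' in the higher Taylor coefficients (which you flag but do not address), whereas the paper's lemmas package exactly that argument once and for all. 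In short: same strategy, a legitimate alternative computation of the multiplier norms, and one genuine gap (the norm of the free critical point).
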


\begin{proof}
Observe that, if $R \in (0, 1]$, then $f(z) = z^{d}$ satisfies the required conditions. Now, assume that $R \in \left\lvert K^{*} \right\rvert \cap (1, +\infty)$. Choose $\gamma \in K^{*}$ such that $\lvert \gamma \rvert = R$. Set \[ \left( d_{0}, d_{1} \right) = \begin{cases} \left( \frac{d}{2}, \frac{d}{2} \right) & \text{if } d \text{ is even}\\ \left( \frac{d -1}{2}, \frac{d +1}{2} \right) & \text{if } d \text{ is odd} \end{cases} \, \text{,} \quad \text{so that} \quad C_{d} = \frac{d -1}{2} \left( \frac{1}{d_{0}} +\frac{1}{d_{1}} \right) \, \text{.} \] Define \[ f(z) = f_{\boldsymbol{c}}(z) = \sum_{j = 0}^{d_{1} -1} b_{j} z^{d_{0} +j} (z -\gamma)^{d_{1} -1 -j} \left( d_{0} z -\left( d_{0} +1 +j \right) \omega \right) \, \text{,} \] with $b_{j} = \frac{(-1)^{j} \left( d_{0} -1 \right)! \left( d_{1} -1 \right)!}{\left( d_{0} +1 +j \right)! \left( d_{1} -1 -j \right)!}$ for all $j \in \left\lbrace 0, \dotsc, d_{1} -1 \right\rbrace$, where \[ \omega = \frac{d_{0}}{d} \gamma +\frac{(-1)^{d_{1}} (d -1)!}{\left( d_{0} -1 \right)! \left( d_{1} -1 \right)!} \gamma^{2 -d} \quad \text{and} \quad \boldsymbol{c} = \left( \underbrace{0, \dotsc, 0}_{d_{0} -1 \text{ entries}}, \underbrace{\gamma, \dotsc, \gamma}_{d_{1} -1 \text{ entries}}, \omega \right) \, \text{.} \] Thus, we have \[ f(0) = 0 \, \text{,} \quad f(\gamma) = \gamma \quad \text{and} \quad f^{\prime}(z) = z^{d_{0} -1} (z -\gamma)^{d_{1} -1} (z -\omega) \, \text{.} \] Note that $\lvert \omega \rvert = R$. As a result, we have $M_{f} = \log(R) > 0$ by Claim~\ref{claim:maxEscape}. Moreover, we have $g_{f}(0) = 0$ and $g_{f}(\gamma) = 0$, which yields $g_{f}(\omega) = M_{f}$. Therefore, $\left\lbrace g_{f} < M_{f} \right\rbrace$ is the union of two disjoint open disks $V_{0}, V_{1}$ of radius $R$ by Lemma~\ref{lemma:greenDisks2}. Without loss of generality, we can assume that $0 \in V_{0}$ and $\gamma \in V_{1}$. Thus, by Lemmas~\ref{lemma:preimage}, \ref{lemma:rhFormula} and~\ref{lemma:greenDisks1}, the maps $f_{0} \colon V_{0} \rightarrow \left\lbrace g_{f} < d \cdot M_{f} \right\rbrace$ and $f_{1} \colon V_{1} \rightarrow \left\lbrace g_{f} < d \cdot M_{f} \right\rbrace$ induced by $f$ have degrees $d_{0}$ and $d_{1}$, respectively, since $0$ and $\gamma$ are critical points for $f$ with multiplicities $d_{0} -1$ and $d_{1} -1$, respectively. Now, by Lemmas~\ref{lemma:preimage} and~\ref{lemma:rhFormula}, $f_{0}^{-1}\left( V_{1} \right)$ is the union of $d_{0}$ pairwise distinct disk components $U_{0}^{(1)}, \dotsc, U_{0}^{\left( d_{0} \right)}$ of $\left\lbrace g_{f} < \frac{M_{f}}{d} \right\rbrace$ and $f$ maps bijectively $U_{0}^{(k)}$ to $V_{1}$ for all $k \in \left\lbrace 1, \dotsc, d_{0} \right\rbrace$. Similarly, $f_{1}^{-1}\left( V_{0} \right)$ is the union of $d_{1}$ pairwise distinct disk components $U_{1}^{(1)}, \dotsc, U_{1}^{\left( d_{1} \right)}$ of $\left\lbrace g_{f} < \frac{M_{f}}{d} \right\rbrace$ and $f$ maps bijectively $U_{1}^{(\ell)}$ to $V_{0}$ for all $\ell \in \left\lbrace 1, \dotsc, d_{1} \right\rbrace$.

Now, suppose that $k \in \left\lbrace 1, \dotsc, d_{0} \right\rbrace$ and $\ell \in \left\lbrace 1, \dotsc, d_{1} \right\rbrace$. Then there is a unique periodic point $z_{0} \in K$ for $f$ with period $2$ such that $z_{0} \in U_{0}^{(k)}$ and $f\left( z_{0} \right) \in U_{1}^{(\ell)}$ by Lemma~\ref{lemma:ineqNonArch}. Now, define $\overline{U}_{0} = f_{0}^{-1}\left( \left\lbrace g_{f} \leq M_{f} \right\rbrace \right)$ and $\overline{U}_{1} = f_{1}^{-1}\left( \left\lbrace g_{f} \leq M_{f} \right\rbrace \right)$. Then $\overline{U}_{0}$ and $\overline{U}_{1}$ are disks and the maps $f \colon \overline{U}_{0} \rightarrow \left\lbrace g_{f} \leq M_{f} \right\rbrace$ and $f \colon \overline{U}_{1} \rightarrow \left\lbrace g_{f} \leq M_{f} \right\rbrace$ have degrees $d_{0}$ and $d_{1}$, respectively, by Lemmas~\ref{lemma:preimage}, \ref{lemma:rhFormula} and~\ref{lemma:greenDisks1}. Thus, $\overline{U}_{0}$ and $\overline{U}_{1}$ are also the disk components of $\left\lbrace g_{f} \leq \frac{M_{f}}{d} \right\rbrace$ containing $z_{0}$ and $f\left( z_{0} \right)$, respectively. Therefore, by Lemma~\ref{lemma:ineqNonArch}, we have \[ \frac{1}{2} \log\left\lvert \left( f^{\circ 2} \right)^{\prime}\left( z_{0} \right) \right\rvert = \frac{d -1}{2} \left( \frac{1}{d_{0}} +\frac{1}{d_{1}} \right) M_{f} = C_{d} \cdot M_{f} \, \text{.} \]

Let us conclude the proof of the proposition. If $z_{0} \in K$ is any fixed point for $f$, then the point $z_{0}$ is also fixed for $f_{j} \colon V_{j} \rightarrow \left\lbrace g_{f} < d \cdot M_{f} \right\rbrace$ for some $j \in \lbrace 0, 1 \rbrace$, and hence $\log\left\lvert f^{\prime}\left( z_{0} \right) \right\rvert \leq 0$ by Lemma~\ref{lemma:nonRepelling}. In addition, we always have $M_{f}^{(1)} \geq 0$ by the holomorphic fixed-point formula, as shown in the proof of Theorem~\ref{theorem:degenB}. This shows that $M_{f}^{(1)} = 0$. Now, suppose that $z_{0} \in K$ is a periodic point for $f$ with period $2$. If $z_{0}$ and $f\left( z_{0} \right)$ both lie in $V_{j}$ for some $j \in \lbrace 0, 1 \rbrace$, then the point $z_{0}$ is also periodic for $f_{j}$, and therefore $\frac{1}{2} \log\left\lvert \left( f^{\circ 2} \right)^{\prime}\left( z_{0} \right) \right\rvert \leq 0$ by Lemma~\ref{lemma:nonRepelling}. Otherwise, replacing $z_{0}$ by $f\left( z_{0} \right)$ if necessary, we have $z_{0} \in U_{0}^{(k)}$ and $f\left( z_{0} \right) \in U_{1}^{(\ell)}$ for some $k \in \left\lbrace 1, \dotsc, d_{0} \right\rbrace$ and some $\ell \in \left\lbrace 1, \dotsc, d_{1} \right\rbrace$, and hence $\frac{1}{2} \log\left\lvert \left( f^{\circ 2} \right)^{\prime}\left( z_{0} \right) \right\rvert = C_{d} \cdot M_{f}$ by the discussion above. Moreover, we also proved that the latter case occurs for some choices of $z_{0}$. Thus, we have $M_{f}^{(2)} = C_{d} \cdot M_{f}$. This completes the proof of the proposition.
\end{proof}

\begin{proposition}
\label{proposition:sharp2}
Assume here that $d \geq 4$. Then, for every $R \in \left\lvert K^{*} \right\rvert$, there exists $f \in \Poly_{d}(K)$ such that \[ M_{f} = \log^{+}(R) \, \text{,} \quad M_{f}^{(1)} = \frac{d -1}{d -2} M_{f} \quad \text{and} \quad M_{f}^{(2)} = \frac{d -1}{d -2} M_{f} \, \text{.} \]
\end{proposition}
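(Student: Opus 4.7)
As in the proof of Proposition~\ref{proposition:sharp1}, for $R \in (0, 1]$ the polynomial $f(z) = z^{d}$ satisfies all required equalities with every quantity equal to $0$. So assume $R \in \lvert K^{*} \rvert \cap (1, +\infty)$ and fix $\gamma \in K^{*}$ with $\lvert \gamma \rvert = R$.

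The plan is to mirror the construction of Proposition~\ref{proposition:sharp1} but to realize condition~\eqref{item:combinatNonArch1} of Lemma~\ref{lemma:combinatNonArch} with $\left( d_{0}, d_{1} \right) = (d -2, 2)$, rather than condition~\eqref{item:combinatNonArch2}. Take $f = f_{\boldsymbol{c}}$ with $\boldsymbol{c} = (\alpha, \dotsc, \alpha, \gamma, \omega) \in K^{d -1}$ consisting of $d -3$ copies of a single $\alpha$ followed by $\gamma$ and $\omega$, and tune the coefficients (in the spirit of the explicit formula used in Proposition~\ref{proposition:sharp1}) so that $f(0) = 0$, $f(\gamma) = \gamma$, and $\lvert \omega \rvert = R$. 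Claim~\ref{claim:maxEscape} then gives $M_{f} = \log R$, and Lemmas~\ref{lemma:greenDisks1} and~\ref{lemma:greenDisks2} split $\lbrace g_{f} < M_{f} \rbrace$ into two disk components $V_{0} = D(0, R)$ and $V_{1} = D(\gamma, R)$ carrying $f$\nobreakdash-degrees $d -2$ and $2$ onto $\lbrace g_{f} < d \cdot M_{f} \rbrace$.

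The key geometric requirement is that the whole critical multiplicity $d -3$ of $f$ inside $V_{0}$, which is concentrated at $\alpha$, lies in a single closed disk component $\overline{U}$ of $\lbrace g_{f} \leq M_{f}/d \rbrace$ that also houses the $d -2$ bijective preimages of $V_{0}$ furnished by condition~\eqref{item:combinatNonArch1} of Lemma~\ref{lemma:combinatNonArch}. Under this requirement, the degrees $d_{0}$ and $e_{0}$ appearing in Lemma~\ref{lemma:ineqNonArch} both equal $d -2$, and its two-sided bounds collapse to the exact value $\log \lvert f'(z_{0}) \rvert = \frac{d -1}{d -2} M_{f}$ at every fixed point $z_{0}$ inside such a bijective preimage. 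Applying Lemma~\ref{lemma:ineqNonArch} with $p = 2$ on pairs of distinct bijective preimages of $V_{0}$ in $V_{0}$ analogously yields period-$2$ cycles with $\frac{1}{2} \log \lvert (f^{\circ 2})'(z_{0}) \rvert = \frac{d -1}{d -2} M_{f}$, while every other fixed point or period-$2$ cycle of $f$ (namely those inside $V_{1}$ or inside the degree-$(d -2)$ disk component of $\lbrace g_{f} < M_{f}/d \rbrace$ mapping into $V_{1}$) is forced by Lemma~\ref{lemma:nonRepelling} to have multiplier of absolute value at most $1$. Combining these bounds gives $M_{f}^{(1)} = M_{f}^{(2)} = \frac{d -1}{d -2} M_{f}$.

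The main obstacle will be the explicit algebraic construction: producing coefficients of $f_{\boldsymbol{c}}$ that simultaneously enforce $f(0) = 0$, $f(\gamma) = \gamma$, $\lvert \omega \rvert = R$, and the critical configuration required to realize both the geometric equality $d_{0} = e_{0}$ in Lemma~\ref{lemma:ineqNonArch} and the orbit containment needed for Lemma~\ref{lemma:nonRepelling}. This is the non-Archimedean analogue of the careful choice of constants $b_{j}$ and $\omega$ in the proof of Proposition~\ref{proposition:sharp1} and should be accessible by an analogous computation.
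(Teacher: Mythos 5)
Your plan is in the same spirit as the paper's argument --- exhibit an $f_{\boldsymbol{c}}$ whose sublevel set $\{g_f < M_f\}$ splits into two disk components of degrees $\{2, d-2\}$, realize condition (1) of Lemma~\ref{lemma:combinatNonArch}, and then read off the multipliers via Lemma~\ref{lemma:ineqNonArch} --- but you swap the roles of the two disks (the paper puts the critical multiplicity $d-3$ at a point $\gamma$ with $f(\gamma) = 0$, so the $d-2$ bijective preimages live inside $V_1$, whereas you want them inside $V_0$ with $\gamma$ a superattracting fixed point in $V_1$). The swap is not itself a problem; what is missing is the hardest part of the proof, namely the period-$2$ cycles that alternate between $V_0$ and $V_1$.

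Those cycles are \emph{not} ``inside the degree-$(d-2)$ disk component of $\{g_f < M_f/d\}$ mapping into $V_1$'': a point of such a cycle in that disk has image in $V_1$, so the cycle is never contained in any single disk, and Lemma~\ref{lemma:nonRepelling} does not apply to it at the level of $f$. You could try to apply Lemma~\ref{lemma:nonRepelling} to $f^{\circ 2}$ restricted to that disk, but the degree there is $(d-2)\cdot 2 = 2(d-2)$, and the standing hypothesis guarantees only that the residue characteristic exceeds $d$, not $2(d-2)$; for example $d = 6$ with residue characteristic $7$ (or $d = 8$ with $11$, $d = 9$ with $11$ or $13$, etc.) is allowed and already breaks that application. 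The paper avoids this entirely: it estimates the alternating cycles by applying Lemma~\ref{lemma:ineqNonArch} at level $k = 2$ after a careful count of degrees on the disk components of $\{g_f < M_f/d^2\}$ and $\{g_f \leq M_f/d^2\}$, obtaining the exact value $\frac{3(d-1)}{4(d-2)}\,M_f < \frac{d-1}{d-2}\,M_f$. Your write-up gives no such analysis, and without it there is no bound at all on those multipliers --- in principle they could dominate $M_f^{(2)}$. Combined with the fact that you leave the explicit choice of $\boldsymbol{c}$ (in particular the placement of $\alpha$ and the consistency of $f(0)=0$, $f(\gamma)=\gamma$, $\lvert\omega\rvert = R$ with $g_f(\alpha) \leq M_f/d$) unverified, the argument as stated has a genuine gap. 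To repair it you would either need to carry out the level-$k=2$ degree bookkeeping for your configuration as the paper does, or simply adopt the paper's polynomial $f(z) = \frac{1}{d}z^2(z-\gamma)^{d-2}$, for which everything is explicit.
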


\begin{proof}
Observe that, if $R \in (0, 1]$, then $f(z) = z^{d}$ satisfies the required conditions. Now, assume that $R \in \left\lvert K^{*} \right\rvert \cap (1, +\infty)$. Choose $\gamma \in K^{*}$ such that $\lvert \gamma \rvert = R$. Define \[ f(z) = f_{\boldsymbol{c}}(z) = \frac{1}{d} z^{2} (z -\gamma)^{d -2} \, \text{,} \quad \text{with} \quad \boldsymbol{c} = \left( 0, \underbrace{\gamma, \dotsc, \gamma}_{d -3 \text{ entries}}, \frac{2}{d} \gamma \right) \in K^{d -1} \, \text{.} \] Thus, we have \[ f(0) = 0 \, \text{,} \quad f(\gamma) = 0 \quad \text{and} \quad f^{\prime}(z) = z (z -\gamma)^{d -3} \left( z -\frac{2}{d} \gamma \right) \, \text{.} \] Note that $M_{f} = \log(R) > 0$ by Claim~\ref{claim:maxEscape}. Moreover, $g_{f}(0) = 0$ and $g_{f}(\gamma) = 0$, and hence $g_{f}\left( \frac{2}{d} \gamma \right) = M_{f}$. Therefore, $\left\lbrace g_{f} < M_{f} \right\rbrace$ is the union of two disjoint open disks $V_{0}, V_{1}$ of radius $R$ by Lemma~\ref{lemma:greenDisks2}. Without loss of generality, we may assume that $0 \in V_{0}$ and $\gamma \in V_{1}$. Since $0$ and $\gamma$ are critical points for $f$ with multiplicities $1$ and $d -3$, respectively, the maps $f_{0} \colon V_{0} \rightarrow \left\lbrace g_{f} < d \cdot M_{f} \right\rbrace$ and $f_{1} \colon V_{1} \rightarrow \left\lbrace g_{f} < d \cdot M_{f} \right\rbrace$ induced by $f$ have degrees $2$ and $d -2$, respectively, by Lemmas~\ref{lemma:preimage}, \ref{lemma:rhFormula} and~\ref{lemma:greenDisks1}. By similar arguments, the sets $\overline{U}_{0} = f_{0}^{-1}\left( \left\lbrace g_{f} \leq M_{f} \right\rbrace \right)$ and $\overline{U}_{1} = f_{1}^{-1}\left( \left\lbrace g_{f} \leq M_{f} \right\rbrace \right)$ are all the disk components of $\left\lbrace g_{f} \leq \frac{M_{f}}{d} \right\rbrace$ and the induced maps $f \colon \overline{U}_{0} \rightarrow \left\lbrace g_{f} \leq M_{f} \right\rbrace$ and $f \colon \overline{U}_{1} \rightarrow \left\lbrace g_{f} \leq M_{f} \right\rbrace$ have degrees $2$ and $d -2$, respectively. The set $f_{1}^{-1}\left( V_{1} \right)$ is the union of $d -2$ distinct disk components $U_{1}^{(1)}, \dotsc, U_{1}^{(d -2)}$ of $\left\lbrace g_{f} < \frac{M_{f}}{d} \right\rbrace$ and $f$ maps bijectively $U_{1}^{(k)}$ onto $V_{1}$ for each $k \in \lbrace 1, \dotsc, d -2 \rbrace$ by Lemmas~\ref{lemma:preimage} and~\ref{lemma:rhFormula}. Similarly, the set $f_{0}^{-1}\left( V_{1} \right)$ is the union of two distinct disk components $W_{0}^{(1)}, W_{0}^{(2)}$ of $\left\lbrace g_{f} < \frac{M_{f}}{d} \right\rbrace$ and $f$ maps bijectively $W_{0}^{(k)}$ onto $V_{1}$ for each $k \in \lbrace 1, 2 \rbrace$. Moreover, the set $W_{1} = f_{1}^{-1}\left( V_{0} \right)$ is a disk component of $\left\lbrace g_{f} < \frac{M_{f}}{d} \right\rbrace$ and $f$ maps $W_{1}$ onto $V_{0}$ with degree $d -2$.

Suppose that $k \in \lbrace 1, \dotsc, d -2 \rbrace$. Then, by the discussion above and Lemma~\ref{lemma:ineqNonArch}, there is a unique fixed point $z_{0} \in U_{1}^{(k)}$ for $f$ and we have $\log\left\lvert f^{\prime}\left( z_{0} \right) \right\rvert = \frac{d -1}{d -2} M_{f}$.

Now, suppose that $k, \ell \in \lbrace 1, \dotsc, d -2 \rbrace$ are distinct. Then, by Lemma~\ref{lemma:ineqNonArch}, there exists a unique periodic point $z_{0} \in K$ for $f$ with period $2$ such that $z_{0} \in U_{1}^{(k)}$ and $f\left( z_{0} \right) \in U_{1}^{(\ell)}$ and we have $\frac{1}{2} \log\left\lvert \left( f^{\circ 2} \right)^{\prime}\left( z_{0} \right) \right\rvert = \frac{d -1}{d -2} M_{f}$.

Finally, suppose that $z_{0} \in K$ is any periodic point for $f$ with period $2$ such that $z_{0} \in V_{0}$ and $f\left( z_{0} \right) \in V_{1}$. Note that $z_{0} \in W_{0}^{(k)}$ and $f\left( z_{0} \right) \in W_{1}$ for some $k \in \lbrace 1, 2 \rbrace$. Now, denote by $D_{0}$ and $D_{1}$ the disk components of $\left\lbrace g_{f} < \frac{M_{f}}{d^{2}} \right\rbrace$ containing $z_{0}$ and $f\left( z_{0} \right)$, respectively. Then $f$ induces bijections from $D_{0}$ onto $W_{1}$ and from $D_{1}$ onto $W_{0}^{(k)}$ by Lemmas~\ref{lemma:preimage} and~\ref{lemma:rhFormula}. Therefore, $z_{0}$ is the unique periodic point for $f$ with period $2$ such that $z_{0} \in D_{0}$ and $f\left( z_{0} \right) \in D_{1}$ by Lemma~\ref{lemma:ineqNonArch}. First, observe that the maps $f^{\circ 2} \colon W_{0}^{(k)} \rightarrow \left\lbrace g_{f} < d \cdot M_{f} \right\rbrace$ and $f^{\circ 2} \colon W_{1} \rightarrow \left\lbrace g_{f} < d \cdot M_{f} \right\rbrace$ have degrees $d -2$ and $2 (d -2)$, respectively. Next, denote here by $\overline{D}_{0}$ and $\overline{D}_{1}$ the disk components of $\left\lbrace g_{f} \leq \frac{M_{f}}{d^{2}} \right\rbrace$ containing $z_{0}$ and $f\left( z_{0} \right)$, respectively. Then, by Lemmas~\ref{lemma:preimage} and~\ref{lemma:rhFormula}, $f$ maps bijectively $\overline{D}_{0}$ onto $\overline{U}_{1}$, we have $\overline{D}_{1} = f_{1}^{-1}\left( \overline{U}_{0} \right)$ and the map $f \colon \overline{D}_{1} \rightarrow \overline{U}_{0}$ has degree $d -2$. It follows that $f^{\circ 2} \colon \overline{D}_{0} \rightarrow \left\lbrace g_{f} \leq M_{f} \right\rbrace$ and $f^{\circ 2} \colon \overline{D}_{1} \rightarrow \left\lbrace g_{f} \leq M_{f} \right\rbrace$ have degrees $d -2$ and $2 (d -2)$, respectively. Therefore, by Lemma~\ref{lemma:ineqNonArch}, we have \[ \frac{1}{2} \log\left\lvert \left( f^{\circ 2} \right)^{\prime}\left( z_{0} \right) \right\rvert = \frac{d -1}{2} \left( \frac{1}{d -2} +\frac{1}{2 (d -2)} \right) M_{f} = \frac{3 (d -1)}{4 (d -2)} M_{f} \, \text{.} \]

Let us conclude the proof of the proposition. Suppose that $z_{0} \in K$ is any fixed point for $f$. If $z_{0} \in V_{0}$, then the point $z_{0}$ is also fixed for $f_{0} \colon V_{0} \rightarrow \left\lbrace g_{f} < d \cdot M_{f} \right\rbrace$, which yields $\log\left\lvert f^{\prime}\left( z_{0} \right) \right\rvert \leq 0$ by Lemma~\ref{lemma:nonRepelling}. Otherwise, we have $z_{0} \in U_{1}^{(k)}$ for some $k \in \lbrace 1, \dotsc, d -2 \rbrace$, and hence $\log\left\lvert f^{\prime}\left( z_{0} \right) \right\rvert = \frac{d -1}{d -2} M_{f}$ by the previous discussion. In addition, we also proved that the latter case occurs for certain choices of $z_{0}$. This shows that $M_{f}^{(1)} = \frac{d -1}{d -2} M_{f}$. Now, suppose that $z_{0} \in K$ is any periodic point for $f$ with period $2$. If $z_{0}$ and $f\left( z_{0} \right)$ both lie in $V_{0}$, then the point $z_{0}$ is also periodic for $f_{0}$, and hence $\frac{1}{2} \log\left\lvert \left( f^{\circ 2} \right)^{\prime}\left( z_{0} \right) \right\rvert \leq 0$ by Lemma~\ref{lemma:nonRepelling}. If $z_{0}$ and $f\left( z_{0} \right)$ both lie in $V_{1}$, then $z_{0} \in U_{1}^{(k)}$ and $f\left( z_{0} \right) \in U_{1}^{(\ell)}$ for some distinct $k, \ell \in \lbrace 1, \dotsc, d -2 \rbrace$, and hence $\frac{1}{2} \log\left\lvert \left( f^{\circ 2} \right)^{\prime}\left( z_{0} \right) \right\rvert = \frac{d -1}{d -2} M_{f}$ by the previous discussion. Otherwise, replacing $z_{0}$ by $f\left( z_{0} \right)$ if necessary, we have $z_{0} \in V_{0}$ and $f\left( z_{0} \right) \in V_{1}$, and thus the discussion above yields $\frac{1}{2} \log\left\lvert \left( f^{\circ 2} \right)^{\prime}\left( z_{0} \right) \right\rvert = \frac{3 (d -1)}{4 (d -2)} M_{f}$. Furthermore, we also proved that the second case occurs for some choices of $z_{0}$. Thus, we have $M_{f}^{(2)} = \frac{d -1}{d -2} M_{f}$. This completes the proof of the proposition.
\end{proof}

Applying Propositions~\ref{proposition:sharp1} and~\ref{proposition:sharp2} with the non-Archimedean field $K = \mathbb{C}\left\lbrace \! \left\lbrace \frac{1}{t} \right\rbrace \! \right\rbrace$ of convergent complex Puiseux series in $\frac{1}{t}$, with $t$ an indeterminate, we shall show that the bounds in Theorem~\ref{theorem:degenB} are also optimal in the complex setting.

Assume here that $t$ is an indeterminate and $K = \mathbb{C}\left\lbrace \! \left\lbrace \frac{1}{t} \right\rbrace \! \right\rbrace$ is the field of Puiseux series in $\frac{1}{t}$ with coefficients in $\mathbb{C}$ that converge on some punctured neighborhood of $t = \infty$. Then $K$ is algebraically closed according to the Newton--Puiseux theorem (see~\cite{N2000}). We equip $K$ with its usual absolute value $\lvert . \rvert$, which is given by \[ \lvert a \rvert = \lim_{t \rightarrow \infty} \exp\left( \frac{\log\left\lvert a(t) \right\rvert_{\infty}}{\log\lvert t \rvert_{\infty}} \right) \, \text{,} \] where $\lvert . \rvert_{\infty}$ denotes the usual absolute value on $\mathbb{C}$. Thus, $K$ is a non-Archimedean valued field and its residue field is naturally isomorphic to $\mathbb{C}$. Now, note that any meromorphic function on a neighborhood of $t = \infty$ in $\widehat{\mathbb{C}}$ can be identified with an element of $K$ via its Laurent series expansion at $t = \infty$. In particular, denoting by $\overline{\mathbb{D}}$ the closed unit disk around the origin in $\mathbb{C}$, every holomorphic family $\left( f_{t} \right)_{t \in \mathbb{C} \setminus \overline{\mathbb{D}}}$ of elements of $\Poly_{d}(\mathbb{C})$ with a pole at $t = \infty$ induces some element $f \in \Poly_{d}(K)$. Specifically, if a holomorphic family $\left( f_{t} \right)_{t \in \mathbb{C} \setminus \overline{\mathbb{D}}}$ of elements of $\Poly_{d}(\mathbb{C})$ with a pole at $t = \infty$ is given by $f_{t}(z) = \sum\limits_{j = 0}^{d} a_{j}(t) z^{j}$, with $a_{0}, \dotsc, a_{d}$ holomorphic on $\mathbb{C} \setminus \overline{\mathbb{D}}$ and meromorphic on $\widehat{\mathbb{C}} \setminus \overline{\mathbb{D}}$, then the induced element $f \in \Poly_{d}(K)$ is $f(z) = \sum\limits_{j = 0}^{d} a_{j} z^{j}$.

To deduce results in the complex setting from analogous statements in the non-Archimedean case, we shall use the result below. It is a particular case of a result due to DeMarco.

\begin{lemma}[{\cite[Proposition~3.1]{DM2016}}]
\label{lemma:familyMaxEscape}
Suppose here that $K = \mathbb{C}\left\lbrace \! \left\lbrace \frac{1}{t} \right\rbrace \! \right\rbrace$ is the field of convergent complex Puiseux series in $\frac{1}{t}$, $\left( f_{t} \right)_{t \in \mathbb{C} \setminus \overline{\mathbb{D}}}$ is a holomorphic family of elements of $\Poly_{d}(\mathbb{C})$ that has a pole at $t = \infty$ and $f \in \Poly_{d}(K)$ is the element induced by $\left( f_{t} \right)_{t \in \mathbb{C} \setminus \overline{\mathbb{D}}}$. Then $M_{f_{t}} = M_{f} \cdot \log\lvert t \rvert_{\infty} +o\left( \log\lvert t \rvert_{\infty} \right)$ as $t \rightarrow \infty$.
\end{lemma}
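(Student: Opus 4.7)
The plan is to conjugate $f$ into Ingram's normal form and then transfer the uniform estimate of Claim~\ref{claim:maxEscape} from $K$ to $\mathbb{C}$. Since $M_{f}$ and $M_{f_{t}}$ are conjugation invariants and $K$ is algebraically closed, there exist $\phi \in \Aff(K)$ and $\boldsymbol{c} \in K^{d -1}$ such that $\phi \centerdot f = f_{\boldsymbol{c}}$, and Claim~\ref{claim:maxEscape} then gives $M_{f} = \log^{+}\lVert \boldsymbol{c} \rVert_{K^{d -1}}$. The map $\phi$ is built from a $(d -1)$th root of $d \cdot a_{d}$ and a fixed point of $f$, both of which lie in $K$, and the coordinates of $\boldsymbol{c}$ are critical points of $\phi \centerdot f$; consequently $\phi$ and $\boldsymbol{c}$ are given by Puiseux series in $\frac{1}{t}$ that converge near $t = \infty$. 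After replacing $t$ by $t^{N}$ if necessary to clear fractional exponents --- a reparametrization under which the asymptotic identity to be proved is invariant --- we may assume $\phi$ and $\boldsymbol{c}$ are honest Laurent series, so that they define holomorphic families $\phi_{t} \in \Aff(\mathbb{C})$ and $\boldsymbol{c}(t) \in \mathbb{C}^{d -1}$ for $\lvert t \rvert_{\infty}$ large, with $\phi_{t} \centerdot f_{t} = f_{\boldsymbol{c}(t)}$. The problem thus reduces to proving
\[ M_{f_{\boldsymbol{c}(t)}} = \log^{+}\lVert \boldsymbol{c} \rVert_{K^{d -1}} \cdot \log\lvert t \rvert_{\infty} +o\left( \log\lvert t \rvert_{\infty} \right) \, \text{.} \]

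The next step is a uniform complex counterpart of Claim~\ref{claim:maxEscape}: there should exist a constant $\Delta = \Delta(d) \geq 0$ with $\left\lvert M_{f_{\boldsymbol{c}}} -\log^{+}\lVert \boldsymbol{c} \rVert_{\mathbb{C}^{d -1}} \right\rvert \leq \Delta$ for every $\boldsymbol{c} \in \mathbb{C}^{d -1}$. The upper bound is exactly Claim~\ref{claim:estimates1} specialized to $K = \mathbb{C}$. For the lower bound I would rerun the Macaulay resultant argument of Claim~\ref{claim:maxEscape}: since $\res\left( F_{1}, \dotsc, F_{d -1} \right)$ is a nonzero rational number independent of $\boldsymbol{c}$, the same identity, used now with the usual complex absolute value instead of the ultrametric one, yields $\max_{k} \left\lvert f_{\boldsymbol{c}}\left( c_{k} \right) \right\rvert_{\infty} \geq C_{0} \lVert \boldsymbol{c} \rVert_{\mathbb{C}^{d -1}}^{d}$ for some $C_{0} > 0$ depending only on $d$, whenever $\lVert \boldsymbol{c} \rVert_{\mathbb{C}^{d -1}} \geq 1$. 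A direct lower estimate of the shape $\left\lvert f_{\boldsymbol{c}}(z) \right\rvert_{\infty} \geq \frac{1}{2 d} \lvert z \rvert_{\infty}^{d}$, valid once $\lvert z \rvert_{\infty}$ is large enough relative to $\lVert \boldsymbol{c} \rVert_{\mathbb{C}^{d -1}}$, iterated inside the limit defining $g_{f_{\boldsymbol{c}}}$, then produces $g_{f_{\boldsymbol{c}}}(z) \geq \log\lvert z \rvert_{\infty} -\Delta^{\prime}$ uniformly in this regime. Evaluating at $z = f_{\boldsymbol{c}}\left( c_{k} \right)$ for the index $k$ obtained above and using $g_{f_{\boldsymbol{c}}} \circ f_{\boldsymbol{c}} = d \cdot g_{f_{\boldsymbol{c}}}$ supplies the desired lower bound on $M_{f_{\boldsymbol{c}}}$.

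Finally, by the very definition of the absolute value on $\mathbb{C}\left\lbrace \! \left\lbrace \frac{1}{t} \right\rbrace \! \right\rbrace$, each coordinate of $\boldsymbol{c}$ satisfies $\log\left\lvert c_{j}(t) \right\rvert_{\infty} = \log\left\lvert c_{j} \right\rvert_{K} \cdot \log\lvert t \rvert_{\infty} +o\left( \log\lvert t \rvert_{\infty} \right)$ as $t \rightarrow \infty$; taking the maximum over $j$ yields $\log^{+}\lVert \boldsymbol{c}(t) \rVert_{\mathbb{C}^{d -1}} = \log^{+}\lVert \boldsymbol{c} \rVert_{K^{d -1}} \cdot \log\lvert t \rvert_{\infty} +o\left( \log\lvert t \rvert_{\infty} \right)$, and combining this with the uniform estimate of the previous step concludes the proof. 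The main obstacle is the bookkeeping of the first step: one must verify that the Puiseux-series data $\phi$ and $\boldsymbol{c}$ genuinely define holomorphic families for $\lvert t \rvert_{\infty}$ large, and confirm that the reparametrization used to clear fractional exponents does not affect the asymptotic ratio $M_{f_{t}} / \log\lvert t \rvert_{\infty}$. Once this is in place, the remaining analysis is essentially a direct complex transcription of the resultant-based estimates already developed in the non-Archimedean case.
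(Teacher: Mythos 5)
The paper does not prove this lemma; it is quoted as a result of DeMarco (\cite[Proposition~3.1]{DM2016}), so there is no in-paper proof to compare against. What you have produced is a self-contained alternative, and as far as I can tell it is correct. The strategy is sound: pass to Ingram's normal form over $K$, exploit the exact identity $M_{f_{\boldsymbol{c}}} = \log^{+}\lVert \boldsymbol{c} \rVert_{K^{d-1}}$ from Claim~\ref{claim:maxEscape} (which applies here since the residue field of $K$ is $\mathbb{C}$, of characteristic $0$), establish a two-sided complex bound $\left\lvert M_{f_{\boldsymbol{c}}} - \log^{+}\lVert \boldsymbol{c} \rVert_{\mathbb{C}^{d-1}} \right\rvert \leq \Delta(d)$, and transfer via the defining limit formula for $\lvert \cdot \rvert_{K}$. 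The Macaulay-resultant Nullstellensatz identity used in Claim~\ref{claim:maxEscape} is a polynomial identity over the ring $\mathbb{Z}\left[ \frac{1}{2}, \dotsc, \frac{1}{d} \right]$, so re-reading it with the usual complex triangle inequality rather than the ultrametric one does indeed give $\max_{k} \left\lvert f_{\boldsymbol{c}}\left( c_{k} \right) \right\rvert_{\infty} \geq C_{0} \lVert \boldsymbol{c} \rVert_{\mathbb{C}^{d-1}}^{d}$, and together with Claim~\ref{claim:estimates1} for the upper bound this does yield a uniform $\Delta(d)$. Your reparametrization $t \mapsto t^{N}$ is the right way to handle the fact that $\phi$ and $\boldsymbol{c}$ a priori live only in the Puiseux field; the homogeneity of both $M_{f}$ and $\log\lvert t \rvert_{\infty}$ under this substitution makes the asymptotic statement invariant, as you observe.

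Two small points worth tightening when you write this out. First, in the lower-bound step, the estimate $g_{f_{\boldsymbol{c}}}(z) \geq \log\lvert z \rvert_{\infty} - \Delta^{\prime}$ only holds once $\lvert z \rvert_{\infty} \geq C_{2} \lVert \boldsymbol{c} \rVert_{\mathbb{C}^{d-1}}$, and plugging in $z = f_{\boldsymbol{c}}\left( c_{k} \right)$ with the resultant bound $\lvert f_{\boldsymbol{c}}\left( c_{k} \right) \rvert_{\infty} \geq C_{0} \lVert \boldsymbol{c} \rVert^{d}$ only enters this regime when $\lVert \boldsymbol{c} \rVert_{\mathbb{C}^{d-1}} \geq \left( C_{2}/C_{0} \right)^{1/(d-1)}$; you should say explicitly that for bounded $\lVert \boldsymbol{c} \rVert_{\mathbb{C}^{d-1}}$ the lower bound is trivial since $M_{f_{\boldsymbol{c}}} \geq 0 \geq \log^{+}\lVert \boldsymbol{c} \rVert_{\mathbb{C}^{d-1}} - \text{const}$. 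Second, note that some coordinate $c_{j}$ of $\boldsymbol{c}$ may vanish identically; such a coordinate contributes nothing to either maximum, so the asymptotic $\log\left\lvert c_{j}(t) \right\rvert_{\infty} = \log\left\lvert c_{j} \right\rvert_{K} \cdot \log\lvert t \rvert_{\infty} + O(1)$ should be invoked only over the nonzero entries. With these caveats addressed, your argument gives a complete proof in the spirit of Section~\ref{section:degenNonArch} of the paper; DeMarco's original proof relies on more general potential-theoretic estimates on the escape rate (and applies beyond the polynomial setting), so your version, while narrower in scope, has the virtue of using only the resultant and triangle-inequality machinery already set up in the paper.
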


We shall also use the following fact:

\begin{lemma}
\label{lemma:familyMultiplier}
Suppose here that $K = \mathbb{C}\left\lbrace \! \left\lbrace \frac{1}{t} \right\rbrace \! \right\rbrace$ is the field of convergent complex Puiseux series in $\frac{1}{t}$, $\left( f_{t} \right)_{t \in \mathbb{C} \setminus \overline{\mathbb{D}}}$ is a holomorphic family of elements of $\Poly_{d}(\mathbb{C})$ that has a pole at $t = \infty$ and $f \in \Poly_{d}(K)$ is the element induced by $\left( f_{t} \right)_{t \in \mathbb{C} \setminus \overline{\mathbb{D}}}$. Then, for every integer $p \geq 1$, we have $M_{f_{t}}^{(p)} = M_{f}^{(p)} \cdot \log\lvert t \rvert_{\infty} +O(1)$ as $t \rightarrow \infty$.
\end{lemma}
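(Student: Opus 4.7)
The plan is to transfer the estimate through the elementary symmetric functions of the multipliers. Set $N = N_d^{(p)}$ and write
\[
\chi_{f_t}^{(p)}(\lambda) = \lambda^N + \sum_{j=1}^N (-1)^j \sigma_j(t) \lambda^{N-j}, \qquad \sigma_j(t) = \sigma_{d,j}^{(p)}\bigl([f_t]\bigr).
\]
Since $\sigma_{d,j}^{(p)} \in \mathbb{Q}[\mathcal{P}_d]$ is a regular function on the moduli space and the coefficients of $f_t$ are meromorphic at $t = \infty$, each $\sigma_j(t)$ is meromorphic at infinity, and thus identifies with an element $\sigma_j \in K$. Examining the leading Puiseux term $\sigma_j(t) \sim c_j t^{e_j}$ gives $\log \lvert \sigma_j \rvert = e_j$ and hence
\[
\log \lvert \sigma_j(t) \rvert_{\infty} = \log \lvert \sigma_j \rvert \cdot \log \lvert t \rvert_{\infty} + O(1) \quad \text{as } t \to \infty,
\]
with the error bounded uniformly in $t$ (and the convention $\log 0 = -\infty$ when $\sigma_j \equiv 0$).

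Next I would invoke the elementary dictionary between the maximum modulus of the roots of a monic polynomial and the absolute values of its coefficients. For any valued field and any monic polynomial of degree $N$ with roots $\lambda_1, \dotsc, \lambda_N$ and elementary symmetric functions $\sigma_1, \dotsc, \sigma_N$, setting $R = \max_i \lvert \lambda_i \rvert$, one has the two-sided bound
\[
\frac{R}{N} \le \max_{1 \le j \le N} \lvert \sigma_j \rvert^{1/j} \le N \cdot R
\]
in the Archimedean case (on one side from $\lvert \sigma_j \rvert \le \binom{N}{j} R^j$, on the other from $\lambda_i^N = \sum_{j \ge 1} (-1)^{j+1} \sigma_j \lambda_i^{N-j}$). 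In the non-Archimedean setting the ultrametric inequality sharpens both bounds into the exact identity $R = \max_j \lvert \sigma_j \rvert^{1/j}$: if $k$ is largest with $\lvert \lambda_k \rvert = \lvert \lambda_1 \rvert$, then the term $\lambda_1 \dotsb \lambda_k$ strictly dominates all others in $\sigma_k$, hence $\lvert \sigma_k \rvert^{1/k} = \lvert \lambda_1 \rvert$. Applying the first estimate to each $f_t$ in $\mathbb{C}$ and the second to $f$ in $K$ yields
\[
p \, M_{f_t}^{(p)} = \max_j \tfrac{1}{j} \log \lvert \sigma_j(t) \rvert_{\infty} + O(1), \qquad p \, M_f^{(p)} = \max_j \tfrac{1}{j} \log \lvert \sigma_j \rvert.
\]

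Combining the three displayed identities, and using the elementary inequality $\lvert \max_j (c a_j + b_j) - c \max_j a_j \rvert \le \max_j \lvert b_j \rvert$ valid whenever $c > 0$, one concludes for $t$ large enough (so that $\log \lvert t \rvert_{\infty} > 0$) that
\[
p \, M_{f_t}^{(p)} = \log \lvert t \rvert_{\infty} \cdot \max_j \tfrac{1}{j} \log \lvert \sigma_j \rvert + O(1) = p \, M_f^{(p)} \cdot \log \lvert t \rvert_{\infty} + O(1),
\]
which gives the claim after dividing by $p$. I do not anticipate any serious obstacle: the argument is a transfer principle through the coefficients of the multiplier polynomial, whose only analytic input is the standard Cauchy-type estimate on the roots of a monic polynomial in terms of its coefficients. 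The only points deserving care are the uniformity of the $O(1)$ in the Puiseux expansion at infinity and the degenerate case in which some $\sigma_j$ vanishes identically, which is handled by the convention that vanishing coefficients contribute $-\infty$ to both maxima.
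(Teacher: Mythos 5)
Your proof is correct, but it takes a slightly different route than the paper's. The paper works directly with the multipliers $\lambda_1,\dotsc,\lambda_N$ as elements of $K$: it observes that, for $t$ large, the multiset $\Lambda_{f_t}^{(p)}$ is exactly $\left[\lambda_1(t),\dotsc,\lambda_N(t)\right]$ (because the $\sigma_{d,j}^{(p)}\circ\pi_d$ are regular on $\Poly_d$), and then applies the fundamental estimate $\log\lvert a(t)\rvert_\infty = \log\lvert a\rvert\cdot\log\lvert t\rvert_\infty + O(1)$ directly to each $\lambda_j$. You instead apply that same fundamental estimate to the elementary symmetric functions $\sigma_j$ and then pass to the roots via a two-sided Cauchy--Fujiwara bound in the Archimedean case and the exact ultrametric identity $R=\max_j\lvert\sigma_j\rvert^{1/j}$ in the non-Archimedean case. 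Both routes rest on the same single analytic ingredient (the logarithmic asymptotics of convergent Puiseux series at $\infty$); the paper's is a bit more direct since it never needs the root-versus-coefficient comparison, while yours avoids having to name the roots of $\chi_f^{(p)}$ as convergent Puiseux series at all, using only the regularity of the $\sigma_{d,j}^{(p)}$. The extra multiplicative constants from the Cauchy bound and the step passing a max through an affine map are absorbed into the $O(1)$ exactly as you indicate, and the degenerate case of identically vanishing coefficients is handled the same way in both arguments (both sides equal $-\infty$ only if all multipliers vanish identically, in which case there is nothing to prove). So the argument stands, just with a small detour that the paper does not take.
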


\begin{proof}
Suppose that $p \geq 1$ is an integer. Write \[ \Lambda_{f}^{(p)} = \left[ \lambda_{1}, \dotsc, \lambda_{N} \right] \in K^{N}/\mathfrak{S}_{N} \, \text{,} \quad \text{with} \quad N = N_{d}^{(p)} \, \text{.} \] There exists $R \in \mathbb{R}_{> 0}$ such that the complex Puiseux series $\lambda_{1}, \dotsc, \lambda_{N}$ all converge on $\mathbb{C} \setminus \overline{\mathbb{D}}_{R}$, where $\overline{\mathbb{D}}_{R}$ denotes the closed disk of center $0$ and radius $R$ in $\mathbb{C}$. Then, as the elementary symmetric functions of the multipliers of polynomials of degree $d$ at all their cycles with period $p$ define regular functions $\sigma_{d, j}^{(p)} \circ \pi_{d}$ on $\Poly_{d}$, with $j \in \left\lbrace 1, \dotsc, N \right\rbrace$, we have \[ \forall t \in \mathbb{C} \setminus \overline{\mathbb{D}}_{R}, \, \Lambda_{f_{t}}^{(p)} = \left[ \lambda_{1}(t), \dotsc, \lambda_{N}(t) \right] \in \mathbb{C}^{N}/\mathfrak{S}_{N} \, \text{.} \] Finally, note that $\log\left\lvert a(t) \right\rvert_{\infty} = \log\lvert a \rvert \cdot \log\lvert t \rvert_{\infty} +O(1)$ as $t \rightarrow \infty$ for each $a \in K$. In particular, for each $j \in \lbrace 1, \dotsc, N \rbrace$, we have $\log\left\lvert \lambda_{j}(t) \right\rvert_{\infty} = \log\left\lvert \lambda_{j} \right\rvert \cdot \log\lvert t \rvert_{\infty} +O(1)$ as $t \rightarrow \infty$. This completes the proof of the lemma.
\end{proof}

Finally, combining Propositions~\ref{proposition:sharp1} and~\ref{proposition:sharp2} with Lemmas~\ref{lemma:familyMaxEscape} and~\ref{lemma:familyMultiplier}, we directly deduce the two results below, which show that the bounds in Theorem~\ref{theorem:degenB} are also sharp in the complex case.

\begin{corollary}
Assume that $d \geq 4$. Then there exists a rational family $\left( f_{t} \right)_{t \in \mathbb{C}^{*}}$ of elements of $\Poly_{d}(\mathbb{C})$ such that $f_{t}$ degenerates in $\mathcal{P}_{d}(\mathbb{C})$ as $t \rightarrow \infty$ and \[ M_{f_{t}}^{(1)} = O(1) \quad \text{and} \quad M_{f_{t}}^{(2)} \sim C_{d} \cdot M_{f_{t}} \quad \text{as} \quad t \rightarrow \infty \, \text{.} \]
\end{corollary}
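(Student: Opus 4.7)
The plan is to take the explicit family constructed in Proposition~\ref{proposition:sharp1} and reinterpret it as a one-parameter family over $\mathbb{C}$ by working over the non-Archimedean field $K = \mathbb{C}\left\lbrace \! \left\lbrace \frac{1}{t} \right\rbrace \! \right\rbrace$ of convergent complex Puiseux series in $\frac{1}{t}$, and then transfer the non-Archimedean conclusions to Archimedean asymptotics via Lemmas~\ref{lemma:familyMaxEscape} and~\ref{lemma:familyMultiplier}.

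First, with $K$ as above, recall that the absolute value satisfies $\lvert t \rvert = \mathrm{e}$, so in particular $\lvert t \rvert \in \lvert K^{*} \rvert$ and $\lvert t \rvert > 1$. I would apply Proposition~\ref{proposition:sharp1} with $R = \lvert t \rvert$ and $\gamma = t$ to obtain a polynomial $f = f_{\boldsymbol{c}} \in \Poly_{d}(K)$ with
\[ M_{f} = \log\lvert t \rvert = 1 \, \text{,} \quad M_{f}^{(1)} = 0 \quad \text{and} \quad M_{f}^{(2)} = C_{d} \cdot M_{f} = C_{d} \, \text{.} \]
The key structural observation is that, by inspection of the construction, the entries of the tuple $\boldsymbol{c}$ are Laurent polynomials in $t$ (namely $0$, $t$, and $\omega = \frac{d_{0}}{d} t + c \cdot t^{2 -d}$), so the elementary symmetric functions $\tau_{j}(\boldsymbol{c})$ are Laurent polynomials in $t$ as well; hence the coefficients of $f$ are rational functions of $t$ with poles only at $t = 0$. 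Thus $f \in \Poly_{d}(K)$ is induced by a rational family $\left( f_{t} \right)_{t \in \mathbb{C}^{*}}$ in the sense of Lemmas~\ref{lemma:familyMaxEscape} and~\ref{lemma:familyMultiplier}.

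Next, I would apply Lemma~\ref{lemma:familyMaxEscape}, which yields
\[ M_{f_{t}} = M_{f} \cdot \log\lvert t \rvert_{\infty} +o\left( \log\lvert t \rvert_{\infty} \right) = \log\lvert t \rvert_{\infty} +o\left( \log\lvert t \rvert_{\infty} \right) \quad \text{as } t \rightarrow \infty \, \text{.} \]
In particular, $\lim_{t \to \infty} M_{f_{t}} = +\infty$, so $f_{t}$ degenerates in $\mathcal{P}_{d}(\mathbb{C})$ as $t \to \infty$ by the characterization of degeneration recalled in Section~\ref{section:prelim}. Applying Lemma~\ref{lemma:familyMultiplier} with $p = 1$ and $p = 2$ then gives
\[ M_{f_{t}}^{(1)} = M_{f}^{(1)} \cdot \log\lvert t \rvert_{\infty} +O(1) = O(1) \]
and
\[ M_{f_{t}}^{(2)} = M_{f}^{(2)} \cdot \log\lvert t \rvert_{\infty} +O(1) = C_{d} \cdot \log\lvert t \rvert_{\infty} +O(1) \sim C_{d} \cdot M_{f_{t}} \quad \text{as } t \rightarrow \infty \, \text{,} \]
which are precisely the asserted asymptotics.

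There is no real obstacle here: once Proposition~\ref{proposition:sharp1} is in hand, the whole argument is essentially a dictionary between non-Archimedean equalities and Archimedean asymptotics. The only point that requires a small verification is the rationality in $t$ of the family, which follows directly from the explicit form of $\boldsymbol{c}$ in the proof of Proposition~\ref{proposition:sharp1}; this is where the construction of Proposition~\ref{proposition:sharp1} with $\gamma$ taken to be the indeterminate $t$ (rather than an abstract element of $K^{*}$ of the prescribed norm) pays off.
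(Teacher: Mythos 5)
Your proof is correct and follows essentially the same route as the paper: both take $\gamma = t$ in the explicit family from Proposition~\ref{proposition:sharp1} (yielding a rational family in $t$), interpret it over $K = \mathbb{C}\left\lbrace \! \left\lbrace \frac{1}{t} \right\rbrace \! \right\rbrace$ to compute $M_{f} = 1$, $M_{f}^{(1)} = 0$, $M_{f}^{(2)} = C_{d}$, and then transfer to the Archimedean asymptotics via Lemmas~\ref{lemma:familyMaxEscape} and~\ref{lemma:familyMultiplier}. Your added remark about rationality of the coefficients in $t$ is a sensible explicit check that the paper leaves implicit.
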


\begin{proof}
Consider the rational family $\left( f_{t} \right)_{t \in \mathbb{C}^{*}}$ of elements of $\Poly_{d}(\mathbb{C})$ defined by \[ f_{t}(z) = \sum_{j = 0}^{d_{1} -1} b_{j} z^{d_{0} +j} (z -t)^{d_{1} -1 -j} \left( d_{0} z -\left( d_{0} +1 +j \right) \omega_{t} \right) \, \text{,} \] with $b_{j} = \frac{(-1)^{j} \left( d_{0} -1 \right)! \left( d_{1} -1 \right)!}{\left( d_{0} +1 +j \right)! \left( d_{1} -1 -j \right)!}$ for all $j \in \left\lbrace 0, \dotsc, d_{1} -1 \right\rbrace$, where \[ \left( d_{0}, d_{1} \right) = \begin{cases} \left( \frac{d}{2}, \frac{d}{2} \right) & \text{if } d \text{ is even}\\ \left( \frac{d -1}{2}, \frac{d +1}{2} \right) & \text{if } d \text{ is odd} \end{cases} \quad \text{and} \quad \omega_{t} = \frac{d_{0}}{d} t +\frac{(-1)^{d_{1}} (d -1)!}{\left( d_{0} -1 \right)! \left( d_{1} -1 \right)!} t^{2 -d} \, \text{.} \] Now, assume that $K = \mathbb{C}\left\lbrace \! \left\lbrace \frac{1}{t} \right\rbrace \! \right\rbrace$, and denote by $f \in \Poly_{d}(K)$ the element induced by $\left( f_{t} \right)_{t \in \mathbb{C}^{*}}$. Then, as $\lvert t \rvert = \exp(1)$, the proof of Proposition~\ref{proposition:sharp1} shows that $M_{f} = 1$, $M_{f}^{(1)} = 0$ and $M_{f}^{(2)} = C_{d} \cdot M_{f}$. Thus, the desired result follows immediately from Lemmas~\ref{lemma:familyMaxEscape} and~\ref{lemma:familyMultiplier}.
\end{proof}

\begin{corollary}
Assume that $d \geq 4$. Then there exists a polynomial family $\left( f_{t} \right)_{t \in \mathbb{C}}$ of elements of $\Poly_{d}(\mathbb{C})$ such that $f_{t}$ degenerates in $\mathcal{P}_{d}(\mathbb{C})$ as $t \rightarrow \infty$ and \[ M_{f_{t}}^{(1)} \sim \frac{d -1}{d -2} M_{f_{t}} \quad \text{and} \quad M_{f_{t}}^{(2)} \sim \frac{d -1}{d -2} M_{f_{t}} \quad \text{as} \quad t \rightarrow \infty \, \text{.} \]
\end{corollary}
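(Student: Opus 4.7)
The plan is to mimic the proof of the previous corollary by taking the polynomial family corresponding to the explicit element $f \in \Poly_{d}(K)$ constructed in Proposition~\ref{proposition:sharp2}. Specifically, I would consider the polynomial family $\left( f_{t} \right)_{t \in \mathbb{C}}$ of elements of $\Poly_{d}(\mathbb{C})$ given by
\[ f_{t}(z) = \frac{1}{d} z^{2} (z -t)^{d -2} \, \text{.} \]
This is a polynomial family in $t$, and its induced element $f \in \Poly_{d}(K)$, with $K = \mathbb{C}\left\lbrace \! \left\lbrace \frac{1}{t} \right\rbrace \! \right\rbrace$, is precisely $f(z) = \frac{1}{d} z^{2} (z -\gamma)^{d -2}$ with $\gamma = t \in K^{*}$ satisfying $\lvert \gamma \rvert = \exp(1) > 1$.

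Next, I would apply Proposition~\ref{proposition:sharp2} to $f \in \Poly_{d}(K)$ with $R = \exp(1) \in \left\lvert K^{*} \right\rvert$, which yields $M_{f} = 1$, $M_{f}^{(1)} = \frac{d -1}{d -2}$ and $M_{f}^{(2)} = \frac{d -1}{d -2}$. Then Lemma~\ref{lemma:familyMaxEscape} gives $M_{f_{t}} = \log\lvert t \rvert_{\infty} +o\left( \log\lvert t \rvert_{\infty} \right)$ as $t \rightarrow \infty$, and in particular $M_{f_{t}} \rightarrow +\infty$, so $\left[ f_{t} \right]$ degenerates in $\mathcal{P}_{d}(\mathbb{C})$. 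Furthermore, Lemma~\ref{lemma:familyMultiplier} applied to $p \in \lbrace 1, 2 \rbrace$ provides
\[ M_{f_{t}}^{(p)} = \frac{d -1}{d -2} \log\lvert t \rvert_{\infty} +O(1) \quad \text{as} \quad t \rightarrow \infty \, \text{,} \]
and dividing by the asymptotic expression for $M_{f_{t}}$ yields the desired asymptotic equivalences $M_{f_{t}}^{(1)} \sim \frac{d -1}{d -2} M_{f_{t}}$ and $M_{f_{t}}^{(2)} \sim \frac{d -1}{d -2} M_{f_{t}}$.

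There is really no obstacle here: all nontrivial content has already been carried out in Proposition~\ref{proposition:sharp2} (the combinatorial analysis of disks and multipliers over $K$) and in Lemmas~\ref{lemma:familyMaxEscape} and~\ref{lemma:familyMultiplier} (which allow one to transfer non-Archimedean statements to the complex setting). The only thing to check is that the chosen family is genuinely polynomial in $t$, which is manifest from the formula $f_{t}(z) = \frac{1}{d} z^{2} (z -t)^{d -2}$, so the assembly is essentially immediate.
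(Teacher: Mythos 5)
Your proof is correct and matches the paper's own argument essentially verbatim: you choose the same family $f_{t}(z) = \tfrac{1}{d} z^{2}(z-t)^{d-2}$, identify the induced element of $\Poly_{d}\bigl(\mathbb{C}\{\!\{1/t\}\!\}\bigr)$ with the explicit example from Proposition~\ref{proposition:sharp2}, and transfer via Lemmas~\ref{lemma:familyMaxEscape} and~\ref{lemma:familyMultiplier}. The one tiny nuance worth noting is that one must invoke the \emph{proof} of Proposition~\ref{proposition:sharp2} rather than merely its statement (which only asserts existence of some $f$), but since you have already matched your $f_t$ to the explicit construction there, the argument goes through exactly as in the paper.
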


\begin{proof}
Consider the polynomial family $\left( f_{t} \right)_{t \in \mathbb{C}}$ of elements of $\Poly_{d}(\mathbb{C})$ defined by \[ f_{t}(z) = \frac{1}{d} z^{2} (z -t)^{d -2} \, \text{.} \] Now, assume that $K = \mathbb{C}\left\lbrace \! \left\lbrace \frac{1}{t} \right\rbrace \! \right\rbrace$, and denote by $f \in \Poly_{d}(K)$ the element induced by $\left( f_{t} \right)_{t \in \mathbb{C}}$. Then, as $\lvert t \rvert = \exp(1)$, the proof of Proposition~\ref{proposition:sharp2} shows that $M_{f} = 1$, $M_{f}^{(1)} = \frac{d -1}{d -2} M_{f}$ and $M_{f}^{(2)} = \frac{d -1}{d -2} M_{f}$. Thus, the desired result follows immediately from Lemmas~\ref{lemma:familyMaxEscape} and~\ref{lemma:familyMultiplier}.
\end{proof}

\section{Unique determination of a generic conjugacy class of polynomial maps by its multipliers at its small cycles}
\label{section:unique}

In this section, we shall prove Theorem~\ref{theorem:unique}. As before, we fix an integer $d \geq 2$.

\subsection{Some preliminaries}

First, let us present the ingredients that we use in our proof of Theorem~\ref{theorem:unique}.

Since $\mathcal{P}_{d}(\mathbb{C}) \cong \mathcal{P}_{d}^{\mc}(\mathbb{C})$, we can restrict our attention to monic centered complex polynomials. Recall here that \[ \Poly_{d}^{\mc}(\mathbb{C}) = \left\lbrace z^{d} +\sum_{j = 0}^{d -2} b_{j} z^{j} : b_{0}, \dotsc, b_{d -2} \in \mathbb{C} \right\rbrace \, \text{.} \] Now, define $\alpha = \exp\left( \frac{2 \pi i}{d -1} \right)$, so that \[ \mu_{d -1}(\mathbb{C}) = \langle \alpha \rangle = \left\lbrace \alpha^{k} : k \in \lbrace 0, \dotsc, d -2 \rbrace \right\rbrace \, \text{.} \] Also recall that the group $\mu_{d -1}(\mathbb{C})$ acts on $\Poly_{d}^{\mc}(\mathbb{C})$ via $\omega \centerdot f = \omega f\left( \frac{z}{\omega} \right)$ and that $\mathcal{P}_{d}(\mathbb{C})$ is biholomorphic to the quotient $\mathcal{P}_{d}^{\mc}(\mathbb{C})$ of $\Poly_{d}^{\mc}(\mathbb{C})$ by $\mu_{d -1}(\mathbb{C})$.

Our objective is to prove the result below, which directly implies Theorem~\ref{theorem:unique}.

\begin{lemma}
\label{lemma:unique}
There exists a nonempty open subset $U$ of $\Poly_{d}^{\mc}(\mathbb{C})$ such that, for every $f \in U$, \[ \left\lbrace g \in \Poly_{d}^{\mc}(\mathbb{C}) : \Lambda_{g}^{(1)} = \Lambda_{f}^{(1)} \text{ and } \Lambda_{g}^{(2)} = \Lambda_{f}^{(2)} \right\rbrace = \left\lbrace \alpha^{k} \centerdot f : k \in \lbrace 0, \dotsc, d -2 \rbrace \right\rbrace \, \text{.} \]
\end{lemma}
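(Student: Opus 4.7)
The plan exploits the finite-morphism statement from Section~\ref{section:degenArch}. Since $\Mult_{d}^{(2)}\colon \mathcal{P}_{d} \to \Sigma_{d}^{(2)}$ is a finite morphism between irreducible $\mathbb{Q}$-varieties of the same dimension $d-1$, its generic fiber cardinality is constant on a dense Zariski-open subset of $\Sigma_{d}^{(2)}$. Hence it suffices to exhibit a single $f_{0} \in \Poly_{d}^{\mc}(\mathbb{C})$ whose preimage in $\Poly_{d}^{\mc}(\mathbb{C})$ under $\Mult_{d}^{(2)} \circ \pi_{d}$ is exactly the $\mu_{d-1}(\mathbb{C})$-orbit of $f_{0}$; by constancy of fiber size on an open, the locus of such $f_{0}$ is then a nonempty Zariski-open subset of $\Poly_{d}^{\mc}(\mathbb{C})$, providing the $U$ required by Lemma~\ref{lemma:unique}.

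To produce $f_{0}$, I would use a non-Archimedean degeneration. Consider a one-parameter family $(f_{t})_{t \in \mathbb{C}^{*}}$ in $\Poly_{d}^{\mc}(\mathbb{C})$, polynomial in $t$, that degenerates in $\mathcal{P}_{d}(\mathbb{C})$ as $t \to \infty$ in the most combinatorially rigid way: all $d-1$ critical points escaping at the same maximal rate so that, by Lemma~\ref{lemma:greenDisks2}, the sublevel set $\{g_{f_{t}} < M_{f_{t}}\}$ splits into $d$ disk components, each carrying a single fixed point with $d_{j} = 1$. Viewing the family as an element $f \in \Poly_{d}^{\mc}(K)$ with $K = \mathbb{C}\{\!\{1/t\}\!\}$, Lemma~\ref{lemma:ineqNonArch} gives exact formulas for the absolute values of every entry of $\Lambda_{f}^{(1)}$ and $\Lambda_{f}^{(2)}$ in terms of the disk-component combinatorics and the cyclic pattern of fixed points.

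The technical core is to show that any $g_{t} \in \Poly_{d}^{\mc}(\mathbb{C})$ with $\Lambda_{g_{t}}^{(1)} = \Lambda_{f_{t}}^{(1)}$ and $\Lambda_{g_{t}}^{(2)} = \Lambda_{f_{t}}^{(2)}$ for arbitrarily large $t$ is a $\mu_{d-1}(\mathbb{C})$-conjugate of $f_{t}$. Via Lemmas~\ref{lemma:familyMaxEscape} and~\ref{lemma:familyMultiplier}, a subsequential limit gives a polynomial $\widetilde{g} \in \Poly_{d}^{\mc}(K)$ with matching multiplier data. The matching of period-$1$ absolute values over $K$ forces $\widetilde{g}$ to share the disk-component degrees with $f$, and the matching of period-$2$ absolute values, via Lemma~\ref{lemma:ineqNonArch} applied to $2$-cycles straddling two disk components, rigidifies how these disks are permuted by $\widetilde{g}$. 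The remaining freedom is exactly a $\mu_{d-1}(K)$-twist, which is transferred back to $\mathbb{C}$ by Galois descent and by specialization at large $t$.

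The principal obstacle I expect is verifying that the period-$2$ data effectively distinguishes among the $(d-2)!$ Fujimura candidates in a single period-$1$ fiber. The period-$1$ multipliers $\lambda_{i}-1 = \prod_{j \neq i}(z_{i}-z_{j})$ determine a multiset but not its pairing with a labeling of the fixed points; the period-$2$ multipliers $\mu = f'(w)f'(w')$ encode the pairwise dynamical interactions, and genericity should force the pairing. The delicate point is to make this rigorous: showing that, in the degenerating family above, the period-$2$ side encodes exactly the cyclic information modulo $\mu_{d-1}$, with no residual ambiguity. This verification—likely carried out by explicit bookkeeping on the combinatorial data of the disk components and their first-return maps in the Berkovich/non-Archimedean picture, possibly aided by an induction on $d$ or a two-critical-point reduction—will be the crux of Section~\ref{section:unique}.
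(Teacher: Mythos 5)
Your proposal takes an entirely different route from the paper, and I do not think it can be made to work as stated.

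The paper proves Lemma~\ref{lemma:unique} by a purely local, Archimedean argument at the highly symmetric point $f_{0}(z) = z^{d}$: it parametrizes a neighborhood of $f_{0}$ by the $d-1$ fixed-point multipliers (via the inverse function theorem and Claim~\ref{claim:inverse}), quotes Fujimura's bound (Lemma~\ref{lemma:fujimura}) to identify all period-$1$ isospectral partners with the $\mathfrak{S}_{d-1}$-orbit under the $*$-action, and then uses Gorbovickis's explicit Jacobian formulas (Lemma~\ref{lemma:gorbovickis}) together with a delicate trigonometric computation (Claim~\ref{claim:distinct}) to show that the Jacobian matrix $A$ of the period-$2$ multiplier map is rigid under simultaneous column/row permutations, except for the cyclic ones (Lemma~\ref{lemma:permMatrix}). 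The essential input is the \emph{linearization} of $\Mult_{d}^{(2)}$, which carries exact multiplier information, not just moduli.

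Your plan instead degenerates to the boundary and attempts to use the non-Archimedean machinery of Sections~\ref{section:degenArch}--\ref{section:degenNonArch}. There are two serious gaps. First, the degeneration you single out as \emph{most combinatorially rigid} — all $d-1$ critical points escaping at the common maximal rate, so that $\{g_{f} < M_{f}\}$ has $d$ disk components with $d_{j} = 1$ for every $j$ — is in fact the \emph{least} discriminating: by Lemma~\ref{lemma:ineqNonArch} applied with $k = 0$, $p = 1$, every fixed-point multiplier then has $\log|\lambda| = (d-1)M_{f}$, and likewise all the period-$2$ multipliers straddling two components have the same absolute value. So the non-Archimedean data, which only sees $|\cdot|_{K}$, collapses to a single number and cannot distinguish the $(d-2)!$ Fujimura candidates. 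Second, and more fundamentally, the tools of Sections~\ref{section:degenArch}--\ref{section:degenNonArch} (Lemmas~\ref{lemma:ineqArch} and~\ref{lemma:ineqNonArch}) estimate absolute values of multipliers; they cannot produce the exact equalities needed to conclude that two $K$-polynomials with matching multiplier spectra are conjugate. The crux you defer (``the delicate point is to make this rigorous'') is exactly where a genuinely new mechanism is required, and the absolute-value picture does not supply one. The paper sidesteps both problems by working with a first-order expansion at the interior point $z^{d}$, where the multiplier map is a biholomorphism and the constraint is linear-algebraic. There is also a minor logical point worth flagging in your reduction: exhibiting a single fiber of cardinality one in $\mathcal{P}_{d}(\mathbb{C})$ does not by itself force $\deg \Mult_{d}^{(2)} = 1$, since fiber cardinality can drop on a proper Zariski-closed locus; you would additionally need to place your chosen $[f_{t}]$ in the generic locus, which Lemma~\ref{lemma:unique} accomplishes automatically by producing an \emph{open} set.
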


Define $\Xi$ to be the set of all elements $\left[ \lambda_{0}, \dotsc, \lambda_{d -1} \right] \in \mathbb{C}^{d}/\mathfrak{S}_{d}$ that satisfy $\lambda_{j} \neq 1$ for all $j \in \lbrace 0, \dotsc, d -1 \rbrace$ and \[ \forall J \subseteq \lbrace 0, \dotsc, d -1 \rbrace, \, \sum_{j \in J} \frac{1}{1 -\lambda_{j}} = 0 \Longleftrightarrow J = \varnothing \text{ or } \lbrace 0, \dotsc, d -1 \rbrace \, \text{.} \] To prove Lemma~\ref{lemma:unique}, we shall use the result below, which is due to Fujimura.

\begin{lemma}[{\cite[Theorem~6]{F2007}}]
\label{lemma:fujimura}
Suppose that $\Lambda \in \Xi$. Then there exist at most $(d -1)!$ elements $f \in \Poly_{d}^{\mc}(\mathbb{C})$ such that $\Lambda_{f}^{(1)} = \Lambda$.
\end{lemma}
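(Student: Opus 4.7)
The plan is to parametrize monic centered polynomials by their fixed points, translate the multiplier conditions into ``trace identities,'' and then apply Bezout's theorem to a system whose degrees multiply to exactly $(d-1)!$.

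First I would identify $\Poly_d^{\mc}(\mathbb{C})$ with tuples of fixed points. For $f \in \Poly_d^{\mc}(\mathbb{C})$, the polynomial $P(z) := f(z) - z$ is monic of degree $d$ with vanishing $z^{d-1}$-coefficient (since $f$ is centered and $d \geq 2$); writing $P(z) = \prod_i (z - z_i)$, the fixed points are $z_0, \dots, z_{d-1}$, they sum to zero, and the multipliers are $\lambda_i = 1 + P'(z_i) = 1 + \prod_{j \neq i}(z_i - z_j)$. Conversely, any tuple $(z_i)$ with $\sum z_i = 0$ recovers a unique such $f$. Fixing any ordering $(\lambda_0, \dots, \lambda_{d-1})$ of $\Lambda$, counting $f$ with $\Lambda_f^{(1)} = \Lambda$ is bounded by the count of ordered tuples $(z_0, \dots, z_{d-1})$ satisfying the multiplier equations, because each $f$ gives at least one such tuple via a labeling of its fixed points, and labelings only inflate the tuple count.

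Second, I would introduce the reciprocal multipliers $\mu_i := 1/(1 - \lambda_i) \in \mathbb{C}^*$ (well defined as $\lambda_i \neq 1$) and use the partial-fraction identity
\begin{equation*}
-\frac{1}{P(z)} = \sum_{i} \frac{\mu_i}{z - z_i},
\end{equation*}
which holds because $P'(z_i) = \lambda_i - 1 = -1/\mu_i$. Comparing the Laurent expansions at $z = \infty$ (the right-hand side equals $\sum_{k \geq 0} (\sum_i \mu_i z_i^k) z^{-k-1}$, while the left-hand side equals $-z^{-d} + O(z^{-d-2})$ as $P$ is centered) produces the trace identities
\begin{equation*}
\sum_i \mu_i z_i^k = 0 \quad (0 \leq k \leq d - 2), \qquad \sum_i \mu_i z_i^{d - 1} = -1.
\end{equation*}
When the $z_i$ are pairwise distinct, Vandermonde inversion makes these identities \emph{equivalent} to the multiplier equations $P'(z_i) = \lambda_i - 1$. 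Since $\lambda_i \neq 1$ forces the $z_i$ to be distinct, this equivalence holds throughout the regime of interest.

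Third, the $k = 0$ trace identity is the holomorphic fixed-point formula $\sum_i \mu_i = 0$, which is automatic from $\Lambda \in \Xi$, so it imposes no condition on $(z_i)$. Combining the remaining $d - 1$ identities with the centered constraint yields the system
\begin{equation*}
\sum_i z_i = 0, \qquad \sum_i \mu_i z_i^k = \delta_{k, d - 1} \cdot (-1) \quad (1 \leq k \leq d - 1),
\end{equation*}
consisting of $d$ polynomial equations in $d$ unknowns of respective degrees $1, 1, 2, \dots, d - 1$. The product of degrees is $1 \cdot 1 \cdot 2 \cdot 3 \cdots (d - 1) = (d - 1)!$, so Bezout's theorem bounds the number of isolated solutions---hence the number of $f$ realizing $\Lambda$---by $(d-1)!$.

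The main obstacle will be verifying that this system is genuinely zero-dimensional for $\Lambda \in \Xi$, so that Bezout applies cleanly without being spoiled by positive-dimensional components or solutions ``lost at infinity.'' This is precisely where the irreducibility condition defining $\Xi$ is used. My plan is to argue that any positive-dimensional family of solutions, or any sequence of solutions escaping to infinity, would after a valuation-theoretic analysis of how the $z_i$ cluster and diverge force some proper nonempty subset $J \subsetneq \{0, \dots, d - 1\}$ to satisfy $\sum_{j \in J} \mu_j = 0$, contradicting the defining property of $\Xi$. Making this combinatorial-asymptotic analysis rigorous---essentially a tropical/valuative decomposition of the parameter space matching the degeneration patterns of $(z_i)$---constitutes the core technical step.
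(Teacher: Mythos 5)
Your proposal is necessarily a self-contained argument, since the paper does not prove this lemma at all: it is quoted directly from Fujimura (Theorem~6 of~\cite{F2007}). Most of what you write is correct. The identification of a monic centered $f$ with the labeled tuple of its fixed points, the observation that $\lambda_{i} \neq 1$ forces all fixed points to be simple (so every $f$ with $\Lambda_{f}^{(1)} = \Lambda$ produces at least one labeled solution tuple, whence the number of such $f$ is at most the number of tuples), the partial-fraction identity and the resulting trace identities, and the count of the Bezout number $1 \cdot 1 \cdot 2 \dotsm (d -1) = (d -1)!$ for your system of $d$ equations in $d$ unknowns are all sound; the answer is also consistent with Fujimura's degree $(d -2)!$ for $\Mult_{d}^{(1)}$ on $\mathcal{P}_{d}$, since the fiber of $\Poly_{d}^{\mc} \rightarrow \mathcal{P}_{d}$ generically has $d -1$ elements.

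The genuine gap is the last step: you invoke Bezout but only announce a plan for ruling out positive-dimensional components and escape to infinity, and this is precisely the point where the hypothesis $\Lambda \in \Xi$ must enter, so as written the proof is incomplete. The good news is that this step is much easier than the ``tropical/valuative decomposition'' you anticipate, because your equations are already essentially homogeneous. Work in $\mathbb{P}^{d}$ with homogeneous coordinates $\left[ Z_{0} : \dotso : Z_{d -1} : T \right]$: the equations $\sum_{i} Z_{i} = 0$ and $\sum_{i} \mu_{i} Z_{i}^{k} = 0$ for $1 \leq k \leq d -2$ are homogeneous, and the only inhomogeneous one becomes $\sum_{i} \mu_{i} Z_{i}^{d -1} +T^{d -1} = 0$. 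At a point with $T = 0$, adding the $k = 0$ relation $\sum_{i} \mu_{i} = 0$ (which holds because $\Lambda \in \Xi$) gives $\sum_{i} \mu_{i} Z_{i}^{k} = 0$ for all $0 \leq k \leq d -1$; grouping the indices according to the distinct values $v_{1}, \dotsc, v_{m}$ among the $Z_{i}$ and inverting the invertible $m \times m$ Vandermonde matrix $\left( v_{l}^{k} \right)$ yields $\sum_{i \in J_{l}} \mu_{i} = 0$ for every group $J_{l}$; the case $m = 1$ is impossible since $\sum_{i} Z_{i} = 0$ would force all $Z_{i} = 0$; hence some nonempty proper subset $J$ satisfies $\sum_{j \in J} \frac{1}{1 -\lambda_{j}} = 0$, contradicting the definition of $\Xi$. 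So the homogenized system has no zeros at infinity; consequently every irreducible component of its projective zero locus is $0$\nobreakdash-dimensional (a positive-dimensional component would have to meet the hyperplane $\lbrace T = 0 \rbrace$), and projective Bezout bounds the number of solutions by $(d -1)!$, which finishes your argument. Note also that your worry about solutions ``lost at infinity'' is misplaced: such losses could only make the affine count smaller than the Bezout number; the only real danger is excess components, and the computation above eliminates them.
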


Now, define \[ f_{0}(z) = z^{d} \in \Poly_{d}^{\mc}(\mathbb{C}) \, \text{.} \] We shall also use the explicit expressions for the differentials of multiplier maps at $f_{0}$, which are due to Gorbovickis.

\begin{lemma}[{\cite[Lemma~3.1]{G2016}}]
\label{lemma:gorbovickis}
Suppose that $z_{0} \in \mathbb{C}^{*}$ is a periodic point for $f_{0}$ with period $p \geq 1$, $U$ is an open neighborhood of $f_{0}$ in $\Poly_{d}^{\mc}(\mathbb{C})$, $\zeta_{0} \colon U \rightarrow \mathbb{C}$ is a holomorphic map such that $\zeta_{0}\left( f_{0} \right) = z_{0}$ and $f^{\circ p}\left( \zeta_{0}(f) \right) = \zeta_{0}(f)$ for all $f \in U$ and $\rho_{0} \colon U \rightarrow \mathbb{C}$ is the holomorphic map defined by $\rho_{0}(f) = \left( f^{\circ p} \right)^{\prime}\left( \zeta_{0}(f) \right)$. Then \[ \forall k \in \lbrace 0, \dotsc, d -2 \rbrace, \, \frac{\partial \rho_{0}}{\partial a_{k}}\left( f_{0} \right) = d^{p -1} (k -d) \sum_{j = 0}^{p -1} z_{0}^{d^{j} (k -d)} \, \text{.} \]
\end{lemma}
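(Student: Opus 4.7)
\emph{Plan.} The proof is a direct local computation at the base point $f_0(z)=z^d$. Use the coordinates $f(z)=z^d+\sum_{\ell=0}^{d-2} a_\ell z^\ell$ on $\Poly_d^{\mc}$, which vanish at $f_0$. Denote the cycle of $f_0$ through $z_0$ by $z_j=f_0^{\circ j}(z_0)=z_0^{d^j}$, so that $z_0^{d^p-1}=1$, and extend $\zeta_0$ along the cycle by $\zeta_j(f):=f^{\circ j}(\zeta_0(f))$, which is holomorphic on a (possibly shrunk) neighborhood of $f_0$ and satisfies $\zeta_j(f_0)=z_j$. By the chain rule,
\[
\rho_0(f)=(f^{\circ p})'(\zeta_0(f))=\prod_{j=0}^{p-1} f'(\zeta_j(f)).
\]

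The first step is to compute $u_j:=\partial \zeta_j/\partial a_k(f_0)$. Differentiating the identity $\zeta_{j+1}(f)=f(\zeta_j(f))$ at $f_0$ gives the linear recursion $u_{j+1}=z_j^k+d\,z_j^{d-1}\,u_j$. Since the multiplier $\rho_0(f_0)=d^p$ equals $\prod_{j=0}^{p-1} d\,z_j^{d-1}$ (using $z_0^{d^p-1}=1$) and $d^p\neq 1$, the periodicity constraint $u_p=u_0$ determines each $u_j$ as an explicit geometric-type cyclic sum.

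The second step is to apply the Leibniz rule to the product formula for $\rho_0$. At $f_0$ one has $\prod_{i\neq j} f_0'(z_i)=d^p/(d\,z_j^{d-1})=d^{p-1}z_j^{1-d}$, $\partial f'/\partial a_k(z_j)|_{f_0}=k\,z_j^{k-1}$, and $f_0''(z_j)=d(d-1)z_j^{d-2}$. This yields
\[
\frac{\partial \rho_0}{\partial a_k}(f_0)=\sum_{j=0}^{p-1} d^{p-1}z_j^{1-d}\bigl[k\,z_j^{k-1}+d(d-1)z_j^{d-2}u_j\bigr].
\]
The last step is to substitute the formula for $u_j$ and simplify, repeatedly using $z_j=z_0^{d^j}$ and $z_0^{d^p-1}=1$. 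As a quick consistency check, when $p=1$ the recursion trivializes to $u_0=z_0^k/(1-d)$ and the above expression reduces to $(k-d)z_0^{k-1}=(k-d)z_0^{k-d}$, matching the claim.

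The main obstacle is purely combinatorial: the cyclic bookkeeping of exponents in the sum obtained after substituting the explicit $u_j$. The contributions of the $k\,z_j^{k-1}$ terms and of the $u_j$ terms, after normalizing every exponent modulo $d^p-1$, must be shown to telescope into a single cyclic sum $\sum_{j=0}^{p-1} z_0^{d^j(k-d)}$ with overall coefficient $d^{p-1}(k-d)$. The key algebraic inputs making this work are $z_0^{d^p}=z_0$ and the identity $z_j^{1-d}\cdot z_j^{d-2}\cdot z_j^{d^i k - d^i\cdot 0}=z_0^{d^j(k-d)}$ up to a factor that collapses $d(d-1)/(1-d^p)$ against the geometric progression coming from $u_j$, leaving $-d$ to combine with $+k$ into the factor $k-d$.
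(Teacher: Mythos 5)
The paper cites this lemma from Gorbovickis (\cite[Lemma~3.1]{G2016}) and does not supply its own proof, so there is no in-paper argument to compare against; I'll evaluate your plan on its own merits.

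Your strategy is correct and is essentially the standard one for this kind of statement: implicitly differentiate the cyclic identity $\zeta_{j+1}(f)=f(\zeta_j(f))$ to get the linear recursion $u_{j+1}=z_j^k+dz_j^{d-1}u_j$, use $\prod_{j}\bigl(dz_j^{d-1}\bigr)=d^p z_0^{d^p-1}=d^p\neq 1$ to conclude the cyclic system for $(u_j)$ is uniquely solvable, then logarithmically differentiate $\rho_0=\prod_j f'(\zeta_j)$ and substitute. I checked the combinatorial collapse you flag as the main obstacle, and it does go through cleanly: writing out $u_j(1-d^p)=\sum_{i=1}^{p}d^{i-1}\bigl(\prod_{\ell=1}^{i-1}z_{j-\ell}^{d-1}\bigr)z_{j-i}^{k}$, the exponent of $z_0$ in $z_j^{-1}\bigl(\prod_{\ell=1}^{i-1}z_{j-\ell}^{d-1}\bigr)z_{j-i}^{k}$ telescopes via the geometric sum $(d-1)\sum_{\ell=1}^{i-1}d^{j-\ell}=d^j-d^{j-i+1}$ to exactly $d^{j-i}(k-d)$, and reindexing by $m\equiv j-i\pmod p$ produces the factor $\sum_{i=1}^p d^{i-1}=(d^p-1)/(d-1)$, which cancels against $(1-d^p)$ and the explicit $d(d-1)$ to leave $-d^p\sum_m z_0^{d^m(k-d)}$; adding the direct term $d^{p-1}k\sum_m z_0^{d^m(k-d)}$ gives the claimed $d^{p-1}(k-d)\sum_m z_0^{d^m(k-d)}$. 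Your $p=1$ sanity check is also correct (using $z_0^{d-1}=1$ there). One small caveat: the final displayed ``key algebraic identity'' in your last paragraph, $z_j^{1-d}\cdot z_j^{d-2}\cdot z_j^{d^ik-d^i\cdot 0}=z_0^{d^j(k-d)}$, does not parse as written and the right-hand side has the wrong exponent base $d^j$ rather than $d^{j-i}$ --- the correct statement is the telescoping identity above, followed by a reindexing in $m=j-i$. This is a notational slip rather than a structural gap, but it should be repaired before the proof is written out.
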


\subsection{Multipliers at fixed points}

Here, let us parametrize some open neighborhood $U_{1}$ of $f_{0}$ in $\Poly_{d}^{\mc}(\mathbb{C})$ by the multipliers at the fixed points and describe, for a generic $f \in U_{1}$, all the elements $g \in \Poly_{d}^{\mc}(\mathbb{C})$ such that $\Lambda_{g}^{(1)} = \Lambda_{f}^{(1)}$.

The fixed points for $f_{0}$ are precisely $0$ and the points $\alpha^{j}$, with $j \in \lbrace 0, \dotsc, d -2 \rbrace$. In addition, we have $f_{0}^{\prime}(0) = 0$ and $f_{0}^{\prime}\left( \alpha^{j} \right) = d$ for all $j \in \lbrace 0, \dotsc, d -2 \rbrace$. It follows from the implicit function theorem that there exist an open neighborhood $U_{1}$ of $f_{0}$ in $\Poly_{d}^{\mc}(\mathbb{C})$ and holomorphic maps $\zeta_{j}^{(1)} \colon U_{1} \rightarrow \mathbb{C}$, with $j \in \lbrace 0, \dotsc, d -2 \rbrace \cup \lbrace \lozenge \rbrace$, such that \[ \zeta_{j}^{(1)}\left( f_{0} \right) = \begin{cases} \alpha^{j} & \text{if } j \in \lbrace 0, \dotsc, d -2 \rbrace\\ 0 & \text{if } j = \lozenge \end{cases} \quad \text{and} \quad \forall f \in U_{1}, \, f\left( \zeta_{j}^{(1)}(f) \right) = \zeta_{j}^{(1)}(f) \] for all $j \in \lbrace 0, \dotsc, d -2 \rbrace \cup \lbrace \lozenge \rbrace$. Shrinking $U_{1}$ if necessary, we may assume that the points $\zeta_{j}^{(1)}(f)$, with $j \in \lbrace 0, \dotsc, d -2 \rbrace \cup \lbrace \lozenge \rbrace$, are pairwise distinct for each $f \in U_{1}$. For every $f \in U_{1}$, we have \[ \Phi_{f}^{(1)}(z) = \left( z -\zeta_{\lozenge}^{(1)}(f) \right) \prod_{j = 0}^{d -2} \left( z -\zeta_{j}^{(1)}(f) \right) \, \text{.} \]

For $j \in \lbrace 0, \dotsc, d -2 \rbrace \cup \lbrace \lozenge \rbrace$, define the holomorphic map $\rho_{j}^{(1)} \colon U_{1} \rightarrow \mathbb{C} \setminus \lbrace 1 \rbrace$ by \[ \rho_{j}^{(1)}(f) = f^{\prime}\left( \zeta_{j}^{(1)}(f) \right) \, \text{.} \] For every $f \in U_{1}$, we have \[ \Lambda_{f}^{(1)} = \left[ \rho_{0}^{(1)}(f), \dotsc, \rho_{d -2}^{(1)}(f), \rho_{\lozenge}^{(1)}(f) \right] \quad \text{and} \quad \frac{1}{1 -\rho_{\lozenge}^{(1)}(f)} +\sum_{j = 0}^{d -2} \frac{1}{1 -\rho_{j}^{(1)}(f)} = 0 \, \text{.} \] Now, define the holomorphic map \[ \boldsymbol{\rho}_{1} = \left( \rho_{0}^{(1)}, \dotsc, \rho_{d -2}^{(1)} \right) \colon U_{1} \rightarrow \mathbb{C}^{d -1} \, \text{.} \] Denote by $\cdot$ the natural action of $\mathfrak{S}_{d -1}$ on $\mathbb{C}^{d -1}$, which is given by \[ \sigma \cdot \left( \lambda_{0}, \dotsc, \lambda_{d -2} \right) = \left( \lambda_{\sigma^{-1}(0)}, \dotsc, \lambda_{\sigma^{-1}(d -2)} \right) \, \text{.} \] By the formulas above,
\begin{equation}
\label{equation:permSpec0}
\forall f, g \in U_{1}, \, \left( \exists \sigma \in \mathfrak{S}_{d -1}, \, \boldsymbol{\rho}_{1}(g) = \sigma \cdot \boldsymbol{\rho}_{1}(f) \right) \Longrightarrow \Lambda_{f}^{(1)} = \Lambda_{g}^{(1)} \, \text{.}
\end{equation}

Now, define \[ A_{1} = \left( \frac{\partial \rho_{j}^{(1)}}{\partial a_{k}}\left( f_{0} \right) \right)_{0 \leq j, k \leq d -2} \in M_{(d -1) \times (d -1)}(\mathbb{C}) \] to be the Jacobian matrix of $\boldsymbol{\rho}_{1}$ at $f_{0}$. By Lemma~\ref{lemma:gorbovickis}, we have \[ A_{1} = \left( (k -d) \alpha^{j (k -1)} \right)_{0 \leq j, k \leq d -2} \, \text{.} \] As a consequence, we have the key result below. For $k \in \lbrace 0, \dotsc, d -2 \rbrace$, define the polynomial \[ P_{k}(T) = \alpha^{k} \prod_{\substack{0 \leq j \leq d -2\\ j \neq k}} \left( \frac{T -\alpha^{j}}{\alpha^{k} -\alpha^{j}} \right) \in \mathbb{C}[T] \, \text{.} \]

\begin{claim}
\label{claim:inverse}
The matrix $A_{1}$ is invertible and $A_{1}^{-1} = \left( B_{j, k} \right)_{0 \leq j, k \leq d -2}$, where \[ \forall k \in \lbrace 0, \dotsc, d -2 \rbrace, \, P_{k}(T) = \sum_{j = 0}^{d -2} (j -d) B_{j, k} T^{j} \, \text{.} \]
\end{claim}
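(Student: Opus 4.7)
The plan is to apply Lemma~\ref{lemma:gorbovickis} with $p = 1$ and $z_{0} = \alpha^{j}$ to write the entries of $A_{1}$ explicitly. For $p = 1$ the formula reduces to $\frac{\partial \rho_{j}^{(1)}}{\partial a_{k}}\left( f_{0} \right) = (k -d) \alpha^{j (k -d)}$. Since $\alpha^{d -1} = 1$, we have $k -d \equiv k -1 \pmod{d -1}$, so this equals $(k -d) \alpha^{j (k -1)}$, matching the formula recorded just before the claim.

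Next, I would observe that the polynomial $P_{k}$ introduced just before the claim is, up to the scalar $\alpha^{k}$, a standard Lagrange basis polynomial for the nodes $\alpha^{0}, \dotsc, \alpha^{d -2}$. Explicitly, $P_{k}$ is the unique polynomial of degree at most $d -2$ satisfying $P_{k}\left( \alpha^{k} \right) = \alpha^{k}$ and $P_{k}\left( \alpha^{j} \right) = 0$ for all $j \in \lbrace 0, \dotsc, d -2 \rbrace \setminus \lbrace k \rbrace$.

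To verify the stated formula for $A_{1}^{-1}$, I would check directly that $A_{1} B = I_{d -1}$, where $B = \left( B_{j, k} \right)_{0 \leq j, k \leq d -2}$. Multiplying out and factoring $\alpha^{-j}$, I compute
\[
\left( A_{1} B \right)_{j, \ell} = \sum_{k = 0}^{d -2} (k -d) \alpha^{j (k -1)} B_{k, \ell} = \alpha^{-j} \sum_{k = 0}^{d -2} (k -d) B_{k, \ell} \alpha^{j k} = \alpha^{-j} P_{\ell}\left( \alpha^{j} \right) \, \text{,}
\]
where the last equality uses the defining expansion $P_{\ell}(T) = \sum_{k = 0}^{d -2} (k -d) B_{k, \ell} T^{k}$. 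By the Lagrange interpolation property of $P_{\ell}$, we have $P_{\ell}\left( \alpha^{j} \right) = \alpha^{\ell} \delta_{j, \ell}$ for $j \in \lbrace 0, \dotsc, d -2 \rbrace$, so $\left( A_{1} B \right)_{j, \ell} = \delta_{j, \ell}$, which shows that $A_{1}$ is invertible with inverse $B$.

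This is a short, direct linear-algebra computation, so there is no substantial obstacle. The only point one needs to be careful about is recognizing that the relation $k -d \equiv k -1 \pmod{d -1}$ converts the output of Lemma~\ref{lemma:gorbovickis} into the Vandermonde-like form $(k -d) \alpha^{j (k -1)}$ on which Lagrange interpolation at the $(d -1)$th roots of unity can be applied cleanly.
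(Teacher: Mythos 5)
Your proof is correct and follows essentially the same route as the paper: both reduce to computing $A_1 B$ entrywise and invoking the Lagrange interpolation identity $P_\ell(\alpha^j) = \alpha^j \delta_{j,\ell}$ at the $(d-1)$th roots of unity. The paper phrases the check as a chain of equivalences (so that $B$ is the unique right inverse), while you verify the single direction $A_1 B = I_{d-1}$ and implicitly rely on the standard fact that a one-sided inverse of a square matrix over a field is two-sided; both are complete.
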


\begin{proof}
Suppose that \[ B = \left( B_{j, k} \right)_{0 \leq j, k \leq d -2} \in M_{(d -1) \times (d -1)}(\mathbb{C}) \, \text{.} \] For $k \in \lbrace 0, \dotsc, d -2 \rbrace$, define the polynomial \[ Q_{k}(T) = \sum_{j = 0}^{d -2} (j -d) B_{j, k} T^{j} \in \mathbb{C}[T] \, \text{.} \] Also denote by $\delta$ the Kronecker delta. Then \[ \begin{split} A_{1} B = I_{d -2} & \Longleftrightarrow \forall j, k \in \lbrace 0, \dotsc, d -2 \rbrace, \, \sum_{\ell = 0}^{d -2} (\ell -d) B_{\ell, k} \alpha^{j (\ell -1)} = \delta_{j k}\\ & \Longleftrightarrow \forall j, k \in \lbrace 0, \dotsc, d -2 \rbrace, \, Q_{k}\left( \alpha^{j} \right) = \alpha^{j} \delta_{j k}\\ & \Longleftrightarrow \forall k \in \lbrace 0, \dotsc, d -2 \rbrace, \, Q_{k} = P_{k} \, \text{.} \end{split} \] This completes the proof of the claim.
\end{proof}

By Claim~\ref{claim:inverse} and the inverse function theorem, shrinking $U_{1}$ if necessary, we can assume that $\boldsymbol{\rho}_{1}$ induces a biholomorphism from $U_{1}$ onto an open neighborhood $V_{1}$ of $(d, \dotsc, d)$ in $\mathbb{C}^{d -1}$. In addition, shrinking further $U_{1}$ if necessary, we can assume that $U_{1}$ is connected, $\Lambda_{f}^{(1)} \in \Xi$ for all $f \in U_{1}$ and $V_{1}$ is invariant under the natural action of $\mathfrak{S}_{d -1}$.

Now, define the action $*$ of $\mathfrak{S}_{d -1}$ on $U_{1}$ by \[ \sigma * f = \boldsymbol{\rho}_{1}^{-1}\left( \sigma \cdot \boldsymbol{\rho}_{1}(f) \right) \, \text{.} \] Denote by $\Delta$ the fat diagonal of $\mathbb{C}^{d -1}$, which is given by \[ \Delta = \bigcup_{0 \leq j < k \leq d -2} \left\lbrace \left( \lambda_{0}, \dotsc, \lambda_{d -2} \right) \in \mathbb{C}^{d -1} : \lambda_{j} = \lambda_{k} \right\rbrace \, \text{.} \] By~\eqref{equation:permSpec0}, we have $\Lambda_{\sigma * f}^{(1)} = \Lambda_{f}^{(1)} \in \Xi$ for all $f \in U_{1}$ and all $\sigma \in \mathfrak{S}_{d -1}$. Moreover, for each $f \in U_{1} \setminus \boldsymbol{\rho}_{1}^{-1}(\Delta)$, the elements $\sigma * f$, with $\sigma \in \mathfrak{S}_{d -1}$, are pairwise distinct. It follows from Lemma~\ref{lemma:fujimura} that
\begin{equation}
\label{equation:permSpec1}
\forall f \in U_{1} \setminus \boldsymbol{\rho}_{1}^{-1}(\Delta), \, \left\lbrace g \in \Poly_{d}^{\mc}(\mathbb{C}) : \Lambda_{g}^{(1)} = \Lambda_{f}^{(1)} \right\rbrace = \left\lbrace \sigma * f : \sigma \in \mathfrak{S}_{d -1} \right\rbrace \, \text{.}
\end{equation}
In addition, we can describe conjugation in terms of this action. Define the cyclic permutation \[ \sigma_{0} = (0 \, \dotso \, d -2) \in \mathfrak{S}_{d -1} \, \text{.} \]

\begin{claim}
\label{claim:permConj1}
We have $\alpha^{k} \centerdot f = \sigma_{0}^{k} * f$ for all $f \in U_{1}$ and all $k \in \lbrace 0, \dotsc, d -2 \rbrace$.
\end{claim}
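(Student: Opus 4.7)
The plan is to verify the claim first for $k = 1$ and then iterate. Before doing so, I would shrink $U_1$ further so that it is invariant under the action of $\mu_{d-1}(\mathbb{C})$; this is possible because $f_0$ is fixed by this action (indeed, $\alpha \centerdot f_0(z) = \alpha f_0(z/\alpha) = \alpha^{1-d} z^d = z^d = f_0(z)$), so any open neighborhood of $f_0$ contains a smaller $\mu_{d-1}(\mathbb{C})$-invariant open neighborhood.

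The core calculation is as follows. Fix $f \in U_1$ and $j \in \lbrace 0, \dotsc, d-2 \rbrace$. If $w \in \mathbb{C}$ is any fixed point for $f$, then $\alpha w$ is a fixed point for $\alpha \centerdot f$ because $\alpha \centerdot f(\alpha w) = \alpha f(w) = \alpha w$. Applying this with $w = \zeta_j^{(1)}(f)$, the point $\alpha \cdot \zeta_j^{(1)}(f)$ is a fixed point for $\alpha \centerdot f$. At $f = f_0$ it equals $\alpha^{j+1}$, which is the value $\zeta_{j+1 \pmod{d-1}}^{(1)}(f_0)$. By the uniqueness part of the implicit function theorem (again after possibly shrinking $U_1$), this forces
\[
\zeta_{j+1 \pmod{d-1}}^{(1)}(\alpha \centerdot f) = \alpha \cdot \zeta_{j}^{(1)}(f) \quad \text{for all } f \in U_1 \text{ and } j \in \lbrace 0, \dotsc, d-2 \rbrace \, .
\]
Then the chain rule gives $(\alpha \centerdot f)^\prime(z) = f^\prime(z/\alpha)$, so
\[
\rho_{j+1 \pmod{d-1}}^{(1)}(\alpha \centerdot f) = (\alpha \centerdot f)^\prime\bigl( \alpha \cdot \zeta_j^{(1)}(f) \bigr) = f^\prime\bigl( \zeta_j^{(1)}(f) \bigr) = \rho_j^{(1)}(f) \, .
\]

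Since $\sigma_0(j) = j + 1 \pmod{d-1}$, equivalently $\sigma_0^{-1}(i) = i - 1 \pmod{d-1}$, the identity above reads $\rho_i^{(1)}(\alpha \centerdot f) = \rho_{\sigma_0^{-1}(i)}^{(1)}(f)$ for every $i \in \lbrace 0, \dotsc, d-2 \rbrace$, which is exactly the statement $\boldsymbol{\rho}_1(\alpha \centerdot f) = \sigma_0 \cdot \boldsymbol{\rho}_1(f)$. Since $\boldsymbol{\rho}_1$ restricts to a biholomorphism $U_1 \to V_1$ and $V_1$ is $\mathfrak{S}_{d-1}$-invariant, the definition of the action $*$ yields $\alpha \centerdot f = \sigma_0 * f$ for every $f \in U_1$. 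Iterating this identity (note that $U_1$ is invariant under both actions), we obtain $\alpha^k \centerdot f = \sigma_0^k * f$ for all $f \in U_1$ and all $k \in \lbrace 0, \dotsc, d-2 \rbrace$.

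The only subtlety is bookkeeping: one must ensure $U_1$ has been shrunk so that it is simultaneously $\mu_{d-1}(\mathbb{C})$-invariant, $\boldsymbol{\rho}_1$ is still a biholomorphism onto an $\mathfrak{S}_{d-1}$-invariant $V_1$, and the relation $\zeta_{j+1 \pmod{d-1}}^{(1)}(\alpha \centerdot f) = \alpha \cdot \zeta_j^{(1)}(f)$ holds. All of these are routine shrinkings compatible with the earlier ones, so no new obstruction appears.
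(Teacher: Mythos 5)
Your proof is correct, and it is a genuinely cleaner route than the paper's. The paper first invokes Fujimura's lemma (through the dichotomy~\eqref{equation:permSpec1}) to establish that $\alpha^{k} \centerdot f$ lies in $U_{1} \setminus \boldsymbol{\rho}_{1}^{-1}(\Delta)$ whenever $f$ does, then identifies the permutation by specializing to the boundary point $f_{0}$, and finally extends from $U_{1} \setminus \boldsymbol{\rho}_{1}^{-1}(\Delta)$ to $U_{1}$ by continuity and density. You instead simply shrink $U_{1}$ to be $\mu_{d -1}(\mathbb{C})$\nobreakdash-invariant (legitimate since $f_{0}$ is fixed by the finite group $\mu_{d -1}(\mathbb{C})$), and then run the direct computation $(\alpha \centerdot f)'(\alpha w) = f'(w)$ together with the uniqueness part of the implicit function theorem on all of $U_{1}$ at once. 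This sidesteps Lemma~\ref{lemma:fujimura} entirely, avoids the fat-diagonal/density step, and makes the claim self-contained. One small remark: as you note, the extra shrinking must be reconciled with the earlier requirement that $V_{1}$ be $\mathfrak{S}_{d -1}$\nobreakdash-invariant, but the order of operations works out cleanly --- once the claim holds on a $\mu_{d -1}(\mathbb{C})$\nobreakdash-invariant $U_{1}$, the further shrinking $U_{1}^{*} = \boldsymbol{\rho}_{1}^{-1}\left( \bigcap_{\sigma \in \mathfrak{S}_{d -1}} \sigma \cdot V_{1} \right)$ is still $\mu_{d -1}(\mathbb{C})$\nobreakdash-invariant precisely because $\boldsymbol{\rho}_{1}(\alpha \centerdot f) = \sigma_{0} \cdot \boldsymbol{\rho}_{1}(f)$ and the symmetrized image is $\mathfrak{S}_{d -1}$\nobreakdash-invariant. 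Also, for the proof of this particular claim you do not actually need the full $\mathfrak{S}_{d -1}$\nobreakdash-invariance of $V_{1}$; the $\mu_{d -1}(\mathbb{C})$\nobreakdash-invariance of $U_{1}$ alone guarantees that $\sigma_{0} \cdot \boldsymbol{\rho}_{1}(f) = \boldsymbol{\rho}_{1}(\alpha \centerdot f) \in V_{1}$, so $\sigma_{0} * f$ is defined.
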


\begin{proof}
The set $U_{1} \setminus \boldsymbol{\rho}_{1}^{-1}(\Delta)$ is a connected open subset of $\Poly_{d}^{\mc}(\mathbb{C})$. Moreover, if $f \in U_{1} \setminus \boldsymbol{\rho}_{1}^{-1}(\Delta)$ and $k \in \lbrace 0, \dotsc, d -2 \rbrace$, then there exists a permutation $\sigma \in \mathfrak{S}_{d -1}$ such that $\alpha^{k} \centerdot f = \sigma * f$ by~\eqref{equation:permSpec1}, as $\Lambda_{\alpha^{k} \centerdot f}^{(1)} = \Lambda_{f}^{(1)}$, and hence $\alpha^{k} \centerdot f \in U_{1} \setminus \boldsymbol{\rho}_{1}^{-1}(\Delta)$. Thus, $U_{1} \setminus \boldsymbol{\rho}_{1}^{-1}(\Delta)$ is invariant under the action of $\mu_{d -1}(\mathbb{C}) = \langle \alpha \rangle$ by conjugation. As a result, if $j, k \in \lbrace 0, \dotsc, d -2 \rbrace$, then there exists $\ell \in \lbrace 0, \dotsc, d -2 \rbrace \cup \lbrace \lozenge \rbrace$ such that $\zeta_{j}^{(1)}\left( \alpha^{k} \centerdot f \right) = \alpha^{k} \zeta_{\ell}^{(1)}(f)$ for all $f \in U_{1} \setminus \boldsymbol{\rho}_{1}^{-1}(\Delta)$, and we obtain $\ell = \sigma_{0}^{-k}(j)$ by letting $f \rightarrow f_{0}$. Thus, we have $\zeta_{j}^{(1)}\left( \alpha^{k} \centerdot f \right) = \alpha^{k} \zeta_{\sigma_{0}^{-k}(j)}^{(1)}(f)$ for all $f \in U_{1} \setminus \boldsymbol{\rho}_{1}^{-1}(\Delta)$ and all $j, k \in \lbrace 0, \dotsc, d -2 \rbrace$. As a consequence, for each $f \in U_{1} \setminus \boldsymbol{\rho}_{1}^{-1}(\Delta)$ and each $k \in \lbrace 0, \dotsc, d -2 \rbrace$, we have $\boldsymbol{\rho}_{1}\left( \alpha^{k} \centerdot f \right) = \sigma_{0}^{k} \cdot \boldsymbol{\rho}_{1}(f)$ since the multiplier is invariant under conjugation, which yields $\alpha^{k} \centerdot f = \sigma_{0}^{k} * f$. Thus, $f \mapsto \alpha^{k} \centerdot f$ and $f \mapsto \sigma_{0}^{k} * f$ coincide on $U_{1} \setminus \boldsymbol{\rho}_{1}^{-1}(\Delta)$, and hence they coincide on all of $U_{1}$ since $U_{1} \setminus \boldsymbol{\rho}_{1}^{-1}(\Delta)$ is dense in $U_{1}$. This completes the proof of the claim.
\end{proof}

\subsection{Multipliers at cycles with period $2$}

Here, let us examine the variations of the multipliers at the cycles with period $2$ on some open neighborhood $U_{2}$ of $f_{0}$ in $\Poly_{d}^{\mc}(\mathbb{C})$.

First, the periodic points for $f_{0}$ with period $2$ are the $\left( d^{2} -1 \right)$th roots of unity that are not $(d -1)$th roots of unity. Choose representatives $w_{0}, \dotsc, w_{\frac{d (d -1)}{2} -1}$ for the cycles for $f_{0}$ with period $2$. Setting $\beta = \exp\left( \frac{2 \pi i}{d^{2} -1} \right)$, we can take $w_{j} = \alpha^{j} \beta$ for all $j \in \lbrace 0, \dotsc, d -2 \rbrace$ since these lie in pairwise distinct cycles for $f_{0}$ with period $2$. We have $\left( f_{0}^{\circ 2} \right)^{\prime}\left( w_{j} \right) = d^{2}$ for all $j \in \left\lbrace 0, \dotsc, \frac{d (d -1)}{2} -1 \right\rbrace$. By the implicit function theorem, there exist an open neighborhood $U_{2}$ of $f_{0}$ in $\Poly_{d}^{\mc}(\mathbb{C})$ and holomorphic maps $\zeta_{j}^{(2)} \colon U_{2} \rightarrow \mathbb{C}$, with $j \in \left\lbrace 0, \dotsc, \frac{d (d -1)}{2} -1 \right\rbrace$, such that \[ \zeta_{j}^{(2)}\left( f_{0} \right) = w_{j} \quad \text{and} \quad \forall f \in U_{2}, \, f^{\circ 2}\left( \zeta_{j}^{(2)}(f) \right) = \zeta_{j}^{(2)}(f) \] for all $j \in \left\lbrace 0, \dotsc, \frac{d (d -1)}{2} -1 \right\rbrace$. Shrinking $U_{2}$ if necessary, we may assume that $U_{2}$ is connected and invariant under the action of $\mu_{d -1}(\mathbb{C}) = \langle \alpha \rangle$ by conjugation. For every $f \in U_{2}$, we have \[ \Phi_{f}^{(2)}(z) = \prod_{j = 0}^{\frac{d (d -1)}{2} -1} \left( z -\zeta_{j}^{(2)}(f) \right) \left( z -f\left( \zeta_{j}^{(2)}(f) \right) \right) \, \text{.} \]

Now, for $j \in \left\lbrace 0, \dotsc, \frac{d (d -1)}{2} -1 \right\rbrace$, define the holomorphic map $\rho_{j}^{(2)} \colon U_{2} \rightarrow \mathbb{C}$ by \[ \rho_{j}^{(2)}(f) = \left( f^{\circ 2} \right)^{\prime}\left( \zeta_{j}^{(2)}(f) \right) \, \text{.} \] For every $f \in U_{2}$, we have \[ \Lambda_{f}^{(2)} = \left[ \rho_{0}^{(2)}(f), \dotsc, \rho_{\frac{d (d -1)}{2} -1}^{(2)}(f) \right] \, \text{.} \] Define the holomorphic map \[ \boldsymbol{\rho}_{2} = \left( \rho_{0}^{(2)}, \dotsc, \rho_{\frac{d (d -1)}{2} -1}^{(2)} \right) \colon U_{2} \rightarrow \mathbb{C}^{\frac{d (d -1)}{2}} \, \text{.} \] Also denote by $\cdot$ the natural action of $\mathfrak{S}_{\frac{d (d -1)}{2}}$ on $\mathbb{C}^{\frac{d (d -1)}{2}}$, which is given by \[ \tau \cdot \left( \lambda_{0}, \dotsc, \lambda_{\frac{d (d -1)}{2} -1} \right) = \left( \lambda_{\tau^{-1}(0)}, \dotsc, \lambda_{\tau^{-1}\left( \frac{d (d -1)}{2} -1 \right)} \right) \, \text{.} \] By the formula above,
\begin{equation}
\label{equation:permSpec2}
\forall f, g \in U_{2}, \, \Lambda_{f}^{(2)} = \Lambda_{g}^{(2)} \Longleftrightarrow \left( \exists \tau \in \mathfrak{S}_{\frac{d (d -1)}{2}}, \, \boldsymbol{\rho}_{2}(g) = \tau \cdot \boldsymbol{\rho}_{2}(f) \right) \, \text{.}
\end{equation}

We may describe the behavior of $\boldsymbol{\rho}_{2}$ under conjugation. Define $\tau_{0} \in \mathfrak{S}_{\frac{d (d -1)}{2}}$ to be the permutation such that, for each $j \in \left\lbrace 0, \dotsc, \frac{d (d -1)}{2} -1 \right\rbrace$, the point $\alpha w_{j}$ lies in the cycle for $f_{0}$ containing $w_{\tau_{0}(j)}$. By the choice of $w_{0}, \dotsc, w_{d -2}$, we have
\begin{equation}
\label{equation:permCycle}
\forall j \in \lbrace 0, \dotsc, d -2 \rbrace, \, \tau_{0}(j) = j +1 \pmod{d -1} \in \lbrace 0, \dotsc, d -2 \rbrace \, \text{.}
\end{equation}

\begin{claim}
\label{claim:permConj2}
We have $\boldsymbol{\rho}_{2}\left( \alpha^{k} \centerdot f \right) = \tau_{0}^{k} \cdot \boldsymbol{\rho}_{2}(f)$ for all $f \in U_{2}$ and all $k \in \lbrace 0, \dotsc, d -2 \rbrace$.
\end{claim}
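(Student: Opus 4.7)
\medskip

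\noindent\textbf{Proposal.} The plan is to first verify the identity on a small neighborhood of $f_{0}$ using the conjugation invariance of multipliers together with uniqueness in the implicit function theorem, and then spread the identity to all of $U_{2}$ by holomorphic continuation using the connectedness of $U_{2}$.

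First, I would observe that $\alpha^{k} \centerdot f_{0} = f_{0}$, since the monomial $z^{d}$ is fixed by conjugation by any $(d -1)$th root of unity; combined with the assumption that $U_{2}$ is invariant under the $\mu_{d -1}(\mathbb{C})$\nobreakdash-action, this ensures $\alpha^{k} \centerdot f \in U_{2}$ for all $f \in U_{2}$ and all $k \in \lbrace 0, \dotsc, d -2 \rbrace$. Fix $j \in \left\lbrace 0, \dotsc, \tfrac{d (d -1)}{2} -1 \right\rbrace$ and $k \in \lbrace 0, \dotsc, d -2 \rbrace$. Consider the two holomorphic maps $U_{2} \rightarrow \mathbb{C}$ given by $f \mapsto \alpha^{k} \zeta_{j}^{(2)}(f)$ and $f \mapsto \zeta_{\tau_{0}^{k}(j)}^{(2)}\bigl( \alpha^{k} \centerdot f \bigr)$. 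Both are periodic points of period dividing $2$ for $\alpha^{k} \centerdot f$: the first because $\phi \centerdot f\bigl( \phi(z) \bigr) = \phi\bigl( f(z) \bigr)$ for all $\phi \in \Aff(\mathbb{C})$, so conjugation carries periodic points to periodic points of the same period; the second by construction. At $f = f_{0}$, these two points both lie in the cycle for $f_{0}$ that contains $w_{\tau_{0}^{k}(j)}$, by the very definition of $\tau_{0}$ iterated $k$ times.

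Since the cycles for $f_{0}$ with period $2$ are pairwise disjoint finite sets and they persist holomorphically for $f$ close to $f_{0}$, by shrinking $U_{2}$ to a smaller connected open neighborhood $U_{2}^{\prime}$ of $f_{0}$ I can ensure that, for every $f \in U_{2}^{\prime}$, the points $\alpha^{k} \zeta_{j}^{(2)}(f)$ and $\zeta_{\tau_{0}^{k}(j)}^{(2)}\bigl( \alpha^{k} \centerdot f \bigr)$ belong to the same cycle for $\alpha^{k} \centerdot f$ with period $2$. Since a polynomial has the same multiplier at each point of a cycle, and since the multiplier is invariant under affine conjugation, this gives
\[ \rho_{\tau_{0}^{k}(j)}^{(2)}\bigl( \alpha^{k} \centerdot f \bigr) = \bigl( \bigl( \alpha^{k} \centerdot f \bigr)^{\circ 2} \bigr)^{\prime}\bigl( \alpha^{k} \zeta_{j}^{(2)}(f) \bigr) = \bigl( f^{\circ 2} \bigr)^{\prime}\bigl( \zeta_{j}^{(2)}(f) \bigr) = \rho_{j}^{(2)}(f) \]
for all $f \in U_{2}^{\prime}$. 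Unpacking the definition of the action $\cdot$ of $\mathfrak{S}_{\frac{d (d -1)}{2}}$ on $\mathbb{C}^{\frac{d (d -1)}{2}}$, this reads exactly as $\boldsymbol{\rho}_{2}\bigl( \alpha^{k} \centerdot f \bigr) = \tau_{0}^{k} \cdot \boldsymbol{\rho}_{2}(f)$ on $U_{2}^{\prime}$.

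Finally, both sides of this identity are holomorphic maps from the connected complex manifold $U_{2}$ to $\mathbb{C}^{\frac{d (d -1)}{2}}$, and they coincide on the nonempty open subset $U_{2}^{\prime}$ of $U_{2}$. By the identity theorem for holomorphic maps, they coincide on all of $U_{2}$, which completes the argument. The only delicate point is ensuring the correspondence at $f = f_{0}$ between the two holomorphic families of period\nobreakdash-$2$ points, but this is precisely what $\tau_{0}$ was defined to encode, and the continuity/uniqueness of the implicit function theorem takes care of the rest.
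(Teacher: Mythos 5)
Your proof is correct and follows essentially the same approach as the paper: both identify the cycle correspondence induced by conjugation by $\alpha^{k}$ as $\tau_{0}^{k}$, verify this at the center $f_{0}$, and propagate it to all of $U_{2}$ using connectedness and the invariance of multipliers under conjugation. The only stylistic difference is that you establish the identity on a shrunk neighborhood $U_{2}^{\prime}$ and then extend by the identity theorem, whereas the paper directly asserts that the cycle label (the index $\ell$ such that $\alpha^{-k} \zeta_{j}^{(2)}(\alpha^{k} \centerdot f)$ lies in the cycle of $\zeta_{\ell}^{(2)}(f)$) is constant over the connected set $U_{2}$; your packaging is a hair cleaner because holomorphic continuation sidesteps any concern about cycles degenerating or merging away from $f_{0}$, but the underlying idea is the same.
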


\begin{proof}
For every $j \in \left\lbrace 0, \dotsc, \frac{d (d -1)}{2} -1 \right\rbrace$ and every $k \in \lbrace 0, \dotsc, d -2 \rbrace$, there exists $\ell \in \left\lbrace 0, \dotsc, \frac{d (d -1)}{2} -1 \right\rbrace$ such that $\alpha^{-k} \zeta_{j}^{(2)}\left( \alpha^{k} \centerdot f \right)$ lies in the cycle for $f$ containing $\zeta_{\ell}^{(2)}(f)$ for all $f \in U_{2}$, and we have $\ell = \tau_{0}^{-k}(j)$ since $\alpha^{-k} w_{j}$ lies in the cycle for $f_{0}$ containing $w_{\ell}$ by taking $f = f_{0}$. Thus, for each $f \in U_{2}$ and each $k \in \lbrace 0, \dotsc, d -2 \rbrace$, the point $\alpha^{-k} \zeta_{j}^{(2)}\left( \alpha^{k} \centerdot f \right)$ belongs to the cycle for $f$ containing $\zeta_{\tau_{0}^{-k}(j)}^{(2)}(f)$ for each $j \in \left\lbrace 0, \dotsc, \frac{d (d -1)}{2} -1 \right\rbrace$, and hence $\boldsymbol{\rho}_{2}\left( \alpha^{k} \centerdot f \right) = \tau_{0}^{k} \cdot \boldsymbol{\rho}_{2}(f)$ because the multiplier is invariant under conjugation. This completes the proof of the claim.
\end{proof}

Finally, define \[ A_{2} = \left( \frac{\partial \rho_{j}^{(2)}}{\partial a_{k}}\left( f_{0} \right) \right)_{\substack{0 \leq j \leq \frac{d (d -1)}{2} -1\\ 0 \leq k \leq d -2}} \in M_{\frac{d (d -1)}{2} \times (d -1)}(\mathbb{C}) \] to be the Jacobian matrix of $\boldsymbol{\rho}_{2}$ at $f_{0}$. By Lemma~\ref{lemma:gorbovickis}, we have
\begin{equation}
\label{equation:matrix}
A_{2} = \left( d (k -d) \left( w_{j}^{k -d} +w_{j}^{d (k -d)} \right) \right)_{\substack{0 \leq j \leq \frac{d (d -1)}{2} -1\\ 0 \leq k \leq d -2}} \, \text{.}
\end{equation}

\subsection{Proof of Theorem~\ref{theorem:unique}}

Finally, let us conclude here our proof of Lemma~\ref{lemma:unique} and deduce Theorem~\ref{theorem:unique}.

By~\eqref{equation:permSpec1} and Claim~\ref{claim:permConj1}, to prove Lemma~\ref{lemma:unique}, it suffices to show that there exists a nonempty open subset $U \subseteq U_{1} \setminus \boldsymbol{\rho}_{1}^{-1}(\Delta)$ of $\Poly_{d}^{\mc}(\mathbb{C})$ such that
\begin{equation}
\label{equation:goal1}
\forall f \in U, \, \left\lbrace \sigma \in \mathfrak{S}_{d -1} : \Lambda_{\sigma * f}^{(2)} = \Lambda_{f}^{(2)} \right\rbrace = \left\langle \sigma_{0} \right\rangle \, \text{.}
\end{equation}
Choose a connected open neighborhood $U_{0} \subseteq U_{1} \cap U_{2}$ of $f_{0}$ in $\Poly_{d}^{\mc}(\mathbb{C})$ such that $V_{0} = \boldsymbol{\rho}_{1}\left( U_{0} \right) \subseteq \mathbb{C}^{d -1}$ is invariant under the natural action of $\mathfrak{S}_{d -1}$, and define the holomorphic map \[ \boldsymbol{\rho} = \boldsymbol{\rho}_{2} \circ \boldsymbol{\rho}_{1}^{-1} \colon V_{0} \rightarrow \mathbb{C}^{\frac{d (d -1)}{2}} \, \text{.} \] By~\eqref{equation:permSpec2}, for every $f \in U_{0}$ and every $\sigma \in \mathfrak{S}_{d -1}$, \[ \begin{split} \Lambda_{\sigma * f}^{(2)} = \Lambda_{f}^{(2)} & \Longleftrightarrow \exists \tau \in \mathfrak{S}_{\frac{d (d -1)}{2}}, \, \boldsymbol{\rho}_{2}(\sigma * f) = \tau \cdot \boldsymbol{\rho}_{2}(f)\\ & \Longleftrightarrow \exists \tau \in \mathfrak{S}_{\frac{d (d -1)}{2}}, \, \boldsymbol{\rho}\left( \sigma \cdot \boldsymbol{\rho}_{1}(f) \right) = \tau \cdot \boldsymbol{\rho}\left( \boldsymbol{\rho}_{1}(f) \right) \, \text{.} \end{split} \] Thus, to prove Lemma~\ref{lemma:unique}, it suffices to show that there is a nonempty open subset $V \subseteq V_{0}$ of $\mathbb{C}^{d -1}$ such that
\begin{equation}
\label{equation:goal2}
\forall \boldsymbol{\lambda} \in V, \, \left\lbrace \sigma \in \mathfrak{S}_{d -1} : \exists \tau \in \mathfrak{S}_{\frac{d (d -1)}{2}}, \, \boldsymbol{\rho}(\sigma \cdot \boldsymbol{\lambda}) = \tau \cdot \boldsymbol{\rho}(\boldsymbol{\lambda}) \right\rbrace = \left\langle \sigma_{0} \right\rangle \, \text{,}
\end{equation}
since $U = \boldsymbol{\rho}_{1}^{-1}(V \setminus \Delta)$ would then satisfy~\eqref{equation:goal1}.

By Claims~\ref{claim:permConj1} and~\ref{claim:permConj2}, for every $f \in U_{0}$ and every $k \in \lbrace 0, \dotsc, d -2 \rbrace$, we have \[ \boldsymbol{\rho}\left( \sigma_{0}^{k} \cdot \boldsymbol{\rho}_{1}(f) \right) = \boldsymbol{\rho}_{2}\left( \sigma_{0}^{k} * f \right) = \boldsymbol{\rho}_{2}\left( \alpha^{k} \centerdot f \right) = \tau_{0}^{k} \cdot \boldsymbol{\rho}_{2}(f) = \tau_{0}^{k} \cdot \boldsymbol{\rho}\left( \boldsymbol{\rho}_{1}(f) \right) \, \text{.} \] Therefore, we have
\begin{equation}
\label{equation:equivar}
\forall \boldsymbol{\lambda} \in V_{0}, \, \forall k \in \lbrace 0, \dotsc, d -2 \rbrace, \, \boldsymbol{\rho}\left( \sigma_{0}^{k} \cdot \boldsymbol{\lambda} \right) = \tau_{0}^{k} \cdot \boldsymbol{\rho}(\boldsymbol{\lambda}) \, \text{.}
\end{equation}
Thus, in view of~\eqref{equation:goal2}, it is enough to prove that there is a nonempty open subset $V \subseteq V_{0}$ of $\mathbb{C}^{d -1}$ such that $\boldsymbol{\rho}(\sigma \cdot \boldsymbol{\lambda}) \neq \tau \cdot \boldsymbol{\rho}(\boldsymbol{\lambda})$ for all $\boldsymbol{\lambda} \in V$, all $\sigma \in \mathfrak{S}_{d -1} \setminus \left\langle \sigma_{0} \right\rangle$ and all $\tau \in \mathfrak{S}_{\frac{d (d -1)}{2}}$.

Now, identify elements of $\mathbb{C}^{d -1}$ and $\mathbb{C}^{\frac{d (d -1)}{2}}$ with column vectors. Define \[ A = A_{2} A_{1}^{-1} \in M_{\frac{d (d -1)}{2} \times (d -1)}(\mathbb{C}) \] to be the Jacobian matrix of $\boldsymbol{\rho}$ at $(d, \dotsc, d) \in \mathbb{C}^{d -1}$, so that \[ \boldsymbol{\rho}\left( (d, \dotsc, d) +h \right) = \left( d^{2}, \dotsc, d^{2} \right) +A h +o(h) \quad \text{as} \quad h \rightarrow 0 \, \text{.} \] Denote by $\times_{\C}$ the action of $\mathfrak{S}_{d -1}$ on $M_{\frac{d (d -1)}{2} \times (d -1)}(\mathbb{C})$ that permutes the columns of matrices, which is given by \[ \sigma \times_{\C} \left( \begin{array}{@{}c|c|c@{}} C_{0} & \dotso & C_{d -2} \end{array} \right) = \left( \begin{array}{@{}c|c|c@{}} C_{\sigma^{-1}(0)} & \dotso & C_{\sigma^{-1}(d -2)} \end{array} \right) \, \text{.} \] Then $M (\sigma \cdot h) = \left( \sigma^{-1} \times_{\C} M \right) h$ for all $\sigma \in \mathfrak{S}_{d -1}$, all $M \in M_{\frac{d (d -1)}{2} \times (d -1)}(\mathbb{C})$ and all $h \in \mathbb{C}^{d -1}$. Therefore, for every $\sigma \in \mathfrak{S}_{d -1}$, we have \[ \boldsymbol{\rho}\left( (d, \dotsc, d) +\sigma \cdot h \right) = \left( d^{2}, \dotsc, d^{2} \right) +\left( \sigma^{-1} \times_{\C} A \right) h +o(h) \quad \text{as} \quad h \rightarrow 0 \, \text{,} \] and hence the Jacobian matrix of $\boldsymbol{\lambda} \mapsto \boldsymbol{\rho}(\sigma \cdot \boldsymbol{\lambda})$ at $(d, \dotsc, d)$ equals $\sigma^{-1} \times_{\C} A$. Also denote by $\times_{\R}$ the action of $\mathfrak{S}_{\frac{d (d -1)}{2}}$ on $M_{\frac{d (d -1)}{2} \times (d -1)}(\mathbb{C})$ that permutes the rows of matrices, which is given by \[ \tau \times_{\R} \left( \begin{array}{@{}c@{}} R_{0}\\ \hline \vdots\\ \hline R_{\frac{d (d -1)}{2} -1} \end{array} \right) = \left( \begin{array}{@{}c@{}} R_{\tau^{-1}(0)}\\ \hline \vdots\\ \hline R_{\tau^{-1}\left( \frac{d (d -1)}{2} -1 \right)} \end{array} \right) \, \text{.} \] Then $\tau \cdot (M h) = \left( \tau \times_{\R} M \right) h$ for all $\tau \in \mathfrak{S}_{\frac{d (d -1)}{2}}$, all $M \in M_{\frac{d (d -1)}{2} \times (d -1)}(\mathbb{C})$ and all $h \in \mathbb{C}^{d -1}$. Therefore, for every $\tau \in \mathfrak{S}_{\frac{d (d -1)}{2}}$, we have \[ \tau \cdot \boldsymbol{\rho}\left( (d, \dotsc, d) +h \right) = \left( d^{2}, \dotsc, d^{2} \right) +\left( \tau \times_{\R} A \right) h +o(h) \quad \text{as} \quad h \rightarrow 0 \, \text{,} \] and hence the Jacobian matrix of $\boldsymbol{\lambda} \mapsto \tau \cdot \boldsymbol{\rho}(\boldsymbol{\lambda})$ at $(d, \dotsc, d)$ equals $\tau \times_{\R} A$.

Thus, Lemma~\ref{lemma:unique} follows easily from the result below.

\begin{lemma}
\label{lemma:permMatrix}
We have \[ \left\lbrace \sigma \in \mathfrak{S}_{d -1} : \exists \tau \in \mathfrak{S}_{\frac{d (d -1)}{2}}, \, \sigma \times_{\C} A = \tau \times_{\R} A \right\rbrace = \left\langle \sigma_{0} \right\rangle \, \text{.} \]
\end{lemma}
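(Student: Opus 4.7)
The plan is to prove both inclusions. For $\langle \sigma_{0} \rangle$ being contained in the set on the left-hand side: differentiating the equivariance $\boldsymbol{\rho}(\sigma_{0}^{a} \cdot \boldsymbol{\lambda}) = \tau_{0}^{a} \cdot \boldsymbol{\rho}(\boldsymbol{\lambda})$ from~\eqref{equation:equivar} at the fixed point $\boldsymbol{\lambda}_{0} = (d, \dotsc, d)$ yields the matricial identity $A\, P_{\sigma_{0}^{a}} = P_{\tau_{0}^{a}}\, A$, which translates to $\sigma_{0}^{-a} \times_{\C} A = \tau_{0}^{a} \times_{\R} A$ for every $a \in \{0, \dotsc, d - 2\}$; this gives the desired inclusion.

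For the reverse inclusion, I would first compute the entries of $A$ in closed form. Using Claim~\ref{claim:inverse}, one has $P_{k}(T) = \alpha^{2 k}(T^{d - 1} - 1) / [(d - 1)(T - \alpha^{k})]$. Combining this with formula~\eqref{equation:matrix} for $A_{2}$ and the identity $w_{j}^{d^{2} - 1} = 1$, the product $A = A_{2} A_{1}^{-1}$ simplifies (after a short calculation: the two terms $d\, w_{j}^{-d} P_{k}(w_{j})$ and $d\, w_{j}^{-1} P_{k}(w_{j}^{d})$ combine via $w_{j}^{d} - w_{j} = w_{j}(w_{j}^{d - 1} - 1)$) to
\[
A_{j, k} = h\bigl( \alpha^{-k} w_{j} \bigr), \qquad h(\zeta) := \frac{d\, \zeta^{1 - d} (\zeta^{d - 1} - 1)^{2}}{(d - 1)(\zeta - 1)(\zeta^{d} - 1)} \, \text{.}
\]
A direct check shows $h(\zeta^{d}) = h(\zeta)$ on $\Omega := \mu_{d^{2} - 1} \setminus \mu_{d - 1}$, so $h$ descends to a function $\bar{h}$ on the $\frac{d (d - 1)}{2}$ cycles of $f_{0}|_{\Omega}$. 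Since $\alpha^{-m} w_{i}$ represents the cycle indexed by $\tau_{0}^{-m}(i)$, the assumption $\sigma \times_{\C} A = \tau \times_{\R} A$, which entrywise reads $h(\alpha^{-\sigma^{-1}(k)} w_{j}) = h(\alpha^{-k} w_{\tau^{-1}(j)})$, becomes
\[
\bar{h}\bigl( \tau_{0}^{-\sigma^{-1}(k)}(j) \bigr) = \bar{h}\bigl( \tau_{0}^{-k} \tau^{-1}(j) \bigr) \qquad \text{for all } j, k \, \text{.}
\]

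The hard part is to establish that $\bar{h}$ is injective on the set of cycles. Granted this, the above identity forces $\tau_{0}^{-\sigma^{-1}(k)} = \tau_{0}^{-k} \tau^{-1}$ as permutations for each $k$; since $\tau_{0}$ has order exactly $d - 1$ in $\mathfrak{S}_{\frac{d (d - 1)}{2}}$ (any smaller power would have to fix the cycles on which $\alpha^{m}$ acts freely, i.e.\ those with $w^{d - 1} \neq -1$, forcing $\alpha^{m} = 1$), the residue $k - \sigma^{-1}(k) \pmod{d - 1}$ is independent of $k$, which means $\sigma^{-1} \in \langle \sigma_{0} \rangle$ and the proof is complete. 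To establish the injectivity of $\bar{h}$, I would rewrite $h$ in the reduced form $h(\zeta) = d\, P(\zeta)^{2} / [(d - 1)\, \zeta^{d - 1} Q(\zeta)]$ with $P(\zeta) = \sum_{j = 0}^{d - 2} \zeta^{j}$ and $Q(\zeta) = \sum_{j = 0}^{d - 1} \zeta^{j}$, clear denominators in $h(\zeta) = h(\eta)$, factor out the contributions from the known solutions $\eta \in \{\zeta, \zeta^{d}\}$ (which exhaust a single $f_{0}$-cycle), and show that the residual factor admits no zero $(\zeta, \eta) \in \Omega \times \Omega$ with $\eta \notin \{\zeta, \zeta^{d}\}$, by exploiting the constraints $\zeta^{d^{2} - 1} = \eta^{d^{2} - 1} = 1$ together with $\zeta^{d - 1}, \eta^{d - 1} \neq 1$.
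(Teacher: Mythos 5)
Your overall strategy is the same as the paper's: the inclusion $\left\langle \sigma_{0} \right\rangle \subseteq \mathrm{LHS}$ follows from differentiating the equivariance~\eqref{equation:equivar} at $(d,\dotsc,d)$, and the reverse inclusion is reduced to showing that the quantities $A_{j,0} = h\left( w_{j} \right)$ are pairwise distinct as $j$ ranges over the $\frac{d(d-1)}{2}$ period-$2$ cycles of $f_{0}$ — which is precisely Claim~\ref{claim:distinct}. Your closed form $A_{j,k} = h\bigl( \alpha^{-k} w_{j} \bigr)$ with $h(\zeta) = \frac{d \zeta^{1-d} (\zeta^{d-1}-1)^{2}}{(d-1)(\zeta-1)(\zeta^{d}-1)}$ matches the paper's $\frac{d}{d-1} F$, the identity $h\left( \zeta^{d} \right) = h(\zeta)$ on $\Omega$ is correct, and your argument that $\tau_{0}$ has order exactly $d-1$ and that a permutation identity $\tau_{0}^{-\sigma^{-1}(k)} = \tau_{0}^{-k}\tau^{-1}$ for all $k$ forces $\sigma \in \left\langle \sigma_{0} \right\rangle$ is sound.

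The gap is that you never actually prove the injectivity of $\bar{h}$ on cycles, which is the entire content of the lemma — the equivariance bookkeeping is formal. Your sketch (clear denominators in $h(\zeta)=h(\eta)$, factor out $\eta\in\{\zeta,\zeta^{d}\}$, argue the residual factor vanishes nowhere on $\Omega\times\Omega$) is not a proof, and it is far from routine to execute: the identity only holds modulo the relations $\zeta^{d^{2}-1}=\eta^{d^{2}-1}=1$, the expected factorization by $(\eta-\zeta)(\eta-\zeta^{d})$ is not clean over $\mathbb{C}[\zeta,\eta]$, and showing the residual factor has no roots on $\Omega\times\Omega$ uniformly in $d$ is a genuine problem with no indicated mechanism. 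The paper resolves this by rewriting $h\bigl( \beta^{j} \bigr) = \frac{\sin\bigl( \frac{j\pi}{d+1} \bigr)^{2} \exp\bigl( \frac{-j\pi i}{d-1} \bigr)}{\sin\bigl( \frac{j\pi}{d^{2}-1} \bigr) \sin\bigl( \frac{jd\pi}{d^{2}-1} \bigr)}$, reading off $k\equiv j\pmod{d-1}$ by comparing arguments, and then exploiting the injectivity of $x\mapsto\frac{1-x^{2}}{x-a}$ on $(-1,1)\setminus\lbrace a\rbrace$ — a concrete argument your proposal would need to reproduce or replace.
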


Let us postpone the proof of Lemma~\ref{lemma:permMatrix} and finish our proof of Lemma~\ref{lemma:unique} first.

\begin{proof}[Proof of Lemma~\ref{lemma:unique}]
For each $\sigma \in \mathfrak{S}_{d -1} \setminus \left\langle \sigma_{0} \right\rangle$ and each $\tau \in \mathfrak{S}_{\frac{d (d -1)}{2}}$, the Jacobian matrices of $\boldsymbol{\lambda} \mapsto \boldsymbol{\rho}(\sigma \cdot \boldsymbol{\lambda})$ and $\boldsymbol{\lambda} \mapsto \tau \cdot \boldsymbol{\rho}(\boldsymbol{\lambda})$ at $(d, \dotsc, d)$ equal $\sigma^{-1} \times_{\C} A$ and $\tau \times_{\R} A$, respectively, and these two matrices are different by Lemma~\ref{lemma:permMatrix}. In particular, for every $\sigma \in \mathfrak{S}_{d -1} \setminus \left\langle \sigma_{0} \right\rangle$ and every $\tau \in \mathfrak{S}_{\frac{d (d -1)}{2}}$, the holomorphic maps $\boldsymbol{\lambda} \mapsto \boldsymbol{\rho}(\sigma \cdot \boldsymbol{\lambda})$ and $\boldsymbol{\lambda} \mapsto \tau \cdot \boldsymbol{\rho}(\boldsymbol{\lambda})$ are different, and therefore $\left\lbrace \boldsymbol{\lambda} \in V_{0} : \boldsymbol{\rho}(\sigma \cdot \boldsymbol{\lambda}) \neq \tau \cdot \boldsymbol{\rho}(\boldsymbol{\lambda}) \right\rbrace$ forms a dense open subset of $V_{0}$. As a result, \[ V = \bigcap_{\sigma \in \mathfrak{S}_{d -1} \setminus \left\langle \sigma_{0} \right\rangle} \bigcap_{\tau \in \mathfrak{S}_{\frac{d (d -1)}{2}}} \left\lbrace \boldsymbol{\lambda} \in V_{0} : \boldsymbol{\rho}(\sigma \cdot \boldsymbol{\lambda}) \neq \tau \cdot \boldsymbol{\rho}(\boldsymbol{\lambda}) \right\rbrace \] is a nonempty open subset of $\mathbb{C}^{d -1}$, which is contained in $V_{0}$. Set $U = \boldsymbol{\rho}_{1}^{-1}(V \setminus \Delta)$. Then $U$ is a nonempty open subset of $\Poly_{d}^{\mc}(\mathbb{C})$ and we have \[ \left\lbrace g \in \Poly_{d}^{\mc}(\mathbb{C}) : \Lambda_{g}^{(1)} = \Lambda_{f}^{(1)} \text{ and } \Lambda_{g}^{(2)} = \Lambda_{f}^{(2)} \right\rbrace = \left\lbrace \alpha^{k} \centerdot f : k \in \lbrace 0, \dotsc, d -2 \rbrace \right\rbrace \] for all $f \in U$ by the previous discussion. Thus, the lemma is proved.
\end{proof}

To complete our proof of Lemma~\ref{lemma:unique}, it remains to prove Lemma~\ref{lemma:permMatrix}. To do this, we shall first show the result below. From now on, write \[ A = \left( A_{j, k} \right)_{\substack{0 \leq j \leq \frac{d (d -1)}{2} -1\\ 0 \leq k \leq d -2}} \, \text{.} \]

\begin{claim}
\label{claim:distinct}
The entries $A_{j, 0}$, with $j \in \left\lbrace 0, \dotsc, \frac{d (d -1)}{2} -1 \right\rbrace$, of the first column of $A$ are pairwise distinct.
\end{claim}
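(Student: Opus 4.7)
The plan is to write $A_{j,0}$ explicitly as a symmetric function of the cycle $\{w_j, w_j^d\}$, then show injectivity of this function on period-$2$ cycles.

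First, I would apply Claim~\ref{claim:inverse} with $k = 0$. Using $\prod_{\ell=1}^{d-2}(T - \alpha^\ell) = 1 + T + \cdots + T^{d-2}$ and $\prod_{\ell=1}^{d-2}(1-\alpha^\ell) = d-1$, this gives $P_0(T) = (1 + T + \cdots + T^{d-2})/(d-1)$, hence $B_{k,0} = 1/[(d-1)(k-d)]$ for each $k \in \{0, \ldots, d-2\}$. Substituting this together with the formula for $A_2$ from~\eqref{equation:matrix} into $A_{j,0} = \sum_{k=0}^{d-2}(A_2)_{j,k}\, B_{k,0}$, the factors $(k-d)$ cancel and the two resulting geometric sums in $w_j^{k-d}$ and $w_j^{d(k-d)}$, together with $w_j^{d^2-1} = 1$, simplify to
\[ A_{j,0} \;=\; \frac{d}{d-1}\cdot\frac{(w_j - w_j^d)^2}{w_j \cdot w_j^d \, (w_j-1)(w_j^d-1)}, \]
which is manifestly symmetric under $w_j \leftrightarrow w_j^d$, confirming that it is a well-defined function of the cycle.

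Next, I would reduce the distinctness statement to a cleaner form. Setting $\psi(z) := (z^d - 1)/[z^{d-1}(z-1)] = \sum_{j=0}^{d-1} z^{-j}$, one checks using $w_j^{d^2-1} = 1$ that $\psi(w_j)\,\psi(w_j^d) = 1$ and that $A_{j,0} = \frac{d}{d-1}\bigl[\psi(w_j) + \psi(w_j^d) - 2\bigr]$. Hence two distinct period-$2$ cycles give the same $A_{j,0}$ exactly when they yield the same unordered pair $\{\psi(s), \psi(s)^{-1}\}$. Since $\psi(s) = 1$ would force $s^{d-1} = 1$ (contradicting period-$2$), and $\psi(s) = -1$ can be excluded by a direct algebraic identity on period-$2$ points, the claim reduces to showing that $\psi$ is injective on the set of period-$2$ points.

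The main obstacle is precisely this injectivity. Writing period-$2$ points as $\beta^k$ with $\beta = \exp(2\pi i/(d^2-1))$ and $k \in \{1, \ldots, d^2-2\}$ not divisible by $d+1$, we have $\psi(\beta^k) = \sum_{j=0}^{d-1} \beta^{-jk}$; the claim becomes that these sums are pairwise distinct. This is a combinatorial-arithmetic statement about the supports $\{0, -k, -2k, \ldots, -(d-1)k\} \bmod (d^2-1)$ together with the $\mathbb{Z}$-linear relations among $(d^2-1)$-th roots of unity encoded by the cyclotomic polynomial $\Phi_{d^2-1}$. Its verification — either by explicit case analysis on the supports, or by exploiting that $\psi\colon \mathbb{P}^1 \to \mathbb{P}^1$ has degree $d-1$ and studying how its fibers meet the period-$2$ locus — constitutes the technical heart of the proof.
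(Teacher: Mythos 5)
Your algebraic simplification is correct and matches the paper exactly: both derive the closed form
\[ A_{j,0} = \frac{d}{d-1}\cdot \frac{(w_j^{d-1}-1)^2}{w_j^{d-1}(w_j-1)(w_j^d-1)} \]
(your version $\frac{(w_j - w_j^d)^2}{w_j w_j^d (w_j-1)(w_j^d-1)}$ is the same after cancelling a $w_j^2$), and your reformulation $A_{j,0} = \frac{d}{d-1}\bigl[\psi(w_j) + \psi(w_j)^{-1} - 2\bigr]$ with $\psi(w_j)\psi(w_j^d)=1$ is valid. However, your proposal stops at precisely the point where the real work happens: you reduce the claim to injectivity of $\psi$ on the period-$2$ points, explicitly label that step as ``the technical heart,'' and do not prove it. That injectivity is not a side condition you can hope to import — it is essentially equivalent to the claim itself. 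Since $\psi$ has degree $d-1$ as a rational map while the period-$2$ locus has $d(d-1)$ points, there is no generic counting reason for it to be injective there; you have merely restated the problem. The handwave about excluding $\psi(s) = -1$ by ``a direct algebraic identity'' is also unverified; $\psi(s)=-1$ amounts to $2s^d - s^{d-1} - 1 = 0$, and ruling out $(d^2-1)$th roots of unity as solutions is not an identity but an argument you would still owe.

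The paper's proof of the distinctness uses a genuinely different idea that your proposal misses. Writing $F(\beta^j)$ in polar form yields
\[ F(\beta^j) = \frac{\sin\left(\frac{j\pi}{d+1}\right)^2 \exp\left(\frac{-j\pi i}{d-1}\right)}{\sin\left(\frac{j\pi}{d^2-1}\right)\sin\left(\frac{jd\pi}{d^2-1}\right)} \, \text{.} \]
Equating $F(\beta^j) = F(\beta^k)$ forces first $k \equiv j \pmod{d-1}$ (from the argument), and then matching the real-valued factors reduces to injectivity of the elementary real function $\varphi(x) = \frac{1-x^2}{x-a}$ on $(-1,1)\setminus\{a\}$, applied with $x = \cos\left(\frac{j\pi}{d+1}\right)$ and $a = \cos\left(\frac{j\pi}{d-1}\right)$. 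This trigonometric-real-analytic step is what actually kills the problem, and it is the ingredient your proposal lacks. If you want to salvage your $\psi$-based framing, you would still have to supply an argument of comparable strength for injectivity of $\psi$ on the $(d^2-1)$th roots of unity that are not $(d-1)$th roots of unity; as it stands the proposal has a real gap.
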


\begin{proof}
By Claim~\ref{claim:inverse} and~\eqref{equation:matrix}, we have \[ A_{j, k} = d \left( \frac{P_{k}\left( w_{j} \right)}{w_{j}^{d}} +\frac{P_{k}\left( w_{j}^{d} \right)}{w_{j}^{d^{2}}} \right) \] for all $j \in \left\lbrace 0, \dotsc, \frac{d (d -1)}{2} -1 \right\rbrace$ and all $k \in \lbrace 0, \dotsc, d -2 \rbrace$. In particular, we have \[ A_{j, 0} = d \left( \frac{P_{0}\left( w_{j} \right)}{w_{j}^{d}} +\frac{P_{0}\left( w_{j}^{d} \right)}{w_{j}^{d^{2}}} \right) = \left( \frac{d}{d -1} \right) F\left( w_{j} \right) \] for all $j \in \left\lbrace 0, \dotsc, \frac{d (d -1)}{2} -1 \right\rbrace$, where \[ F(T) = (d -1) \left( \frac{P_{0}(T)}{T^{d}} +\frac{P_{0}\left( T^{d} \right)}{T^{d^{2}}} \right) \in \mathbb{C}(T) \, \text{.} \] Recall that the points $w_{j}$, with $j \in \left\lbrace 0, \dotsc, \frac{d (d -1)}{2} -1 \right\rbrace$, are representatives for the cycles for $f_{0}$ with period $2$. Moreover, the periodic points for $f_{0}$ with period $2$ are given by $\beta^{j}$, with $j \in \mathbb{Z} \setminus (d +1) \mathbb{Z}$, where $\beta = \exp\left( \frac{2 \pi i}{d^{2} -1} \right)$ as before. Therefore, it suffices to show that \[ \forall j, k \in \mathbb{Z} \setminus (d +1) \mathbb{Z}, \, F\left( \beta^{j} \right) = F\left( \beta^{k} \right) \Longrightarrow k \equiv j \text{ or } j d \pmod{d^{2} -1} \, \text{.} \] Now, note that $P_{0}(T) = \frac{1}{d -1} \left( \frac{T^{d -1} -1}{T -1} \right)$, which yields \[ F(T) = \frac{T^{d -1} -1}{T^{d} (T -1)} +\frac{T^{d (d -1)} -1}{T^{d^{2}} \left( T^{d} -1 \right)} \, \text{.} \] Therefore, for every $j \in \mathbb{Z} \setminus (d +1) \mathbb{Z}$, we have \[ F\left( \beta^{j} \right) = \frac{\beta^{j (d -1)} -1}{\beta^{j d} \left( \beta^{j} -1 \right)} +\frac{1 -\beta^{j (d -1)}}{\beta^{j d} \left( \beta^{j d} -1 \right)} = \frac{\left( \beta^{j (d -1)} -1 \right)^{2}}{\beta^{j (d -1)} \left( \beta^{j} -1 \right) \left( \beta^{j d} -1 \right)} \] since $\beta^{d^{2}} = \beta$, and hence \[ F\left( \beta^{j} \right) = \frac{\left( \beta^{\frac{j (d -1)}{2}} -\beta^{\frac{-j (d -1)}{2}} \right)^{2}}{\beta^{\frac{j (d +1)}{2}} \left( \beta^{\frac{j}{2}} -\beta^{\frac{-j}{2}} \right) \left( \beta^{\frac{j d}{2}} -\beta^{\frac{-j d}{2}} \right)} = \frac{\sin\left( \frac{j \pi}{d +1} \right)^{2} \exp\left( \frac{-j \pi i}{d -1} \right)}{\sin\left( \frac{j \pi}{d^{2} -1} \right) \sin\left( \frac{j d \pi}{d^{2} -1} \right)} \, \text{.} \] Now, suppose that $j, k \in \mathbb{Z} \setminus (d +1) \mathbb{Z}$ are such that $F\left( \beta^{j} \right) = F\left( \beta^{k} \right)$. Then there exists $\ell \in \mathbb{Z}$ such that \[ \frac{-j \pi}{d -1} = \frac{-k \pi}{d -1} +\ell \pi \quad \text{and} \quad \frac{\sin\left( \frac{j \pi}{d +1} \right)^{2}}{\sin\left( \frac{j \pi}{d^{2} -1} \right) \sin\left( \frac{j d \pi}{d^{2} -1} \right)} = \frac{(-1)^{\ell} \sin\left( \frac{k \pi}{d +1} \right)^{2}}{\sin\left( \frac{k \pi}{d^{2} -1} \right) \sin\left( \frac{k d \pi}{d^{2} -1} \right)} \, \text{.} \] Using basic trigonometric identities, it follows that \[ k = j +\ell (d -1) \quad \text{and} \quad \frac{1 -\cos\left( \frac{j \pi}{d +1} \right)^{2}}{\cos\left( \frac{j \pi}{d +1} \right) -\cos\left( \frac{j \pi}{d -1} \right)} = \frac{1 -\cos\left( \frac{(j -2 \ell) \pi}{d +1} \right)^{2}}{\cos\left( \frac{(j -2 \ell) \pi}{d +1} \right) -\cos\left( \frac{j \pi}{d -1} \right)} \, \text{.} \] Now, setting $a = \cos\left( \frac{j \pi}{d -1} \right)$, note that the function $\varphi \colon (-1, 1) \setminus \lbrace a \rbrace \rightarrow \mathbb{R}$ given by $\varphi(x) = \frac{1 -x^{2}}{x -a}$ is injective because $a \in [-1, 1]$. Therefore, $\cos\left( \frac{j \pi}{d +1} \right) = \cos\left( \frac{(j -2 \ell) \pi}{d +1} \right)$, which yields $\frac{(j -2 \ell) \pi}{d +1} \equiv \pm \frac{j \pi}{d +1} \pmod{2 \pi}$, and hence $\ell \equiv 0 \text{ or } j \pmod{d +1}$. Thus, $k \equiv j \text{ or } j d \pmod{d^{2} -1}$, and the claim is proved.
\end{proof}

We shall now prove Lemma~\ref{lemma:permMatrix}.

\begin{proof}[Proof of Lemma~\ref{lemma:permMatrix}]
For each $k \in \lbrace 0, \dotsc, d -2 \rbrace$, the two maps $\boldsymbol{\lambda} \mapsto \boldsymbol{\rho}\left( \sigma_{0}^{k} \cdot \boldsymbol{\lambda} \right)$ and $\boldsymbol{\lambda} \mapsto \tau_{0}^{k} \cdot \boldsymbol{\rho}(\boldsymbol{\lambda})$ coincide according to~\eqref{equation:equivar}, and hence they have the same Jacobian matrix at $(d, \dotsc, d)$. Thus, by the previous discussion,
\begin{equation}
\label{equation:circulant}
\forall k \in \lbrace 0, \dotsc, d -2 \rbrace, \, \sigma_{0}^{-k} \times_{\C} A = \tau_{0}^{k} \times_{\R} A \, \text{.}
\end{equation}
Now, suppose that $\sigma \in \mathfrak{S}_{d -1}$ and $\tau \in \mathfrak{S}_{\frac{d (d -1)}{2}}$ satisfy $\sigma \times_{\C} A = \tau \times_{\R} A$. Write \[ \sigma \times_{\C} A = \left( M_{j, k} \right)_{\substack{0 \leq j \leq \frac{d (d -1)}{2} -1\\ 0 \leq k \leq d -2}} = \tau \times_{\R} A \, \text{.} \] Set $\ell = \sigma(0)$, and let us prove that $\sigma = \sigma_{0}^{\ell}$. By~\eqref{equation:circulant}, we have \[ A_{0, 0} = M_{0, \ell} = A_{\tau^{-1}(0), \ell} = A_{\tau^{-1}(0), \sigma_{0}^{\ell}(0)} = A_{\tau_{0}^{-\ell} \tau^{-1}(0), 0} \, \text{,} \] which yields $\tau_{0}^{-\ell} \tau^{-1}(0) = 0$ by Claim~\ref{claim:distinct}, and hence $\tau^{-1}(0) = \ell$ according to~\eqref{equation:permCycle}. As a result, for every $k \in \lbrace 0, \dotsc, d -2 \rbrace$, we have \[ A_{0, \sigma^{-1}(k)} = M_{0, k} = A_{\ell, k} = A_{\tau_{0}^{\ell}(0), k} = A_{0, \sigma_{0}^{-\ell}(k)} \] by~\eqref{equation:permCycle} and~\eqref{equation:circulant}. Moreover, $A_{0, k} = A_{\tau_{0}^{-k}(0), 0}$ for all $k \in \lbrace 0, \dotsc, d -2 \rbrace$ by~\eqref{equation:circulant}, and hence $A_{0, 0}, \dotsc, A_{0, d -2}$ are pairwise distinct by~\eqref{equation:permCycle} and Claim~\ref{claim:distinct}. Therefore, $\sigma^{-1}(k) = \sigma_{0}^{-\ell}(k)$ for all $k \in \lbrace 0, \dotsc, d -2 \rbrace$, and hence $\sigma = \sigma_{0}^{\ell}$. Thus, the lemma is proved.
\end{proof}

Thus, our proof of Lemma~\ref{lemma:unique} is complete. To conclude, we simply observe that Theorem~\ref{theorem:unique} follows immediately. For completeness, we include details.

\begin{proof}[Proof of Theorem~\ref{theorem:unique}]
For $P \geq 1$, denote again by $\Sigma_{d}^{(P)}$ the scheme-theoretic image of $\Mult_{d}^{(P)}$. Then we have \[ \dim\left( \Sigma_{d}^{(2)} \right) \geq \dim\left( \Sigma_{d}^{(1)} \right) = d -1 = \dim\left( \mathcal{P}_{d} \right) \] because $\Sigma_{d}^{(1)}$ is the image of $\Sigma_{d}^{(2)}$ under the projection $p_{1} \colon \mathbb{A}^{d} \times \mathbb{A}^{\frac{d (d -1)}{2}} \rightarrow \mathbb{A}^{d}$ onto the first factor. Consequently, the induced morphism $\Mult_{d}^{(2)} \colon \mathcal{P}_{d} \rightarrow \Sigma_{d}^{(2)}$ has some finite degree $D \geq 1$. There is a nonempty Zariski-open subset $W$ of $\Sigma_{d}^{(2)}$ such that every $\boldsymbol{\sigma} \in W(\mathbb{C})$ has exactly $D$ preimages under $\Mult_{d}^{(2)} \colon \mathcal{P}_{d}(\mathbb{C}) \rightarrow \Sigma_{d}^{(2)}(\mathbb{C})$. Now, consider the preimage $V$ of $W$ under $\Mult_{d}^{(2)}$. Then $V$ is a nonempty Zariski-open subset of $\mathcal{P}_{d}$. For every $[f] \in V(\mathbb{C})$, there are exactly $D$ elements $[g] \in \mathcal{P}_{d}(\mathbb{C})$ such that $\Lambda_{g}^{(1)} = \Lambda_{f}^{(1)}$ and $\Lambda_{g}^{(2)} = \Lambda_{f}^{(2)}$. By Lemma~\ref{lemma:unique}, there is a nonempty open subset $U$ of $\Poly_{d}^{\mc}(\mathbb{C})$ such that, for every $f \in U$, the elements $g \in \Poly_{d}^{\mc}(\mathbb{C})$ such that $\Lambda_{g}^{(1)} = \Lambda_{f}^{(1)}$ and $\Lambda_{g}^{(2)} = \Lambda_{f}^{(2)}$ are precisely the $\alpha^{k} \centerdot f$, with $k \in \lbrace 0, \dotsc, d -2 \rbrace$. Now, consider the image $V^{\prime}$ of $U$ under the holomorphic map $\pi_{d}^{\mc} \colon \Poly_{d}^{\mc}(\mathbb{C}) \rightarrow \mathcal{P}_{d}(\mathbb{C})$, via the biholomorphism $\mathcal{P}_{d}(\mathbb{C}) \cong \mathcal{P}_{d}^{\mc}(\mathbb{C})$. Then $V^{\prime}$ is a nonempty open subset of $\mathcal{P}_{d}(\mathbb{C})$. Furthermore, each $[f] \in V^{\prime}$ is the unique $[g] \in \mathcal{P}_{d}(\mathbb{C})$ such that $\Lambda_{g}^{(1)} = \Lambda_{f}^{(1)}$ and $\Lambda_{g}^{(2)} = \Lambda_{f}^{(2)}$. Finally, note that $V(\mathbb{C}) \cap V^{\prime} \neq \varnothing$, as $V(\mathbb{C})$ is dense in $\mathcal{P}_{d}(\mathbb{C})$ for the complex topology. Thus, we have $D = 1$, and the theorem is proved.
\end{proof}

\appendix

\section{Additional estimates on multipliers of polynomial maps}
\label{appendix:bounds}

In this section, we again fix an integer $d \geq 2$.

\subsection{Upper bounds on absolute values of multipliers}

As mentioned in the introduction, it is not difficult to obtain upper bounds on the characteristic exponents of polynomial maps at periodic points in terms of the maximal escape rate. For completeness, let us give here some details.

Given a valued field $K$, we denote by $\lvert . \rvert_{K}$ its absolute value and by $\lVert . \rVert_{K^{d -1}}$ the norm on $K^{d -1}$ given by \[ \lVert \boldsymbol{c} \rVert_{K^{d -1}} = \max_{j \in \lbrace 1, \dotsc, d -1 \rbrace} \left\lvert c_{j} \right\rvert_{K} \quad \text{for} \quad \boldsymbol{c} = \left( c_{1}, \dotsc, c_{d -1} \right) \in K^{d -1} \, \text{.} \]

We shall also work again with the normal form introduced by Ingram in~\cite{I2012}. Given a field $K$ of characteristic $0$ and $\boldsymbol{c} = \left( c_{1}, \dotsc, c_{d -1} \right) \in K^{d -1}$, we define \[ f_{\boldsymbol{c}}(z) = \frac{1}{d} z^{d} +\sum_{j = 1}^{d -1} \frac{(-1)^{j} \tau_{j}(\boldsymbol{c})}{d -j} z^{d -j} \in \Poly_{d}(K) \, \text{,} \] where $\tau_{1}(\boldsymbol{c}), \dotsc, \tau_{d -1}(\boldsymbol{c})$ are the elementary symmetric functions of $c_{1}, \dotsc, c_{d -1}$.

For our purposes, we shall use the generalization of Claims~\ref{claim:estimates2} and~\ref{claim:maxEscape} below. As it can be derived in a similar way, by using the triangle inequality and elimination theory, we omit the proof.

\begin{claim}
\label{claim:estimates3}
Assume that $K$ is an algebraically closed valued field of characteristic $0$. Then there exist $\delta_{K} \in \mathbb{R}_{> 0}$ and $\Delta_{K} \in \mathbb{R}$ both depending only on the restriction of $\lvert . \rvert_{K}$ to $\mathbb{Q}$ such that $g_{f_{\boldsymbol{c}}}(z) \geq \log^{+}\lvert z \rvert_{K} +\Delta_{K}$ for all $\boldsymbol{c} \in K^{d -1}$ and all $z \in K$ such that $\lvert z \rvert_{K} > \delta_{K} \lVert \boldsymbol{c} \rVert_{K^{d -1}}$. In addition, there exists some $E_{K} \in \mathbb{R}$ depending only on the restriction of $\lvert . \rvert_{K}$ to $\mathbb{Q}$ such that $M_{f_{\boldsymbol{c}}} \geq \log^{+}\lVert \boldsymbol{c} \rVert_{K^{d -1}} +E_{K}$ for all $\boldsymbol{c} \in K^{d -1}$. Furthermore, we can take $\delta_{K} = 1$, $\Delta_{K} = 0$ and $E_{K} = 0$ if $K$ is non-Archimedean with residue characteristic $0$ or greater than $d$.
\end{claim}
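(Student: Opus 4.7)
The plan is to adapt the proofs of Claims~\ref{claim:estimates2} and~\ref{claim:maxEscape} to a general algebraically closed valued field $K$ of characteristic $0$, while tracking carefully how the factors $(d)_{K}$, $\left( \binom{d -1}{j} \right)_{K}$ and the absolute value of a certain Macaulay resultant enter through the (generalized) triangle inequality. The main bookkeeping obstacle throughout is to ensure that every auxiliary constant depends only on the restriction of $\lvert . \rvert_{K}$ to $\mathbb{Q}$.

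For the first assertion, I would begin from the defining formula
\[ f_{\boldsymbol{c}}(z) = \frac{1}{d} z^{d} +\sum_{j = 1}^{d -1} \frac{(-1)^{j} \tau_{j}(\boldsymbol{c})}{d -j} z^{d -j} \]
and estimate the sum of the lower-order terms by a quantity of the form $\mu_{K} \|\boldsymbol{c}\|_{K^{d -1}} \max\{\|\boldsymbol{c}\|_{K^{d -1}}, \lvert z \rvert_{K}\}^{d -1}$, with $\mu_{K}$ determined by the $(d)_{K}$ and $\left( \binom{d -1}{j} \right)_{K}$ factors. Choosing $\delta_{K}$ large enough that $\delta_{K} \mu_{K} < \frac{1}{2 \lvert d \rvert_{K}}$ would ensure that for $\lvert z \rvert_{K} > \delta_{K} \|\boldsymbol{c}\|_{K^{d -1}}$ the leading term $\frac{1}{d} z^{d}$ dominates, giving the reverse-triangle lower bound $\left\lvert f_{\boldsymbol{c}}(z) \right\rvert_{K} \geq \frac{1}{2 \lvert d \rvert_{K}} \lvert z \rvert_{K}^{d}$. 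For $\lvert z \rvert_{K}$ additionally exceeding an auxiliary radius $R_{K}$ depending only on $\lvert . \rvert_{K}\vert_{\mathbb{Q}}$, the condition $\lvert z \rvert_{K} > \delta_{K} \|\boldsymbol{c}\|_{K^{d -1}}$ propagates to every iterate, and one recovers $g_{f_{\boldsymbol{c}}}(z) \geq \log\lvert z \rvert_{K} +\Delta_{K}'$ by induction, passage to the limit in $\frac{1}{d^{n}} \log^{+}\left\lvert f_{\boldsymbol{c}}^{\circ n}(z) \right\rvert_{K}$. For the residual range $\delta_{K} \|\boldsymbol{c}\|_{K^{d -1}} < \lvert z \rvert_{K} \leq R_{K}$, the desired inequality follows automatically from $g_{f_{\boldsymbol{c}}}(z) \geq 0$ and $\log^{+}\lvert z \rvert_{K} \leq \log^{+} R_{K}$, provided one sets $\Delta_{K} = \min\{\Delta_{K}', -\log^{+} R_{K}\}$.

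For the second assertion, I would reproduce the Macaulay resultant argument of Claim~\ref{claim:maxEscape}. The crucial observation is that the proof of Claim~\ref{claim:resultant} in fact establishes $\res(F_{1}, \dotsc, F_{d -1})$, with $F_{j}(\boldsymbol{c}) = f_{\boldsymbol{c}}(c_{j})$, to be a nonzero element of $\mathbb{Z}\left[ \frac{1}{2}, \dotsc, \frac{1}{d} \right]$: the same counting argument carried out there over each $\overline{\mathbb{F}}_{p}$ with $p > d$ applies just as well over $\overline{\mathbb{Q}}$ to give nonvanishing. Consequently $\left\lvert \res(F_{1}, \dotsc, F_{d -1}) \right\rvert_{K}$ is a positive real number depending only on $\lvert . \rvert_{K}\vert_{\mathbb{Q}}$. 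Substituting the Macaulay syzygies
\[ \res(F_{1}, \dotsc, F_{d -1}) \, c_{j}^{D} = \sum_{k = 1}^{d -1} f_{\boldsymbol{c}}(c_{k}) G_{j, k}(\boldsymbol{c}) \, \text{,} \]
in which $G_{j, k}$ has coefficients in $\mathbb{Z}\left[ \frac{1}{2}, \dotsc, \frac{1}{d} \right]$, into the triangle inequality would give $\|\boldsymbol{c}\|_{K^{d -1}}^{d} \leq \kappa_{K} \max_{k} \left\lvert f_{\boldsymbol{c}}(c_{k}) \right\rvert_{K}$ for a constant $\kappa_{K}$ of the desired form. For $\|\boldsymbol{c}\|_{K^{d -1}}$ sufficiently large, the resulting critical value $f_{\boldsymbol{c}}(c_{k})$ exceeds the threshold $\delta_{K} \|\boldsymbol{c}\|_{K^{d -1}}$ of the first assertion; combining the two estimates with the functional equation $g_{f_{\boldsymbol{c}}} \circ f_{\boldsymbol{c}} = d \cdot g_{f_{\boldsymbol{c}}}$ would then yield $M_{f_{\boldsymbol{c}}} \geq \log\|\boldsymbol{c}\|_{K^{d -1}} +E_{K}'$, and the residual range of bounded $\|\boldsymbol{c}\|_{K^{d -1}}$ can be absorbed by decreasing $E_{K}$, using $M_{f_{\boldsymbol{c}}} \geq 0$.

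Finally, the non-Archimedean refinement is essentially automatic. When $K$ is non-Archimedean with residue characteristic $0$ or greater than $d$, every $(n)_{K}$ for $n \leq \binom{d -1}{j}$ equals $1$, every $\lvert d -j \rvert_{K}$ equals $1$, and $\left\lvert \res(F_{1}, \dotsc, F_{d -1}) \right\rvert_{K} = 1$ by Claim~\ref{claim:resultant}; hence $\mu_{K} = \kappa_{K} = 1$, the inequalities above become the sharp ultrametric equalities of Claims~\ref{claim:estimates2} and~\ref{claim:maxEscape}, and one may set $\delta_{K} = 1$, $\Delta_{K} = 0$, $E_{K} = 0$.
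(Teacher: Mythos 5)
Your proposal is correct and follows precisely the route the paper indicates (the paper omits the proof of Claim~\ref{claim:estimates3}, saying only that it "can be derived in a similar way, by using the triangle inequality and elimination theory"): you adapt the triangle-inequality estimate of Claim~\ref{claim:estimates1} to a reverse-triangle lower bound on $\lvert f_{\boldsymbol{c}}(z) \rvert_{K}$ dominated by $\frac{1}{d} z^{d}$, propagate it through iterates to bound $g_{f_{\boldsymbol{c}}}$, and rerun the Macaulay-resultant argument of Claims~\ref{claim:resultant}--\ref{claim:maxEscape} over $\overline{\mathbb{Q}}$ to control $M_{f_{\boldsymbol{c}}}$, carefully tracking that every constant factors through $\lvert . \rvert_{K}\vert_{\mathbb{Q}}$. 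The only slip is the condition "$\delta_{K} \mu_{K} < \frac{1}{2 \lvert d \rvert_{K}}$", which scales the wrong way (enlarging $\delta_{K}$ enlarges $\delta_{K}\mu_{K}$): given your bound on the lower-order terms, the intended inequality is $\mu_{K}/\delta_{K} < \frac{1}{2 \lvert d \rvert_{K}}$, i.e.\ $\delta_{K} > 2 \lvert d \rvert_{K} \mu_{K}$, which is what "choosing $\delta_{K}$ large enough" must mean.
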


Finally, we obtain the following result:

\begin{proposition}
\label{proposition:ineqMax}
Assume that $K$ is an algebraically closed valued field of characteristic $0$. Then there exists $B_{K} \in \mathbb{R}$ depending only on the restriction of $\lvert . \rvert_{K}$ to $\mathbb{Q}$ such that $M_{f}^{(p)} \leq (d -1) M_{f} +B_{K}$ for all $f \in \Poly_{d}(K)$ and all $p \geq 1$. Moreover, we can take $B_{K} = 0$ if $K$ is non-Archimedean with residue characteristic either $0$ or greater than $d$.
\end{proposition}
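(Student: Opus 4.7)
The plan is to reduce to Ingram's normal form $f_{\boldsymbol{c}}$, so that $f_{\boldsymbol{c}}^{\prime}$ appears as the convenient product $\prod_{j = 1}^{d -1}\left( z -c_{j} \right)$, and then combine Claim~\ref{claim:estimates3} with the chain rule.

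First, I would reduce to the case $f = f_{\boldsymbol{c}}$ for some $\boldsymbol{c} \in K^{d -1}$: as at the end of the proof of Proposition~\ref{proposition:corollary}, every $f \in \Poly_{d}(K)$ is conjugate over $K$ to some $f_{\boldsymbol{c}}$, and both $M_{f}$ and $M_{f}^{(p)}$ are conjugation invariant. Next, I would invoke Claim~\ref{claim:estimates3}: it supplies constants $\delta_{K} \in \mathbb{R}_{> 0}$ and $\Delta_{K}, E_{K} \in \mathbb{R}$, depending only on the restriction of $\lvert . \rvert_{K}$ to $\mathbb{Q}$, with $\delta_{K} = 1$ and $\Delta_{K} = E_{K} = 0$ when $K$ is non-Archimedean with residue characteristic $0$ or greater than $d$, such that $g_{f_{\boldsymbol{c}}}(z) \geq \log^{+}\lvert z \rvert_{K} +\Delta_{K}$ whenever $\lvert z \rvert_{K} > \delta_{K} \lVert \boldsymbol{c} \rVert_{K^{d -1}}$ and $M_{f_{\boldsymbol{c}}} \geq \log^{+}\lVert \boldsymbol{c} \rVert_{K^{d -1}} +E_{K}$.

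The key observation is that every periodic point $z_{0}$ for $f_{\boldsymbol{c}}$ satisfies $g_{f_{\boldsymbol{c}}}\left( z_{0} \right) = 0$, which together with Claim~\ref{claim:estimates3} forces $\log^{+}\lvert z_{0} \rvert_{K} \leq M_{f_{\boldsymbol{c}}} +C_{K}$ for a constant $C_{K}$ depending only on $\lvert . \rvert_{K}$ restricted to $\mathbb{Q}$ and vanishing under the residue-characteristic hypothesis, so the same bound holds along the whole forward orbit. Since $f_{\boldsymbol{c}}^{\prime}(z) = \prod_{j = 1}^{d -1}\left( z -c_{j} \right)$, the (ultrametric) triangle inequality then yields
\[ \log\left\lvert f_{\boldsymbol{c}}^{\prime}(z) \right\rvert_{K} \leq (d -1) \log(2)_{K} +(d -1) \log^{+}\max\left\lbrace \lvert z \rvert_{K}, \lVert \boldsymbol{c} \rVert_{K^{d -1}} \right\rbrace \, \text{.} \]
Applying this along the $f_{\boldsymbol{c}}$-orbit of a period-$p$ point with multiplier $\lambda$ and dividing the chain-rule identity for $\left( f_{\boldsymbol{c}}^{\circ p} \right)^{\prime}\left( z_{0} \right)$ by $p$ yields the desired bound $\frac{1}{p} \log\lvert \lambda \rvert_{K} \leq (d -1) M_{f_{\boldsymbol{c}}} +B_{K}$. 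The extra value $\lambda = 1$ that may occur in $\Lambda_{f}^{(p)}$ from parabolic cycles of proper subperiod contributes $\log 1 = 0$ and is harmless.

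There is no serious obstacle here: the argument is essentially bookkeeping. The only subtlety lies in checking that, under the residue-characteristic hypothesis, every ingredient in $B_{K}$ collapses --- namely $(2)_{K} = 1$ in the non-Archimedean case and $\delta_{K}, \Delta_{K}, E_{K}$ all take their trivial values in the improved form of Claim~\ref{claim:estimates3} --- so that one indeed obtains $B_{K} = 0$ there.
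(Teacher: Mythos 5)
Your proposal is correct and follows essentially the same route as the paper: reduce by conjugation to Ingram's normal form $f_{\boldsymbol{c}}$, use $g_{f_{\boldsymbol{c}}}(z_0)=0$ at periodic points together with Claim~\ref{claim:estimates3} to control the orbit, exploit the factorization $f_{\boldsymbol{c}}^{\prime}(z)=\prod_{j}(z-c_j)$ via the triangle inequality, and conclude with the chain rule and the lower bound $M_{f_{\boldsymbol{c}}}\geq\log^{+}\lVert\boldsymbol{c}\rVert_{K^{d-1}}+E_K$. The only difference is bookkeeping: you first convert the bound on $\lvert z_0\rvert_K$ into one in terms of $M_{f_{\boldsymbol{c}}}$, whereas the paper carries $\max\{1,\lVert\boldsymbol{c}\rVert_{K^{d-1}}\}$ through to the end and substitutes once.
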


\begin{proof}
By conjugation, we may restrict our attention to the polynomials $f_{\boldsymbol{c}}$, with $\boldsymbol{c} \in K^{d -1}$. Define \[ \varepsilon_{K} = \begin{cases} 1 +\max\left\lbrace \delta_{K}, \exp\left( -\Delta_{K} \right) \right\rbrace & \text{if } K \text{ is Archimedean}\\ \max\left\lbrace 1, \delta_{K}, \exp\left( -\Delta_{K} \right) \right\rbrace & \text{if } K \text{ is non-Archimedean} \end{cases} \] and \[ B_{K} = (d -1) \left( \log\left( \varepsilon_{K} \right) -E_{K} \right) \in \mathbb{R} \, \text{,} \] with $\delta_{K} \in \mathbb{R}_{> 0}$, $\Delta_{K} \in \mathbb{R}$ and $E_{K} \in \mathbb{R}$ as in Claim~\ref{claim:estimates3}. Note that $B_{K}$ depends only on the restriction of $\lvert . \rvert_{K}$ to $\mathbb{Q}$ and we have $B_{K} = 0$ if $K$ is non-Archimedean with residue characteristic $0$ or greater than $d$. Now, suppose that $\boldsymbol{c} \in K^{d -1}$, $p \geq 1$ and $z_{0} \in K$ is a fixed point for $f_{\boldsymbol{c}}^{\circ p}$. For every $j \in \lbrace 0, \dotsc, p -1 \rbrace$, as $g_{f_{\boldsymbol{c}}}\left( f_{\boldsymbol{c}}^{\circ j}\left( z_{0} \right) \right) = 0$, we have \[ \begin{split} \left\lvert f_{\boldsymbol{c}}^{\circ j}\left( z_{0} \right) \right\rvert_{K} & \leq \max\left\lbrace \delta_{K} \lVert \boldsymbol{c} \rVert_{K^{d -1}}, \exp\left( -\Delta_{K} \right) \right\rbrace\\ & \leq \max\left\lbrace \delta_{K}, \exp\left( -\Delta_{K} \right) \right\rbrace \cdot \max\left\lbrace 1, \lVert \boldsymbol{c} \rVert_{K^{d -1}} \right\rbrace \end{split} \] by Claim~\ref{claim:estimates3}. Therefore, for every $j \in \lbrace 0, \dotsc, p -1 \rbrace$, we have \[ \left\lvert f_{\boldsymbol{c}}^{\prime}\left( f_{\boldsymbol{c}}^{\circ j}\left( z_{0} \right) \right) \right\rvert_{K} = \prod_{\ell = 1}^{d -1} \left\lvert f_{\boldsymbol{c}}^{\circ j}\left( z_{0} \right) -c_{\ell} \right\rvert_{K} \leq \varepsilon_{K}^{d -1} \max\left\lbrace 1, \lVert \boldsymbol{c} \rVert_{K^{d -1}} \right\rbrace^{d -1} \] by the triangle inequality. As a result, we have \[ \begin{split} \frac{1}{p} \log\left\lvert \left( f_{\boldsymbol{c}}^{\circ p} \right)^{\prime}\left( z_{0} \right) \right\rvert_{K} & = \frac{1}{p} \sum_{j = 0}^{p -1} \log\left\lvert f_{\boldsymbol{c}}^{\prime}\left( f_{\boldsymbol{c}}^{\circ j}\left( z_{0} \right) \right) \right\rvert_{K}\\ & \leq (d -1) \log^{+}\lVert \boldsymbol{c} \rVert_{K^{d -1}} +(d -1) \log\left( \varepsilon_{K} \right)\\ & \leq (d -1) M_{f_{\boldsymbol{c}}} +B_{K} \end{split} \] by Claim~\ref{claim:estimates3}. This completes the proof of the proposition.
\end{proof}

\begin{remark}
In~\cite{Bu2003}, Buff used de Branges's theorem to show that we can take $B_{\mathbb{C}} = 2 \log(d)$ in the statement of Proposition~\ref{proposition:ineqMax}.
\end{remark}

As an immediate consequence of Proposition~\ref{proposition:ineqMax}, we obtain an upper bound on the heights of multipliers of polynomial maps in terms of the critical height.

\begin{corollary}
There exists some $B \in \mathbb{R}$ such that $H_{f}^{(p)} \leq (d -1) H_{f} +B$ for all $f \in \Poly_{d}\left( \overline{\mathbb{Q}} \right)$ and all $p \geq 1$.
\end{corollary}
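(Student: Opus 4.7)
The plan is to mimic the passage from Proposition~\ref{proposition:corollary} to Corollary~\ref{corollary:degenGlobal}: apply the local estimate of Proposition~\ref{proposition:ineqMax} placewise at all completions of a suitable number field and sum. Fix $f \in \Poly_{d}(\overline{\mathbb{Q}})$ and $p \geq 1$, and choose a number field $\mathbb{K}$ containing the coefficients of $f$, its critical points, and all elements of $\Lambda_{f}^{(p)}$. Proposition~\ref{proposition:ineqMax} applied over each $\overline{\mathbb{Q}}_{q}$, with $q \in \mathbb{P} \cup \lbrace \infty \rbrace$, furnishes constants $B_{q} \in \mathbb{R}$, depending only on the restriction of $\lvert . \rvert_{q}$ to $\mathbb{Q}$, such that $M_{\sigma(f)}^{(p)} \leq (d -1) M_{\sigma(f)} +B_{q}$ for every embedding $\sigma \colon \mathbb{K} \hookrightarrow \overline{\mathbb{Q}}_{q}$, with $B_{q} = 0$ whenever $q$ is a prime greater than $d$.

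The key identity at each place is
\[ \max_{\lambda \in \Lambda_{f}^{(p)}} \log^{+}\lvert \sigma(\lambda) \rvert_{q} = p \cdot \left( M_{\sigma(f)}^{(p)} \right)^{+} \, \text{,} \]
where $(\cdot)^{+} = \max\lbrace 0, \cdot \rbrace$. Swapping maximum and summation then yields
\[ \frac{1}{p} H_{f}^{(p)} \leq \frac{1}{[\mathbb{K} \colon \mathbb{Q}]} \sum_{q \in \mathbb{P} \cup \lbrace \infty \rbrace} \sum_{\sigma \mathpunct{:} \mathbb{K} \hookrightarrow \overline{\mathbb{Q}}_{q}} \left( M_{\sigma(f)}^{(p)} \right)^{+} \, \text{.} \]
Using $M_{\sigma(f)} \geq 0$, the placewise bound $\left( M_{\sigma(f)}^{(p)} \right)^{+} \leq (d -1) M_{\sigma(f)} +B_{q}^{+}$ follows. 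Substituting gives two terms.

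For the first term, the inequality $M_{\sigma(f)} = \max_{c \in \Gamma_{f}} g_{\sigma(f)}\left( \sigma(c) \right) \leq \sum_{c \in \Gamma_{f}} \rho_{c} \cdot g_{\sigma(f)}\left( \sigma(c) \right)$, valid because each summand is non-negative and at least one equals $M_{\sigma(f)}$ with coefficient $\rho_{c} \geq 1$, combined with the adelic formula for the canonical height, yields
\[ \frac{1}{[\mathbb{K} \colon \mathbb{Q}]} \sum_{q, \sigma} M_{\sigma(f)} \leq \sum_{c \in \Gamma_{f}} \rho_{c} \cdot \widehat{h}_{f}(c) = H_{f} \, \text{.} \]
For the second term, since $B_{q}$ depends only on $q$ and there are exactly $[\mathbb{K} \colon \mathbb{Q}]$ embeddings $\mathbb{K} \hookrightarrow \overline{\mathbb{Q}}_{q}$, the normalized double sum collapses to $\sum_{q \in \mathbb{P} \cup \lbrace \infty \rbrace} B_{q}^{+}$, which is finite because only finitely many $B_{q}$ are nonzero.

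Combining these yields $H_{f}^{(p)} \leq (d -1) H_{f} +B$ with $B = \sum_{q} B_{q}^{+}$, independent of $f$ and $p$. No new dynamical input is needed; the only subtleties are bookkeeping: correctly normalizing the sum over embeddings so that $[\mathbb{K} \colon \mathbb{Q}]$ cancels, and absorbing the positive part of the local error into a place-independent global constant. The main (mild) obstacle is making sure the statement is interpreted consistently with the normalization already implicit in the proof of Corollary~\ref{corollary:degenGlobal}, where the height $H_{f}^{(p)}$ on the left carries the factor $\frac{1}{p}$ coming from the definition of the characteristic exponent.
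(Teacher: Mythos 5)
Your argument is correct and proceeds along exactly the route the paper only gestures at (the paper gives no proof, calling the corollary an "immediate consequence" of Proposition~\ref{proposition:ineqMax} and implicitly modeling it on the proof of Corollary~\ref{corollary:degenGlobal}). The key steps all check out: the identity $\max_{\lambda\in\Lambda_{f}^{(p)}}\log^{+}\lvert\sigma(\lambda)\rvert_{q}=p\left(M_{\sigma(f)}^{(p)}\right)^{+}$, the $\max$--$\sum$ swap in the right direction for an upper bound, the comparison $\frac{1}{[\mathbb{K}:\mathbb{Q}]}\sum_{q,\sigma}M_{\sigma(f)}\leq H_{f}$ via the adelic formula for $\widehat{h}_{f}$ and the fact that $\rho_{c}\geq1$, and the finiteness of $\sum_{q}B_{q}^{+}$.

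The $\frac{1}{p}$ normalization you mention at the end, however, is not merely "interpretation"; it points at an actual discrepancy in the statement. What your argument proves is $\frac{1}{p}H_{f}^{(p)}\leq(d-1)H_{f}+B$, and that is the only inequality that can hold: with the paper's definition $H_{f}^{(p)}=\max_{\lambda\in\Lambda_{f}^{(p)}}h(\lambda)$, the corollary as written is false. Take $f(z)=z^{d}$: the unique critical point $0$ is fixed, so $H_{f}=0$, whereas every multiplier of a period-$p$ cycle equals $d^{p}$, so $H_{f}^{(p)}=p\log d$ grows without bound in $p$ and no constant $B$ works. You should therefore say plainly that you have proved the inequality with $\frac{1}{p}H_{f}^{(p)}$ on the left, and that the corollary needs to be amended accordingly (either by inserting the $\frac{1}{p}$ explicitly, or by building the $\frac{1}{p}$ into the definition of $H_{f}^{(p)}$, mirroring the normalization already used for $M_{f}^{(p)}$). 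As written, your penultimate sentence claims the unnormalized bound, which your own derivation does not give.
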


\subsection{Lower bounds on absolute values of multipliers}

Finally, let us obtain here a lower bound on the absolute values of multipliers of polynomial maps whose critical points all escape.

Suppose that $f \in \Poly_{d}(K)$, with $K$ an algebraically closed valued field of characteristic $0$. We define the \emph{minimal escape rate} $m_{f}$ of $f$ by \[ m_{f} = \min\left\lbrace g_{f}(c) : c \in K, \, f^{\prime}(c) = 0 \right\rbrace \, \text{.} \] Also, for $p \geq 1$, we define \[ m_{f}^{(p)} = \min_{\lambda \in \Lambda_{f}^{(p)}} \left( \frac{1}{p} \log^{+}\lvert \lambda \rvert \right) \, \text{,} \] where $\lvert . \rvert$ denotes the absolute value on $K$.

As a consequence of Lemma~\ref{lemma:ineqArch} in the Archimedean case and Lemma~\ref{lemma:ineqNonArch} in the non-Archimedean case, we obtain the result below. In the complex setting, it is a slightly weaker version of~\cite[Theorem~1.6]{EL1992}.

\begin{proposition}
\label{proposition:ineqMin}
Assume that $K$ is an algebraically closed valued field of characteristic $0$ that is either Archimedean or non-Archimedean with residue characteristic $0$ or greater than $d$ and $f \in \Poly_{d}(K)$. Then $m_{f}^{(p)} \geq (d -1) m_{f}$ for all $p \geq 1$.
\end{proposition}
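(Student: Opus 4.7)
The plan is to apply Lemma~\ref{lemma:ineqArch} in the Archimedean case and Lemma~\ref{lemma:ineqNonArch} in the non-Archimedean case, with the bottom level $\eta/d^{k}$ set equal to $m_{f}$, the largest level below which the sublevel set of $g_{f}$ contains no critical point. The conclusion is trivial when $m_{f} = 0$, so assume $m_{f} > 0$; then $M_{f} \geq m_{f} > 0$, and I choose any integer $k \geq 1$ such that $\eta := m_{f} d^{k} \geq M_{f}$ (so that the hypothesis $\eta \geq M_{f}$ of the inequality lemma is satisfied, and the filled Julia set is disconnected in the Archimedean setting).

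Fix any periodic point $z_{0}$ for $f$ of some period $p \geq 1$, set $z_{j} = f^{\circ j}(z_{0})$, and let $U_{j}$ and $V_{j}$ denote the connected (resp.\ disk) components of $\{g_{f} < m_{f}\}$ and $\{g_{f} < d \cdot m_{f}\}$ containing $z_{j}$. The key step is to verify that $f$ induces a biholomorphism (resp.\ bijection) $U_{j} \to V_{j+1 \pmod{p}}$. Since $U_{j} \subseteq \{g_{f} < m_{f}\}$ contains no critical point by the definition of $m_{f}$, Riemann--Hurwitz (resp.\ Lemma~\ref{lemma:rhFormula}) forces $f|_{U_{j}}$ to have degree $1$ onto its image; a degree count then identifies that image with $V_{j+1 \pmod{p}}$, namely $f^{-1}(V_{j+1 \pmod{p}})$ decomposes into exactly $d$ components of $\{g_{f} < m_{f}\}$, each bijective onto $V_{j+1 \pmod{p}}$, and $U_{j}$ is one of them.

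Invoking the lemma, its unique provided fixed point must equal $z_{0}$, and
\[ \log\lvert (f^{\circ p})'(z_{0}) \rvert \geq (d-1) \left( \sum_{j=0}^{p-1} \frac{1}{e_{j}} \right) \eta, \]
where $e_{j}$ denotes the degree of $f^{\circ k} \colon V_{j} \to \{g_{f} < d \eta\}$. Since $d \eta > M_{f}$, the set $\{g_{f} < d \eta\}$ is a single component, so the degrees of $f^{\circ k}$ on all components of $\{g_{f} < d m_{f}\}$ sum to the total degree $d^{k}$ of $f^{\circ k}$; in particular $e_{j} \leq d^{k}$ for every $j$, whence $\sum_{j} 1/e_{j} \geq p/d^{k}$ and the bound above becomes $\log\lvert (f^{\circ p})'(z_{0}) \rvert \geq p(d-1) m_{f}$. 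The same reasoning with $p$ replaced by any proper divisor $q \mid p$ rules out cycles of period $q$ with root-of-unity multipliers, so by Proposition~\ref{proposition:dynatomic} the multiset $\Lambda_{f}^{(p)}$ consists exclusively of multipliers at genuine period-$p$ cycles, and the desired bound $m_{f}^{(p)} \geq (d-1) m_{f}$ follows.

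The main obstacle will be establishing the bijection hypothesis of the inequality lemma; once it holds, the numerical estimate is a routine degree count. A minor technical point in the Archimedean setting is the properness of $f|_{U_{j}} \colon U_{j} \to V_{j+1 \pmod{p}}$ needed to invoke Riemann--Hurwitz: this is automatic because $\partial U_{j} \subseteq \{g_{f} = m_{f}\}$ while $f^{-1}(V_{j+1 \pmod{p}}) \subseteq \{g_{f} < m_{f}\}$, so no preimage of a compact subset of $V_{j+1 \pmod{p}}$ can meet the boundary of $U_{j}$.
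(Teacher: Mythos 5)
Your proof is correct and follows essentially the same route as the paper's proof of this proposition: apply Lemma~\ref{lemma:ineqArch} (resp.\ Lemma~\ref{lemma:ineqNonArch}) with $\eta = d^{k} m_{f}$, using the absence of critical points in $\left\lbrace g_{f} < m_{f} \right\rbrace$ together with the Riemann--Hurwitz formula (resp.\ Lemma~\ref{lemma:rhFormula}) to verify the bijectivity hypothesis, and then bound $\sum_{j} 1/e_{j} \geq p/d^{k}$. You are slightly more explicit than the paper on two small technical points — the properness of $f|_{U_{j}} \colon U_{j} \to V_{j+1 \pmod p}$ and the fact that the bound applied to proper divisors of $p$ rules out parabolic contributions to $\Lambda_{f}^{(p)}$ — but neither represents a departure from the paper's argument, which handles the latter implicitly by establishing the bound for every period.
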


\begin{proof}
First, assume that $K$ is endowed with an Archimedean absolute value $\lvert . \rvert_{\infty}$. Then, by Ostrowski's theorem, there exist an embedding $\sigma \colon K \hookrightarrow \mathbb{C}$ and $s \in (0, 1]$ such that $\lvert z \rvert_{\infty} = \left\lvert \sigma(z) \right\rvert^{s}$ for each $z \in K$, where $\lvert . \rvert$ is the usual absolute value on $\mathbb{C}$. We have $m_{f} = s \cdot m_{\sigma(f)}$ and $m_{f}^{(p)} = s \cdot m_{\sigma(f)}^{(p)}$ for each $p \geq 1$. Thus, replacing $f$ by $\sigma(f)$ if necessary, we may assume that $f \in \Poly_{d}(\mathbb{C})$. Note that the desired result is immediate if $m_{f} = 0$. Now, suppose that $m_{f} > 0$. Choose an integer $k \geq 0$ such that $d^{k} m_{f} \geq M_{f}$. Suppose that $z_{0} \in \mathbb{C}$ is a periodic point for $f$ with period $p \geq 1$. For $j \in \lbrace 0, \dotsc, p -1 \rbrace$, denote by $U_{j}$ and $V_{j}$ the respective connected components of $\left\lbrace g_{f} < m_{f} \right\rbrace$ and $\left\lbrace g_{f} < d \cdot m_{f} \right\rbrace$ containing $f^{\circ j}\left( z_{0} \right)$. Then $U_{j}$ contains no critical point for $f$ for all $j \in \lbrace 0, \dotsc, p -1 \rbrace$. By the Riemann--Hurwitz formula, it follows that $f$ induces a biholomorphism from $U_{j}$ to $V_{j +1 \pmod{p}}$ for all $j \in \lbrace 0, \dotsc, p -1 \rbrace$. Therefore, by Lemma~\ref{lemma:ineqArch}, we have \[ \frac{1}{p} \log\left\lvert \left( f^{\circ p} \right)^{\prime}\left( z_{0} \right) \right\rvert \geq \frac{d -1}{p} \left( \sum_{j = 0}^{p -1} \frac{1}{d_{j}} \right) d^{k} m_{f} \geq (d -1) m_{f} \, \text{,} \] where $d_{j} \leq d^{k}$ is the degree of $f^{\circ k} \colon V_{j} \rightarrow \left\lbrace g_{f} < d^{k +1} m_{f} \right\rbrace$ for all $j \in \lbrace 0, \dotsc, p -1 \rbrace$. This completes the proof of the proposition in the Archimedean case.

Now, assume that $K$ is endowed with a non-Archimedean absolute value $\lvert . \rvert$ and the associated residue characteristic either equals $0$ or is greater than $d$. Note that the desired inequality is immediate if the absolute value $\lvert . \rvert$ is trivial or if $m_{f} = 0$. From now on, suppose that $\lvert . \rvert$ is not trivial and $m_{f} > 0$. Choose an integer $k \geq 0$ such that $d^{k} m_{f} \geq M_{f}$. Suppose that $z_{0} \in K$ is a periodic point for $f$ with period $p \geq 1$. We have $g_{f}\left( z_{0} \right) = 0$. By Lemma~\ref{lemma:greenDisks1}, the sets $\left\lbrace g_{f} < m_{f} \right\rbrace$ and $\left\lbrace g_{f} < d \cdot m_{f} \right\rbrace$ are finite unions of disks. Thus, for $j \in \lbrace 0, \dotsc, p -1 \rbrace$, define $U_{j}$ and $V_{j}$ to be the respective disk components of $\left\lbrace g_{f} < m_{f} \right\rbrace$ and $\left\lbrace g_{f} < d \cdot m_{f} \right\rbrace$ that contain $f^{\circ j}\left( z_{0} \right)$. Then $U_{j}$ contains no critical point for $f$ for each $j \in \lbrace 0, \dotsc, p -1 \rbrace$. As a result, it follows from Lemmas~\ref{lemma:preimage} and~\ref{lemma:rhFormula} that $f$ induces a bijection from $U_{j}$ to $V_{j +1 \pmod{p}}$ for each $j \in \lbrace 0, \dotsc, p -1 \rbrace$. Therefore, by Lemma~\ref{lemma:ineqNonArch}, we have \[ \frac{1}{p} \log\left\lvert \left( f^{\circ p} \right)^{\prime}\left( z_{0} \right) \right\rvert \geq \frac{d -1}{p} \left( \sum_{j = 0}^{p -1} \frac{1}{d_{j}} \right) d^{k} m_{f} \geq (d -1) m_{f} \, \text{,} \] where $d_{j} \leq d^{k}$ is the degree of $f^{\circ k} \colon V_{j} \rightarrow \left\lbrace g_{f} < d^{k +1} m_{f} \right\rbrace$ for all $j \in \lbrace 0, \dotsc, p -1 \rbrace$. This completes the proof of the proposition in the non-Archimedean case.
\end{proof}

\section{About isospectral polynomial maps}
\label{appendix:isospec}

\subsection{Examples of isospectral polynomial maps of composite degrees}

As mentioned in the introduction, the multiplier spectrum morphisms are not always isomorphisms onto their images. In fact, there are nonconjugate polynomial maps of any composite degree that have the same multiset of multipliers for each period. Although it is already known, let us detail this here for the reader's convenience.

To exhibit isospectral polynomial maps, one can use the following result:

\begin{proposition}[{\cite[Lemma~2.1]{P2019b}}]
\label{proposition:isospec}
Assume that $K$ is an algebraically closed field of characteristic $0$ and $f \in \Poly_{d}(K)$ and $g \in \Poly_{e}(K)$, with $d, e \geq 2$. Then $\Lambda_{f \circ g}^{(p)} = \Lambda_{g \circ f}^{(p)}$ for all $p \geq 1$.
\end{proposition}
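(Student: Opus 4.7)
The plan is to exploit the semiconjugation $g \circ (f \circ g) = (g \circ f) \circ g$. Setting $F = f \circ g$ and $G = g \circ f$, both polynomials of degree $d e$, iteration yields
\[ g \circ F^{\circ p} = G^{\circ p} \circ g \qquad \text{for every } p \geq 1. \]
I would first establish $\Lambda_{F}^{(p)} = \Lambda_{G}^{(p)}$ in a ``generic'' case, then conclude by an algebraic specialization argument. For the generic case I would work over the algebraic closure $\Omega$ of the purely transcendental extension $\mathbb{Q}(a_{0}, \dotsc, a_{d}, b_{0}, \dotsc, b_{e})$, with $f$ and $g$ given by indeterminate coefficients. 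The simplicity condition I need is that both $F^{\circ p}(z) - z$ and $G^{\circ p}(w) - w$ have only simple roots. To ensure this, note that the specialization $f_{0}(z) = z^{d}$, $g_{0}(z) = z^{e}$ gives $F_{0}^{\circ p}(z) - z = z^{(d e)^{p}} - z$, whose roots are all simple, so the corresponding discriminants in $z$, which are polynomials in the coefficients of $f$ and $g$, do not vanish identically.

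In this generic setting, put $Y_{p} = \{ z \in \Omega : F^{\circ p}(z) = z \}$ and $Z_{p} = \{ w \in \Omega : G^{\circ p}(w) = w \}$, both of cardinality $(d e)^{p}$. Since $F$ permutes $Y_{p}$, the restriction $(f \circ g)|_{Y_{p}}$ is injective, forcing $g|_{Y_{p}}$ to be injective. The semiconjugation yields $g(Y_{p}) \subseteq Z_{p}$, and by cardinality $g$ induces a bijection $Y_{p} \to Z_{p}$. As $g$ intertwines $F$ with $G$ and is injective on each $F$-orbit, this bijection sends each $F$-cycle of length $q \mid p$ to a $G$-cycle of the same length $q$.

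Next, I would verify multiplier preservation: for $z_{0} \in Y_{p}$, the chain rule combined with $(f \circ g)'(w) = f'(g(w)) g'(w)$ gives
\[ (F^{\circ p})'(z_{0}) = \prod_{j = 0}^{p -1} f'\bigl( g(F^{\circ j}(z_{0})) \bigr) \cdot \prod_{j = 0}^{p -1} g'\bigl( F^{\circ j}(z_{0}) \bigr). \]
Using $G^{\circ j}(g(z_{0})) = g(F^{\circ j}(z_{0}))$, then $f \circ g = F$, and finally cyclically reindexing the $g'$-product via $F^{\circ p}(z_{0}) = z_{0}$, the analogous computation for $G$ gives $(G^{\circ p})'(g(z_{0})) = (F^{\circ p})'(z_{0})$. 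Hence the bijection $g \colon Y_{p} \to Z_{p}$ transports multipliers, so $\Lambda_{F}^{(p)} = \Lambda_{G}^{(p)}$ over $\Omega$.

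To conclude in general, note that the coefficients of $\chi_{F}^{(p)}$ and $\chi_{G}^{(p)}$ lie in $\mathbb{Q}[a_{0}, \dotsc, a_{d}, b_{0}, \dotsc, b_{e}, a_{d}^{-1}, b_{e}^{-1}]$; their equality as multisets over $\Omega$ forces $\chi_{F}^{(p)} = \chi_{G}^{(p)}$ identically as Laurent polynomials in the coefficients, and specialization to any algebraically closed field $K$ of characteristic $0$ then yields $\Lambda_{f \circ g}^{(p)} = \Lambda_{g \circ f}^{(p)}$. I expect the main delicacy to be the bookkeeping required to justify the simplicity assumption in the generic case and to set up the specialization cleanly; the dynamical heart of the argument (the semiconjugation, the counting bijection $Y_{p} \to Z_{p}$, and the cyclic reindexing of the chain-rule product) is quite short.
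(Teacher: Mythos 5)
Your proposal is correct, and its dynamical core — the semiconjugation $g \circ F^{\circ p} = G^{\circ p} \circ g$, the observation that $F$ permuting the period-dividing-$p$ points forces $g$ to be injective on them, the cardinality/period bookkeeping to upgrade the injection to a period-preserving bijection, and the cyclic reindexing of the chain-rule product to transfer multipliers — is exactly the paper's proof. Where you diverge is the framing: the paper works directly over the arbitrary algebraically closed field $K$, constructs the bijection on the full set of periodic points, and then simply declares the multisets equal; you instead pass to the generic polynomial with transcendental coefficients (over which $F^{\circ p}(z)-z$ is separable, so $Y_p$ and $Z_p$ both have $(de)^p$ simple elements), get the equality $\chi_F^{(p)} = \chi_G^{(p)}$ there, and descend by specialization. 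Your version is the more careful of the two: the multiset $\Lambda_f^{(p)}$ is defined as the root multiset of the multiplier polynomial $\chi_f^{(p)}$, which is built from a resultant of the dynatomic polynomial, and in degenerate cases (parabolic cycles, collisions) the multiplicities in that multiset are not obviously the naive per-cycle count; the paper's bijection-of-periodic-points argument glosses over this, while your specialization step handles it cleanly since the generic identity between two monic polynomials with coefficients in $\mathbb{Q}[a_0,\dotsc,a_d,a_d^{-1},b_0,\dotsc,b_e,b_e^{-1}]$ automatically propagates. The cost is the extra apparatus of the transcendental base and the separability check; the benefit is airtight treatment of the degenerate fibers.
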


\begin{proof}
For each $p \geq 1$, we have $g \circ (f \circ g)^{\circ p} = (g \circ f)^{\circ p} \circ g$, and hence $g$ sends any periodic point for $f \circ g$ in $K$ with period $p$ to a periodic point for $g \circ f$ in $K$ with period dividing $p$. Similarly, $f$ sends any periodic point for $g \circ f$ in $K$ with period $p \geq 1$ to a periodic point for $f \circ g$ in $K$ with period dividing $p$. Moreover, the map $f \circ g$ induces a permutation of its set of periodic points in $K$, which preserves the periods. Therefore, $g$ induces an injection from the set of periodic points for $f \circ g$ in $K$ into the set of periodic points for $g \circ f$ in $K$, which preserves the periods. In fact, this map induced by $g$ is a bijection as $g \circ f$ also permutes its periodic points in $K$. Finally, for each periodic point $z_{0} \in K$ for $f \circ g$ with period $p \geq 1$, we have \[ \begin{split} \left( (g \circ f)^{\circ p} \right)^{\prime}\left( g\left( z_{0} \right) \right) & = \left( \prod_{j = 0}^{p -1} f^{\prime}\left( (g \circ f)^{\circ j} \circ g\left( z_{0} \right) \right) \right) \left( \prod_{j = 0}^{p -1} g^{\prime}\left( f \circ (g \circ f)^{\circ j} \circ g\left( z_{0} \right) \right) \right)\\ & = \left( \prod_{j = 0}^{p -1} f^{\prime}\left( g \circ (f \circ g)^{\circ j}\left( z_{0} \right) \right) \right) \left( \prod_{j = 0}^{p -1} g^{\prime}\left( (f \circ g)^{\circ j}\left( z_{0} \right) \right) \right)\\ & = \left( (f \circ g)^{\circ p} \right)^{\prime}\left( z_{0} \right) \, \text{.} \end{split} \] Thus, the map $g$ induces a bijection from the set of periodic points for $f \circ g$ in $K$ onto the set of periodic points for $g \circ f$ in $K$, which preserves the periods and the multipliers. This completes the proof of the proposition.
\end{proof}

As a direct consequence of Proposition~\ref{proposition:isospec}, we obtain the following:

\begin{corollary}
Suppose that $d \geq 2$ is not a prime number. Then, for each $P \geq 1$, the morphism $\Mult_{d}^{(P)}$ is not injective.
\end{corollary}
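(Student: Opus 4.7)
The plan is to exhibit, for each composite $d \geq 2$, two explicit elements of $\Poly_{d}(\mathbb{Q})$ that induce distinct conjugacy classes in $\mathcal{P}_{d}(\mathbb{Q})$ but have identical multisets of multipliers at their cycles of every period. Since a composite $d$ factors as $d = e m$ with $e, m \geq 2$, Proposition~B.1 hands us a very natural family of candidates: pairs of the form $(f \circ g, g \circ f)$ with $f \in \Poly_{e}$ and $g \in \Poly_{m}$. Whatever choice we make for $f$ and $g$, the composition argument already guarantees $\Lambda_{f \circ g}^{(p)} = \Lambda_{g \circ f}^{(p)}$ for all $p \geq 1$, hence $\Mult_{d}^{(P)}([f \circ g]) = \Mult_{d}^{(P)}([g \circ f])$ for every $P \geq 1$. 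The only task is therefore to pick $f$ and $g$ so that $[f \circ g] \neq [g \circ f]$ in $\mathcal{P}_{d}$.

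The explicit choice I would take is $f(z) = z^{e} + a$ and $g(z) = z^{m}$ for any $a \in \mathbb{Q}^{*}$. A direct computation gives
\[
f \circ g(z) = z^{d} + a \qquad \text{and} \qquad g \circ f(z) = (z^{e} + a)^{m} \, \text{,}
\]
both of which lie in $\Poly_{d}(\mathbb{Q})$. Differentiating, $(f \circ g)'(z) = d \cdot z^{d -1}$, so $f \circ g$ is unicritical with a single critical point at $0$ of multiplicity $d -1$. In contrast, $(g \circ f)'(z) = em \cdot z^{e -1} (z^{e} + a)^{m -1}$, so $g \circ f$ has $e +1$ distinct critical points, namely $0$ with multiplicity $e -1$ and each of the $e$ simple roots of $z^{e} + a$ with multiplicity $m -1$.

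The last step is the key observation that conjugation by an affine transformation sends critical points to critical points preserving multiplicities, so the multiset of critical multiplicities is an affine conjugacy invariant. For $f \circ g$ this multiset has a single element $\lbrace d -1 \rbrace$, whereas for $g \circ f$ it has $e +1 \geq 3$ elements. They are therefore not equal, so $[f \circ g] \neq [g \circ f]$ in $\mathcal{P}_{d}(\mathbb{Q})$. Combined with Proposition~B.1, this proves that the morphism $\Mult_{d}^{(P)}$ fails to be injective even on $\mathbb{Q}$-points, let alone on $K$-points for any algebraically closed field $K$ of characteristic $0$ containing $\mathbb{Q}$.

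There is no real obstacle here; the proof is essentially a one-line check once the right pair is exhibited. The only thing to be mildly careful about is verifying the multiplicity count $(e -1) + e (m -1) = d -1$ so that one confirms no critical point has been missed, and ensuring $a \neq 0$ so that $0$ is not a root of $z^{e} + a$ (which would collapse critical points and break the distinction). Both are immediate.
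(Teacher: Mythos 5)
Your proof is correct and follows essentially the same route as the paper: you decompose $d = em$, take $f(z) = z^{e} + a$, $g(z) = z^{m}$, apply Proposition B.1 to the pair $(f \circ g, g \circ f)$, and distinguish the conjugacy classes by counting distinct critical points. The paper's own proof is the special case $a = 1$ with $(e, m) = (d_1, d_2)$.
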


\begin{proof}
Assume that $d = d_{1} d_{2}$, with $d_{1}, d_{2} \geq 2$, and define \[ f(z) = z^{d} +1 \in \Poly_{d}(\mathbb{Q}) \quad \text{and} \quad g(z) = \left( z^{d_{1}} +1 \right)^{d_{2}} \in \Poly_{d}(\mathbb{Q}) \, \text{.} \] Then we have $f = h_{1} \circ h_{2}$ and $g = h_{2} \circ h_{1}$, where $h_{1}(z) = z^{d_{1}} +1 \in \Poly_{d_{1}}(\mathbb{Q})$ and $h_{2}(z) = z^{d_{2}} \in \Poly_{d_{2}}(\mathbb{Q})$, and therefore $\Mult_{d}^{(P)}\left( [f] \right) = \Mult_{d}^{(P)}\left( [g] \right)$ for all $P \geq 1$ by Proposition~\ref{proposition:isospec}. However, $[f] \neq [g]$ in $\mathcal{P}_{d}(\mathbb{Q})$ because $f$ has only $1$ critical point in $\mathbb{C}$ whereas $g$ has exactly $d_{1} +1$ critical points in $\mathbb{C}$. This completes the proof of the corollary.
\end{proof}

Actually, it is suspected that Proposition~\ref{proposition:isospec} is the unique source of examples of nonconjugate isospectral polynomial maps (see~\cite[Problem~3.1]{P2019a}).

\subsection{The case of quartic polynomial maps}

Finally, as isospectral polynomial maps of degree $2$ or $3$ are necessarily conjugate, let us investigate the situation for polynomial maps of degree $4$.

Using explicit expressions for the multiplier spectrum morphisms, we show here that the pairs of nonconjugate isospectral quartic polynomial maps all come from Proposition~\ref{proposition:isospec}. More precisely, we obtain the following result:

\begin{proposition}
Assume that $K$ is an algebraically closed field of characteristic $0$ and $f, g \in \Poly_{4}(K)$ satisfy $\Lambda_{f}^{(1)} = \Lambda_{g}^{(1)}$ and $\Lambda_{f}^{(2)} = \Lambda_{g}^{(2)}$. Then $[f] = [g]$ in $\mathcal{P}_{4}(K)$ or there exist $h_{1}, h_{2} \in \Poly_{2}(K)$ such that $f = h_{1} \circ h_{2}$ and $g = h_{2} \circ h_{1}$.
\end{proposition}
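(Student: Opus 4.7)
Plan: I would work directly with monic centered representatives via the isomorphism $\mathcal{P}_{4}(K) \cong \mathcal{P}_{4}^{\mc}(K)$, writing $f(z) = z^{4} + a_{2} z^{2} + a_{1} z + a_{0}$ and $g(z) = z^{4} + b_{2} z^{2} + b_{1} z + b_{0}$. In this presentation, $[f] = [g]$ in $\mathcal{P}_{4}(K)$ if and only if $(b_{0}, b_{1}, b_{2}) = (\omega a_{0}, a_{1}, \omega^{-1} a_{2})$ for some $\omega \in \mu_{3}(K)$, so the task reduces to classifying the pairs of triples $(a_{i}), (b_{i})$ producing polynomials with matching multisets of multipliers at cycles with periods $1$ and $2$.

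First, I would compute $\chi_{f}^{(1)}$ and $\chi_{f}^{(2)}$ as explicit polynomials in $a_{0}, a_{1}, a_{2}$ via the resultant formulas of Section~\ref{section:prelim}, yielding closed-form expressions for the four invariants $\sigma_{4, j}^{(1)}$ (three of which are independent, by the holomorphic fixed-point formula) and the six invariants $\sigma_{4, j}^{(2)}$. The hypotheses $\Lambda_{f}^{(1)} = \Lambda_{g}^{(1)}$ and $\Lambda_{f}^{(2)} = \Lambda_{g}^{(2)}$ then translate into a system of polynomial equations in $a_{0}, a_{1}, a_{2}, b_{0}, b_{1}, b_{2}$. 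Second, I would analyze this system by exploiting the natural $\mu_{3}$-invariants $a_{1}, a_{0} a_{2}, a_{0}^{3} + a_{2}^{3}, a_{0}^{3} a_{2}^{3}$ (and likewise for the $b_{i}$) that parameterize $\mathcal{P}_{4}$. The expectation is that the solution variety decomposes into a trivial component $\{(b_{0}, b_{1}, b_{2}) = (\omega a_{0}, a_{1}, \omega^{-1} a_{2})\}_{\omega \in \mu_{3}}$, on which $[f] = [g]$, together with an exceptional component contained in $\{a_{1} = b_{1} = 0\}$. On this exceptional locus, $f(z) = z^{4} + a_{2} z^{2} + a_{0}$ admits a one-parameter family of factorizations $f = h_{1} \circ h_{2}$ with $h_{2}(z) = z^{2} + \gamma$ and $h_{1}(w) = w^{2} + \epsilon w + \zeta$ satisfying $2 \gamma + \epsilon = a_{2}$ and $\gamma^{2} + \epsilon \gamma + \zeta = a_{0}$; then $h_{2} \circ h_{1}$ shares the multiplier spectrum of $f$ by Proposition~\ref{proposition:isospec}, and a suitable choice of the free parameter should match $g$ to $h_{2} \circ h_{1}$ up to $\mu_{3}$-conjugation.

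The main obstacle is the explicit decomposition in the second step, and specifically showing that every solution outside the trivial component forces $a_{1} = b_{1} = 0$. This is in principle a finite computation accessible to computer algebra, but the real difficulty is to extract a conceptually clean argument. As a dimension sanity check that also hints at an alternative route, note that by the birationality of $\Mult_{4}^{(2)}$ (a consequence of the Main Theorem), the off-diagonal isospectral locus in $\mathcal{P}_{4} \times \mathcal{P}_{4}$ has dimension at most $2$, while the morphism $\Phi \colon \Poly_{2} \times \Poly_{2} \to \mathcal{P}_{4} \times \mathcal{P}_{4}$ given by $(h_{1}, h_{2}) \mapsto ([h_{1} \circ h_{2}], [h_{2} \circ h_{1}])$ has image of dimension exactly $2$, since the $4$-dimensional gauge $(\phi, \psi) \cdot (h_{1}, h_{2}) = (\phi h_{1} \psi^{-1}, \psi h_{2} \phi^{-1})$ preserves $\Phi$. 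Thus an irreducibility argument for the off-diagonal isospectral locus could, in principle, replace the brute-force case analysis.
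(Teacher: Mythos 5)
Your plan matches the paper's proof in its broad outline: pass to monic centered quartic representatives, compute the multiplier invariants $\sigma_{4,j}^{(1)}$ and $\sigma_{4,j}^{(2)}$ explicitly in $\mu_{3}$-invariants of $(a_{0}, a_{1}, a_{2})$, and then show by elimination that the isospectral fiber consists of a single point away from the decomposable locus $\lbrace a_{1} = 0 \rbrace$ and of the two points $\left\lbrace \left[ h_{1} \circ h_{2} \right], \left[ h_{2} \circ h_{1} \right] \right\rbrace$ on it. This is essentially the paper's strategy, carried out there via a SageMath-aided elimination.

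There is, however, a concrete error in your choice of invariants that would derail the second step. The ring $\mathbb{Q}\left[ \Poly_{4}^{\mc} \right]^{\mu_{3}}$ is generated by $a_{1}$, $a_{0} a_{2}$, $a_{0}^{3}$ and $a_{2}^{3}$ (with the single relation $a_{0}^{3} \cdot a_{2}^{3} = (a_{0} a_{2})^{3}$), not by $a_{1}$, $a_{0} a_{2}$, $a_{0}^{3} + a_{2}^{3}$, $a_{0}^{3} a_{2}^{3}$. Your list is redundant (the fourth element is $(a_{0} a_{2})^{3}$) and, more importantly, too coarse: it only determines the unordered pair $\left\lbrace a_{0}^{3}, a_{2}^{3} \right\rbrace$, so it identifies $(a_{0}, a_{1}, a_{2})$ with $(a_{2}, a_{1}, a_{0})$, which are generically not $\mu_{3}$-equivalent. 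For instance $z^{4} + 2 z^{2} + 1$ and $z^{4} + z^{2} + 2$ are not affinely conjugate, yet have the same values of your proposed invariants. In fact the paper's elimination expresses $a_{0}^{3}$ and $a_{2}^{3}$ individually (given $a_{0} a_{2}$ and the multiplier data) via two different formulas, which is essential for the uniqueness conclusion off the decomposable locus; your coarser parameterization would lose exactly this information and could only reconstruct $f$ up to the spurious swap.

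Finally, you correctly locate the crux of the argument --- showing that every off-diagonal isospectral solution lies in $\lbrace a_{1} = b_{1} = 0 \rbrace$, and that the multiplier map has fibers of cardinality at most two on that locus --- but you leave it as an expectation. The paper settles it by producing two explicit elimination polynomials for $\delta = a_{0} a_{2}$: one linear with leading coefficient proportional to $\sigma_{4,1}^{(1)} - 12$ (giving uniqueness off $\lbrace a_{1} = 0 \rbrace$), and one quadratic with unit leading coefficient (giving the bound of two everywhere). Without producing relations of this kind, the plan is not yet a proof. The dimension count via birationality of $\Mult_{4}^{(2)}$ is a fine heuristic, but it only controls the generic fiber and does not by itself give the pointwise bound of two; it is the explicit quadratic relation that does that work.
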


\begin{proof}
We shall first work with monic centered quartic polynomials. Recall that \[ \Poly_{4}^{\mc} = \left\lbrace z^{4} +a_{2} z^{2} +a_{1} z +a_{0} \right\rbrace \] and that the algebraic group $\mu_{3} = \left\lbrace \omega : \omega^{3} = 1 \right\rbrace$ acts on $\Poly_{4}^{\mc}$ by \[ \omega \centerdot \left( z^{4} +a_{2} z^{2} +a_{1} z +a_{0} \right) = z^{4} +\omega^{-1} a_{2} z^{2} +a_{1} z +\omega a_{0} \, \text{.} \] Therefore, as $\mathcal{P}_{4}^{\mc} = \Poly_{4}^{\mc}/\mu_{3}$, we have \[ \mathbb{Q}\left[ \mathcal{P}_{4}^{\mc} \right] = \mathbb{Q}\left[ \Poly_{4}^{\mc} \right]^{\mu_{3}} = \mathbb{Q}[\alpha, \beta, \gamma, \delta] \, \text{,} \] where \[ \alpha = a_{1} \, \text{,} \quad \beta = a_{0}^{3} \, \text{,} \quad \gamma = a_{2}^{3} \quad \text{and} \quad \delta = a_{0} a_{2} \, \text{.} \] Now, for $h_{1}(z) = z^{2} +c_{1} \in \Poly_{2}^{\mc}$ and $h_{2}(z) = z^{2} +c_{2} \in \Poly_{2}^{\mc}$, we have \[ h_{1} \circ h_{2}(z) = z^{4} +a_{2} z^{2} +a_{1} z +a_{0} \in \Poly_{4}^{\mc} \, \text{,} \quad \text{with} \quad \left\lbrace \begin{array}{l} a_{0} = c_{2}^{2} +c_{1}\\ a_{1} = 0\\ a_{2} = 2 c_{2} \end{array} \right. \, \text{,} \] and $\left[ h_{1} \circ h_{2} \right] = \left[ h_{2} \circ h_{1} \right]$ in $\mathcal{P}_{4}^{\mc}$ if and only if $c_{1}^{3} = c_{2}^{3}$. Thus, setting \[ \mathcal{S}_{4}^{\mc} = \left\lbrace \left[ h_{1} \circ h_{2} \right] : h_{1}, h_{2} \in \Poly_{2}^{\mc} \right\rbrace \subseteq \mathcal{P}_{4}^{\mc} \] and \[ \mathcal{L}_{4}^{\mc} = \left\lbrace \left[ h_{1} \circ h_{2} \right] : h_{1}, h_{2} \in \Poly_{2}^{\mc}, \, \left[ h_{1} \circ h_{2} \right] = \left[ h_{2} \circ h_{1} \right] \right\rbrace \subseteq \mathcal{S}_{4}^{\mc} \, \text{,} \] we have \[ \mathcal{S}_{4}^{\mc} = \lbrace \alpha = 0 \rbrace \quad \text{and} \quad \mathcal{L}_{4}^{\mc} = \lbrace \alpha = 0 \rbrace \cap \left\lbrace 64 \beta^{3} -\gamma^{2} +12 \gamma \delta -48 \delta^{2} -8 \gamma = 0 \right\rbrace \, \text{.} \] For simplicity, write $s_{j} = \sigma_{4, j}^{(1)}$ for $j \in \lbrace 1, \dotsc, 4 \rbrace$ and $t_{j} = \sigma_{4, j}^{(2)}$ for $j \in \lbrace 1, \dotsc, 6 \rbrace$, so that \[ \Mult_{4}^{(2)} = \left( s_{1}, \dotsc, s_{4}, t_{1}, \dotsc, t_{6} \right) \colon \mathcal{P}_{4}^{\mc} \rightarrow \mathbb{A}^{10} \] via the natural isomorphism $\mathcal{P}_{4} \cong \mathcal{P}_{4}^{\mc}$. Using the software SageMath, we obtain
\begin{align*}
s_{1} & = -8 \alpha +12 \, \text{,}\\
s_{2} & = 18 \alpha^{2} -60 \alpha +4 \gamma -16 \delta +48 \, \text{,}\\
\begin{split}
s_{4} & = -27 \alpha^{4} +108 \alpha^{3} -4 \alpha^{2} \gamma +144 \alpha^{2} \delta -144 \alpha^{2} +8 \alpha \gamma -288 \alpha \delta\\
& \quad +16 \gamma \delta -128 \delta^{2} +64 \alpha +256 \beta +128 \delta \, \text{,}
\end{split}\\
\begin{split}
t_{2} & = 27 \alpha^{4} +324 \alpha^{3} +4 \alpha^{2} \gamma -144 \alpha^{2} \delta +1440 \alpha^{2} +24 \alpha \gamma -864 \alpha \delta\\
& \quad -16 \gamma \delta +128 \delta^{2} +2880 \alpha -256 \beta +96 \gamma -512 \delta +3840 \, \text{.}
\end{split}
\end{align*}
Then, using elimination with the software SageMath, we obtain
\begin{itemize}
\item an expression for $\alpha$ as an element of $\mathbb{Q}\left[ s_{1} \right]$:
\begin{equation}
\label{equation:alpha}
\alpha = \frac{-1}{8} s_{1} +\frac{3}{2} \, \text{,}
\end{equation}
\item a polynomial equation in $\delta$ with coefficients in $\mathbb{Q}\left[ s_{1}, s_{2}, s_{4}, t_{2} \right]$, degree $1$ and leading coefficient a constant multiple of $s_{1} -12$:
\begin{multline}
\label{equation:delta1}
2048 \left( s_{1} -12 \right) \delta -9 s_{1}^{3} +660 s_{1}^{2} -16 s_{1} s_{2} -19952 s_{1} +576 s_{2} -16 s_{4} -16 t_{2}\\ +202944 = 0 \, \text{,}
\end{multline}
\item a polynomial equation in $\delta$ with coefficients in $\mathbb{Q}\left[ s_{1}, s_{2}, s_{4}, t_{2} \right]$, degree $2$ and constant leading coefficient:
\begin{multline}
\label{equation:delta2}
1048576 \delta^{2} +512 \left( 315 s_{1}^{2} +4 s_{2}^{2} -3624 s_{1} -352 s_{2} -12 s_{4} +4 t_{2} -9552 \right) \delta\\ -9 s_{1}^{2} s_{2}^{2} +3672 s_{1}^{2} s_{2} +81 s_{1}^{2} s_{4} -63 s_{1}^{2} t_{2} -24 s_{1} s_{2}^{2} -344240 s_{1}^{2} -124416 s_{1} s_{2}\\ +1168 s_{1} s_{4} +784 s_{1} t_{2} +4848 s_{2}^{2} -672 s_{2} s_{4} -160 s_{2} t_{2} +s_{4}^{2} +2 s_{4} t_{2} +t_{2}^{2}\\ +7144320 s_{1} +1537152 s_{2} -12432 s_{4} -11664 t_{2} -12936960 = 0 \, \text{,}
\end{multline}
\item an expression for $\beta$ as an element of $\mathbb{Q}\left[ \delta, s_{1}, s_{2}, s_{4}, t_{2} \right]$:
\begin{equation}
\label{equation:beta}
\begin{split}
\beta & = \frac{-3}{2048} \delta s_{1}^{2} +\frac{3}{65536} s_{1}^{2} s_{2} +\frac{1}{4} \delta^{2} +\frac{31}{256} \delta s_{1} -\frac{1}{64} \delta s_{2}\\
& \quad +\frac{103}{16384} s_{1}^{2} -\frac{1}{2048} s_{1} s_{2} -\frac{1}{65536} s_{1} s_{4} -\frac{1}{65536} s_{1} t_{2} -\frac{127}{128} \delta\\
& \quad -\frac{1007}{2048} s_{1} +\frac{69}{4096} s_{2} +\frac{55}{16384} s_{4} -\frac{9}{16384} t_{2} +\frac{7131}{1024} \, \text{,}
\end{split}
\end{equation}
\item an expression for $\gamma$ as an element of $\mathbb{Q}\left[ \delta, s_{1}, s_{2}, s_{4}, t_{2} \right]$:
\begin{equation}
\label{equation:gamma}
\gamma = \frac{-9}{128} s_{1}^{2} +4 \delta -\frac{3}{16} s_{1} +\frac{1}{4} s_{2} +\frac{3}{8} \, \text{.}
\end{equation}
\end{itemize}
In particular, by~\eqref{equation:alpha} and the discussion above, we have $\mathcal{S}_{4}^{\mc} = \left\lbrace s_{1} = 12 \right\rbrace$. Thus, by the relations~\eqref{equation:alpha}, \eqref{equation:delta1}, \eqref{equation:beta} and~\eqref{equation:gamma}, each element $[f] \in \mathcal{P}_{4}^{\mc}(K) \setminus \mathcal{S}_{4}^{\mc}(K)$ is the unique $[g] \in \mathcal{P}_{4}^{\mc}(K)$ such that $\Mult_{4}^{(2)}\left( [g] \right) = \Mult_{4}^{(2)}\left( [f] \right)$. Now, note that every element of $K^{10}$ has at most two preimages in $\mathcal{P}_{4}^{\mc}(K)$ under $\Mult_{4}^{(2)}$ by the relations~\eqref{equation:alpha}, \eqref{equation:delta2}, \eqref{equation:beta} and~\eqref{equation:gamma}. Moreover, for all $[f] = \left[ h_{1} \circ h_{2} \right] \in \mathcal{S}_{4}^{\mc}(K)$, with $h_{1}, h_{2} \in \Poly_{2}^{\mc}(K)$, the elements $\left[ h_{1} \circ h_{2} \right]$ and $\left[ h_{2} \circ h_{1} \right]$ of $\mathcal{P}_{4}^{\mc}(K)$ are both preimages of $\Mult_{4}^{(2)}\left( [f] \right)$ under $\Mult_{4}^{(2)}$ by Proposition~\ref{proposition:isospec}, and these elements are distinct if $[f] \in \mathcal{S}_{4}^{\mc}(K) \setminus \mathcal{L}_{4}^{\mc}(K)$. As a result, for every $[f] = \left[ h_{1} \circ h_{2} \right] \in \mathcal{S}_{4}^{\mc}(K)$, with $h_{1}, h_{2} \in \Poly_{2}^{\mc}(K)$, we have \[ \forall [g] \in \mathcal{P}_{4}^{\mc}(K), \, \Mult_{4}^{(2)}\left( [g] \right) = \Mult_{4}^{(2)}\left( [f] \right) \Longleftrightarrow [g] = \left[ h_{1} \circ h_{2} \right] \text{ or } \left[ h_{2} \circ h_{1} \right] \] since $\mathcal{L}_{4}^{\mc}$ is a proper Zariski-closed subset of $\mathcal{S}_{4}^{\mc}$.

Finally, assume that $f, g \in \Poly_{4}(K)$ satisfy $\Lambda_{f}^{(1)} = \Lambda_{g}^{(1)}$ and $\Lambda_{f}^{(2)} = \Lambda_{g}^{(2)}$. Then, using the natural isomorphism $\mathcal{P}_{4} \cong \mathcal{P}_{4}^{\mc}$, it follows from the discussion above that $[f] = [g]$ in $\mathcal{P}_{4}(K)$ or there exist $h_{1}, h_{2} \in \Poly_{2}^{\mc}(K)$ such that $[f] = \left[ h_{1} \circ h_{2} \right]$ and $[g] = \left[ h_{2} \circ h_{1} \right]$ in $\mathcal{P}_{4}(K)$. In the latter situation, there are $\phi, \psi \in \Aff(K)$ such that $f = \phi \centerdot \left( h_{1} \circ h_{2} \right)$ and $g = \psi \centerdot \left( h_{2} \circ h_{1} \right)$ in $\Poly_{4}(K)$, and this yields $f = k_{1} \circ k_{2}$ and $g = k_{2} \circ k_{1}$, where $k_{1} = \phi \circ h_{1} \circ \psi^{-1}$ and $k_{2} = \psi \circ h_{2} \circ \phi^{-1}$. Thus, the proposition is proved.
\end{proof}

\providecommand{\bysame}{\leavevmode\hbox to3em{\hrulefill}\thinspace}
\providecommand{\MR}{\relax\ifhmode\unskip\space\fi MR }
% \MRhref is called by the amsart/book/proc definition of \MR.
\providecommand{\MRhref}[2]{%
	\href{http://www.ams.org/mathscinet-getitem?mr=#1}{#2}
}
\providecommand{\href}[2]{#2}

\end{document}